\newtheorem{theorem}{Theorem}
\newtheorem{lemma}{Lemma}
\newtheorem{proposition}{Proposition}
\newtheorem{corollary}{Corollary}
\theoremstyle{remark}
\numberwithin{equation}{section}
\newcommand{\internalcomment}[1]{}
\begin{document}

\title[An approximation formula by interpolation]
{An approximation formula for holomorphic functions by interpolation on the ball}

\author{Amadeo Irigoyen}

\address{Scuola Normale Superiore di Pisa - Piazza dei Cavalieri, 7 - 56126 Pisa, Italy}

\email{
\begin{minipage}[t]{5cm}
amadeo.irigoyen@sns.it
\end{minipage}
}

\begin{abstract}

We deal with a problem of the reconstruction
of any holomorphic function $f$ on the unit ball of
$\mathbb{C}^2$ from its restricions on a union of
complex lines. We give an explicit
formula of Lagrange interpolation's type
that is constructed from the knowledge of
$f$ and its derivatives on these lines. 
We prove that this formula approximates any function
when the number of lines increases.
The motivation of 
this problem comes also from possible applications 
in mathematical economics and medical imaging.

\end{abstract}

\maketitle

\tableofcontents

\section{Introduction}

In this paper, we deal with the following problem of reconstruction: 
$f$ being a holomorphic function on a bounded domain
$\Omega\subset\mathbb{C}^m$, we want to reconstruct $f$ from 
its restriction on an analytic subvariety $Z$ of $\Omega$. We
assume that this analytic subvariety is given by
$Z=\{z\in\Omega,\;g(z)=0\}$, where
$g\in\mathcal{O}\left(\overline{\Omega}\right)$.

A natural way is to construct from the restriction
$f_{\{g=0\}}$ a holomorphic function
$\widetilde{f}\in\mathcal{O}\left(\overline{\Omega}\right)$ 
that interpolates $f$, i.e. 
$\widetilde{f}(z)=f(z),\;\forall\,z\in Z$
(see \cite{berndtsson}, \cite{amar}, \cite{henleit}).
Nevertheless, we know that generally 
$\widetilde{f}\neq f$ then we cannot regain $f$. 
This yields to the following questions: 
what will happen if we make larger the analytic
set $Z$ where $f$ and $\widetilde{f}$ coincide ?
Will $\widetilde{f}$ converge to $f$ ? Else is it only 
true for a certain class of functions $f$ ? Is there also
an explicit formula that gives $\widetilde{f}$ or
another that approximates $f$ ? And what could be the precision of 
this approximation ?
\bigskip

A first consequence is that
$f-\widetilde{f}_N$ will vanish on
the set 
$Z_N,\,N\geq1$,
that is increasingly big but this is not sufficient
to deduce that
$f-\widetilde{f}_N\rightarrow0$. Nevertheless,
it is sufficient to prove that
$\widetilde{f}_N$ is uniformly bounded
on any compact subset of $\Omega$. Indeed, by
the Stieltjes-Vitali-Montel theorem,
one can choose a subsequence 
(precisely, choose a subsequence from 
any subsequence) that converges
to a holomorphic $h$ that vanishes
on 
$\bigcup_{N\geq1}Z_N$, then
$h=0$ (for all subsequence) and
$\widetilde{f}_N\rightarrow f$.
\bigskip

We begin with a special case in the unit disc
$D(0,1)\subset\mathbb{C}$. The analytic set $Z$ is given
by a sequence 
$\{\eta_j,\,j\geq1\}\subset D(0,1)$ and the data
of $f_Z,\;f\in\mathcal{O}\left(\overline{D}(0,1)\right)$,
is the sequence
$\{f(\eta_j),\,j\geq1\}$. Now consider, for all
$\eta\in D(0,1)$ the Blaschke function 
$\varphi_{\eta}\in\mathcal{O}\left(\overline{D}(0,1)\right)$
defined as
\begin{eqnarray*}
\varphi_{\eta}(z) 
& := &
\frac{z-\eta}{1-\overline{\eta}z}
\,.
\end{eqnarray*}
One has, for all $N\geq1$ and all
$z\in D(0,1)$,
\begin{eqnarray*}
\prod_{l=1}^N\varphi_{\eta_l}(z)
\frac{1}{2\pi i}
\int_{|\zeta|=1}
\frac{f(\zeta)\,d\zeta}
{\prod_{l=1}^N\varphi_{\eta_l}(\zeta)(\zeta-z)}
& = &
f(z)
-\sum_{l=1}^N
\prod_{j\neq l}
\frac{\varphi_{\eta_j}(z)}{\varphi_{\eta_j}(\eta_l)}
\,\frac{f(\eta_l)}{1-\overline{\eta_l}z}
\,.
\end{eqnarray*}
Since 
$|\varphi_{\eta}(\zeta)|=1$ for all 
$|\zeta|=1$ and 
$|\varphi_{\eta}(z)|<1$ on $D(0,1)$, 
one has,
for all compact subset
$K\subset D(0,1)$,
\begin{eqnarray*}
\left|
\prod_{l=1}^N\varphi_{\eta_l}(z)
\frac{1}{2\pi i}
\int_{|\zeta|=1}
\frac{f(\zeta)\,d\zeta}
{\prod_{l=1}^N\varphi_{\eta_l}(\zeta)(\zeta-z)}
\right|
\;\leq\;
\frac{\sup_{\zeta\in K}|f(\zeta)|}{\varepsilon_K}
(1-\varepsilon_K)^N
\xrightarrow[N\rightarrow\infty]{}0
\,.
\\
\end{eqnarray*}
It follows that $f$ can be approximated
by the above explicit interpolation formula.
Notice that it is of Lagrange interpolation's type (see below).
In addition, we know the precision of this
approximation.
\bigskip

This example gives an idea
for the principal result of this paper. Nevertheless,
in order to generalize this method to
$\Omega\subset\mathbb{C}^m$, we need to find 
functions 
$\varphi_Z$ of Blaschke type such that
$Z=\{\varphi_Z=0\}$, i.e. that satisfie as well
$|\varphi_Z(\zeta)|=1,\,\forall\,\zeta\in\partial\Omega$.
This will be not possible in our case since we will consider
subvarieties that cross $\partial\Omega$. 

Therefore
we begin with a preliminar result 
(section \ref{prelimsection}, proposition \ref{prelim}) 
that gives the essential idea.
Let be 
$f\in\mathcal{O}\left(\overline{\Omega}\right)$
and assume that $Z=\{g(z)=0\}$ where
$g\in\mathcal{O}\left(\overline{\Omega}\right)$.
Then for all $z\in\Omega$,
\begin{eqnarray}\label{prelimdec}
f(z) & = &
Res(f,g)(z)+PV(f,g)(z)
\,,
\end{eqnarray}
where $Res(f,g)$ 
is a holomorphic function that interpolates $f$
on $\{g=0\}$
and is constructed with the residual current of
$1/g$ (resp. $PV(f,g)$ is constructed with the 
principal value current of $1/g$, 
see \cite{colherr}, \cite{herrlieb}).
\bigskip

In all the following, we will deal with
the unit ball of 
$\mathbb{C}^2$,
\begin{eqnarray*}
\mathbb{B}_2 & = & 
\left\{(z_1,z_2)\in\mathbb{C}^2,\,|z_1|^2+|z_2|^2<1\right\}
\,.
\\
\end{eqnarray*}
We also consider for
$Z=\{g(z)=0\}$
a union of lines that cross the origin. Without loss of generality,
one can choose
\begin{eqnarray}\label{defgn}
g_n(z) & = &
z_1^{m_1}\,
\prod_{j=2}^{n-1}(z_1-\eta_jz_2)^{m_j}
\,z_2^{m_n}
\,,
\end{eqnarray}
where 
$n\geq3,\;m_1,\ldots,m_n\in\mathbb{N}$ and
$0<|\eta_2|\leq\cdots\leq|\eta_{n-1}|$
(one has $\eta_1=0$ and by convention
$\eta_n=\infty$).

On the other hand, we specify that, 
for all $f\in\mathcal{O}(\mathbb{B}_2)$ and
all $m_p\geq2$,
the data 
$f_{\{g_n(z)=0\}}$ is defined as
\begin{eqnarray}\label{data1}
f_{\{z_1^{m_1}=0\}},\,
\left(f_{\left\{(z_1-\eta_pz_2)^{m_p}=0\right\}}
\right)_{2\leq p\leq n-1},\,
f_{\{z_2^{m_n}=0\}}
\end{eqnarray}
where
$f_{\left\{(z_1-\eta_pz_2)^{m_p}=0\right\}}$
is defined as
\begin{eqnarray}\label{data2}
& & 
f_{\{z_1=\eta_pz_2\}},
\left\{\frac{\partial f}{\partial z_1},
\frac{\partial f}{\partial z_2}\right\}_{\{z_1=\eta_pz_2\}},
\ldots,
\left\{
\left(\frac{\partial^{m_p-1}f}{\partial z_1^{j_1}\partial z_2^{j_2}}\right)
_{j_1+j_2=m_p-1}
\right\}_{\{z_1=\eta_pz_2\}}
\end{eqnarray}
(as well as for 
$f_{\{z_1^{m_1}=0\}}$ and
$f_{\{z_2^{m_n}=0\}}$).

The problem of interpolation by lines 
is motivated by applications in mathematical economics and medical imaging
where we have to reconstruct any function $F$ with compact support
from knowledge of its Radon transform
$(RF)(\theta^{(p)},s),\;
(\theta^{(p)},s)\in S^{n-1}\times\mathbb{R}$,
on a finite number of directions
$\theta^{(p)},\,p=1\ldots,n$
(see \cite{hensha}).
\bigskip

Before giving the principal result of this paper, 
we need to specify the following notations.

First, 
$W\subset\mathbb{C}^2$ 
being an open set,
consider
$h(t,w),\,(t,w)\in W$,
that is holomorphic with respect to $t$
(resp. continuous with repect to $w$). For all
$\eta_1,\ldots,\eta_n\in\mathbb{C}$
such that
$(\eta_j,\eta_j)\in W,\,
\forall\,j=1,\ldots,n$
and all
$m_1,\ldots,m_n\in\mathbb{N}$,
we set
\begin{eqnarray}\label{lagen}
\mathcal{L}\left(\eta_1^{m_1},\ldots,\eta_n^{m_n};h(t,w)\right)(X)
& := &
\;\;\;\;\;\;\;\;\;\;\;\;\;\;\;\;\;\;\;\;\;\;\;\;\;\;\;\;\;\;\;\;\;\;\;\;\;\;\;\;\;\;\;\;\;\;\;\;\;\;\;\;
\end{eqnarray}
\begin{eqnarray*}
\;\;
& := &
\prod_{j=1}^n
(X-\eta_j)^{m_j}
\sum_{p=1}^n
\frac{1}{(m_p-1)!}
\frac{\partial^{m_p-1}}{\partial t^{m_p-1}}|_{t=\eta_p}
\left(
\frac{h(t,\eta_p)}
{\prod_{j=1,j\neq p}^n(t-\eta_j)^{m_j}}
\right)
\,.
\end{eqnarray*}

In particular, if
$h(t)$ is holomorphic
on $U\subset\mathbb{C}$
and
$\eta_1,\ldots,\eta_n\in U$,
we set
\begin{eqnarray}\label{lag}
\mathcal{L}\left(\eta_1^{m_1},\ldots,\eta_n^{m_n};h(t)\right)(X)
& := &
\;\;\;\;\;\;\;\;\;\;\;\;\;\;\;\;\;\;\;\;\;\;\;\;\;\;\;\;\;\;\;\;\;\;\;\;\;\;\;\;\;\;\;\;\;\;\;\;\;\;\;\;\;\;\;
\end{eqnarray}
\begin{eqnarray*}
\;\;
& := &
\prod_{j=1}^n
(X-\eta_j)^{m_j}
\sum_{p=1}^n
\frac{1}{(m_p-1)!}
\frac{\partial^{m_p-1}}{\partial t^{m_p-1}}|_{t=\eta_p}
\left(
\frac{h(t)}
{\prod_{j=1,j\neq p}^n(t-\eta_j)^{m_j}}
\right)
\,
\\
\end{eqnarray*}
(in the above sums only appear the terms with
$m_p\geq1$; if 
$m_p=0,\,\forall\,p=1,\ldots,n$, we set by convention
$\mathcal{L}(h):=0$).

In addition, if we choose
$\dfrac{h(t)}{X-t}$, 
then formula (\ref{lag}) gives the Lagrange interpolation polynomial
of $h$ on the $\{\eta_p\}_{1\leq p\leq n}$ with derivatives 
at order $\leq m_p-1$ (see
section \ref{lagrange}):

\begin{eqnarray*}
\mathcal{L}\left(\eta_1^{m_1},\ldots,\eta_n^{m_n}\;;\;
\frac{h(t)}{X-t}\right)(X)
& = &
\;\;\;\;\;\;\;\;\;\;\;\;\;\;\;\;\;\;\;\;\;\;\;\;\;\;\;\;\;\;\;\;\;\;\;\;\;\;\;\;\;\;\;\;\;\;\;\;\;\;\;\;\;\;\;\;\;\;\;\;
\;\;\;\;\;\;
\end{eqnarray*}
\begin{eqnarray*}
& = &
\prod_{j=1}^n(X-\eta_j)^{m_j}
\sum_{p=1}^n
\frac{1}{(m_p-1)!}
\frac{\partial^{m_p-1}}{\partial t^{m_p-1}}|_{t=\eta_p}
\left(\frac{h(t)}{(X-t)\prod_{j=1,j\neq p}^n(t-\eta_j)^{m_j}}\right)
\\
& = & \sum_{p=1}^n\;\prod_{j=1,j\neq p}^n
(X-\eta_j)^{m_j}\;
\sum_{s=0}^{m_p-1}(X-\eta_p)^s\,\frac{1}{s!}
\frac{\partial^s}{\partial t^s}|_{t=\eta_p}
\left(\frac{h(t)}{\prod_{j=1,j\neq p}^n(t-\eta_j)^{m_j}}\right)
\,.
\\
\end{eqnarray*}

In the particular case with
$m_p=1,\,\forall\,p=1,\ldots,n$, we get the classical
Lagrange interpolation polynomial
\begin{eqnarray*}
\mathcal{L}\left(\eta_1,\ldots,\eta_n\;;\;
\frac{h(t)}{X-t}\right)(X)
& = &
\sum_{p=1}^n
\;\prod_{j=1,j\neq p}^n
\;\frac{X-\eta_j}{\eta_p-\eta_j}
\;h(\eta_p)
\,.
\end{eqnarray*}
On the other hand, if 
$m_j=0,\;\forall\,j\neq p$, we get
the Taylor limited expansion on $\eta_p$
at order $m_p-1$:
\begin{eqnarray*}
\mathcal{L}\left(\eta_1^0,\ldots,\eta_p^{m_p},\ldots,\eta_n^0\;;\;
\frac{h(t)}{X-t}\right)(X)
& = &
\sum_{s=0}^{m_p-1}(X-\eta_p)^s\,\frac{1}{s!}
\,h^{(s)}(\eta_p)
\,.
\\
\end{eqnarray*}

\bigskip

Next, for all $p=1,\ldots,n$ and $u_p=0,\ldots,m_p-1$, we set
\begin{eqnarray*}\label{Nup}
N_{u_p}
& := & 
u_p+m_{p+1}+\cdots+m_n
\end{eqnarray*}
and
\begin{eqnarray*}\label{N}
N & := & 
m_1+\cdots+m_n\,.
\end{eqnarray*}

For $f\in\mathcal{O}(\mathbb{B}_2)$ and
$f(z)=\sum_{k_1,k_2\geq0}
a_{k_1,k_2}z_1^{k_1}z_2^{k_2}$
its Taylor expansion, we set, for all
$p=1,\ldots,n-1$ and
$u_p=0,\ldots,m_p-1$,
\begin{eqnarray}\label{r0up}
R_{u_p}^0(f;z,t,w)
& := &
\;\;\;\;\;\;\;\;\;\;\;\;\;\;\;\;\;\;\;\;\;\;\;\;\;\;\;\;\;\;\;\;\;\;\;\;\;\;\;\;\;\;\;\;\;\;\;\;\;\;\;\;\;\;\;\;\;\;\;
\;\;\;\;\;\;\;\;\;\;\;
\end{eqnarray}
\begin{eqnarray*}
\;\;\;\;\;\;\;\;\;
& := &
\frac{1+|w|^2\eta_p/t}{1+|w|^2}
\sum_{k_1+k_2\geq N_{u_p}}
a_{k_1,k_2}\,t^{k_1}
\left(
\frac{z_2+|w|^2z_1/t}{1+|w|^2}
\right)^{k_1+k_2-N_{u_p}}
z_2^{N_{u_p}}
\end{eqnarray*}
and
\begin{eqnarray}\label{r0N}
R_N^0(f;z,t,w)
& := &
\;\;\;\;\;\;\;\;\;\;\;\;\;\;\;\;\;\;\;\;\;\;\;\;\;\;\;\;\;\;\;\;\;\;\;\;\;\;\;\;\;\;\;\;\;\;\;\;\;\;\;\;\;\;\;\;\;\;\;\;
\;\;\;\;\;\;\;\;\;\;\;\;\;\;
\end{eqnarray}
\begin{eqnarray*}
\;\;\;\;\;\;\;\;\;\;\;\;
& := &
(z_1/z_2)^{m_1}
\sum_{k_1+k_2\geq N}
a_{k_1,k_2}\,t^{k_1-m_1}
\left(
\frac{z_2+|w|^2z_1/t}{1+|w|^2}
\right)^{k_1+k_2-N+1}
z_2^{N-1}
\;.
\\\nonumber
\end{eqnarray*}

For all $u_1=0,\ldots,m_1-1$,
\begin{eqnarray}\label{r1u1}
R_{u_1}^1(f;z,t)
& := &
\sum_{k_1\leq u_1,k_2\geq N_{u_1}}
a_{k_1,k_2}\,t^{k_1}
z_2^{k_1+k_2}
\end{eqnarray}
and
\begin{eqnarray}\label{r1N}
R_N^1(f;z,t)
& := &
(z_1/z_2)^{m_1}
\sum_{k_1\leq m_1-1,k_2\geq N-k_1}
a_{k_1,k_2}
\,t^{k_1-m_1}z_2^{k_1+k_2}
\;.
\\\nonumber
\end{eqnarray}

Lastly, for all
$p=2,\ldots,n-1$ and
$u_p=0,\ldots,m_p-1$, we set
\begin{eqnarray}\label{r2up}
& &
R_{u_p}^2(f;z,t)
\;:=\;
\eta_p
\sum_{k_2\leq m_n-1,k_1\geq N_{u_p}-k_2}
a_{k_1,k_2}\,t^{N_{u_p}-1-k_2}
z_1^{k_1+k_2-N_{u_p}}\,z_2^{N_{u_p}}
\end{eqnarray}
and
\begin{eqnarray}\label{r2N}
R_N^2(f;z,t)
& := &
\;\;\;\;\;\;\;\;\;\;\;\;\;\;\;\;\;\;\;\;\;\;\;\;\;\;\;\;\;\;\;\;\;\;\;\;\;\;\;\;\;\;\;\;\;\;\;\;\;\;\;\;\;\;\;\;\;\;\;\;
\;\;\;\;\;\;\;\;\;\;\;\;\;\;\;
\end{eqnarray}
\begin{eqnarray*}
\;\;\;\;\;\;
& := &
(z_1/z_2)^{m_1}
\sum_{k_2\leq m_n-1,k_1\geq N-k_2}
a_{k_1,k_2}
\,t^{N-m_1-1-k_2}
z_1^{k_1+k_2-N+1}z_2^{N-1}
\;.
\end{eqnarray*}
All these functions are well-defined since the
above series are absolutely convergent for all
$z\in\mathbb{B}_2$ and all
$(t,w)$ in a neighborhood of 
$(\eta_p,\eta_p),\,p=1\ldots,n-1$
(see lemma~\ref{serie}).

We can finally set
\begin{eqnarray}\label{formula}
\mathcal{G}
\left(
\eta_1^{m_1},\ldots,\eta_n^{m_n};f
\right)(z)
& := &
\;\;\;\;\;\;\;\;\;\;\;\;\;\;\;\;\;\;\;\;\;\;\;\;\;\;\;\;\;\;\;\;\;\;\;\;\;\;\;\;\;\;\;\;\;\;\;\;\;\;\;\;\;\;\;\;\;\;
\end{eqnarray}

\begin{eqnarray*}
\;\;\;\;\;\;\;\;\;\;
& := &
\sum_{u_1=0}^{m_1-1}
(z_1/z_2)^{u_1}
\mathcal{L}
\left(
\eta_2^{m_2},\ldots,\eta_{n-1}^{m_{n-1}};
\frac{R_{u_1}^0(f;z,t,w)
-R_{u_1}^1(f;z,t)}
{t^{u_1+1}}
\right)
(z_1/z_2)
\\
\\
&  &
+\;
\sum_{p=2}^{n-1}
\sum_{u_p=0}^{m_p-1}
\mathcal{L}
\left(
\eta_p^{u_p+1},\ldots,\eta_{n-1}^{m_{n-1}};
\frac{R_{u_p}^0(f;z,t,w)
-R_{u_p}^2(f;z,t)}
{z_1/z_2-\eta_p}
\right)(z_1/z_2)
\\
\\
&  &
+\;
\mathcal{L}
\left(
0^{m_n};\frac{f(z_1,t)}{z_2-t}
\right)(z_2)
\\
\\
&  &
-\;
\mathcal{L}
\left(
\eta_2^{m_2},\ldots,\eta_{n-1}^{m_{n-1}};
\frac{R_N^0(f;z,t,w)
-R_N^1(f;z,t)-R_N^2(f;z,t)}
{z_1/z_2-t}
\right)(z_1/z_2)
\;.
\\
\end{eqnarray*}
\bigskip

Now we can give the principal result of this paper.
\medskip

\begin{theorem}\label{theorem}

Let be 
$f\in\mathcal{O}\left(\mathbb{B}_2\right)$
and 
$f(z)=\sum_{k_1,k_2\geq0}
a_{k_1,k_2}\,z_1^{k_1}\,z_2^{k_2}$
its Taylor expansion. Then 
for all $z\in\mathbb{B}_2$, we have

\begin{eqnarray*}
f(z)
& = &
\mathcal{G}
\left(
\eta_1^{m_1},\ldots,\eta_n^{m_n};f
\right)(z)
\;+\;
\sum_{k_1+k_2\geq N,\,k_1\geq m_1,\,k_2\geq m_n}
a_{k_1,k_2}\,z_1^{k_1}\,z_2^{k_2}
\;.
\\
\end{eqnarray*}

\end{theorem}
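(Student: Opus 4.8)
The plan is to reduce the identity to a monomial computation and then evaluate the four groups of terms in $\mathcal{G}$ separately. Every building block is linear in $f$: the functions $R^0_{u_p},R^1_{u_1},R^2_{u_p},R^0_N,R^1_N,R^2_N$ depend linearly on the Taylor coefficients $a_{k_1,k_2}$, the operator $\mathcal{L}$ is linear in its function argument, and the third term $\mathcal{L}(0^{m_n};f(z_1,t)/(z_2-t))(z_2)$ is linear in $f$. Hence $\mathcal{G}(\eta_1^{m_1},\ldots,\eta_n^{m_n};\cdot)$ is linear, and by the absolute convergence near each node $(\eta_p,\eta_p)$ granted by Lemma~\ref{serie} it suffices to prove the identity when $f$ is a single monomial $z_1^{k_1}z_2^{k_2}$. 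For such $f$ the right-hand tail equals $z_1^{k_1}z_2^{k_2}$ precisely when $(k_1,k_2)$ lies in $A:=\{k_1+k_2\ge N,\ k_1\ge m_1,\ k_2\ge m_n\}$ and vanishes otherwise, so the goal becomes to show that $\mathcal{G}(\ldots;z_1^{k_1}z_2^{k_2})(z)$ equals $z_1^{k_1}z_2^{k_2}$ for $(k_1,k_2)\notin A$ and equals $0$ for $(k_1,k_2)\in A$.

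The cleanest anchor is the third term: by the Taylor specialization of $\mathcal{L}$ recorded after (\ref{lag}) it equals $\sum_{s=0}^{m_n-1}z_2^s\,\tfrac1{s!}\partial_t^s f(z_1,t)|_{t=0}$, hence reproduces exactly the monomials of $f$ with $k_2\le m_n-1$, the contribution of the component $z_2^{m_n}=0$ (the direction $\eta_n=\infty$). The remaining three groups carry the factors $(X-\eta_j)^{m_j}$ associated with the interior lines $z_1=\eta_j z_2$ (here $X=z_1/z_2$), together with the extra power $X^{u_1}=(z_1/z_2)^{u_1}$ attached to the line $z_1=0$; these factors come from the operator $\mathcal{L}$ and carry the divisors of $g_n$. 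They are arranged to reconstruct the monomials with $k_1\le m_1-1$ and those with $k_1+k_2\le N-1$, while the differences $R^0_{u_1}-R^1_{u_1}$, $R^0_{u_p}-R^2_{u_p}$ and $R^0_N-R^1_N-R^2_N$ subtract the slices already accounted for so that no monomial is counted twice.

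To make this precise I would, for the groups whose $\mathcal{L}$-argument has the form $(\,\cdot\,)/(X-t)$ — namely the third and the subtracted fourth term — use the Hermite interpolation identity of (\ref{lag}): $\mathcal{L}(\ldots;h(t)/(X-t))(X)$ is the Hermite interpolant of $h$, so $h(X)-\mathcal{L}(\ldots)(X)$ is divisible by the corresponding $\prod(X-\eta_j)^{m_j}$. For the first two groups, whose arguments are $(R^0_{u_1}-R^1_{u_1})/t^{u_1+1}$ and $(R^0_{u_p}-R^2_{u_p})/(z_1/z_2-\eta_p)$, I would expand the $R$-functions of the monomial $z_1^{k_1}z_2^{k_2}$, apply the differentiation $\tfrac1{(m_p-1)!}\partial_t^{m_p-1}|_{t=\eta_p}$ term by term, and collect powers of $X=z_1/z_2$. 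The thresholds $N_{u_p}=u_p+m_{p+1}+\cdots+m_n$ and the index restrictions $k_1\le u_1$, $k_2\le m_n-1$, $k_1+k_2\ge N_{u_p}$ built into (\ref{r0up})–(\ref{r2N}) are exactly what make the surviving indices, after summing over $u_1$, over $(p,u_p)$, and after the subtraction of the fourth term, reproduce each monomial with $(k_1,k_2)\notin A$ once and each monomial with $(k_1,k_2)\in A$ not at all. Conceptually this realizes the decomposition $f=Res(f,g_n)+PV(f,g_n)$ of (\ref{prelimdec}), $\mathcal{G}$ being the residual interpolating part and the tail the principal-value part.

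I expect the main obstacle to be the combinatorial bookkeeping of these cancellations: for each monomial one must track in which of the interpolation terms and in the third term it can appear, and verify that the coefficients sum to the indicator of the complement of $A$. A delicate point inside this is the treatment of the $w$-dependent weights $\tfrac{1+|w|^2\eta_p/t}{1+|w|^2}$ and $\tfrac{z_2+|w|^2z_1/t}{1+|w|^2}$ in $R^0$: after the specialization $w=\eta_p$ forced by the $(t,w)$-form of $\mathcal{L}$ in (\ref{lagen}) and the Hermite differentiation $\partial_t^{m_p-1}$, these factors must collapse so that the answer is a genuine holomorphic polynomial in $z$ with no residual $|w|^2$, and checking this collapse is the computational heart of the argument. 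The interchange of the infinite Taylor summation with the finite-order $t$-differentiation in $\mathcal{L}$ is justified throughout by Lemma~\ref{serie}.
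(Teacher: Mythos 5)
Your reduction to monomials is sound and matches the paper's own first step: the paper also applies its machinery to each monomial $z_1^{k_1}z_2^{k_2}$ separately and resums at the end, with Lemma~\ref{serie} (and Lemma~\ref{cvtaylor}) justifying the interchange of the Taylor series with the finitely many $t$-derivatives in $\mathcal{L}$. Your identification of the third term as the Taylor polynomial in $z_2$ of order $m_n-1$, reproducing exactly the monomials with $k_2\leq m_n-1$, is also correct.

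However, there is a genuine gap at the core: you never actually establish the monomial identity, you only describe the bookkeeping that would be needed. The statement that, after summing over $u_1$, over $(p,u_p)$, and subtracting the fourth group, the coefficients ``sum to the indicator of the complement of $A$'' is precisely the theorem restricted to monomials, and no mechanism is offered to prove it. The cancellations involved are not routine: the paper obtains them by representing each monomial via a Cauchy--Fantappi\`e-type formula on the sliced sphere (Proposition~\ref{proptranche}), so that the identity monomial $=$ interpolation part $+$ remainder holds \emph{automatically}, and then computes both parts by iterated contour integration, where Lemma~\ref{lagrangeuclide} converts the residues at $|\zeta_1|=+\infty$ and at $\zeta_2=(1-r^2)z_2$ into quotients and remainders of Euclidean division by $\prod_j(X-\eta_j)^{m_j}$. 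In particular, the weights $\frac{1+|\eta_q|^2\eta_p/t}{1+|\eta_q|^2}$ and $\frac{z_2+|\eta_q|^2z_1/t}{1+|\eta_q|^2}$, whose ``collapse'' you correctly flag as the computational heart, are never checked to collapse in the paper: they arise as the values $r^2=\alpha_q^2=\frac{|\eta_q|^2}{1+|\eta_q|^2}$, $1-r^2=\frac{1}{1+|\eta_q|^2}$ of the slicing radius in the residue computation, so their presence is an output of the integral representation rather than an input to be verified. Without Proposition~\ref{proptranche} (or an equivalent algebraic identity playing its role, e.g.\ a full Hermite-interpolation/partial-fraction calculus for all four groups, not only those of the form $h(t)/(X-t)$), your plan amounts to re-deriving by hand, for every pair $(k_1,k_2)$ and every index regime ($k_1\leq m_1-1$; $k_2\leq m_n-1$; $k_1+k_2<N$; $(k_1,k_2)\in A$), the identities that Propositions~\ref{remainder} and~\ref{interpoler} encode --- and that derivation is the proof, not an afterthought. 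As it stands, the proposal is a correct outline of \emph{what} must be true, but not a proof of \emph{why} it is true.
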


\bigskip

The proof of this result will consist on
the calculation of 
$Res(f,g_n)$ (section \ref{interpolers}, 
proposition \ref{interpoler})
and $PV(f,g_n)$ (section \ref{remainders}, 
proposition \ref{remainder})
from (\ref{prelimdec}). As we will see,
$Res(f,g_n)$ is a holomorphic function that
is constructed from the data
$f_{\{g_n(z)=0\}}$ and that interpolates $f$
on $\{g_n(z)=0\}$ (i.e. it coincides with $f$
with derivatives on the lines
$\{z_1=\eta_pz_2\}$),
while $PV(f,g_n)$ is the sum
of a part that is constructed from
$f_{\{g_n(z)=0\}}$ and
of the above remainder that appears in the statement 
of the theorem. 

It follows that any holomorphic function can be approximated
on the unit ball by the explicit formula 
$\mathcal{G}
\left(
\eta_1^{m_1},\ldots,\eta_n^{m_n};f
\right)$
that is constructed
with the restrictions of $f$ and its derivatives on the
complex lines $\{z_1-\eta_jz_2=0\},\,j=1,\ldots,n-1$
and $\{z_2=0\}$ (see \cite{alabut}, \cite{loganshepp} for
analogous results). 

We see in addition that it is sufficient to assume
that $f\in\mathcal{O}(\mathbb{B}_2)$. In particular,
it is not necessary to assume that $f$ is holomorphic
on a neighborhood of the closed unit ball (condition
that we need
in order to construct interpolation functions
with residual currents).
\bigskip

Although 
$\mathcal{G}
\left(
\eta_1^{m_1},\ldots,\eta_n^{m_n};f
\right)$
is an explicit formula 
(see section \ref{proof}, lemma \ref{explicit}),
it is quite technical in the general case. Nevertheless,
one can entirely specify it in the special but not less natural
case of single lines (i.e.
$m_2=\cdots=m_{n-1}=1$
and 
$m_1=m_n=0$) where theorem \ref{theorem} becomes:
\begin{eqnarray*}
f(z) & = &
\mathcal{G}(\eta_2,\ldots,\eta_{n-1};f)(z)
+\sum_{k_1+k_2\geq N}
a_{k_1,k_2}z_1^{k_1}z_2^{k_2}
\;,
\end{eqnarray*}
with
\begin{eqnarray}\label{singlintro}
\mathcal{G}(\eta_2,\ldots,\eta_{n-1};f)(z)
& := &
\mathcal{G}
\left(\eta_1^0,\eta_2^1,\ldots,\eta_{n-1}^1,\eta_n^0;f\right)(z)
\;\;\;\;\;\;\;\;\;\;\;\;\;
\end{eqnarray}
\begin{eqnarray*}
& = &
\sum_{p=2}^{n-1}
\prod_{j=p+1}^{n-1}
(z_1-\eta_jz_2)
\;\times
\\
& &
\times\;
\sum_{q=p}^{n-1}
\frac{1+\eta_p\overline{\eta_q}}{1+|\eta_q|^2}
\frac{1}
{\prod_{j=p,j\neq q}^{n-1}(\eta_q-\eta_j)}
\sum_{l\geq N-p+1}
\left(
\frac{z_2+\overline{\eta_q}z_1}{1+|\eta_q|^2}
\right)^{l-(N-p+1)}
\frac{1}{l!}
\frac{\partial^l}{\partial v^l}|_{v=0}
[f(\eta_qv,v)]
\\
\\
& - &
\sum_{p=2}^{n-1}
\prod_{j=2,j\neq p}^{n-1}
\frac{z_1-\eta_jz_2}{\eta_p-\eta_j}
\sum_{l\geq N}
\left(
\frac{z_2+\overline{\eta_p}z_1}{1+|\eta_p|^2}
\right)^{l-N+1}
\frac{1}{l!}
\frac{\partial^l}{\partial v^l}|_{v=0}
[f(\eta_pv,v)]
\;.
\\
\end{eqnarray*}

\bigskip

We finish with giving as a consequence
of the theorem some information for
the precision of this approximation
(section \ref{proof}, corollary~\ref{vitessecv}).
$\mathcal{F}$ being a compact subset of
$\mathcal{O}(\mathbb{B}_2)$ (i.e.
a subset of holomorphic functions
that is uniformly bounded on any
compact subset of the ball), one has for all
compact subset
$K\subset\mathbb{B}_2$

\begin{eqnarray}\label{corollarintro}
\sup_{f\in\mathcal{F}}
\sup_{z\in K}
\left|
f(z)-
\mathcal{G}
\left(
\eta_1^{m_1},\ldots,\eta_n^{m_n};f
\right)(z)
\right|
& \leq &
C(K,\mathcal{F})
(1-\varepsilon_K)^N
\;,
\\\nonumber
\end{eqnarray}
where
$C(K,\mathcal{F})$ (resp. $\varepsilon_K$)
depends on $K$ and $\mathcal{F}$
(resp. $K$).
\bigskip

In conclusion some questions follow. 
Can the above precision be made better ?
Or can it be made better with another 
choice of formula ? Since
$\mathcal{G}(\eta_1^{m_1},\ldots\eta_n^{m_n};f)$ 
is linear with respect to the data $f_{\{g_n=0\}}$,
theorem \ref{theorem} can also be interpreted
by the approximation of all $2$-variable
holomorphic function $f$ by a fixed linear superposition
of functions $f_j$ of fewer variables
(that depend on $f$). We know
that one cannot get such an exact 
representation of all function $f$
(see \cite{vitushkin}, \cite{vithen}). 
Therefore we would like to
get some lower bounds for the approximation of any
compact subset of holomorphic functions by such a given family.
We are also motivated in dealing with the approximation by
a nonlinear family with respect to the data
$f_{\{g_n=0\}}$ and getting lower bounds
that only depend on the 
$\varepsilon$-entropy of
$\mathcal{O}(\mathbb{B}_2)$ 
(see \cite{kolmogorov}, \cite{amadeo1}, \cite{amadeo2}).

Other questions follow: can we extend theorem \ref{theorem}
to the case of general analytic subvarieties (and not only
for complex lines) ? Is the assertion still true
with a general domain (and not only the ball) ?
Finally, can we get analogous results
in $\mathbb{C}^m$ ?
\bigskip

I would like to thank G. Henkin for the interesting discussions
about this problem.
\bigskip

\section{A preliminar formula in the general case}\label{prelimsection}

\subsection{General case}

\bigskip

We recall for
$\zeta\in\mathbb{C}^m$
\begin{eqnarray*}
\begin{cases}
\omega'(\zeta)=\;
\sum_{k=1}^m(-1)^{k-1}\zeta_k
\bigwedge_{j=1,j\neq k}^md\zeta_k\,,\\
\omega(\zeta)\;=\;
\bigwedge_{j=1}^md\zeta_j
\end{cases}
\end{eqnarray*}
(as well as
$d\omega'(\zeta)=m\omega(\zeta)$).
The orientation on $\mathbb{C}^m$ is such that
\begin{eqnarray*}
\left(\frac{1}{i}\right)^{m^2}
\int_{\zeta\in\mathbb{C}^m}
\omega(\overline{\zeta})\wedge\omega(\zeta)
& > &
0\,.
\end{eqnarray*}
\bigskip

Now we consider a bounded convex domain in $\mathbb{C}^m$
\begin{eqnarray*}
\Omega & = & \{z\in\mathbb{C}^m,\;\rho(z)<0\}
\end{eqnarray*}
with smooth boundary
$\partial\Omega=\{z\in\mathbb{C}^m,\,\rho(z)=0\}$ and whose orientation satisfies the Stokes formula on
$\overline{\Omega}$ (this orientation is also chosen such that
$\frac{1}{i^{m^2}}
\int_{\zeta\in\partial\Omega}\omega'(\overline{\zeta})\wedge\omega(\zeta)>0$).

Let be
$f,\,g\in\mathcal{O}(\overline{\Omega})$ 
(i.e. 
$\exists\;U\supset\overline{\Omega}$ open set such that
$f,\,g\in\mathcal{O}(U)$ )
with $g\neq0$ and
$P=(P_1,\ldots,P_m)\in\mathcal{O}^m\left(\overline{\Omega}\times\overline{\Omega}\right)$
that satisfies: for all
$(\zeta,z)\in\overline{\Omega}\times\overline{\Omega}$,
\begin{eqnarray}
g(\zeta)-g(z) & = &
<P(\zeta,z),\zeta-z>\\\nonumber
& = &
\sum_{j=1}^m\,P_j(\zeta,z)\,(\zeta_j-z_j)\,.
\end{eqnarray}
Lastly, we set
\begin{eqnarray}
\varphi(\zeta,z) & = & 
<\frac{\partial\rho}{\partial z}(\zeta,z),\zeta-z>\,.
\end{eqnarray}
Now we can give the following preliminar result that 
gives the essential method to prove theorem \ref{theorem}.

\bigskip

\begin{proposition}\label{prelim}

For all $z\in\Omega$, we have
\begin{eqnarray}\label{prelimgen}
\;\;\;\;\;\;
& &
(-1)^{m(m-1)/2}
\;f(z)\;=\;
\lim_{\varepsilon\rightarrow0}
\,g(z)\,\frac{
(m-1)!}{(2\pi i)^m}
\int_{\{\zeta\in\partial\Omega,\,|g(\zeta)|>\varepsilon\}}
\frac{f(\zeta)\;
\omega'\left(\frac{\partial\rho}{\partial z}\right)
\wedge\omega(\zeta)}
{g(\zeta)\;\varphi(\zeta,z)^m}
\end{eqnarray}
\begin{eqnarray*}
& &
+\;
\lim_{\varepsilon\rightarrow0}
\frac{(m-2)!}{(2\pi i)^m}
\int_{\{\zeta\in\partial\Omega,|g(\zeta)|=\varepsilon\}}
\frac{f(\zeta)
\sum_{1\leq k<l\leq m}
(-1)^{k+l-1}
\left(\frac{\partial\rho}{\partial z_k}P_l-\frac{\partial\rho}{\partial z_l}P_k\right)
\bigwedge_{j\neq k,l}
d\left(\frac{\partial\rho}{\partial z_j}\right)
\wedge\omega(\zeta)}
{g(\zeta)\;\varphi(\zeta,z)^{m-1}}
\,.
\end{eqnarray*}

\end{proposition}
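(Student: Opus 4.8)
The plan is to start from the Cauchy--Fantappiè representation of $f$ on the convex domain $\Omega$ associated with the Leray section $\partial\rho/\partial z$, and then to introduce the division by $g$ through the Hefer-type decomposition $g(\zeta)-g(z)=\langle P(\zeta,z),\zeta-z\rangle$, regularizing the singularity of $1/g$ on $\partial\Omega\cap\{g=0\}$ by integrating over $\{|g|>\varepsilon\}$ and letting $\varepsilon\to0$.

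\medskip

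First I would write down the classical Cauchy--Fantappiè formula: since $\Omega$ is convex, the support function satisfies $\varphi(\zeta,z)\neq0$ for $\zeta\in\partial\Omega$ and $z\in\Omega$, so the kernel $\omega'(\partial\rho/\partial z)\wedge\omega(\zeta)/\varphi^m$ is nonsingular on $\partial\Omega$ and reproduces $f$, giving $(-1)^{m(m-1)/2}f(z)=\frac{(m-1)!}{(2\pi i)^m}\int_{\partial\Omega}f(\zeta)\,\omega'(\partial\rho/\partial z)\wedge\omega(\zeta)/\varphi^m$. Because $\{g=0\}\cap\partial\Omega$ has vanishing $(2m-1)$-measure and the integrand is continuous, this equals $\lim_{\varepsilon\to0}\int_{\{|g|>\varepsilon\}}$. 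On $\{|g|>\varepsilon\}$ I would insert the decomposition of unity $1=\frac{g(z)}{g(\zeta)}+\frac{g(\zeta)-g(z)}{g(\zeta)}$ and replace $g(\zeta)-g(z)$ by $\langle P(\zeta,z),\zeta-z\rangle$ using the hypothesis on $P$. The first resulting piece is exactly $g(z)$ times the first integral in (\ref{prelimgen}), so that term is immediate.

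\medskip

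The core of the argument is the second piece, $\frac{(m-1)!}{(2\pi i)^m}\int_{\{|g|>\varepsilon\}}\frac{f\,\langle P,\zeta-z\rangle}{g\,\varphi^m}\,\omega'(\partial\rho/\partial z)\wedge\omega(\zeta)$, which I would convert into a boundary integral by Stokes. Concretely, I expect the Cauchy--Fantappiè determinant calculus to furnish a primitive: setting $\Xi=\sum_{1\le k<l\le m}(-1)^{k+l-1}(\frac{\partial\rho}{\partial z_k}P_l-\frac{\partial\rho}{\partial z_l}P_k)\bigwedge_{j\neq k,l}d(\frac{\partial\rho}{\partial z_j})$, one should have $d_\zeta\big(\frac{f}{g}\,\frac{\Xi\wedge\omega(\zeta)}{\varphi^{m-1}}\big)\equiv (m-1)\,\frac{f}{g}\,\frac{\langle P,\zeta-z\rangle}{\varphi^{m}}\,\omega'(\frac{\partial\rho}{\partial z})\wedge\omega(\zeta)$ after restriction to $\partial\Omega$. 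Differentiating $\varphi^{-(m-1)}$ produces the factor $-(m-1)\varphi^{-m}d\varphi$, which accounts both for the drop from $(m-1)!$ to $(m-2)!$ and for the raised power $\varphi^{m}$; the terms from $d(f/g)$ vanish because $f,g$ are holomorphic, so $\bar\partial(f/g)=0$ and the $(1,0)$-part dies against $\omega(\zeta)$; and the leftover terms from $d\Xi$ should vanish upon restriction to the real hypersurface $\partial\Omega$. Granting this identity, Stokes on the manifold-with-boundary $\{\zeta\in\partial\Omega:|g(\zeta)|>\varepsilon\}$, whose boundary is $\{|g|=\varepsilon\}$ for a.e. $\varepsilon$ by Sard, turns the second piece into $-\frac{(m-2)!}{(2\pi i)^m}\int_{\{|g|=\varepsilon\}}\frac{f}{g}\frac{\Xi\wedge\omega(\zeta)}{\varphi^{m-1}}$, which is precisely the second integral in (\ref{prelimgen}) up to the orientation sign on $\{|g|=\varepsilon\}$. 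Since the total integral tends to $(-1)^{m(m-1)/2}f(z)$ and this second piece tends to the residue term, the first piece converges to the principal-value term by difference, and taking $\varepsilon\to0$ yields the asserted splitting $f=\mathrm{Res}(f,g)+\mathrm{PV}(f,g)$.

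\medskip

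I expect the main obstacle to be the verification of the primitive identity $d_\zeta\eta\equiv\Theta$ on $\partial\Omega$: one must expand $\omega'(\partial\rho/\partial z)$ and $d\Xi$ in the exterior algebra, check that the antisymmetrized Hefer coefficients $\frac{\partial\rho}{\partial z_k}P_l-\frac{\partial\rho}{\partial z_l}P_k$ are exactly what $d\varphi\wedge\Xi$ produces, and confirm that every spurious term either contains a repeated $d\zeta_j$ (hence dies against $\omega(\zeta)$) or restricts to zero on $\partial\Omega$. Controlling the interchange of $\lim_{\varepsilon\to0}$ with the integrals and pinning down all orientation signs, including the global $(-1)^{m(m-1)/2}$ coming from the Cauchy--Fantappiè normalization, are the remaining routine but delicate points.
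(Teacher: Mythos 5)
Your proposal is correct in substance, and it takes a route that is organized genuinely differently from the paper's, although both rest on the same exterior-algebra computations. The paper never invokes the Cauchy--Fantappi\`e formula and then integrates by parts on $\partial\Omega$; it applies Stokes a single time, on the $2m$-dimensional product $\Sigma_\varepsilon=\{\zeta\in\partial\Omega:\,|g(\zeta)|\geq\varepsilon\}\times[0,1]$, to the homotopy form $\psi(\zeta,\lambda)=\frac{(m-1)!}{(2\pi i)^m}f\,\omega'\bigl(\frac{\lambda}{\varphi}\frac{\partial\rho}{\partial z}+(1-\lambda)\frac{P}{g}\bigr)\wedge\omega(\zeta)$, and the four terms of the proposition appear as the four strata: the face $\lambda=1$ yields the Cauchy--Fantappi\`e integral, hence $(-1)^{m(m-1)/2}f(z)$; the face $\lambda=0$ vanishes because $P$ and $g$ are holomorphic; the side $\{|g(\zeta)|=\varepsilon\}\times[0,1]$ yields the residue term, with $\int_0^1\lambda^{m-2}d\lambda=\frac{1}{m-1}$ accounting for the drop from $(m-1)!$ to $(m-2)!$; and the interior integral of $d\psi$ yields $-g(z)$ times the principal-value term. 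Your argument is precisely the fiber-integrated version of this: your primitive $\frac{f}{g}\,\Xi\wedge\omega(\zeta)/\varphi^{m-1}$ is, up to a constant, the pushforward of $\psi$ along $[0,1]$, and the identity you flag as the main obstacle is indeed true, in the form $d_\zeta\bigl(\frac{f}{g}\,\frac{\Xi\wedge\omega(\zeta)}{\varphi^{m-1}}\bigr)=-(m-1)\,\frac{f}{g}\,\frac{\langle P,\zeta-z\rangle}{\varphi^{m}}\,\omega'\bigl(\frac{\partial\rho}{\partial z}\bigr)\wedge\omega(\zeta)$ modulo forms containing some $d\zeta_j$ or $dP_j$ (hence killed by $\omega(\zeta)$): $d\Xi$ recombines into $(m-1)\sum_{i}(-1)^{i}P_i\bigwedge_{j\neq i}d\bigl(\frac{\partial\rho}{\partial z_j}\bigr)$, and $-(m-1)\,d\varphi\wedge\Xi/\varphi^{m}$ produces the cancelling term plus $-(m-1)\langle P,\zeta-z\rangle\,\omega'\bigl(\frac{\partial\rho}{\partial z}\bigr)/\varphi^{m}$, via the contractions $\sum_q(\zeta_q-z_q)\frac{\partial\rho}{\partial z_q}=\varphi$ and $\sum_q(\zeta_q-z_q)P_q=g(\zeta)-g(z)$ --- exactly the manipulations the paper performs inside the homotopy; the minus sign is then compensated by the fact that in the statement $\{|g|=\varepsilon\}$ is oriented as the boundary of $\{|g|\leq\varepsilon\}$, i.e.\ as $-\partial\{|g|\geq\varepsilon\}$. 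The trade-off: the paper's parameter $\lambda$ generates the primitive and all combinatorial constants mechanically, at the price of an orientation lemma on the product; your version needs no auxiliary space and only one application of Stokes on $\partial\Omega$ itself, but must verify the primitive identity by hand. One caveat, common to both proofs: Stokes only gives convergence of the combination $g(z)\cdot(\mathrm{PV}_\varepsilon)+(\mathrm{Res}_\varepsilon)$, so asserting that the two limits in (\ref{prelimgen}) exist separately --- your ``by difference'' step, and equally the paper's final display --- implicitly uses the existence of principal-value and residue limits for holomorphic $g$, i.e.\ the Coleff--Herrera and Herrera--Lieberman theory cited in the introduction.
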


\bigskip

\begin{proof}

For all $\varepsilon>0$ and $z\in\Omega$, we consider the following
differential form 
\begin{eqnarray*}
\psi(\zeta,\lambda) & = & 
\frac{(m-1)!}{(2\pi i)^m}\;
f(\zeta)\;\omega'\left(\frac{\lambda}{\varphi(\zeta,z)}
\frac{\partial\rho}{\partial z}(\zeta)
+(1-\lambda)\frac{P(\zeta,z)}{g(\zeta)}\right)
\wedge\omega(\zeta)
\end{eqnarray*}
that is defined on a neighborhood of
\begin{eqnarray*}
\Sigma_{\varepsilon} & = & 
\{\zeta\in\partial\Omega,\;|g(\zeta)|\geq\varepsilon\}\times[0,1]\,,
\end{eqnarray*}
with induced orientation from the one of 
$\Sigma_0=\partial\Omega\times[0,1]$ that satisfies
for all differential form $\chi_1(\zeta)$ and all function
$\chi_2(\lambda)$
\begin{eqnarray*}
\int_{\{(\zeta,\lambda)\in\Sigma_0\}}
\chi_1(\zeta)\wedge\chi_2(\lambda)d\lambda
& = &
\int_{\{\zeta\in\partial\Omega\}}
\chi_1(\zeta)\,\times\,
\int_0^1\chi_2(\lambda)d\lambda\,.
\end{eqnarray*}

The application of the Stokes formula gives
\begin{eqnarray}\label{stokes}
\int_{\partial \Sigma_{\varepsilon}}\psi(\zeta,\lambda) & = & 
\int_{\Sigma_{\varepsilon}}d\psi(\zeta,\lambda)\,,
\end{eqnarray}
with the associate orientation of
$\partial\Sigma_{\varepsilon}$ that is specified in the following lemma.

\begin{lemma}\label{orientation}

We have
\begin{eqnarray*}
\partial\Sigma_{\varepsilon}
& = &
-\,\{\zeta\in\partial\Omega,\,|g(\zeta)|=\varepsilon\}\times[0,1]
-\,\{\zeta\in\partial\Omega,\,|g(\zeta)|\geq\varepsilon\}\times\{1\}
+\,\{\zeta\in\partial\Omega,\,|g(\zeta)|\geq\varepsilon\}\times\{0\}\,.
\end{eqnarray*}
It follows that

\begin{eqnarray*}
\int_{\partial\Sigma_{\varepsilon}}
\psi(\zeta,\lambda)
& = &
-\int_{\{|g(\zeta)|=\varepsilon\}\times[0,1]}
\psi(\zeta,\lambda)
-\int_{\{|g(\zeta)|>\varepsilon\}}
\psi(\zeta,1)
+\int_{\{|g(\zeta)|>\varepsilon\}}
\psi(\zeta,0)\,,\\
\end{eqnarray*}
where the orientation of
$\{\zeta\in\partial\Omega,\,|g(\zeta)|=\varepsilon\}\times[0,1]$
is defined, for all differential form 
$\chi_1(\zeta)$ and all function $\chi_2(\lambda)$, as
\begin{eqnarray*}
\int_{\{\zeta\in\partial\Omega,\,|g(\zeta)|=\varepsilon\}\times[0,1]}
\chi_1(\zeta)\wedge\chi_2(\lambda)d\lambda
& = &
\int_{\{|g(\zeta)|=\varepsilon\}}
\chi_1(\zeta)\,\times\,
\int_0^1\chi_2(\lambda)d\lambda\,,
\end{eqnarray*}
and the orientation of
$\{\zeta\in\partial\Omega,\,|g(\zeta)|=\varepsilon\}$ 
is the one that satisfies the Stokes formula on
$\{\zeta\in\partial\Omega,\,|g(\zeta)|\leq\varepsilon\}$.

\end{lemma}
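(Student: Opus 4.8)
The plan is to recognize $\Sigma_{\varepsilon}$ as the product $A\times[0,1]$ of the oriented manifold-with-boundary $A:=\{\zeta\in\partial\Omega,\,|g(\zeta)|\geq\varepsilon\}$ with the oriented interval, and then to invoke the standard formula for the boundary of a product of oriented manifolds with boundary, namely $\partial(M\times N)=\partial M\times N+(-1)^{\dim M}\,M\times\partial N$, with each face carrying its outward-normal induced orientation. Here $A$ is a hypersurface in $\mathbb{C}^m\cong\mathbb{R}^{2m}$, so $\dim A=2m-1$ is odd and the product sign equals $-1$; together with $\partial[0,1]=\{1\}-\{0\}$ this already yields the two end faces $-A\times\{1\}+A\times\{0\}$, which are precisely the last two terms of the claimed decomposition.

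The only genuinely delicate point is the orientation of the lateral face $\partial A\times[0,1]$. As a set, $\partial A=\{\zeta\in\partial\Omega,\,|g(\zeta)|=\varepsilon\}$, but the orientation that the product rule places on it is the one induced by the outward normal of $A=\{|g|\geq\varepsilon\}$. Along the level set $\{|g|=\varepsilon\}$ that normal points in the direction of decreasing $|g|$, hence into the complementary region $\{|g|\leq\varepsilon\}$, whereas the orientation fixed in the statement is the one for which Stokes holds on $\{|g|\leq\varepsilon\}$, whose outward normal points the opposite way. Consequently the product-rule orientation of $\partial A$ is the negative of the chosen orientation of $\{|g|=\varepsilon\}$, and this is exactly the origin of the leading minus sign $-\{|g|=\varepsilon\}\times[0,1]$. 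Assembling the three faces with these signs reproduces the stated formula for $\partial\Sigma_{\varepsilon}$.

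For the ``it follows'' assertion I would integrate $\psi$ over each of the three faces, using the product orientation convention recalled just before the lemma, so that on the lateral face $\int_{\{|g|=\varepsilon\}\times[0,1]}\chi_1(\zeta)\wedge\chi_2(\lambda)\,d\lambda$ factors as announced, while on the two end faces the factor $\int_0^1$ collapses and $\psi$ is evaluated at $\lambda=1$ and $\lambda=0$ respectively. Replacing $\{|g|\geq\varepsilon\}$ by $\{|g|>\varepsilon\}$ in these two end-face integrals alters the domain only by the $\varepsilon$-level set $\{|g|=\varepsilon\}$, which is of codimension one in $A$ and hence of measure zero for the relevant $(2m-1)$-form on $\partial\Omega$; the integrals are therefore unchanged, giving the displayed identity.

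I expect the main obstacle to be purely the bookkeeping of signs, in particular keeping the product sign $(-1)^{\dim A}=-1$ carefully separated from the sign produced by the deliberate choice to orient $\{|g|=\varepsilon\}$ through Stokes on the complement $\{|g|\leq\varepsilon\}$ rather than on $A$ itself. A clean way to make the lateral-face orientation rigorous, should the normal-vector argument feel under-justified, is to work near a point of the level set in local coordinates in which $|g|-\varepsilon$ is one of the coordinate functions and to compare the two induced orientations directly; I would fall back on this explicit computation if needed.
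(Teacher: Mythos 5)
Your proof is correct, but it takes a genuinely different route from the paper's. You quote the general product-boundary formula $\partial(M\times N)=\partial M\times N+(-1)^{\dim M}\,M\times\partial N$ and then fix the sign of the lateral face by comparing outward normals, whereas the paper never invokes this formula: it proves the identity directly at the level of integrals, writing an arbitrary form as $\chi=\chi_0+\chi_1\wedge d\lambda$, keeping only the top-degree homogeneous parts $\chi_{0,2m-1}$ and $\chi_{1,2m-2}$, and computing $\int_{\Sigma_\varepsilon}d\chi$ via Fubini, the fundamental theorem of calculus in $\lambda$, and Stokes in $\zeta$ on the slice $\{|g(\zeta)|>\varepsilon\}$. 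The two signs have the same origin in both arguments: your $(-1)^{\dim M}=(-1)^{2m-1}$ is the paper's sign from commuting $d\lambda$ past the $(2m-1)$-form $\chi_{0,2m-1}$, and your outward-normal comparison is the paper's minus sign when Stokes applied on $\{|g|\geq\varepsilon\}$ is rewritten in terms of the orientation of $\{|g|=\varepsilon\}$ fixed by Stokes on $\{|g|\leq\varepsilon\}$. What your argument buys is brevity and conceptual clarity; what the paper's buys is self-containedness: since all of its orientations are \emph{defined} through iterated-integral (Fubini-type) conventions rather than through normal-first frames, the hands-on computation simultaneously verifies that these conventions are compatible with the manifold-theoretic ones and delivers directly the integral identity that is actually used afterwards in the proof of Proposition \ref{prelim}. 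In your version that compatibility is implicitly assumed when you identify the product-rule orientations with the paper's conventions; the local-coordinate fallback you mention (coordinates in which $|g|-\varepsilon$ is a coordinate function) is exactly what would close this residual gap. Your measure-zero remark justifying the replacement of $\{|g|\geq\varepsilon\}$ by $\{|g|>\varepsilon\}$ in the end-face integrals is also correct, since for small generic $\varepsilon$ the level set is a $(2m-2)$-dimensional submanifold of $\partial\Omega$.
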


\begin{proof}

First notice that, for all small enough $\varepsilon>0$,
$\{\zeta\in\partial\Omega,\,|g(\zeta)|>\varepsilon\}$
(resp. 
$\{\zeta\in\partial\Omega,\,|g(\zeta)|=\varepsilon\}$)
is a $(2m-1)$-dimensional (resp. $(2m-2)$-dimensional)
submanifold.

Now consider $\chi(\zeta,\lambda)$ a differential form. It can be written as
\begin{eqnarray*}
\chi(\zeta,\lambda)
& = &
\chi_0(\zeta,\lambda)
+\chi_1(\zeta,\lambda)\wedge d\lambda
\end{eqnarray*}
where $\chi_0,\,\chi_1$ are differential forms of degree zero
with respect to $\lambda$.

For $j=0,1$, one has
\begin{eqnarray}\label{orient1}
\int_{\{|g(\zeta)|>\varepsilon\}\times\{j\}}
\chi(\zeta,\lambda)
& = &
\int_{\{|g(\zeta)|>\varepsilon\}}
\chi_0(\zeta,j)
=\int_{\{|g(\zeta)|>\varepsilon\}}
\chi_{0,2m-1}(\zeta,j)\,,
\end{eqnarray}
where
$\chi_{0,2m-1}(\zeta,j):=
\sum_{k+l=2m-1}\sum_{|K|=k,|L|=l}
\chi_{0,K,L}(\zeta,\lambda)d\zeta_K\wedge d\overline{\zeta}_L$
is the $(2m-1)$-homogeneous part of $\chi_0$ with respect to $\zeta$.

Similarly,
\begin{eqnarray}\label{orient2}
\int_{\{|g(\zeta)|=\varepsilon\}\times[0,1]}
\chi(\zeta,\lambda)
& = &
\int_{\{|g(\zeta)|=\varepsilon\}\times[0,1]}
\chi_{1,2m-2}(\zeta,\lambda)\wedge d\lambda\\\nonumber
& = &
\int_0^1d\lambda
\int_{\{|g(\zeta)|=\varepsilon\}}
\chi_{1,2m-2}(\zeta,\lambda)\,.
\end{eqnarray}

On the other hand,
\begin{eqnarray*}
\int_{\Sigma_{\varepsilon}}
d\chi(\zeta,\lambda)
& = &
\int_{\{|g(\zeta)|>\varepsilon\}\times[0,1]}
d_{\lambda}(\chi_{0,2m-1}(\zeta,\lambda))
+d_{\zeta}(\chi_{1,2m-2}(\zeta,\lambda))\wedge d\lambda
\\
& = &
\int_{\{|g(\zeta)|>\varepsilon\}\times[0,1]}
\left(
(-1)^{2m-1}
\frac{\partial\chi_{0,2m-1}}{\partial\lambda}(\zeta,\lambda)
+d_{\zeta}(\chi_{1,2m-2}(\zeta,\lambda))
\right)
\wedge d\lambda\\
& = &
-\,\int_{\{|g(\zeta)|>\varepsilon\}}
\int_0^1d\lambda\,
\frac{\partial\chi_{0,2m-1}}{\partial\lambda}(\zeta,\lambda)
+\int_0^1d\lambda
\int_{\{|g(\zeta)|>\varepsilon\}}
d_{\zeta}(\chi_{1,2m-2}(\zeta,\lambda))
\\
& = &
-\int_{\{|g(\zeta)|>\varepsilon\}}
\left(
\chi_{0,2m-1}(\zeta,1)
-\chi_{0,2m-1}(\zeta,0)
\right)
-\int_0^1d\lambda
\int_{\{|g(\zeta)|=\varepsilon\}}
\chi_{1,2m-2}(\zeta,\lambda)\,.
\\
\end{eqnarray*}

The lemma follows by (\ref{stokes}), 
(\ref{orient1}) and (\ref{orient2}).

\end{proof}

Now we have
$\psi(\zeta,0)=0$ 
since $P(\zeta,z)$ and $g(\zeta)$ are holomorphic. 

Next, we claim that
\begin{eqnarray}\label{prelim1}
\psi(\zeta,1)
& = &
\frac{(m-1)!}{(2\pi i)^m}\;f(\zeta)\;
\frac{\omega'\left(\frac{\partial\rho}{\partial z}(\zeta)\right)
\wedge\omega(\zeta)}
{\varphi(\zeta,z)^m}\,.
\\\nonumber
\end{eqnarray}
Indeed, we have (since
$d(\frac{1}{\varphi})\wedge d(\frac{1}{\varphi})=0$)
\begin{eqnarray*}
\omega'\left(\frac{1}{\varphi}\frac{\partial\rho}{\partial z}(\zeta)\right)
& = &
\sum_{k=1}^m(-1)^{k-1}\frac{1}{\varphi}\frac{\partial\rho}{\partial z_k}
\bigwedge_{j\neq k}
\left(
\frac{1}{\varphi}d\left(\frac{\partial\rho}{\partial z_j}\right)
+\frac{\partial\rho}{\partial z_j}d\left(\frac{1}{\varphi}\right)
\right)\\
& = &
\sum_{k=1}^m(-1)^{k-1}\frac{1}{\varphi}\frac{\partial\rho}{\partial z_k}
\bigwedge_{j\neq k}
\frac{1}{\varphi}d\left(\frac{\partial\rho}{\partial z_j}\right)
\\
& &
+\;
\sum_{k=1}^m(-1)^{k-1}\frac{1}{\varphi}\frac{\partial\rho}{\partial z_k}
\sum_{l\neq k}
\bigwedge_{j<l,j\neq k}
\frac{1}{\varphi}d\left(\frac{\partial\rho}{\partial z_j}\right)
\wedge\frac{\partial\rho}{\partial z_l}
d\left(\frac{1}{\varphi}\right)\wedge
\bigwedge_{j>l,j\neq k}
\frac{1}{\varphi}d\left(\frac{\partial\rho}{\partial z_j}\right)
\\
\end{eqnarray*}
and the last sum is
\begin{eqnarray*}
& = &
\sum_{1\leq l<k\leq m}
(-1)^{k-1}\frac{1}{\varphi}\frac{\partial\rho}{\partial z_k}
(-1)^{l-1}\frac{\partial\rho}{\partial z_l}
d\left(\frac{1}{\varphi}\right)\wedge
\bigwedge_{j\neq k,l}
\frac{1}{\varphi}d\left(\frac{\partial\rho}{\partial z_j}\right)
\\
& & 
\;+\sum_{1\leq k<l\leq m}
(-1)^{k-1}\frac{1}{\varphi}\frac{\partial\rho}{\partial z_k}
(-1)^{l-2}\frac{\partial\rho}{\partial z_l}
d\left(\frac{1}{\varphi}\right)\wedge
\bigwedge_{j\neq k,l}
\frac{1}{\varphi}d\left(\frac{\partial\rho}{\partial z_j}\right)
\\
\\
& = &
\sum_{1\leq l<k\leq m}
(-1)^{k+l}
\frac{1}{\varphi}\frac{\partial\rho}{\partial z_k}
\frac{\partial\rho}{\partial z_l}
d\left(\frac{1}{\varphi}\right)
\wedge
\bigwedge_{j\neq k,l}
\frac{1}{\varphi}d\left(\frac{\partial\rho}{\partial z_j}\right)
\\
& & 
-\;
\sum_{1\leq l<k\leq m}
(-1)^{k+l}
\frac{1}{\varphi}\frac{\partial\rho}{\partial z_k}
\frac{\partial\rho}{\partial z_l}
d\left(\frac{1}{\varphi}\right)
\wedge
\bigwedge_{j\neq k,l}
\frac{1}{\varphi}d\left(\frac{\partial\rho}{\partial z_j}\right)
\\
\\
& = &
0\,,
\end{eqnarray*}
and this proves (\ref{prelim1}). It follows that
\begin{eqnarray*}
\int_{\{|g(\zeta)|>\varepsilon}
\psi(\zeta,1)
& = &
\frac{(m-1)!}{(2\pi i)^m}
\int_{\{|g(\zeta)|>\varepsilon}
f(\zeta)\;
\frac{\omega'\left(\frac{\partial\rho}{\partial z}(\zeta)\right)
\wedge\omega(\zeta)}
{\varphi(\zeta,z)^m}\,
\end{eqnarray*}
then
\begin{eqnarray}\label{prelimf}
\lim_{\varepsilon\rightarrow0}
\int_{\{|g(\zeta)|>\varepsilon\}}
\psi(\zeta,1)
& = &
\frac{(m-1)!}{(2\pi i)^m}
\int_{\zeta\in\partial\Omega}
f(\zeta)\;
\frac{\omega'\left(\frac{\partial\rho}{\partial z}(\zeta)\right)
\wedge\omega(\zeta)}
{\varphi(\zeta,z)^m}\\\nonumber
\\\nonumber
& = &
(-1)^{m(m-1)/2}\;f(z)\,,
\end{eqnarray}
by the Cauchy-Fantappie formula (see \cite{scv}).
\bigskip

Now we want to specify $\psi(\zeta,\lambda)$ on
$\{|g(\zeta)|=\varepsilon\}\times[0,1]$: 
\begin{eqnarray*}
\omega'\left(
\frac{P}{g}
+\lambda
\left(\frac{1}{\varphi}\frac{\partial\rho}{\partial z}-\frac{P}{g}\right)
\right)
\wedge\omega(\zeta)
& = &
\;\;\;\;\;\;\;\;\;\;\;\;\;\;\;\;\;\;\;\;\;\;\;\;\;\;\;\;\;\;\;\;\;\;\;\;\;\;\;\;\;\;\;\;\;\;\;\;\;\;\;\;\;\;\;\;\;\;\;
\;\;\;\;\;\;\;\;\;\;\;
\end{eqnarray*}
\begin{eqnarray*}
& = &
\sum_{k=1}^m
(-1)^{k-1}
\left(
\frac{P_k}{g}
+\lambda
\left(\frac{1}{\varphi}\frac{\partial\rho}{\partial z_k}-\frac{P_k}{g}\right)
\right)
\bigwedge_{j\neq k}
\left(
\left(\frac{1}{\varphi}\frac{\partial\rho}{\partial z_j}-\frac{P_j}{g}\right)
d\lambda
+\lambda
d\left(\frac{1}{\varphi}\frac{\partial\rho}{\partial z_j}\right)
\right)
\wedge\omega(\zeta)
\\
& = &
\sum_{k<l}
(-1)^{k-1}
\left(
\frac{P_k}{g}
+\lambda
\left(\frac{1}{\varphi}\frac{\partial\rho}{\partial z_k}-\frac{P_k}{g}\right)
\right)
(-1)^{2m-l}
\left(\frac{1}{\varphi}\frac{\partial\rho}{\partial z_l}-\frac{P_l}{g}\right)
\bigwedge_{j\neq k,l}
\left(
\lambda
d\left(\frac{1}{\varphi}\frac{\partial\rho}{\partial z_j}\right)
\right)
\wedge\omega(\zeta)
\wedge d\lambda
\\
& + &
\sum_{k>l}
(-1)^{k-1}
\left(
\frac{P_k}{g}
+\lambda
\left(\frac{1}{\varphi}\frac{\partial\rho}{\partial z_k}-\frac{P_k}{g}\right)
\right)
(-1)^{2m-l-1}
\left(\frac{1}{\varphi}\frac{\partial\rho}{\partial z_l}-\frac{P_l}{g}\right)
\bigwedge_{j\neq k,l}
\left(
\lambda
d\left(\frac{1}{\varphi}\frac{\partial\rho}{\partial z_j}\right)
\right)
\wedge\omega(\zeta)
\wedge d\lambda
\\
\\
& = &
\sum_{k<l}
(-1)^{k+l}
\left(
\frac{1}{\varphi}\frac{\partial\rho}{\partial z_k}
\frac{P_l}{g}
-\frac{1}{\varphi}\frac{\partial\rho}{\partial z_l}
\frac{P_k}{g}
\right)
\bigwedge_{j\neq k,l}
\left(
\frac{1}{\varphi}
d\left(\frac{\partial\rho}{\partial z_j}\right)
+\frac{\partial\rho}{\partial z_j}
d\left(\frac{1}{\varphi}\right)
\right)
\wedge\omega(\zeta)
\wedge\lambda^{m-2}d\lambda
\,.
\\
\end{eqnarray*}
Similarly, we see that, for all
$1\leq k<l\leq m$,
\begin{eqnarray*}
\bigwedge_{j\neq k,l}
\left(
\frac{1}{\varphi}
d\left(\frac{\partial\rho}{\partial z_j}\right)
+\frac{\partial\rho}{\partial z_j}
d\left(\frac{1}{\varphi}\right)
\right)
& = &
\;\;\;\;\;\;\;\;\;\;\;\;\;\;\;\;\;\;\;\;\;\;\;\;\;\;\;\;\;\;\;\;\;\;\;\;\;\;\;\;\;\;\;\;\;\;\;\;\;\;\;\;\;\;
\;\;\;\;\;\;\;\;\;\;\;\;\;\;\;\;\;\;
\end{eqnarray*}
\begin{eqnarray*}
& = &
\frac{1}{\varphi^{m-2}}
\bigwedge_{j\neq k,l}
d\left(\frac{\partial\rho}{\partial z_j}\right)
\\
& &
+\;
\frac{1}{\varphi^{m-3}}
d\left(\frac{1}{\varphi}\right)
\wedge
\left[
\sum_{u<k<l}(-1)^{u-1}
+\sum_{k<u<l}(-1)^{u-2}
+\sum_{k<l<u}(-1)^{u-3}
\right]
\frac{\partial\rho}{\partial z_u}
\bigwedge_{j\neq k,l,u}
d\left(
\frac{\partial\rho}{\partial z_j}
\right)\,,
\\
\end{eqnarray*}
and
\begin{eqnarray*}
\sum_{k<l}(-1)^{k+l}
\left(
\frac{\partial\rho}{\partial z_k}
\frac{P_l}{g}
-\frac{\partial\rho}{\partial z_l}
\frac{P_k}{g}
\right)
\left[
\sum_{u<k<l}(-1)^{u-1}
+\sum_{k<u<l}(-1)^{u-2}
+\sum_{k<l<u}(-1)^{u-3}
\right]
\frac{\partial\rho}{\partial z_u}
\bigwedge_{j\neq k,l,u}
d\left(
\frac{\partial\rho}{\partial z_j}
\right)
=
\end{eqnarray*}
\begin{eqnarray*}
& = &
\sum_{u<k<l}
(-1)^{k+l+u}
\bigwedge_{j\neq k,l,u}
d\left(
\frac{\partial\rho}{\partial z_j}
\right)
\times
\\
& & 
\times
\left[
-
\left(
\frac{\partial\rho}{\partial z_k}
\frac{P_l}{g}
-\frac{\partial\rho}{\partial z_l}
\frac{P_k}{g}
\right)
\frac{\partial\rho}{\partial z_u}
+
\left(
\frac{\partial\rho}{\partial z_u}
\frac{P_l}{g}
-\frac{\partial\rho}{\partial z_l}
\frac{P_u}{g}
\right)
\frac{\partial\rho}{\partial z_k}
-
\left(
\frac{\partial\rho}{\partial z_u}
\frac{P_k}{g}
-\frac{\partial\rho}{\partial z_k}
\frac{P_u}{g}
\right)
\frac{\partial\rho}{\partial z_l}
\right]
\\
\\
& = & 
0\,.
\\
\end{eqnarray*}
It follows that
\begin{eqnarray*}
\omega'\left(
\frac{P}{g}
+\lambda
\left(\frac{1}{\varphi}\frac{\partial\rho}{\partial z}-\frac{P}{g}\right)
\right)
\wedge\omega(\zeta)
& = &
\;\;\;\;\;\;\;\;\;\;\;\;\;\;\;\;\;\;\;\;\;\;\;\;\;\;\;\;\;\;\;\;\;\;\;\;\;\;\;\;\;\;\;\;\;\;\;\;\;\;\;\;\;\;\;\;\;\;\;
\;\;\;\;\;\;\;\;\;\;\;
\end{eqnarray*}
\begin{eqnarray*}
\;\;\;\;\;\;\;\;\;\;\;\;\;\;\;\;\;\;\;\;\;\;
& = &
\frac{1}{\varphi^{m-1}}
\sum_{k<l}
(-1)^{k+l}
\left(
\frac{\partial\rho}{\partial z_k}
\frac{P_l}{g}
-\frac{\partial\rho}{\partial z_l}
\frac{P_k}{g}
\right)
\bigwedge_{j\neq k,l}
d\left(\frac{\partial\rho}{\partial z_j}\right)
\wedge\omega(\zeta)
\wedge\lambda^{m-2}d\lambda
\\
\end{eqnarray*}
then
\begin{eqnarray}\label{prelimres}
\int_{\{|g(\zeta)|=\varepsilon\}\times[0,1]}
\psi(\zeta,\lambda)
& = &
\;\;\;\;\;\;\;\;\;\;\;\;\;\;\;\;\;\;\;\;\;\;\;\;\;\;\;\;\;\;\;\;\;\;\;\;\;\;\;\;\;\;\;\;
\;\;\;\;\;\;\;\;\;\;\;\;\;\;\;\;\;\;
\end{eqnarray}
\begin{eqnarray*}
& = &
\frac{(m-1)!}{(2\pi i)^m}
\int_{\{|g(\zeta)|=\varepsilon\}}
\frac{f(\zeta)}{\varphi^{m-1}}
\sum_{k<l}
(-1)^{k+l}
\left(
\frac{\partial\rho}{\partial z_k}
\frac{P_l}{g}
-\frac{\partial\rho}{\partial z_l}
\frac{P_k}{g}
\right)
\bigwedge_{j\neq k,l}
d\left(\frac{\partial\rho}{\partial z_j}\right)
\wedge\omega(\zeta)
\times
\int_0^1\lambda^{m-2}d\lambda
\\
& = &
\frac{(m-2)!}{(2\pi i)^m}
\int_{\{|g(\zeta)|=\varepsilon\}}
\frac{f(\zeta)
\sum_{k<l}
(-1)^{k+l}
\left(
\frac{\partial\rho}{\partial z_k}(\zeta)
P_l(\zeta,z)
-\frac{\partial\rho}{\partial z_l}(\zeta)
P_k(\zeta,z)
\right)}
{g(\zeta)\;\varphi(\zeta,z)^{m-1}}
\bigwedge_{j\neq k,l}
d\left(\frac{\partial\rho}{\partial z_j}(\zeta)\right)
\wedge\omega(\zeta)\,.
\\
\end{eqnarray*}

\bigskip

Finally, we want to specify
$d\psi(\zeta,\lambda)$
on
$\{|g(\zeta)|>\varepsilon\}\times[0,1]$. Since 
$f,\,g$ and $P$ are holomorphic, one has
\begin{eqnarray*}
d\psi(\zeta,\lambda)
& = &
\frac{m!}{(2\pi i)^m}\;
f(\zeta)\;\omega\left(\frac{\lambda}{\varphi(\zeta,z)}
\frac{\partial\rho}{\partial z}(\zeta)
+(1-\lambda)\frac{P(\zeta,z)}{g(\zeta)}\right)
\wedge\omega(\zeta)\\
& = &
\frac{m!}{(2\pi i)^m}\;
f(\zeta)\;
\omega
\left(
\lambda
\left(
\frac{1}{\varphi(\zeta,z)}
\frac{\partial\rho}{\partial z}(\zeta)
-\frac{P(\zeta,z)}{g(\zeta)}
\right)
\right)
\wedge\omega(\zeta)
\,
\\
\end{eqnarray*}
and
\begin{eqnarray}\nonumber
\omega
\left(
\lambda
\left(
\frac{1}{\varphi}
\frac{\partial\rho}{\partial z}
-\frac{P}{g}
\right)
\right)
\wedge\omega(\zeta)
& = &
\bigwedge_{k=1}^m
\left(
\left(
\frac{1}{\varphi}
\frac{\partial\rho}{\partial z_k}
-\frac{P_k}{g}
\right)
d\lambda
+\lambda
d\left(
\frac{1}{\varphi}
\frac{\partial\rho}{\partial z_k}
\right)
\right)
\wedge\omega(\zeta)
\\\nonumber
& = &
\sum_{k=1}^m
(-1)^{2m-k}
\left(
\frac{1}{\varphi}
\frac{\partial\rho}{\partial z_k}
-\frac{P_k}{g}
\right)
\bigwedge_{l\neq k}
d\left(
\frac{1}{\varphi}
\frac{\partial\rho}{\partial z_l}
\right)
\wedge\omega(\zeta)
\wedge
\lambda^{m-1}d\lambda
\\\nonumber
& = &
-\,
\omega'
\left(
\frac{1}{\varphi}
\frac{\partial\rho}{\partial z}
\right)
\wedge\omega(\zeta)
\wedge\lambda^{m-1}d\lambda
\\\nonumber
& & 
+\;
\sum_{k=1}^m
(-1)^{k-1}
\frac{P_k}{g}
\bigwedge_{l\neq k}
d\left(
\frac{1}{\varphi}
\frac{\partial\rho}{\partial z_l}
\right)
\wedge\omega(\zeta)
\wedge
\lambda^{m-1}d\lambda\,.
\\\label{prelimpv1}
& = &
-\,
\frac{1}{\varphi^m}
\,\omega'
\left(
\frac{\partial\rho}{\partial z}
\right)
\wedge\omega(\zeta)
\wedge\lambda^{m-1}d\lambda
\\\nonumber
& & 
+\;
\frac{1}{\varphi^{m-1}}
\sum_{k=1}^m
(-1)^{k-1}
\frac{P_k}{g}
\bigwedge_{l\neq k}
d\left(
\frac{\partial\rho}{\partial z_l}
\right)
\wedge\omega(\zeta)
\wedge
\lambda^{m-1}d\lambda
\\\nonumber
& & 
+\;
\frac{1}{\varphi^{m-2}}
\,
d\left(\frac{1}{\varphi}\right)
\wedge
\sum_{k=1}^m
(-1)^{k-1}
\frac{P_k}{g}
\;\times
\\\nonumber
& &
\;\times\;
\left[
\sum_{l<k}(-1)^{l-1}
+\sum_{l>k}(-1)^{l-2}
\right]
\frac{\partial\rho}{\partial z_l}
\bigwedge_{j\neq k,l}
d\left(
\frac{\partial\rho}{\partial z_j}
\right)
\wedge\omega(\zeta)
\wedge
\lambda^{m-1}d\lambda
\,,
\end{eqnarray}
the last equality coming from (\ref{prelim1}).
On the other hand, since
$\varphi(\zeta,z)
=\sum_{q=1}^m(\zeta_q-z_q)
\frac{\partial\rho}{\partial z_q}$,
one has
\begin{eqnarray*}
\frac{1}{\varphi^{m-2}}
d\left(
\frac{1}{\varphi}\right)
\wedge
\sum_{k=1}^m
(-1)^{k-1}
\frac{P_k}{g}
\left[
\sum_{l<k}(-1)^{l-1}
+\sum_{l>k}(-1)^l
\right]
\frac{\partial\rho}{\partial z_l}
\bigwedge_{j\neq k,l}
d\left(
\frac{\partial\rho}{\partial z_j}
\right)
\wedge\omega(\zeta)
& = &
\end{eqnarray*}
\begin{eqnarray*}
& = &
\frac{1}{\varphi^m}
\sum_{q=1}^m
(\zeta_q-z_q)
d\left(\frac{\partial\rho}{\partial z_q}\right)
\wedge
\sum_{k<l}
(-1)^{k+l}
\left(
\frac{P_k}{g}
\frac{\partial\rho}{\partial z_l}
-
\frac{P_l}{g}
\frac{\partial\rho}{\partial z_k}
\right)
\bigwedge_{j\neq k,l}
d\left(
\frac{\partial\rho}{\partial z_j}
\right)
\wedge\omega(\zeta)
\\
\\
& = &
\frac{1}{\varphi^m}
\sum_{k<l}
(-1)^{k+l}
(\zeta_k-z_k)
\left(
\frac{P_k}{g}
\frac{\partial\rho}{\partial z_l}
-
\frac{P_l}{g}
\frac{\partial\rho}{\partial z_k}
\right)
(-1)^{k-1}
\bigwedge_{j\neq l}
d\left(
\frac{\partial\rho}{\partial z_j}
\right)
\wedge\omega(\zeta)
\\
& & 
+\;
\frac{1}{\varphi^m}
\sum_{k<l}
(-1)^{k+l}
(\zeta_l-z_l)
\left(
\frac{P_k}{g}
\frac{\partial\rho}{\partial z_l}
-
\frac{P_l}{g}
\frac{\partial\rho}{\partial z_k}
\right)
(-1)^{l-2}
\bigwedge_{j\neq k}
d\left(
\frac{\partial\rho}{\partial z_j}
\right)
\wedge\omega(\zeta)
\\
\\
& = &
\frac{1}{\varphi^m}
\sum_{k>l}
(-1)^{k-1}
(\zeta_l-z_l)
\left(
\frac{P_l}{g}
\frac{\partial\rho}{\partial z_k}
-
\frac{P_k}{g}
\frac{\partial\rho}{\partial z_l}
\right)
\bigwedge_{j\neq k}
d\left(
\frac{\partial\rho}{\partial z_j}
\right)
\wedge\omega(\zeta)
\\
& &
+\;
\frac{1}{\varphi^m}
\sum_{k<l}
(-1)^k
(\zeta_l-z_l)
\left(
\frac{P_k}{g}
\frac{\partial\rho}{\partial z_l}
-
\frac{P_l}{g}
\frac{\partial\rho}{\partial z_k}
\right)
\bigwedge_{j\neq k}
d\left(
\frac{\partial\rho}{\partial z_j}
\right)
\wedge\omega(\zeta)
\end{eqnarray*}
\begin{eqnarray*}
& = &
\frac{1}{\varphi^m}
\sum_{k=1}^m
(-1)^k
\sum_{l=1}^m
(\zeta_l-z_l)
\left(
\frac{P_k}{g}
\frac{\partial\rho}{\partial z_l}
-
\frac{P_l}{g}
\frac{\partial\rho}{\partial z_k}
\right)
\bigwedge_{j\neq k}
d\left(
\frac{\partial\rho}{\partial z_j}
\right)
\wedge\omega(\zeta)
\\\nonumber
\\\nonumber
& = &
\frac{1}{\varphi^m}
\sum_{k=1}^m
(-1)^k
\left(
\frac{P_k}{g}
\,\varphi
-
\frac{g(\zeta)-g(z)}{g(\zeta)}
\frac{\partial\rho}{\partial z_k}
\right)
\bigwedge_{j\neq k}
d\left(
\frac{\partial\rho}{\partial z_j}
\right)
\wedge\omega(\zeta)
\\\nonumber
\\\nonumber
& = &
\frac{1}{\varphi^{m-1}}
\sum_{k=1}^m
(-1)^k
\frac{P_k}{g}
\bigwedge_{j\neq k}
d\left(
\frac{\partial\rho}{\partial z_j}
\right)
\wedge\omega(\zeta)
\;+\;
\frac{1}{\varphi^m}
\left(1-\frac{g(z)}{g(\zeta)}\right)
\omega'
\left(
\frac{\partial\rho}{\partial z}
\right)
\wedge\omega(\zeta)\,.
\\
\end{eqnarray*}
It follows from (\ref{prelimpv1}) that
\begin{eqnarray}\nonumber
\int_{\{|g(\zeta)|>\varepsilon\}\times[0,1]}
d\psi(\zeta,\lambda)
& = &
-\,
\frac{m!}{(2\pi i)^m}
\int_{\{|g(\zeta)|>\varepsilon\}}
\frac{f(\zeta)\;g(z)}{g(\zeta)\,\varphi(\zeta,z)}
\,\omega'
\left(
\frac{\partial\rho}{\partial z}
\right)
\wedge\omega(\zeta)
\;\times\;
\int_0^1\lambda^{m-1}d\lambda
\\\label{prelimpv2}
& = &
-\,g(z)\,
\frac{(m-1)!}{(2\pi i)^m}
\int_{\{|g(\zeta)|>\varepsilon\}}
\frac{f(\zeta)}{g(\zeta)\,\varphi(\zeta,z)}
\,\omega'
\left(
\frac{\partial\rho}{\partial z}
\right)
\wedge\omega(\zeta)\,.
\\\nonumber
\end{eqnarray}

Lastly, we deduce by 
lemma \ref{orientation}, (\ref{stokes}), 
(\ref{prelimf}), (\ref{prelimres}) and
(\ref{prelimpv2}) that, for all
$z\in\mathbb{B}_2$,
\begin{eqnarray*}
-\,\lim_{\varepsilon\rightarrow0}
g(z)\,
\frac{(m-1)!}{(2\pi i)^m}
\int_{\{|g(\zeta)|>\varepsilon\}}
\frac{f(\zeta)}{g(\zeta)\,\varphi(\zeta,z)}
\,\omega'
\left(
\frac{\partial\rho}{\partial z}
\right)
\wedge\omega(\zeta)
& = &
\;\;\;\;\;\;\;\;\;\;\;\;\;\;\;\;\;\;\;\;\;\;\;\;\;\;\;\;\;\;\;\;\;\;\;\;\;\;\;\;\;\;\;\;\;\;\;\;\;\;\;\;\;\;\;
\end{eqnarray*}
\begin{eqnarray*}
& = &
-\lim_{\varepsilon\rightarrow0}
\frac{(m-2)!}{(2\pi i)^m}
\int_{\{|g(\zeta)|=\varepsilon\}}
\frac{f(\zeta)
\sum_{k<l}
(-1)^{k+l}
\left(
\frac{\partial\rho}{\partial z_k}
P_l
-\frac{\partial\rho}{\partial z_l}
P_k
\right)}
{g(\zeta)\;\varphi(\zeta,z)^{m-1}}
\bigwedge_{j\neq k,l}
d\left(\frac{\partial\rho}{\partial z_j}\right)
\wedge\omega(\zeta)
\\
\\
& & 
-\;
(-1)^{m(m-1)/2}\;f(z)
\\
\end{eqnarray*}
and the proof of the proposition is achieved.

\end{proof}

\bigskip

\subsection{Case of theorem \ref{theorem}}\label{tranche}

Now consider
$\mathbb{C}^2$ with
$\Omega=\mathbb{B}_2$ and
$\partial\Omega=\mathbb{S}_2=\{|z_1|^2+|z_2|^2=1\}$ 
where
$\rho(\zeta)=\|\zeta\|^2-1=<\overline{\zeta},\zeta>-1$.
Moreover,
\begin{eqnarray*}
\begin{cases}
\frac{\partial\rho}{\partial z}(\zeta)=\overline{\zeta}\,,\\
\varphi(\zeta,z)=<\overline{\zeta},\zeta-z>=1-<\overline{\zeta},z>\,.\\
\end{cases}
\end{eqnarray*}
We also choose (see (\ref{defgn}) in introduction)
\begin{eqnarray*}
g_n(z) 
& = &
z_1^{m_1}\prod_{j=2}^{n-1}(z_1-\eta_jz_2)^{m_j}\,z_2^{m_n}
\end{eqnarray*}
with $m_j\geq0,\;j=1,\ldots,n$,
\begin{eqnarray*}
0=|\eta_1|<|\eta_2|\leq\cdots\leq|\eta_{n-1}|<|\eta_n|=+\infty
\end{eqnarray*}
and associate 
$P_n(\zeta,z)=(P_n^1(\zeta,z),P_n^2(\zeta,z))$
(we will specify $P_n(\zeta,z)$ in section~\ref{interpolers}).
One could use proposition \ref{prelim} in the following. Nevertheless,
in order to prove theorem~\ref{theorem}, we need another set than
$\{|g_n(\zeta)|>\varepsilon\}$.
For all $p=1,\ldots,n$ we set
\begin{eqnarray}\label{alpha}
\alpha_p & := &
\frac{|\eta_p|}{\sqrt{1+|\eta_p|^2}}
\end{eqnarray}
(with 
$\alpha_n:=1$). Then
$0=\alpha_1<\alpha_2\leq\cdots\leq\alpha_{n-1}<\alpha_n$.

Now there are
$\widetilde{n}\leq n$
and 
$1=q_1<\cdots<q_{\widetilde{n}}=n$
such that
\begin{eqnarray}\label{defq}
0=\alpha_1=\alpha_{q_1}<\alpha_{q_1+1} & =  \cdots  = & \alpha_{q_2}<\cdots\\\nonumber
<\alpha_{q_l+1} & =\cdots= & \alpha_{q_{l+1}}<\cdots\\\nonumber
<\alpha_{q_{\widetilde{n}-2}+1} & =\cdots= & \alpha_{q_{\widetilde{n}-1}}
<\alpha_{q_{\widetilde{n}}}=\alpha_n=1\,.
\end{eqnarray}
\bigskip

Then for all small enough $\varepsilon>0$ we can set
\begin{eqnarray}\label{deftranche}
\widetilde{\Sigma}_{\varepsilon} & := & 
\bigcup_{l=1}^{\widetilde{n}-1}
\left\{\zeta\in\mathbb{S}_2,\;
\alpha_{q_l}+\varepsilon<|\zeta_1|<\alpha_{q_{l+1}}-\varepsilon
\right\}\,
\end{eqnarray}
with boundary $\partial\widetilde{\Sigma}_{\varepsilon}$ 
and orientation that satisfies Stokes formula. 
For the proof of theorem~\ref{theorem} we will use 
the following result that is similar to
proposition \ref{prelim}
with $\widetilde{\Sigma}_{\varepsilon}$:

\begin{proposition}\label{proptranche}

For all 
$f\in\mathcal{O}\left(\overline{\mathbb{B}_2}\right)$ and all $z\in\mathbb{B}_2$,

\begin{eqnarray}\label{prelimtranche}
f(z) & = &
\;\;\;\;\;\;\;\;\;\;\;\;\;\;\;\;\;\;\;\;\;\;\;\;\;\;\;\;\;\;\;\;\;\;\;\;\;\;\;\;\;\;\;\;\;\;\;\;
\;\;\;\;\;\;\;\;\;\;\;\;\;\;\;\;\;\;\;\;\;\;\;\;\;\;\;\;\;\;\;\;\;\;\;\;\;\;\;\;
\end{eqnarray}
\begin{eqnarray*}
& = &
\lim_{\varepsilon\rightarrow0}
\frac{1}{(2\pi i)^2}
\left[
\int_{|\zeta_1|=1-\varepsilon}
-\sum_{l=2}^{\widetilde{n}-1}
\left(\int_{|\zeta_1|=\alpha_{q_l}+\varepsilon}-\int_{|\zeta_1|=\alpha_{q_l}-\varepsilon}\right)
-\int_{|\zeta_1|=\varepsilon}
\right]
\frac{f(\zeta)\det\left(\overline{\zeta},P_n(\zeta,z)\right)}{g_n(\zeta)\left(1-<\overline{\zeta},z>\right)}
\,\omega(\zeta)\\
\\
& & 
-\,\lim_{\varepsilon\rightarrow0}\,
\frac{g_n(z)}{(2\pi i)^2}\,
\sum_{l=1}^{\widetilde{n}-1}\,
\int_{\alpha_{q_l}+\varepsilon<|\zeta_1|<\alpha_{q_{l+1}}-\varepsilon}\;
\frac{f(\zeta)\;\omega'\left(\overline{\zeta}\right)\wedge\omega(\zeta)}
{g_n(\zeta)\,\left(1-<\overline{\zeta},z>\right)^2}\,.
\end{eqnarray*}

\end{proposition}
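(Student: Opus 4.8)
The plan is to adapt the argument of Proposition \ref{prelim} to the particular domain $\Omega=\mathbb{B}_2$, the specific $\rho(\zeta)=\|\zeta\|^2-1$, and the stratified integration set $\widetilde{\Sigma}_\varepsilon$ in place of $\{|g_n(\zeta)|>\varepsilon\}$. Since all the differential-form computations carried out in the proof of Proposition \ref{prelim} were purely local and algebraic (the vanishing identities coming from $d(1/\varphi)\wedge d(1/\varphi)=0$ and the skew-symmetry of the $P_k,\partial\rho/\partial z_l$ terms), they remain valid verbatim here; so the first thing I would do is specialize those computations to $m=2$, where the formulas collapse dramatically. With $m=2$ we have $\partial\rho/\partial z=\overline{\zeta}$, $\varphi(\zeta,z)=1-\langle\overline{\zeta},z\rangle$, and $\omega'(\overline{\zeta})=\overline{\zeta}_1\,d\overline{\zeta}_2-\overline{\zeta}_2\,d\overline{\zeta}_1$; the residual numerator $\sum_{k<l}(-1)^{k+l}(\frac{\partial\rho}{\partial z_k}P_l-\frac{\partial\rho}{\partial z_l}P_k)$ reduces to the single determinant $\det(\overline{\zeta},P_n(\zeta,z))=\overline{\zeta}_1 P_n^2-\overline{\zeta}_2 P_n^1$, which already matches the numerator appearing in the statement.

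The genuinely new ingredient is the geometry of $\widetilde{\Sigma}_\varepsilon$, so next I would repeat the Stokes/orientation bookkeeping of Lemma \ref{orientation} for this set. The key observation is that $\widetilde{\Sigma}_\varepsilon$ is a union of annular slices of $\mathbb{S}_2$ cut out by the values of $|\zeta_1|$, and on $\mathbb{S}_2$ one has $|\zeta_1|=\alpha_p$ exactly along the circles where the line $\{z_1=\eta_p z_2\}$ meets the sphere; thus the level sets $\{|g_n(\zeta)|=\varepsilon\}$ are being replaced by the cleaner level sets $\{|\zeta_1|=\alpha_{q_l}\pm\varepsilon\}$. I would form the product form $\psi(\zeta,\lambda)$ as in Proposition \ref{prelim}, restricted to $\widetilde{\Sigma}_\varepsilon\times[0,1]$, apply Stokes, and track the boundary pieces: the $\lambda=0$ face vanishes by holomorphy of $P_n,g_n$; the $\lambda=1$ face assembles, via the Cauchy--Fantappiè formula exactly as in (\ref{prelimf}), into the $f(z)$ term together with the surviving boundary integrals over the circles $|\zeta_1|=\alpha_{q_l}\pm\varepsilon$ (this is the source of the alternating-sign telescoping sum $\int_{|\zeta_1|=1-\varepsilon}-\sum_l(\int_{\alpha_{q_l}+\varepsilon}-\int_{\alpha_{q_l}-\varepsilon})-\int_{|\zeta_1|=\varepsilon}$ in the first bracket). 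The interior contribution $\int d\psi$ over $\widetilde{\Sigma}_\varepsilon\times[0,1]$ produces, after the same cancellation that gave (\ref{prelimpv2}), the principal-value term carrying the factor $g_n(z)$ and the kernel $\omega'(\overline{\zeta})\wedge\omega(\zeta)/(g_n(\zeta)(1-\langle\overline{\zeta},z\rangle)^2)$ integrated over the open slices.

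The step I expect to be the main obstacle is the careful determination of the induced orientations on the inner circles $\{|\zeta_1|=\alpha_{q_l}\pm\varepsilon\}$ and the justification that the two one-sided limits survive as $\varepsilon\to0$. Unlike the single level set $\{|g_n|=\varepsilon\}$ of Proposition \ref{prelim}, here each interior critical value $\alpha_{q_l}$ is approached from both sides, contributing a $+\varepsilon$ circle and a $-\varepsilon$ circle with opposite induced orientations, which is precisely why they appear as a difference; I would need to verify the sign convention by writing $\widetilde{\Sigma}_\varepsilon$ as a disjoint union of the slices in (\ref{deftranche}) and applying Lemma \ref{orientation} slice by slice, being attentive to the fact that the outer boundary of one slice and the inner boundary of the adjacent slice both sit over the same critical circle $|\zeta_1|=\alpha_{q_l}$ but with reversed orientation. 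A secondary technical point is that $f\in\mathcal{O}(\overline{\mathbb{B}_2})$ guarantees the integrands are bounded near $\mathbb{S}_2$ so that passing to the limit $\varepsilon\to0$ is legitimate (the singularities of $1/g_n$ lie on the lower-dimensional circles $|\zeta_1|=\alpha_p$ and are controlled by the $P_n$-numerator, which vanishes to the appropriate order); once the orientation signs are fixed, collecting the $\lambda=1$ boundary terms and the interior term yields exactly the two displayed expressions and completes the proof.
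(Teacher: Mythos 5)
Your proposal is correct and follows essentially the same route as the paper: the paper's proof also takes the homotopy form $\widetilde{\psi}(\zeta,\lambda)$ built from $\overline{\zeta}/(1-\langle\overline{\zeta},z\rangle)$ and $P_n/g_n$, observes that $g_n$ vanishes on $\mathbb{S}_2$ exactly on the circles $|\zeta_1|=\alpha_j$ (so $\widetilde{\psi}$ is defined near $\overline{\widetilde{\Sigma}_\varepsilon}$), redoes the orientation bookkeeping of Lemma \ref{orientation} slice by slice to get $\partial\widetilde{\Sigma}_{\varepsilon}=\sum_{l}\bigl(\{|\zeta_1|=\alpha_{q_{l+1}}-\varepsilon\}-\{|\zeta_1|=\alpha_{q_l}+\varepsilon\}\bigr)$, and concludes via Stokes and the Cauchy--Fantappi\`e identity $\lim_{\varepsilon\to0}\int_{\widetilde{\Sigma}_\varepsilon}\psi(\zeta,1)=-f(z)$. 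The only slight imprecision in your write-up is the remark that the singularities of $1/g_n$ are "controlled by the $P_n$-numerator": no such control is needed, since the integration sets stay at distance $\varepsilon$ from the critical circles for each fixed $\varepsilon$.
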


\bigskip

\begin{proof}

The proof is similar to the one of proposition \ref{prelim}
with
\begin{eqnarray*}
\widetilde{\psi}(\zeta,\lambda)
& = &
\frac{1}{(2\pi i)^2}
\;f(\zeta)
\;\omega'\left(
\lambda
\frac{\overline{\zeta}}{1-<\overline{\zeta},z>}
+(1-\lambda)\frac{P_n(\zeta,z)}{g_n(\zeta)}\right)
\wedge\omega(\zeta)\,.
\end{eqnarray*}

In particular, we see that,
for all small enough $\varepsilon>0$,
$\widetilde{\psi}$ is well-defined on
a neighborhood of
$\overline{\widetilde{\Sigma}_{\varepsilon}}$.
Indeed, if for 
$j=1,\ldots,n-1$,
$\zeta_1-\eta_j\zeta_2$ vanishes on
$\mathbb{S}_2$, then
\begin{eqnarray*}
\begin{cases}
|\zeta_1|=|\eta_j|\,|\zeta_2|\\
|\zeta_1|^2+|\zeta_2|^2=1
\end{cases}
& \Rightarrow &
|\zeta_1|^2=|\eta_j|^2(1-|\zeta_1|^2)
\end{eqnarray*}
thus
$|\zeta_1|=\alpha_j$.

The only difference is to specify the orientation on
$\widetilde{\Sigma}_{\varepsilon}$ and
$\partial\widetilde{\Sigma}_{\varepsilon}$.
First, the orientation on
$\widetilde{\Sigma}_{\varepsilon}$
is induced by the one on
$\mathbb{S}_2$.
Next, for all
$0<\beta_1<\beta_2<1$,
we have as in lemma \ref{orientation}
\begin{eqnarray*}
\partial
\left(
\{\beta_1<|\zeta_1|<\beta_2\}
\times[0,1]
\right)
& = &
\left(
\partial
\{\beta_1<|\zeta_1|<\beta_2\}
\right)
\times[0,1]
\,-\,
\{\beta_1<|\zeta_1|<\beta_2\}
\times\left(
\partial[0,1]
\right)
\,
\end{eqnarray*}
where the orientation of
$\partial\{\beta_1<|\zeta_1|<\beta_2\}$
is fixed by the Stokes formula on
$\{\beta_1<|\zeta_1|<\beta_2\}$,
i.e. for all
$(2n-2)$-form $\chi$,

\begin{eqnarray*}
\int_{\{\beta_1<|\zeta_1|<\beta_2\}}
d\chi(\zeta)
& = &
\int_{\{|\zeta_1|=\beta_2\}}
\chi(\zeta)
-\int_{\{|\zeta_1|=\beta_1\}}
\chi(\zeta)\,.
\\
\end{eqnarray*}
It follows that
\begin{eqnarray*}
\partial\widetilde{\Sigma}_{\varepsilon}
& = &
\sum_{l=1}^{\widetilde{n}-1}
\left(
\{|\zeta_1|=\alpha_{q_{l+1}}-\varepsilon\}
\,-\,
\{|\zeta_1|=\alpha_{q_l}+\varepsilon\}
\right)\,.
\end{eqnarray*}

Since
\begin{eqnarray*}
\lim_{\varepsilon\rightarrow0}
\int_{\widetilde{\Sigma}_{\varepsilon}}
\psi(\zeta,1)
& = &
\frac{1}{(2\pi i)^2}
\int_{\mathbb{S}_2}
f(\zeta)\,
\frac{\omega'(\overline{\zeta})\wedge\omega(\zeta)}
{(1-<\overline{\zeta},z>^2}
\;=\;
-\,f(z)\,,
\\
\end{eqnarray*}
we get
\begin{eqnarray*}
-\,\lim_{\varepsilon\rightarrow0}\,
\frac{g_n(z)}{(2\pi i)^2}\,
\sum_{l=1}^{\widetilde{n}-1}\,
\int_{\alpha_{q_l}+\varepsilon<|\zeta_1|<\alpha_{q_{l+1}}-\varepsilon}\;
\frac{f(\zeta)\;\omega'\left(\overline{\zeta}\right)\wedge\omega(\zeta)}
{g_n(\zeta)\,\left(1-<\overline{\zeta},z>\right)^2}
& = &
\;\;\;\;\;\;\;\;\;\;\;\;\;\;\;\;\;\;\;\;\;\;\;\;\;\;\;\;\;\;\;\;
\end{eqnarray*}
\begin{eqnarray*}
& = &
\lim_{\varepsilon\rightarrow0}
\sum_{l=1}^{\widetilde{n}-1}
\frac{1}{(2\pi i)^2}
\left[
\int_{|\zeta_1|=\alpha_{q_{l+1}}-\varepsilon}
-\int_{|\zeta_1|=\alpha_{q_l}+\varepsilon}
\right]
\frac{-f(\zeta)\,
(\overline{\zeta}_1P_n^2
-\overline{\zeta}_2P_n^1)}
{g_n(\zeta)\,(1-<\overline{\zeta},z>)}
\,\omega(\zeta)
+f(z)\,
\\
\end{eqnarray*}
and the proposition follows.

\end{proof}

\bigskip

\section{Some preliminar results on the Lagrange interpolation formula}\label{lagrange}

\bigskip

Let $W\subset\mathbb{C}$ be an open set, $f\,\in\mathcal{O}(W)$ and
$\eta_1,\ldots,\eta_n\,\in W$ be different complex numbers (with associate multiplicities $m_1,\ldots,m_n\,\in\mathbb{N}$). Consider the following Lagrange interpolation polynomial for 
$f$ on the $\eta_j$:
\begin{eqnarray}\label{lagrange1}
L_{f,\eta^m}(X) & := & 
\mathcal{L}
\left(
\eta_1^{m_1},\ldots,\eta_n^{m_n};
\frac{f(t)}{X-t}
\right)
\;\;\;\;\;\;\;\;\;\;\;\;\;\;\;\;\;\;\;\;\;\;\;\;\;\;\;\;\;\;\;\;\;\;
\;\;\;\;\;\;\;\;\;
\end{eqnarray}
\begin{eqnarray*}
& = &
\prod_{j=1}^n(X-\eta_j)^{m_j}
\sum_{p=1}^n
\frac{1}{(m_p-1)!}
\frac{\partial^{m_p-1}}{\partial t^{m_p-1}}|_{t=\eta_p}\left(\frac{f(t)}{(X-t)\prod_{j=1,j\neq p}^n(t-\eta_j)^{m_j}}\right)
\\
& = & 
\sum_{p=1}^n\;\prod_{j=1,j\neq p}^n
(X-\eta_j)^{m_j}\;
\sum_{s=0}^{m_p-1}(X-\eta_p)^s\,\frac{1}{s!}
\frac{\partial^s}{\partial t^s}|_{t=\eta_p}\left(\frac{f(t)}{\prod_{j=1,j\neq p}^n(t-\eta_j)^{m_j}}\right)
\,.
\\
\end{eqnarray*}
%

%

\bigskip

One has the following preliminar result:

\begin{lemma}\label{prelimlagrange}

$L_{f,\eta^m}$ is the unique polynomial $P\in\mathbb{C}[X]$ with degree at most $N-1$
(where 
$N=m_1+\cdots+m_n$)
that satisfies, for all $p=1,\ldots,n$ and all $s=1,\ldots,m_p-1$,
\begin{eqnarray*}
P^{(s)}(\eta_p)
& = &
f^{(s)}(\eta_p)\,.
\end{eqnarray*}

\end{lemma}

\begin{proof}


First, we have for all
$p=1,\ldots,n$ and all $l=1,\ldots,m_p-1$,
\begin{eqnarray*}
L_{f,\eta^m}^{(l)}(\eta_p)
& = &
\;\;\;\;\;\;\;\;\;\;\;\;\;\;\;\;\;\;\;\;\;\;\;\;\;\;\;\;\;\;\;\;\;\;\;\;\;\;\;\;\;\;\;\;\;\;\;\;\;\;\;\;\;\;\;\;\;\;
\;\;\;\;\;\;\;\;\;\;\;\;\;\;\;\;\;\;\;\;\;\;\;\;\;\;\;\;\;\;\;\;\;\;\;\;\;\;\;\;
\end{eqnarray*}
\begin{eqnarray*}
& = &
\frac{\partial^l}{\partial X^l}|_{X=\eta_p}
\prod_{j=1,j\neq p}^n
(X-\eta_j)^{m_j}
\sum_{s=0}^{m_p-1}(X-\eta_p)^s\frac{1}{s!}
\frac{\partial^s}{\partial t^s}|_{t=\eta_p}
\left(\frac{f(t)}{\prod_{j=1,j\neq p}^n(t-\eta_j)^{m_j}}\right)
\\
& = &
\sum_{u=0}^l
\frac{l!}{u!\,(l-u)!}
\frac{\partial^{l-u}}{\partial X^{l-u}}|_{X=\eta_p}
\left(
\prod_{j=1,j\neq p}^n
(X-\eta_j)^{m_j}
\right)
\;\times
\\
& & 
\;\;\;\;\;\;\;\;\;\;\;\;\;\;\;\;\;\;\;\;\;\;\;\;\;\;\;\;\;\;\;\;\;\;\;\;\;\;\;\;\;\;\;\;\;\;\;\;\;\;\;\;\;\;
\times\;
u!\;
\frac{1}{u!}
\frac{\partial^u}{\partial t^u}|_{t=\eta_p}
\left(\frac{f(t)}{\prod_{j=1,j\neq p}^n(t-\eta_j)^{m_j}}\right)
\\
\\
& = &
\frac{\partial^u}{\partial t^u}|_{t=\eta_p}
\left[
\prod_{j=1,j\neq p}^n(t-\eta_j)^{m_j}
\times
\frac{f(t)}{\prod_{j=1,j\neq p}^n(t-\eta_j)^{m_j}}
\right]
\\
\\
& = &
f^{(l)}(\eta_p)\,.
\\
\end{eqnarray*}

Finally, let be
$P\in\mathbb{C}[X]$
another polynomial of degree at most
$N-1$ that satisfies
$L_{f,\eta^m}^{(s)}(\eta_p)=f^{(s)}(\eta_p)$,
for all
$p=1,\ldots,n$ and all
$s=0,\ldots,m_p-1$.
It follows that 
$P-L_{f,\eta^m}$ is divisible by
$\prod_{j=1}^n(X-\eta_j)$
and of degree at most
$N-1$ then
$P-L_{f,\eta^m}=0$ and this proves the lemma.

\end{proof}

\bigskip

We have an additional result when $f$ is a polynomial function.

\begin{lemma}\label{lagrangeuclide}

Consider the Euclidean division of $P\in\mathbb{C}[X]$ with degree $k$ by 
$G(X):=\prod_{j=1}^n(X-\eta_j)^{m_j}$
\begin{eqnarray*}
P & = & G\cdot Q(P,G)\,+\,R(P,G)\,,
\end{eqnarray*}
where $Q(P,G)$ (resp. $R(P,G)$) is the quotient (resp. remainder). Then
\begin{eqnarray*}
R(P,G) & = & L_{P,\eta^m}\,.
\end{eqnarray*}
In particular,
\begin{eqnarray}\label{preliminfini}
& &
\frac{P(X)}{G(X)}
\,-\,
\sum_{p=1}^n
\frac{1}{(m_p-1)!}
\frac{\partial^{m_p-1}}{\partial t^{m_p-1}}|_{t=\eta_p}
\left[
\frac{P(t)}{(X-t)
\prod_{j=1,j\neq p}^n
(t-\eta_j)^{m_j}}
\right]
\;=\;
\\\nonumber
\\\nonumber
& & 
\;\;\;=\;
Q(P,G)
\,.
\\\nonumber
\end{eqnarray}

If $P(X)=X^k$ we have in addition
\begin{eqnarray}
Q(X^k,G) & = &
\sum_{u=0}^{k-N}X^{k-N-u}
\sum_{v_1+\cdots+v_n=u}\,\prod_{j=1}^n\frac{(v_j+m_j-1)!}{v_j!\,(m_j-1)!}\,\eta_j^{v_j}\,.
\end{eqnarray}

\end{lemma}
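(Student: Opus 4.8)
The plan is to derive the identity $R(P,G)=L_{P,\eta^m}$ from the uniqueness already established in lemma \ref{prelimlagrange}, to read off the quotient by a single division, and finally to treat the case $P=X^k$ by expanding $1/G$ as a Laurent series at infinity.

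First I would observe that the Euclidean remainder $R:=R(P,G)$ has degree at most $N-1$, since $\deg G=N$. Writing $P=G\cdot Q+R$ and differentiating at a node $\eta_p$, the Leibniz rule gives, for every $0\le s\le m_p-1$,
\begin{eqnarray*}
(G\cdot Q)^{(s)}(\eta_p)
& = &
\sum_{u=0}^s\frac{s!}{u!\,(s-u)!}\,G^{(u)}(\eta_p)\,Q^{(s-u)}(\eta_p)
\;=\;0\,,
\end{eqnarray*}
because $G$ vanishes to order $m_p$ at $\eta_p$, so $G^{(u)}(\eta_p)=0$ for all $u\le m_p-1$. Hence $R^{(s)}(\eta_p)=P^{(s)}(\eta_p)$ for all $p$ and all $s\le m_p-1$. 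Thus $R$ is a polynomial of degree at most $N-1$ satisfying exactly the interpolation conditions that characterize $L_{P,\eta^m}$, and the uniqueness part of lemma \ref{prelimlagrange} forces $R=L_{P,\eta^m}$.

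For (\ref{preliminfini}) I would use that, by its very definition (\ref{lagrange1}), the polynomial $L_{P,\eta^m}$ equals $G(X)$ times the sum $\Sigma(X):=\sum_{p=1}^n\frac{1}{(m_p-1)!}\frac{\partial^{m_p-1}}{\partial t^{m_p-1}}|_{t=\eta_p}\big(\frac{P(t)}{(X-t)\prod_{j\neq p}(t-\eta_j)^{m_j}}\big)$; in other words $\Sigma=L_{P,\eta^m}/G$ is precisely the proper (partial-fraction) part of $P/G$. Dividing the identity $P=G\cdot Q+L_{P,\eta^m}$ by $G$ then yields $Q=P/G-\Sigma$, which is exactly (\ref{preliminfini}).

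Finally, for $P=X^k$ I would expand $1/G$ at infinity: from $(X-\eta_j)^{-m_j}=X^{-m_j}\sum_{v_j\ge0}\frac{(v_j+m_j-1)!}{v_j!\,(m_j-1)!}\eta_j^{v_j}X^{-v_j}$ one multiplies the $n$ factors and groups by the total index $u=v_1+\cdots+v_n$, obtaining $X^k/G(X)=\sum_{u\ge0}X^{k-N-u}\sum_{v_1+\cdots+v_n=u}\prod_j\frac{(v_j+m_j-1)!}{v_j!\,(m_j-1)!}\eta_j^{v_j}$. Since the proper part $L_{P,\eta^m}/G$ contributes only strictly negative powers of $X$, the quotient $Q(X^k,G)$ is exactly the sum of the nonnegative-degree terms, that is those with $k-N-u\ge0$, giving the stated formula. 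The only genuine bookkeeping is this last grouping by total degree; everything else is forced by the uniqueness from lemma \ref{prelimlagrange} and by the degree bound $\deg R<\deg G$, which guarantees that the polynomial part of the Laurent expansion at infinity coincides with $Q$.
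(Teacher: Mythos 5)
Your proof is correct. The first two parts (the identification $R(P,G)=L_{P,\eta^m}$ via vanishing of $G$ at the nodes plus the uniqueness in the interpolation lemma, and the quotient identity obtained by dividing $P=G\cdot Q+L_{P,\eta^m}$ by $G$) follow essentially the same path as the paper. Where you genuinely diverge is the formula for $Q(X^k,G)$: the paper writes $G(X)=\prod_{j=1}^N(X-\eta'_j)$ with the roots repeated according to multiplicity, proves the quotient formula by induction on $N$ (dividing off one linear factor at a time), and only at the end regroups the resulting sum $\sum\prod_j{\eta'_j}^{v_j}$ by multiplicities using a separate stars-and-bars counting lemma, itself proved with the generating function $(1-X)^{-m}$. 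You instead expand each factor $(X-\eta_j)^{-m_j}$ by the negative binomial series at infinity, multiply, and identify $Q(X^k,G)$ with the nonnegative-power part of the Laurent expansion of $X^k/G$, which is legitimate because $\deg R<\deg G$ forces $R/G$ to contribute only strictly negative powers (by uniqueness of the Laurent expansion on $|X|>\max_j|\eta_j|$, or equivalently in the ring of formal Laurent series in $X^{-1}$). Your route is shorter and makes the coefficients $\frac{(v_j+m_j-1)!}{v_j!\,(m_j-1)!}$ appear directly from the expansion, absorbing the paper's induction and its combinatorial lemma into a single generating-function computation; the paper's induction, in exchange, stays entirely inside polynomial algebra with no appeal to series expansions, at the cost of the extra regrouping step.
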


\bigskip

\begin{proof}

First, we have $G^{(s)}(\eta_p)=0$ for all $p=1,\ldots,n$ and all $s=0,\ldots,m_p-1$ then
$$R(P,G)^{(s)}(\eta_p)=P^{(s)}(\eta_p)=L_{P,\eta^m}^{(s)}(\eta_p)\,.$$
The first assertion follows since $L_{P,\eta^m}$ and $R(P,G)$ have degree at most $N-1$.
\bigskip

Next, (\ref{preliminfini}) follows from the Euclidean division of $P$
by $G$.
\bigskip

Now consider $P(X)=X^k$. If $k<N$ then $Q(X^k,G)=0$ and the second assertion is obvious. If $k\geq N$ we can write
$G(X)=\prod_{j=1}^N(X-\eta'_j)$ (where the $\eta'_j$ are not necessarily different) and we prove the assertion by induction on $N$.

If $N=1$ we have
\begin{eqnarray*}
X^k & = & X^k-{\eta'_1}^k+{\eta'_1}^k\\
& = & (X-\eta'_1)\sum_{u=0}^{k-1}X^{k-1-u}{\eta'_1}^u+{\eta'_1}^k\,.
\end{eqnarray*}
Now assume that it is true for $N-1$ and let be $\eta'_N\in\mathbb{C}$. We have similarly
\begin{eqnarray*}
X^k & = & (X-\eta'_N)\sum_{v_N=0}^{k-1}{\eta'_N}^{v_N}X^{k-1-v_N}+{\eta'_N}^k\\
\end{eqnarray*}
Since
\begin{eqnarray*}
Q\left(X^{k-1-v_N},\prod_{j=1}^{N-1}(X-\eta'_j)\right)
& = & \sum_{w=0}^{k-N-v_N}X^{k-N-v_N-w}\sum_{v_1+\cdots+v_{N-1}=w}\prod_{j=1}^{N-1}{\eta'_j}^{v_j}\\
& = & \sum_{u=v_N}^{k-N}X^{k-N-u}\sum_{v_1+\cdots+v_{N-1}=u-v_N}\prod_{j=1}^{N-1}{\eta'_j}^{v_j}\,,
\end{eqnarray*}
it follows that
\begin{eqnarray*}
X^k & = & 
(X-\eta'_N)\prod_{j=1}^{N-1}(X-\eta'_j)
\sum_{u=0}^{k-N}X^{k-N-u}
\sum_{v_N=0}^{\min(k-1,u)}{\eta'_N}^{v_N}
\sum_{v_1+\cdots+v_{N-1}=u-v_N}\prod_{j=1}^{N-1}{\eta'_j}^{v_j}\\
& & \;+\;(X-\eta'_N)R+{\eta'_N}^k\\
\\
& = & G(X)\,\sum_{u=0}^{k-N}X^{k-N-u}\sum_{v_N=0}^u\;
\sum_{v_1+\cdots+v_{N-1}+v_N=u}{\eta'_N}^{v_N}\prod_{j=1}^{N-1}{\eta'_j}^{v_j}
\,+\,(X-\eta'_N)R+{\eta'_N}^k\,,\\
\end{eqnarray*}
with $\deg R\leq N-2$ and this proves the assertion.
\bigskip

To finish the proof, we notice that for all $u=0,\ldots,k-N$
\begin{eqnarray*}
\sum_{v_1+\cdots+v_N=u}\prod_{j=1}^N{\eta'_j}^{v_j} 
& = & \sum_{v_{1,1}+\cdots+v_{n,m_n}=u}\,\prod_{l=1}^n\eta_l^{v_{l,1}+\cdots+v_{l,m_l}}\\
& = & \sum_{w_1+\cdots+w_n=u}\,\prod_{l=1}^n
\left(\eta_l^{w_l}\,card\{{v_{l,1}+\cdots+v_{l,m_l}=w_l}\}\right)\\
& = & \sum_{w_1+\cdots+w_n=u}\,\prod_{l=1}^n
\frac{(w_l+m_l-1)!}{w_l!\,(m_l-1)!}\,\eta_l^{w_l}\,,
\end{eqnarray*}
the last equality coming from the following lemma.

\end{proof}

\begin{lemma}\label{combinatoire}

For all
$m\geq1$ and $q\geq0$,
\begin{eqnarray*}
card\{(v_1,\ldots,v_m)\in\mathbb{N}^m,\;v_1+\cdots+v_m=q\}
& = & \frac{(q+m-1)!}{q!\,(m-1)!}\,.
\end{eqnarray*}

\end{lemma}

\begin{proof}

Consider the following formal series

$$\sum_{v_1,\ldots,v_m\geq 0}X_1^{v_1}\cdots X_m^{v_m}\,$$ 
The coefficient of order $q$ after evaluation
$X_1=\cdots=X_m=X$
is exactly 
$card\{v_1+\cdots+v_m=q\}$.
On the other hand, we have
$$\prod_{j=1}^m\left(\sum_{v_j\geq 0}X_j^{v_j}\right)
=\prod_{j=1}^m\frac{1}{1-X_j}$$
that gives after evaluation
\begin{eqnarray*}
\frac{1}{(1-X)^m} & = &
\frac{1}{(m-1)!}\frac{d^{m-1}}{dX^{m-1}}\left(\frac{1}{1-X}\right)\\
& = & 
\frac{1}{(m-1)!}\sum_{k\geq m-1}k(k-1)\cdots(k-m+2)X^{k-m+1},
\\
\end{eqnarray*}
whose coefficient of order $q$ is
$\dfrac{(q+m-1)\cdots(q+1)}{(m-1)!}$.

\end{proof}

\bigskip

\section{Calculation of the remainder}\label{remainders}

\bigskip

We set
\begin{eqnarray}\label{ueta}
& &
U_{\eta}
\;=\;
\{z\in\mathbb{C}^2,\,
z_1\neq0,\,z_2\neq0\;
\mbox{and}\;
\forall\,p=2,\ldots,n-1,\,
z_1-\eta_pz_2\neq0
\}
\,.
\\\nonumber
\end{eqnarray}
We also remind 
$g_n(z)=z_1^{m_1}\prod_{j=2}^{n-1}(z_1-\eta_jz_2)^{m_j}z_2^{m_n},\;
m_1,\ldots,m_n\in\mathbb{N}$.
So we can give the following result that we will prove in this section.

\begin{proposition}\label{remainder}

For all $z\in U_{\eta}$,
we have
\begin{eqnarray}\label{remainder1}
-\;
\lim_{\varepsilon\rightarrow0}\frac{g_n(z)}{(2\pi i)^2}
\int_{\Sigma_{\varepsilon}}
\frac{\zeta_1^{k_1}\zeta_2^{k_2}\;\omega'\left(\overline{\zeta}\right)\wedge\omega(\zeta)}
{g_n(\zeta)\;\left(1-<\overline{\zeta},z>\right)^2}
& = & 
\;\;\;\;\;\;\;\;\;\;\;\;\;\;\;\;\;\;\;\;\;\;\;\;\;\;\;\;\;\;\;\;\;\;\;\;\;\;\;
\\\nonumber
\end{eqnarray}
\begin{eqnarray*}
& = &
{\bf 1}_{k_1+k_2\geq N,\,k_1\geq m_1,k_2\geq m_n}
\,z_1^{k_1}\,z_2^{k_2}\\
\\
& - &
{\bf 1}_{k_1+k_2\geq N}\;
\sum_{p=2}^{n-1}
z_1^{m_1}\prod_{j=2,j\neq p}^{n-1}(z_1-\eta_jz_2)^{m_j}z_2^{m_n}
\sum_{s=0}^{m_p-1}z_2^{m_p-1-s}(z_1-\eta_pz_2)^s
\\
& & 
\;\times\,\frac{1}{s!}\frac{\partial^s}{\partial t^s}|_{t=\eta_p}
\left[\frac{t^{k_1}}{t^{m_1}\prod_{j=2,j\neq p}^{n-1}(t-\eta_j)^{m_j}}
\left(\frac{z_2+|\eta_p|^2z_1/t}{1+|\eta_p|^2}\right)^{k_1+k_2-N+1}\right]\\
\\
& + & 
{\bf 1}_{k_1\leq m_1-1,k_2\geq N-k_1}
\sum_{p=2}^{n-1}z_1^{m_1}\prod_{j=2,j\neq p}^{n-1}(z_1-\eta_jz_2)^{m_j}z_2^{m_n}
\sum_{s=0}^{m_p-1}z_2^{m_p-1-s}(z_1-\eta_pz_2)^s\\
& & 
\;\times\,
\frac{1}{s!}\frac{\partial^s}{\partial t^s}|_{t=\eta_p}
\left[
\frac{t^{k_1}z_2^{k_1+k_2-N+1}}
{t^{m_1}\prod_{j=2,j\neq p}^{n-1}(t-\eta_j)^{m_j}}
\right]\\
\\
& + & 
{\bf 1}_{k_2\leq m_n-1,k_1\geq N-k_2}
\sum_{p=2}^{n-1}z_1^{m_1}\prod_{j=2,j\neq p}^{n-1}(z_1-\eta_jz_2)^{m_j}z_2^{m_n}
\sum_{s=0}^{m_p-1}z_2^{m_p-1-s}(z_1-\eta_pz_2)^s
\\
& & 
\;\times\,
\frac{1}{s!}\frac{\partial^s}{\partial t^s}|_{t=\eta_p}
\left[
\frac{t^{N-1-k_2}z_1^{k_1+k_2-N+1}}
{t^{m_1}\prod_{j=2,j\neq p}^{n-1}(t-\eta_j)^{m_j}}
\right]\,,
\\
\end{eqnarray*}
where
\begin{eqnarray*}
{\bf 1}_{k_1+k_2\geq N}
& := &
\begin{cases}
1\;\mbox{ if }\;k_1+k_2\geq N,\\
0\;\mbox{ otherwise}
\end{cases}
\end{eqnarray*}
(likewise for
${\bf 1}_{k_1+k_2\geq N,\,k_1\geq m_1,k_2\geq m_n}$,
${\bf 1}_{k_1\leq m_1-1,k_2\geq N-k_1}$
and
${\bf 1}_{k_2\leq m_n-1,k_1\geq N-k_2}$).

\end{proposition}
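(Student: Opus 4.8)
The plan is to evaluate the principal--value integral directly for the monomial $f(\zeta)=\zeta_1^{k_1}\zeta_2^{k_2}$, turning the integral over the $3$--real--dimensional region $\widetilde{\Sigma}_\varepsilon\subset\mathbb{S}_2$ into iterated contour integrals. First I would use the spherical coordinates $\zeta_1=re^{i\theta_1}$, $\zeta_2=\sqrt{1-r^2}\,e^{i\theta_2}$ ($0<r<1$) and compute the Cauchy--Fantappi\`e form; a direct calculation collapses it to $\omega'(\overline\zeta)\wedge\omega(\zeta)=2r\,dr\wedge d\theta_1\wedge d\theta_2$ (up to the fixed orientation). Writing $\sigma=\sqrt{1-r^2}$ and passing to the slope $t=\zeta_1/\zeta_2$ together with $v=e^{i\theta_2}$, and using $\overline\zeta_j=|\zeta_j|^2/\zeta_j$ on $\mathbb{S}_2$, I would factor $1-<\overline\zeta,z>=(v-B(t))/v$ with $B(t)=\frac1\sigma(\frac{r^2z_1}{t}+\sigma^2z_2)$, noting $|B(t)|\le|z|<1$; meanwhile $g_n$ becomes $t^{m_1}(\sigma v)^N\prod_{j=2}^{n-1}(t-\eta_j)^{m_j}$, so the integrand separates cleanly in $t$ and $v$.

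The inner integral over $|v|=1$ reduces to $\oint v^{M+1}(v-B(t))^{-2}\,dv$ with $M=k_1+k_2-N$. For $M<0$ the residues at $v=0$ and $v=B(t)$ cancel (the integrand decays at $v=\infty$), so this integral vanishes; this is exactly the assertion that for $k_1+k_2<N$ the whole expression is $0$, matching the fact that every indicator in the conclusion is then off. For $M\ge 0$ the only contribution is the double pole at $v=B(t)$, giving $2\pi i\,(M+1)B(t)^M$; the powers of $\sigma$ cancel and leave $(r^2a(t)+z_2)^M$ with $a(t)=z_1/t-z_2$. The remaining $t$--contour is $|t|=r/\sigma$, whose radius increases from $0$ to $+\infty$ and crosses $|\eta_j|$ exactly at $r=\alpha_j$. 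On each annulus $\alpha_{q_l}<|\zeta_1|<\alpha_{q_{l+1}}$ the enclosed poles ($t=0$ and $t=\eta_j$ with $\alpha_j<r$) are fixed, so after summing over $l$ and letting $\varepsilon\to0$ the residue at $\eta_j$ is integrated over $r\in(\alpha_j,1)$ and the residue at $t=0$ over $r\in(0,1)$.

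I would then commute this $r$--integral inside the (fixed) residue and use $\rho=r^2$ to obtain $\int_{\alpha_j}^1 2r(r^2a+z_2)^M\,dr=\frac{1}{a(M+1)}[(z_1/t)^{M+1}-(\frac{z_2+|\eta_j|^2z_1/t}{1+|\eta_j|^2})^{M+1}]$. The factor $1/a=t/(z_2(X-t))$, $X=z_1/z_2$, is precisely the Lagrange kernel $1/(X-t)$, and the prefactor $(M+1)$ cancels against it; the lower endpoint $r=\alpha_j$ produces the powers $(\frac{z_2+|\eta_j|^2z_1/t}{1+|\eta_j|^2})^{M+1}$ appearing in the statement, while the upper endpoint $r=1$ produces $(z_1/t)^{M+1}$ pieces, i.e. integrand $h(t)=t^{N-1-k_2}z_1^{M+1}/t^{m_1}$. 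Recognizing each sum $\sum_j\frac{1}{(m_j-1)!}\partial_t^{m_j-1}|_{\eta_j}[\,\cdots/(X-t)\,]$ as a Lagrange operator $\mathcal{L}$ via the two equivalent forms of $\mathcal{L}$ recalled in the introduction, the factor $g_n(z)/\prod_j(X-\eta_j)^{m_j}$ supplies the prefactors $z_1^{m_1}\prod_{j\neq p}(z_1-\eta_jz_2)^{m_j}z_2^{m_n}$ and the blocks $\sum_s z_2^{m_p-1-s}(z_1-\eta_pz_2)^s\frac1{s!}\partial_t^s$; the lower--endpoint pieces then give exactly the second (negative) sum of the statement.

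For the upper--endpoint pieces I would apply the residue theorem in $t$, via the Lagrange identities of Section~\ref{lagrange} (Lemmas~\ref{prelimlagrange} and~\ref{lagrangeuclide}), to split the contribution according to the poles at $t=z_1/z_2$ (whose residue reconstructs $h(X)$, the bare monomial $z_1^{k_1}z_2^{k_2}$), at $t=0$ (the line $z_1=0$), and at $t=\infty$ (the line $z_2=0$); these yield the remaining three families, the pole at $t=z_1/z_2$ producing the main term under $\{k_1\ge m_1,\,k_2\ge m_n\}$ and the poles at $0$ and $\infty$ producing the $R^1$-- and $R^2$--type corrections under the complementary conditions $\{k_1\le m_1-1\}$ and $\{k_2\le m_n-1\}$. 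I expect the main obstacle to be precisely this assembly: handling the moving $t$--contour across the circles $|\zeta_1|=\alpha_p$, reorganizing the single high--order residues into the Lagrange operators, and sorting the resulting residue families (with their signs and $2\pi i$ factors) into the four indicator--weighted terms---in particular separating the bare monomial built at $t=z_1/z_2$ from the interpolation sums built at the $\eta_p$.
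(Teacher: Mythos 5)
Your proposal is correct, and its computational core checks out: the factorization $1-\langle\overline{\zeta},z\rangle=(v-B(t))/v$ with $|B(t)|\le\|z\\|<1$ on the torus, the value $(M+1)B(t)^M$ of the $v$-integral (and its vanishing for $M<0$), the closed-form $r$-integration whose endpoints produce $(z_1/t)^{M+1}$ and $\left(\frac{z_2+|\eta_j|^2z_1/t}{1+|\eta_j|^2}\right)^{M+1}$, and the reassembly by the residue theorem in $t$. The skeleton is the same as the paper's (slice $\mathbb{S}_2$ by $|\zeta_1|=r$, iterate contour integrals, reassemble with the Lagrange/Euclidean-division identities of lemma \ref{lagrangeuclide}), but your execution is genuinely different and in places cleaner. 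The paper integrates in $\zeta_1$ first, writing $\int_{|\zeta_1|=r}$ as an integral over $|\zeta_1|=\infty$ minus the residues at the poles $\eta_p\zeta_2$ lying outside (lemma \ref{infini}); it then needs lemma \ref{residup} to locate the $\zeta_2$-pole $(1-r^2)z_2+r^2z_1/t$ for $t$ near $\eta_p$, an explicit multinomial expansion (\ref{infini3a}) for the "infinity" part, and a Beta-function identity to carry out the $r$-integration term by term. Your $(t,v)$ coordinates replace all of this: the angular integral is a single double-pole residue justified by Cauchy--Schwarz alone (no analogue of lemma \ref{residup} is needed, since after that integral the $t$-integrand is a rational function of $t$), and the $r$-integral is one antiderivative whose endpoint at $r=\alpha_j$ produces the Blaschke-type factors directly, with no combinatorics. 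What the paper's route buys is that the case analysis in $(k_1,k_2)$ falls out mechanically from lemma \ref{lagrangeuclide}; what yours buys is brevity and transparency about where each term of the statement originates ($r$-endpoints and $t$-poles).

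One point in your last paragraph is stated imprecisely, though it does not break the argument. The residue at $t=z_1/z_2$ of the upper-endpoint ($r=1$) integrand produces the bare monomial for \emph{every} $k_1+k_2\ge N$, not only when $k_1\ge m_1$ and $k_2\ge m_n$: the indicators in the statement do not record which poles "exist" but result from a recombination you still have to perform. Concretely, since $z_1/t=z_2$ at $t=z_1/z_2$ and that pole is simple, the upper-endpoint integrand and the $r=0$-endpoint integrand (the one carrying $z_2^{M+1}$) have equal residues there; when $k_1\le m_1-1$ a degree count shows both residues at infinity vanish, and the global residue theorem applied to the $z_2^{M+1}$-integrand converts $-\mathrm{Res}_{z_1/z_2}-\mathrm{Res}_{0}$ into the sum of residues at the $\eta_p$, which is exactly the third family; symmetrically, when $k_2\le m_n-1$ there is no pole at $t=0$ and the residue at infinity of the upper-endpoint integrand yields the fourth family; only when $k_1\ge m_1$ and $k_2\ge m_n$ do both corrections vanish and the bare monomial survive. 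This is the same case analysis the paper performs in (\ref{infini5c}), (\ref{infini5a}) and (\ref{infini5b}); since you flagged exactly this assembly as the main obstacle and cited the right tools (lemma \ref{prelimlagrange}, lemma \ref{lagrangeuclide}, and the two equivalent forms of $\mathcal{L}$), it is bookkeeping rather than a gap.
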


\bigskip

We begin with the following lemma.

\begin{lemma}\label{infini}

For all 
$r\in\,[0,1]$ such that 
$r\neq\alpha_p,\;\forall\,p=1,\ldots,n$ and all $k_1\geq0$, we have
\begin{eqnarray}
\frac{1}{2\pi i}\int_{|\zeta_1|=+\infty}
\frac{\zeta_1^{k_1-m_1+1}\;d\zeta_1}{\prod_{j=2}^{n-1}
(\zeta_1-\eta_j\zeta_2)^{m_j}\;\left(\zeta_1-\frac{r^2z_1\zeta_2}{\zeta_2-(1-r^2z_2)}\right)^2}
& = &
\;\;\;\;\;\;\;\;\;\;\;\;\;\;\;\;
\end{eqnarray}
\begin{eqnarray*}
\;\;\;\;\;\;\;\;\;\;\;\;\;\;\;
& = &
{\bf{1}}_{k_1\geq m_1+\cdots+m_{n-1}}\;
\zeta_2^{k_1-(m_1+\cdots+m_{n-1})}\,P\left(\frac{r^2z_1}{\zeta_2-(1-r^2)z_2}\right)
\,,
\end{eqnarray*}
where $P\in\mathbb{C}[X]$.

\end{lemma}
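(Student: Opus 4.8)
The plan is to read the integral over $\{|\zeta_1|=+\infty\}$ as a residue at infinity in the variable $\zeta_1$, treating $\zeta_2,z,r$ as parameters, and then to peel off the dependence on $\zeta_2$ by a homogeneity (rescaling) argument. Throughout I write $D:=m_1+\cdots+m_{n-1}$ and abbreviate the location of the double pole as $a:=\frac{r^2z_1\zeta_2}{\zeta_2-(1-r^2)z_2}$, so that $a=\zeta_2\,b$ with $b:=\frac{r^2z_1}{\zeta_2-(1-r^2)z_2}$. The integrand appearing in the statement is a rational function of $\zeta_1$ whose only finite poles are the $\eta_j\zeta_2$, of order $m_j$, and $a$, of order $2$; the genericity hypotheses $r\neq\alpha_p$ (together with $\zeta_2\neq0$) keep these finite and well separated, so that for a circle of radius larger than every $|\eta_j\zeta_2|$ and $|a|$ the integral equals the coefficient of $\zeta_1^{-1}$ in the Laurent expansion of the integrand valid for $|\zeta_1|$ large.

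First I would perform the substitution $\zeta_1=\zeta_2\,\xi$, legitimate since $\zeta_2\neq0$: it is an orientation-preserving conformal map, so the circle $|\zeta_1|=\rho$ becomes $|\xi|=\rho/|\zeta_2|$ traversed once positively. Each factor transforms homogeneously, $\zeta_1-\eta_j\zeta_2=\zeta_2(\xi-\eta_j)$ and $\zeta_1-a=\zeta_2(\xi-b)$, while $\zeta_1^{k_1-m_1+1}=\zeta_2^{k_1-m_1+1}\xi^{k_1-m_1+1}$ and $d\zeta_1=\zeta_2\,d\xi$. Collecting the powers of $\zeta_2$ gives the exponent $(k_1-m_1+1)-(m_2+\cdots+m_{n-1})-2+1=k_1-D$, and one is left with
\[
\frac{1}{2\pi i}\int_{|\zeta_1|=+\infty}(\cdots)\,d\zeta_1
=\zeta_2^{\,k_1-D}\,\frac{1}{2\pi i}\int_{|\xi|=+\infty}
\frac{\xi^{\,k_1-m_1+1}\,d\xi}{\prod_{j=2}^{n-1}(\xi-\eta_j)^{m_j}\,(\xi-b)^2}\,.
\]
This already isolates the announced factor $\zeta_2^{\,k_1-D}$ and reduces everything to a one-variable residue at infinity in which $b$ is the only remaining non-constant datum.

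Next I would evaluate the reduced integral as the coefficient of $\xi^{-1}$ at infinity. Writing the denominator as $\xi^{(m_2+\cdots+m_{n-1})+2}\prod_{j}(1-\eta_j/\xi)^{m_j}(1-b/\xi)^2$, the integrand becomes $\xi^{\,k_1-D-1}\prod_{j}(1-\eta_j/\xi)^{-m_j}(1-b/\xi)^{-2}$. Setting $s=1/\xi$ and expanding each factor in the convergent series $(1-\eta_js)^{-m_j}=\sum_{v\geq0}\binom{v+m_j-1}{v}\eta_j^{\,v}s^{v}$ and $(1-bs)^{-2}=\sum_{l\geq0}(l+1)b^{\,l}s^{l}$ (the binomial counts being exactly those of lemma \ref{combinatoire}), the sought coefficient of $\xi^{-1}$ is the coefficient of $s^{\,k_1-D}$ in this product. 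That coefficient is $0$ unless $k_1-D\geq0$, which produces the indicator ${\bf 1}_{k_1\geq m_1+\cdots+m_{n-1}}$; and when $k_1\geq D$ it is a finite sum of monomials $b^{\,l}$ with $0\leq l\leq k_1-D$ whose coefficients depend only on the fixed data $\eta_j,m_j,k_1$. Hence it equals $P(b)$ for some $P\in\mathbb{C}[X]$, and substituting $b=\frac{r^2z_1}{\zeta_2-(1-r^2)z_2}$ reproduces exactly the right-hand side of the lemma.

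The only delicate points are bookkeeping: verifying the $\zeta_2$-exponent $k_1-D$ after the rescaling, and confirming that integration over $|\zeta_1|=+\infty$ legitimately selects the coefficient of $\zeta_1^{-1}$, which rests on the integrand being rational with all poles finite. Once these are in place the statement is a pure power-series extraction and no genuine analytic obstacle remains; in particular, the double pole at $b$ and the higher-order poles at the $\eta_j$ require no special case analysis, since the residue at infinity handles all of them uniformly.
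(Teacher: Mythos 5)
Your proof is correct, but it takes a genuinely different route from the paper's. The paper evaluates the integral as the sum of the residues at the \emph{finite} poles (the double pole at $\frac{r^2z_1\zeta_2}{\zeta_2-(1-r^2)z_2}$ and the poles at $\eta_p\zeta_2$), rescales each residue by $\zeta_2$, and then must invoke lemma~\ref{lagrangeuclide} via (\ref{preliminfini}) to recognize that the resulting combination, namely $\frac{x^{k_1-m_1+1}}{\prod_j(x-\eta_j)^{m_j}}$ minus the Lagrange-type sum, is the Euclidean quotient $Q\left(X^{k_1-m_1+1},\prod_{j=2}^{n-1}(X-\eta_j)^{m_j}\right)$, whose derivative gives $P$; since this residue decomposition degenerates when $\frac{r^2z_1}{\zeta_2-(1-r^2)z_2}=\eta_p$, the paper also needs a closing density/continuity argument in $(r,z)$. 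You instead rescale first ($\zeta_1=\zeta_2\xi$, correctly extracting $\zeta_2^{k_1-D}$) and compute the residue at infinity by pure Laurent-coefficient extraction; this makes the polynomial structure of $P$ manifest, requires no appeal to lemma~\ref{lagrangeuclide}, and handles coincident poles ($b=\eta_p$) and the possible pole at $\zeta_1=0$ (when $k_1<m_1-1$) uniformly, so the exceptional-configuration argument disappears. What the paper's longer route buys is the explicit representation (\ref{infini2}) of the answer as a derivative of the quotient, i.e. as the full rational function minus the Lagrange sum, and it is exactly this form that is reused downstream in the proof of proposition~\ref{remainder}; your $P$, written as the coefficient of $s^{k_1-D}$ in $(1-bs)^{-2}\prod_j(1-\eta_js)^{-m_j}$, agrees with the derivative of the quotient given by lemma~\ref{lagrangeuclide} (both equal $\sum_{u}(k_1-D+1-u)\,b^{k_1-D-u}\sigma_u$ with $\sigma_u$ the weighted power sums), but one would still need that identification to carry out the later computations in the paper.
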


\begin{proof}

First, notice that, for all
$z\in\mathbb{B}_2$ and all
$\zeta\in\mathbb{S}_2$, we have by the
Cauchy-Schwarz inequality
$|1-<\overline{\zeta},z>|
\geq1-\|z\|\,\|\zeta\|=1-\|z\|>0$.
In addition, 
\begin{eqnarray}\label{zetaz2}
|(1-r^2)z_2|
=\sqrt{1-r^2}|\,\overline{\zeta}_2z_2|
\leq|\sqrt{1-r^2}\,\|z\|<|\zeta_2|
\end{eqnarray}
and
\begin{eqnarray}\label{zetaz1}
\left|
\frac{r^2z_1\zeta_2}{\zeta_2-(1-r^2)z_2}
\right|
=r\frac{|\overline{\zeta_1}z_1|}{|1-\overline{\zeta_2}z_2|}
<r=|\zeta_1|
\end{eqnarray}
since
$|1-\overline{\zeta}_2z_2|-|\overline{\zeta}_1z_1|\geq
1-|\overline{\zeta}_1z_1|-|\overline{\zeta}_2z_2|
\geq1-\|\zeta\|\,\|z\|>0$.
In particular, 
$\frac{r^2z_1\zeta_2}{\zeta_2-(1-r^2)z_2}$ 
is residue with respect to
$\zeta_1$ in the above integral.
\bigskip

Next, by
$\int_{|\zeta_1|=+\infty}$ above, we mean 
$\lim_{R\rightarrow+\infty}\int_{|\zeta_1|=R}$ 
that exists by the residue theorem
(it is also 
$\int_{|\zeta_1|=R}$ for
$R$ large enough). If
$k_1<m_1+\cdots+m_{n-1}$ then
\begin{eqnarray*}
\deg_{\zeta_1}
\left(\frac{\zeta_1^{k_1-m_1+1}}{\prod_{j=2}^{n-1}
(\zeta_1-\eta_j\zeta_2)^{m_j}\;\left(\zeta_1-\frac{r^2z_1\zeta_2}{\zeta_2-(1-r^2z_2)}\right)^2}\right)
& \leq & -2
\end{eqnarray*}
and 
\begin{eqnarray*}
\frac{1}{2\pi i}\int_{|\zeta_1|=+\infty}
\frac{\zeta_1^{k_1-m_1+1}\;d\zeta_1}{\prod_{j=2}^{n-1}
(\zeta_1-\eta_j\zeta_2)^{m_j}\;\left(\zeta_1-\frac{r^2z_1\zeta_2}{\zeta_2-(1-r^2z_2)}\right)^2}
& = & 0\,.
\end{eqnarray*}

Now if $k_1\geq m_1+\cdots+m_{n-1}$, in particular
$k_1-m_1+1\geq0$ and the above integral is
\begin{eqnarray*}\nonumber
& & 
\frac{\partial}{\partial\zeta_1}|_{\zeta_1=\frac{r^2z_1\zeta_2}{\zeta_2-(1-r^2)z_2}}
\left[
\frac{\zeta_1^{k_1-m_1+1}}{\prod_{j=2}^{n-1}(\zeta_1-\eta_j\zeta_2)^{m_j}}
\right]\\\nonumber
& & +\;
\sum_{p=2}^{n-1}
\frac{1}{(m_p-1)!}\frac{\partial^{m_p-1}}{\partial\zeta_1^{m_p-1}}|_{\zeta_1=\eta_p\zeta_2}
\left[\frac{\zeta_1^{k-m_1+1}}{\prod_{j=2,j\neq p}^{n-1}(\zeta_1-\eta_j\zeta_2)^{m_j}
(\zeta_1-\frac{r^2z_1\zeta_2}{\zeta_2-(1-r^2)z_2})^2}
\right]
=\\\nonumber
\\\nonumber
& = &
\frac{1}{\zeta_2}
\frac{\partial}{\partial x}|_{x=\frac{r^2z_1}{\zeta_2-(1-r^2)z_2}}
\left[
\frac{(x\zeta_2)^{k_1-m_1+1}}{\prod_{j=2}^{n-1}(x\zeta_2-\eta_j\zeta_2)^{m_j}}
\right]\\\nonumber
& & +\;
\sum_{p=2}^{n-1}
\frac{1}{(m_p-1)!}\frac{1}{\zeta_2^{m_p-1}}\frac{\partial^{m_p-1}}{\partial t^{m_p-1}}|_{t=\eta_p}
\left[\frac{(t\zeta_2)^{k-m_1+1}}{\prod_{j=2,j\neq p}^{n-1}(t\zeta_2-\eta_j\zeta_2)^{m_j}
(t\zeta_2-\frac{r^2z_1\zeta_2}{\zeta_2-(1-r^2)z_2})^2}
\right]\nonumber
\end{eqnarray*}
\begin{eqnarray}\label{infini2}
=\;\zeta_2^{k_1-(m_1+\cdots+m_{n-1})}
\frac{\partial}{\partial x}|_{x=\frac{r^2z_1}{\zeta_2-(1-r^2)z_2}}
\left[
\frac{x^{k_1-m_1+1}}{\prod_{j=2}^{n-1}(x-\eta_j)^{m_j}}
\right]
\;\;\;\;\;\;\;\;\;\;\;\;\;\;
\end{eqnarray}
\begin{eqnarray*}
& &
+\;\;
\zeta_2^{k_1-(m_1+\cdots+m_{n-1})}
\sum_{p=2}^{n-1}
\frac{1}{(m_p-1)!}\frac{\partial^{m_p-1}}{\partial t^{m_p-1}}|_{t=\eta_p}
\left[\frac{t^{k-m_1+1}}{\prod_{j=2,j\neq p}^{n-1}(t-\eta_j)^{m_j}
(t-\frac{r^2z_1}{\zeta_2-(1-r^2)z_2})^2}
\right]\\
\\
& = & 
\zeta_2^{k_1-(m_1+\cdots+m_{n-1})}\;\times
\;\;\;\;\;\;\;\;\;\;\;\;\;\;\;\;\;\;\;\;\;\;\;\;\;\;\;\;\;\;\;\;\;\;\;\;\;\;\;\;
\;\;\;\;\;\;\;\;\;\;\;\;\;\;\;\;\;\;\;\;\;\;\;\;\;\;\;\;\;\;\;\;\;\;\;\;\;\;\;\;\;\;\;\;
\end{eqnarray*}
\begin{eqnarray*}
\frac{\partial}{\partial x}|_{x=\frac{r^2z_1}{\zeta_2-(1-r^2)z_2}}
\left[
\frac{x^{k_1-m_1+1}}{\prod_{j=2}^{n-1}(x-\eta_j)^{m_j}}
-\sum_{p=2}^{n-1}
\frac{1}{(m_p-1)!}\frac{\partial^{m_p-1}}{\partial t^{m_p-1}}|_{t=\eta_p}
\left(\frac{t^{k-m_1+1}}{\prod_{j=2,j\neq p}^{n-1}(t-\eta_j)^{m_j}(x-t)}
\right)
\right]\,.\nonumber
\end{eqnarray*}

\bigskip

Now by lemma \ref{lagrangeuclide} 
and (\ref{preliminfini}),
we have

\begin{eqnarray*}
\frac{X^{k_1-m_1+1}}{\prod_{j=2}^{n-1}(X-\eta_j)^{m_j}}
-\sum_{p=2}^{n-1}
\frac{1}{(m_p-1)!}\frac{\partial^{m_p-1}}{\partial t^{m_p-1}}|_{t=\eta_p}
\left(\frac{t^{k-m_1+1}}{\prod_{j=2,j\neq p}^{n-1}(t-\eta_j)^{m_j}(X-t)}
\right)
& = &
\end{eqnarray*}
\begin{eqnarray*}
\;\;\;\;\;\;\;\;\;\;\;\;\;\;\;\;\;\;\;\;\;\;
& = &
Q\left(X^{k_1-m_1+1},\prod_{j=2}^{n-1}(X-\eta_j)^{m_j}\right)\,,
\end{eqnarray*}
where $Q$ (resp. $R$) is the quotient (resp. remainder)
of the Euclidean division of 
$X^{k_1-m_1+1}$ by
$\prod_{j=2}^{n-1}(X-\eta_j)^{m_j}$. 
It follows that this is a polynomial, as well as
\begin{eqnarray*}
\frac{\partial Q}{\partial X}|_{X=\frac{r^2z_1}{\zeta_2-(1-r^2)z_2}}
\end{eqnarray*}
and this proves the lemma.

Notice that it is true as long as 
$\dfrac{r^2z_1}{\zeta_2-(1-r^2)z_2}\neq\eta_p,
\,\forall\,p=2,\ldots,n-1$
then as soon as
$|\zeta_2|^2=1-r^2\neq|(1-r^2)z_2+r^2z_1/\eta_p|^2$.
The lemma is proved for all 
$(r,z)$ in a dense open set of
$[0,1]\times\mathbb{B}_2$ then
for all
$r\neq\alpha_p,\,p=1,\ldots,n$ since
the functions that appear in the statement
are continuous with respect to
$r$ and $z$.

\end{proof}

\bigskip

Now we can give the proof of proposition~\ref{remainder}.

\begin{proof}

We have to calculate for all $k_1,\,k_2\geq0$ :
\begin{eqnarray}\label{infini0}
-\,\lim_{\varepsilon\rightarrow0}\frac{1}{(2\pi i)^2}
\int_{\Sigma_{\varepsilon}}
\frac{\zeta_1^{k_1}\zeta_2^{k_2}\;\omega'\left(\overline{\zeta}\right)\wedge\omega(\zeta)}
{g_n(\zeta)\;\left(1-\overline{\zeta_1}z_1-\overline{\zeta_2}z_2\right)^2}
& = &
\;\;\;\;\;\;\;\;\;\;\;\;\;\;\;\;\;\;\;\;\;\;\;\;\;\;\;\;\;\;\;
\end{eqnarray}
\begin{eqnarray*}
& = &
\lim_{\varepsilon\rightarrow0}
\sum_{l=1}^{\widetilde{n}-1}
\int_{\alpha_{q_l}+\varepsilon}^{\alpha_{q_{l+1}}-\varepsilon}2rdr
\;\times
\\
& &
\frac{1}{2\pi i}\int_{|\zeta_2|=\sqrt{1-r^2}}
\frac{\zeta_2^{k_2-m_n-1}d\zeta_2}{(1-\overline{\zeta_2}z_2)^2}
\frac{1}{2\pi i}
\int_{|\zeta_1|=r}
\frac{\zeta_1^{k_1-m_1-1}d\zeta_1}
{\prod_{j=2}^{n-1}(\zeta_1-\eta_j\zeta_2)^{m_j}(1-\frac{\overline{\zeta_1}z_1}{1-\overline{\zeta_2}z_2})^2}\,,
\end{eqnarray*}
since
\begin{eqnarray*}
\omega'\left(\overline{\zeta}\right)\wedge\omega(\zeta)
& = &
-2rdr\wedge\frac{d\zeta_1}{\zeta_1}\wedge\frac{d\zeta_2}{\zeta_2}\,
\end{eqnarray*}
with the following parametrization of
$\zeta\in\mathbb{S}_2$ :
\begin{eqnarray*}
\begin{cases}
\zeta_1=re^{i\theta_1},\;0\leq\theta_1<2\pi,\\
\zeta_2=\sqrt{1-r^2}\,e^{i\theta_2},\;0\leq\theta_2<2\pi,\\
\end{cases}
& &
0<r<1\,.
\end{eqnarray*}
\bigskip

Now fix $l=1,\ldots,\widetilde{n}-1$ and $\alpha_{q_l}<r<\alpha_{q_{l+1}}$. Then for all
$|\zeta_1|=r$ and all $|\zeta_2|=\sqrt{1-r^2}$, one has
$|\eta_{q_l}|^2<r^2(1+|\eta_{q_l}|^2)$ thus 
$|\eta_{q_l}|\sqrt{1-r^2}<r$ (similarly
$|\eta_{q_{l+1}}|\sqrt{1-r^2}>r$). It follows that
\begin{eqnarray}
\begin{cases}
|\eta_2\zeta_2|\leq\cdots\leq|\eta_{q_l}\zeta_2|<|\zeta_1|\,,\\
|\zeta_1|<|\eta_{q_l+1}\zeta_2|=\cdots=|\eta_{q_{l+1}}\zeta_2|\leq\cdots\leq|\eta_{n-1}\zeta_2|\,.
\end{cases}
\end{eqnarray}
This yields to
\begin{eqnarray*}
\frac{1}{2\pi i}
\int_{|\zeta_1|=r}
\frac{\zeta_1^{k_1-m_1+1}\;d\zeta_1}
{\prod_{j=2}^{n-1}(\zeta_1-\eta_j\zeta_2)^{m_j}(\zeta_1-\frac{r^2z_1\zeta_2}{\zeta_2-(1-r^2)z_2})^2}
& = &
\;\;\;\;\;\;\;\;\;\;\;\;\;\;\;\;\;\;\;\;\;\;\;\;\;\;\;\;\;\;\;\;\;\;\;\;\;\;\;\;\;\;
\end{eqnarray*}
\begin{eqnarray*}
& = &
\frac{1}{2\pi i}
\int_{|\zeta_1|=+\infty}
\frac{\zeta_1^{k_1-m_1+1}\;d\zeta_1}
{\prod_{j=2}^{n-1}(\zeta_1-\eta_j\zeta_2)^{m_j}(\zeta_1-\frac{r^2z_1\zeta_2}{\zeta_2-(1-r^2)z_2})^2}\\
& &
-\,\sum_{p=q_l+1}^{n-1}
\frac{1}{(m_p-1)!}
\frac{\partial^{m_p-1}}{\partial\zeta_1^{m_p-1}}|_{\zeta_1=\eta_p\zeta_2}
\left(
\frac{\zeta_1^{k_1-m_1+1}}
{\prod_{j=2,j\neq p}^{n-1}(\zeta_1-\eta_j\zeta_2)^{m_j}(\zeta_1-\frac{r^2z_1\zeta_2}{\zeta_2-(1-r^2)z_2})^2}
\right)\\
\\
& = &
{\bf{1}}_{k_1\geq m_1+\cdots+m_{n-1}}\,\zeta_2^{k_1-(m_1+\cdots+m_{n-1})}
P\left(\frac{r^2z_1}{\zeta_2-(1-r^2)z_2}\right)\\
& - &
\zeta_2^{k_1-(m_1+\cdots+m_{n-1})}
\sum_{p=q_l+1}^{n-1}
\frac{1}{(m_p-1)!}
\frac{\partial^{m_p-1}}{\partial t^{m_p-1}}|_{t=\eta_p}
\left(
\frac{t^{k_1-m_1+1}}
{\prod_{j=2,j\neq p}^{n-1}(t-\eta_j)^{m_j}(t-\frac{r^2z_1}{\zeta_2-(1-r^2)z_2})^2}
\right)
\end{eqnarray*}
by lemma~\ref{infini}.
\bigskip

One can deduce that
\begin{eqnarray*}
\frac{1}{2\pi i}\int_{|\zeta_2|=\sqrt{1-r^2}}
\frac{\zeta_2^{k_2-m_n+1}d\zeta_2}{(\zeta_2-(1-r^2)z_2)^2}
\frac{1}{2\pi i}
\int_{|\zeta_1|=r}
\frac{\zeta_1^{k_1-m_1+1}d\zeta_1}
{\prod_{j=2}^{n-1}(\zeta_1-\eta_j\zeta_2)^{m_j}(\zeta_1-\frac{r^2z_1\zeta_2}{\zeta_2-(1-r^2)z_2})^2}
& = &
\end{eqnarray*}
\begin{eqnarray}\label{infini3}
& = &
{\bf{1}}_{k_1\geq m_1+\cdots+m_{n-1}}\,
\frac{1}{2\pi i}\int_{|\zeta_2|=\sqrt{1-r^2}}
\frac{\zeta_2^{k_1+k_2-N+1}P(\frac{r^2z_1}{\zeta_2-(1-r^2)z_2})}
{(\zeta_2-(1-r^2)z_2)^2}
\,d\zeta_2
\end{eqnarray}
\begin{eqnarray*}
-\sum_{p=q_l+1}^{n-1}
\frac{1}{(m_p-1)!}
\frac{\partial^{m_p-1}}{\partial t^{m_p-1}}|_{t=\eta_p}
\left[
\frac{t^{k_1-m_1+1}}
{\prod_{j=2,j\neq p}^{n-1}(t-\eta_j)^{m_j}}
\frac{1}{2\pi i}\int_{|\zeta_2|=\sqrt{1-r^2}}
\frac{\zeta_2^{k_1+k_2-N+1}\;d\zeta_2}{(t(\zeta_2-(1-r^2)z_2)-r^2z_1)^2}
\right].\nonumber
\end{eqnarray*}

\bigskip

Now we see in the first integral that the only possible residues are
$\zeta_2=(1-r^2)z_2$ and $\zeta_2=0$. If
$k_1+k_2<N$ (and $P\neq0$ otherwise the integral is zero) then 
\begin{eqnarray*}
\deg\left(\frac{\zeta_2^{k_1+k_2-N+1}}{(\zeta_2-(1-r^2)z_2)^2}
P\left(\frac{r^2z_1}{\zeta_2-(1-r^2)z_2}\right)\right)
& \leq & -2
\end{eqnarray*}
and
\begin{eqnarray*}
\frac{1}{2\pi i}\int_{|\zeta_2|=\sqrt{1-r^2}}
\frac{\zeta_2^{k_1+k_2-N+1}P(\frac{r^2z_1}{\zeta_2-(1-r^2)z_2})d\zeta_2}
{(\zeta_2-(1-r^2)z_2)^2}
=\frac{1}{2\pi i}\int_{|\zeta_2|=+\infty}
\frac{\zeta_2^{k_1+k_2-N+1}P(\frac{r^2z_1}{\zeta_2-(1-r^2)z_2})d\zeta_2}
{(\zeta_2-(1-r^2)z_2)^2}
=0\,.
\end{eqnarray*}
\bigskip

If $k_1+k_2\geq N$ then the only residue is
$\zeta_2=(1-r^2)z_2$
and we get by (\ref{infini2})
\begin{eqnarray*}
\frac{1}{2\pi i}\int_{|\zeta_2|=\sqrt{1-r^2}}
\frac{\zeta_2^{k_1+k_2-N+1}P(\frac{r^2z_1}{\zeta_2-(1-r^2)z_2})}
{(\zeta_2-(1-r^2)z_2)^2}\,d\zeta_2
& = &
\;\;\;\;\;\;\;\;\;\;\;\;\;\;\;\;\;\;\;\;\;\;\;\;\;\;\;\;\;\;\;\;\;\;\;\;\;\;\;\;\;\;\;\;\;\;\;\;\;
\end{eqnarray*}
\begin{eqnarray*}
& = &
\lim_{\varepsilon'\rightarrow0}
\frac{1}{2\pi i}
\int_{|\zeta_2-(1-r^2)z_2|=\varepsilon'}
\frac{\zeta_2^{k_1+k_2-N+1}P(\frac{r^2z_1}{\zeta_2-(1-r^2)z_2})}
{(\zeta_2-(1-r^2)z_2)^2}\,d\zeta_2\,,
\\
\\
& = &
\lim_{\varepsilon'\rightarrow0}
\frac{1}{2\pi i}
\int_{|\zeta_2-(1-r^2)z_2|=\varepsilon'}
\frac{\zeta_2^{k_1+k_2-N+1}}{(\zeta_2-(1-r^2)z_2)^2}
\frac{\partial}{\partial x}|_{x=\frac{r^2z_1}{\zeta_2-(1-r^2)z_2}}
\left[
\frac{x^{k_1-m_1+1}}{\prod_{j=2}^{n-1}(x-\eta_j)^{m_j}}
\right]
\,d\zeta_2\\
& &
+\;
\lim_{\varepsilon'\rightarrow0}
\frac{1}{2\pi i}
\int_{|\zeta_2-(1-r^2)z_2|=\varepsilon'}
\frac{\zeta_2^{k_1+k_2-N+1}}{(\zeta_2-(1-r^2)z_2)^2}\,d\zeta_2
\;\times\\
& & 
\;\;\;\;\;\;\;\;\;
\times\;
\sum_{p=2}^{n-1}
\frac{1}{(m_p-1)!}\frac{\partial^{m_p-1}}{\partial t^{m_p-1}}|_{t=\eta_p}
\left[\frac{t^{k_1-m_1+1}}{\prod_{j=2,j\neq p}^{n-1}(t-\eta_j)^{m_j}
(t-\frac{r^2z_1}{\zeta_2-(1-r^2)z_2})^2}
\right]\\
\\
& = &
\lim_{\varepsilon'\rightarrow0}
\;(r^2z_1)^{k_1-m_1}\;
\frac{\partial}{\partial t}|_{t=1}
\;t^{k_1-m_1+1}
\;\times
\end{eqnarray*}
\begin{eqnarray*}
\frac{1}{2\pi i}
\int_{|\zeta_2-(1-r^2)z_2|=\varepsilon'}
\frac{\zeta_2^{k_1+k_2-N+1}\;d\zeta_2}
{(\zeta_2-(1-r^2)z_2)^{k_1-(m_1+\cdots+m_{n-1})+2}
\prod_{j=2}^{n-1}(tr^2z_1-\eta_j(\zeta_2-(1-r^2)z_2))^{m_j}}\\
\end{eqnarray*}
\begin{eqnarray*}
& + &
\lim_{\varepsilon'\rightarrow0}
\sum_{p=2}^{n-1}
\frac{1}{(m_p-1)!}
\;\times
\\
& &
\frac{\partial^{m_p-1}}{\partial t^{m_p-1}}|_{t=\eta_p}
\left[
\frac{t^{k_1-m_1+1}}{\prod_{j=2,j\neq p}^{n-1}(t-\eta_j)^{m_j}}
\frac{1}{2\pi i}
\int_{|\zeta_2-(1-r^2)z_2|=\varepsilon'}
\frac{\zeta_2^{k_1+k_2-N+1}\;d\zeta_2}{(t(\zeta_2-(1-r^2)z_2)-r^2z_1)^2}
\right]
\\
\end{eqnarray*}
(one can switch integral and derivative for all
fixed $z$ and all
$\varepsilon'$ small enough since the above functions,
as well as all
their derivatives with respect to $t$,
are integrable).
\bigskip

Now $z_1,\,r,\,t\neq0$ being fixed (since
$z\in U_{\eta}$), one can choose for $\zeta_2$
a small enough neighborhood of
$(1-r^2)z_2$ such that the function
\begin{eqnarray*}
\zeta_2 & \mapsto &
\frac{\zeta_2^{k_1+k_2-N+1}}{(t(\zeta_2-(1-r^2)z_2)-r^2z_1)^2}
\end{eqnarray*}
has no singularity. It follows that
\begin{eqnarray*}
\frac{1}{2\pi i}\int_{|\zeta_2|=\sqrt{1-r^2}}
\frac{\zeta_2^{k_1+k_2-N+1}P(\frac{r^2z_1}{\zeta_2-(1-r^2)z_2})}
{(\zeta_2-(1-r^2)z_2)^2}\,d\zeta_2
& = &
\;\;\;\;\;\;\;\;\;\;\;\;\;\;\;\;\;\;\;\;\;\;\;\;\;\;\;\;\;\;\;\;\;\;\;\;\;\;\;\;\;\;\;\;\;\;\;\;\;
\end{eqnarray*}
\begin{eqnarray*}
& = &
(r^2z_1)^{k_1-m_1}\;
\frac{\partial}{\partial t}|_{t=1}
\;t^{k_1-m_1+1}
\frac{1}{(k_1-(m_1+\cdots+m_{n-1})+1)!}
\;\times
\\
& &
\times\;
\frac{\partial^{k_1-(m_1+\cdots+m_{n-1})+1}}{\partial\zeta_2^{k_1-(m_1+\cdots+m_{n-1})+1}}|_{\zeta_2=(1-r^2)z_2}
\left[
\frac{\zeta_2^{k_1+k_2-N+1}}{\prod_{j=2}^{n-1}(tr^2z_1-\eta_j(\zeta_2-(1-r^2)z_2))^{m_j}}
\right]
\end{eqnarray*}
\begin{eqnarray*}
& = &
(r^2z_1)^{k_1-m_1}\;
\frac{\partial}{\partial t}|_{t=1}
\;t^{k_1-m_1+1}
\;\times
\\
& &
\times\;
\sum_{v_1+\cdots+v_{n-1}=k_1-(m_1+\cdots+m_{n-1})+1,\,v_1\leq k_1+k_2-N+1}
\frac{(k_1+k_2-N+1)!}{v_1!\,(k_1+k_2-N+1-v_1)!}
\;\times
\\
& &
\times\;
((1-r^2)z_2)^{k_1+k_2-N+1-v_1}\;
\prod_{j=2}^{n-1}
\;\frac{(v_j+m_j-1)!}{v_j!\,(m_j-1)!}
\;\frac{\eta_j^{v_j}}{(tr^2z_1)^{v_j+m_j}}
\\
\end{eqnarray*}
\begin{eqnarray*}
& = &
\sum_{v_1+\cdots+v_{n-1}=k_1-(m_1+\cdots+m_{n-1})+1,\,v_1\leq k_1+k_2-N+1}
\frac{(k_1+k_2-N+1)!}{v_1!\,(k_1+k_2-N+1-v_1)!}
\;\times\\
\\
& & 
\times\;
(r^2z_1)^{k_1-m_1-(v_2+m_2+\cdots+v_{n-1}+m_{n-1})}
((1-r^2)z_2)^{k_1+k_2-N+1-v_1}
\;\times
\\
& &
\times\;
\prod_{j=2}^{n-1}
\;\frac{(v_j+m_j-1)!}{v_j!\,(m_j-1)!}
\;\eta_j^{v_j}
\;\frac{d}{dt}|_{t=1}
\left(t^{k_1-m_1+1-(v_2+m_2+\cdots+v_{n-1}+m_{n-1})}\right)
\end{eqnarray*}
\begin{eqnarray*}
& = &
\sum_{v_1+\cdots+v_{n-1}=k_1-(m_1+\cdots+m_{n-1})+1,
\,v_1\leq k_1+k_2-N+1,\,v_1\geq1}
\frac{(k_1+k_2-N+1)!}{v_1!\,(k_1+k_2-N+1-v_1)!}
\;\times
\\
& &
\times\;
(r^2z_1)^{v_1-1}
((1-r^2)z_2)^{k_1+k_2-N+1-v_1}
\;\prod_{j=2}^{n-1}
\;\frac{(v_j+m_j-1)!}{v_j!\,(m_j-1)!}
\;\eta_j^{v_j}
\;v_1
\end{eqnarray*}
\begin{eqnarray}\label{infini3a}
& &
=
\sum_{v_1+\cdots+v_{n-1}=k_1-(m_1+\cdots+m_{n-1}),\,v_1\leq k_1+k_2-N}
\frac{(k_1+k_2-N+1)!}{v_1!\,(k_1+k_2-N-v_1)!}
\;\times
\\\nonumber
& &
\times\;
(r^2z_1)^{v_1}
((1-r^2)z_2)^{k_1+k_2-N-v_1}
\;\prod_{j=2}^{n-1}
\;\frac{(v_j+m_j-1)!}{v_j!\,(m_j-1)!}
\;\eta_j^{v_j}\,.
\;\;\;\;\;\;\;\;\;\;\;\;\;\;\;\;\;\;\;\;\;\;\;\;
\end{eqnarray}

\bigskip

Now consider the other part of (\ref{infini3}). In order to calculate the following integral, for all
$t$ close to $\eta_p,\;p=q_l+1,\ldots,n-1$,
\begin{eqnarray*}
\frac{1}{2\pi i}\int_{|\zeta_2|=\sqrt{1-r^2}}
\frac{\zeta_2^{k_1+k_2-N+1}\;d\zeta_2}{(\zeta_2-(1-r^2)z_2-r^2z_1/t)^2}
\;,
\end{eqnarray*}
the following lemma will be useful.
\bigskip

\begin{lemma}\label{residup}

Let $K\subset\mathbb{B}_2$ 
be a compact subset.
There exists
$\varepsilon_K>0$ such that,
for all $z\in K$,
for all $p=q_{l}+1,\ldots,n-1$, 
for all $t$ close to $\eta_p$
and all
$r<\alpha_{q_{l+1}}$,
\begin{eqnarray}\label{residup1}
\left|\left(1-r^2\right)z_2+\frac{r^2z_1}{t}\right|
& \leq &
\left(1-\varepsilon_K^2\right)
\sqrt{1-r^2}\,.
\end{eqnarray}
In particular, for all
$z\in\mathbb{B}_2$,
for all
$t$ close to $\eta_p$ and
all
$r<\alpha_{q_{l+1}}$,
one has
\begin{eqnarray*}
\left|\left(1-r^2\right)z_2+\frac{r^2z_1}{t}\right|
& < &
\sqrt{1-r^2}\,.
\\
\end{eqnarray*}

These assertions are still true for
all $p=2,\ldots,n-1$,
for all
$t$ close enough to $\eta_p$
and $r$ to $\alpha_p$.

%

\end{lemma}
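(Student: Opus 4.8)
The plan is to estimate the left-hand side by the triangle inequality and then by Cauchy--Schwarz, reducing everything to a single scalar inequality that is governed exactly by the definition of $\alpha_p$. First I would write, for $t\neq0$,
\begin{eqnarray*}
\left|(1-r^2)z_2+\frac{r^2z_1}{t}\right|
& \leq &
(1-r^2)|z_2|+\frac{r^2}{|t|}|z_1|
\;\leq\;
\sqrt{(1-r^2)^2+\frac{r^4}{|t|^2}}\;\|z\|\,,
\end{eqnarray*}
the last step being Cauchy--Schwarz applied to the vectors $\bigl(1-r^2,\,r^2/|t|\bigr)$ and $\bigl(|z_2|,\,|z_1|\bigr)$.

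The heart of the argument is to control the coefficient when $t=\eta_p$. I would observe that
\begin{eqnarray*}
(1-r^2)^2+\frac{r^4}{|\eta_p|^2}\;\leq\;1-r^2
& \Longleftrightarrow &
\frac{r^2}{1-r^2}\;\leq\;|\eta_p|^2\;=\;\frac{\alpha_p^2}{1-\alpha_p^2}\,,
\end{eqnarray*}
using $|\eta_p|^2=\alpha_p^2/(1-\alpha_p^2)$ from (\ref{alpha}). Since $x\mapsto x/(1-x)$ is increasing on $[0,1)$, the right-hand condition is equivalent to $r\leq\alpha_p$. Now for $p\geq q_l+1$ the grouping (\ref{defq}) gives $\alpha_p\geq\alpha_{q_{l+1}}$, so the hypothesis $r<\alpha_{q_{l+1}}$ yields $r<\alpha_p$, whence the coefficient is $\leq\sqrt{1-r^2}$. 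Combined with the previous display this gives, \emph{at $t=\eta_p$},
\begin{eqnarray*}
\left|(1-r^2)z_2+\frac{r^2z_1}{\eta_p}\right|
& \leq &
\|z\|\,\sqrt{1-r^2}\,.
\end{eqnarray*}
Setting $M:=\sup_{z\in K}\|z\|$, which is $<1$ because $K$ is compact in $\mathbb{B}_2$, already produces a uniform bound $\leq M\sqrt{1-r^2}$ when $t=\eta_p$.

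It remains to let $t$ vary near $\eta_p$, and this is where I expect the only real (though mild) difficulty, namely making the perturbation uniform in $r$ up to $\alpha_{q_{l+1}}$ and in $z\in K$. I would fix $\varepsilon_K$ by $1-\varepsilon_K^2=(1+M)/2$, so that the bound just obtained leaves a slack of $\tfrac{1-M}{2}\sqrt{1-r^2}$. Since $r<\alpha_{q_{l+1}}<1$ one has $\sqrt{1-r^2}\geq\sqrt{1-\alpha_{q_{l+1}}^2}>0$, so this slack is bounded below by a positive constant independent of $z$ and $r$. As $t\mapsto r^2z_1/t$ is Lipschitz on the ball $\{|t-\eta_p|\leq|\eta_p|/2\}$ with constant at most $4/|\eta_p|^2$, uniformly in $z\in K$ (so $|z_1|\leq1$) and $r\in[0,1]$ (so $r^2\leq1$), shrinking the neighborhood of $\eta_p$ makes $\bigl|r^2z_1/t-r^2z_1/\eta_p\bigr|$ smaller than that slack; this yields the claimed bound $\leq(1-\varepsilon_K^2)\sqrt{1-r^2}$, uniformly over the finitely many $p$.

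Finally, the ``in particular'' statement for arbitrary $z\in\mathbb{B}_2$ is just the case $K=\{z\}$ of the first part. The closing assertion for all $p=2,\ldots,n-1$ with $r$ near $\alpha_p$ follows from the same Cauchy--Schwarz estimate together with the remark that at $(r,t)=(\alpha_p,\eta_p)$ one has the equality $(1-\alpha_p^2)^2+\alpha_p^4/|\eta_p|^2=1-\alpha_p^2$; hence the ratio $\sqrt{(1-r^2)^2+r^4/|t|^2}/\sqrt{1-r^2}$ is continuous and equals $1$ there, so it stays below $1/M$ for $(r,t)$ in a neighborhood of $(\alpha_p,\eta_p)$, and multiplying by $\|z\|\leq M$ gives the bound again.
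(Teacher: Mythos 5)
Your argument is correct in substance and follows the same route as the paper: Cauchy--Schwarz reduces (\ref{residup1}) to the scalar inequality $(1-r^2)^2+r^4/|\eta_p|^2\leq 1-r^2$, which holds precisely when $r\leq\alpha_p$, followed by a perturbation argument for $t$ near $\eta_p$. The one step you assert without justification is $\alpha_{q_{l+1}}<1$, and your additive-slack argument genuinely needs it: if $\alpha_{q_{l+1}}$ were equal to $1$, then $\sqrt{1-r^2}$ would not be bounded below over $r<\alpha_{q_{l+1}}$, while your Lipschitz bound on $\bigl|r^2z_1/t-r^2z_1/\eta_p\bigr|$ does not decay as $r\to1$, so no fixed neighborhood of $\eta_p$ would work. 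The claim is true exactly when the statement is non-vacuous: for any $p$ in the range $\{q_l+1,\ldots,n-1\}$ you already extracted $\alpha_{q_{l+1}}\leq\alpha_p$ from (\ref{defq}), and $\alpha_p\leq\alpha_{n-1}<1$ since $|\eta_{n-1}|<+\infty$; the only $l$ with $\alpha_{q_{l+1}}=1$ is $l=\widetilde{n}-1$, for which $q_{\widetilde{n}-1}=n-1$ (the group of value $1$ reduces to $\{n\}$ alone, because $\alpha_{n-1}<\alpha_n=1$), so that the range of $p$ is empty. Adding that one line closes the gap.

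For comparison, the paper's perturbation is multiplicative rather than additive: it bounds $\sqrt{1-2r^2+r^4(1+1/|t|^2)}$ by $(1+\varepsilon_K)\sqrt{1-2r^2+r^4/\alpha_p^2}$ for $t$ close enough to $\eta_p$, uniformly in $r$, because $1-2r^2+r^4/\alpha_p^2=(1-r^2)^2+r^4/|\eta_p|^2$ stays bounded below on all of $[0,1]$; this sidesteps any need for a lower bound on $\sqrt{1-r^2}$ and hence never touches the issue above. Your treatment of the final assertion (continuity of the ratio at $(r,t)=(\alpha_p,\eta_p)$, where it equals $1$ by the identity $(1-\alpha_p^2)^2+\alpha_p^4/|\eta_p|^2=1-\alpha_p^2$) is essentially the paper's own limit argument, and in fact written more carefully.
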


\bigskip

\begin{proof}

One has by the Cauchy-Schwarz inequality
\begin{eqnarray*}
|(1-r^2)z_2+r^2z_1/t|
& \leq &
\|z\|
\,\sqrt{(1-r^2)^2+\frac{r^4}{|t|^2}}
\;\leq\;
\|z\|\,
\sqrt{1-2r^2+r^4\left(1+\frac{1}{|t|^2}\right)}\,.
\end{eqnarray*}
Since $z\in K\subset\mathbb{B}_2$, 
there is $\varepsilon_K>0$ such that
$\|z\|\leq1-\varepsilon_K$. It follows that, for all 
$t$ close enough to $\eta_p$,
\begin{eqnarray*}
|(1-r^2)z_2+r^2z_1/t|
& \leq &
(1-\varepsilon_K)\;(1+\varepsilon_K)
\sqrt{1-2r^2+\frac{r^4}{\alpha_p^2}}\\
& \leq &
(1-\varepsilon_K^2)
\,\sqrt{1-2r^2+\frac{r^4}{\alpha_{q_{l+1}}^2}}\\
& < &
\sqrt{1-r^2}\;.
\\
\end{eqnarray*}

Similarly, 
since
\begin{eqnarray*}
\lim_{t\rightarrow\eta_p,r\rightarrow\alpha_p}
\sqrt{1-2r^2+r^4\left(1+\frac{1}{|t|^2}\right)}
\;=\;
\sqrt{1-\alpha_p^2}
\;=\;
\lim_{r\rightarrow\alpha_p}\sqrt{1-r^2}\,,
\end{eqnarray*}
one has, for all
$t$ close enough to $\eta_p$
and $r$ to $\alpha_p$,
\begin{eqnarray*}
|(1-r^2)z_2+r^2z_1/t|
& \leq &
(1-\varepsilon_K)
\sqrt{1-2r^2+r^4\left(1+\frac{1}{|t|^2}\right)}
\\
& \leq &
(1-\varepsilon_K)^2
\sqrt{1-r^2}\,.
\\
\end{eqnarray*}

\end{proof}

\bigskip

In particular, 
$\zeta_2=(1-r^2)z_2+r^2z_1/t$ is residue in the above integral. If
$k_1+k_2<N$ then
\begin{eqnarray*}
\frac{1}{2\pi i}\int_{|\zeta_2|=\sqrt{1-r^2}}
\frac{\zeta_2^{k_1+k_2-N+1}\;d\zeta_2}{(\zeta_2-(1-r^2)z_2-r^2z_1/t)^2}
=
\frac{1}{2\pi i}\int_{|\zeta_2|=+\infty}
\frac{\zeta_2^{k_1+k_2-N+1}\;d\zeta_2}{(\zeta_2-(1-r^2)z_2-r^2z_1/t)^2}
=0\,.
\\
\end{eqnarray*}

\bigskip

If $k_1+k_2\geq N$ one has
\begin{eqnarray*}
\frac{1}{2\pi i}\int_{|\zeta_2|=\sqrt{1-r^2}}
\frac{\zeta_2^{k_1+k_2-N+1}\;d\zeta_2}{(\zeta_2-(1-r^2)z_2-r^2z_1/t)^2}
=
(k_1+k_2-N+1)
((1-r^2)z_2+r^2z_1/t)^{k_1+k_2-N}\,.
\end{eqnarray*}

\bigskip

It follows by (\ref{infini3}) and (\ref{infini3a})
that, for all
$k_1,\,k_2\geq0$,
\begin{eqnarray*}
\frac{1}{2\pi i}\int_{|\zeta_2|=\sqrt{1-r^2}}
\frac{\zeta_2^{k_2-m_n+1}d\zeta_2}{(\zeta_2-(1-r^2)z_2)^2}
\frac{1}{2\pi i}
\int_{|\zeta_1|=r}
\frac{\zeta_1^{k_1-m_1+1}d\zeta_1}
{\prod_{j=2}^{n-1}(\zeta_1-\eta_j\zeta_2)^{m_j}(\zeta_1-\frac{r^2z_1\zeta_2}{\zeta_2-(1-r^2)z_2})^2}
& = &
\end{eqnarray*}
\begin{eqnarray*}
& = &
{\bf 1}_{k_1+k_2\geq N}
\sum_{v_1+\cdots+v_{n-1}=k_1-(m_1+\cdots+m_{n-1}),
\,v_1\leq k_1+k_2-N}
\frac{(k_1+k_2-N+1)!}{v_1!\,(k_1+k_2-N-v_1)!}
\;\times
\\
& &
\times\;
(r^2z_1)^{v_1}
((1-r^2)z_2)^{k_1+k_2-N-v_1}
\;\prod_{j=2}^{n-1}
\;\frac{(v_j+m_j-1)!}{v_j!\,(m_j-1)!}
\;\eta_j^{v_j}
\\
& - &
{\bf 1}_{k_1+k_2\geq N}
\sum_{p=q_l+1}^{n-1}
\frac{1}{(m_p-1)!}
\;\times
\\
& &
\times\;
\frac{\partial^{m_p-1}}{\partial t^{m_p-1}}|_{t=\eta_p}
\left(
\frac{(k_1+k_2-N+1)t^{k_1-m_1-1}
((1-r^2)z_2+r^2z_1/t)^{k_1+k_2-N}}
{\prod_{j=2,j\neq p}^{n-1}(t-\eta_j)^{m_j}}
\right)
\end{eqnarray*}(notice that the condition 
${\bf 1}_{k_1\geq m_1+\cdots+m_{n-1}}$ in (\ref{infini3})
is satisfied since if
$k_1<m_1+\cdots+m_{n-1}$ then 
$\sum_{v_1+\cdots+v_{n-1}=k_1-(m_1+\cdots+m_{n-1})}=0$).
\bigskip

This is valid for all 
$l=1,\ldots,\widetilde{n}-1$ and all
$\alpha_{q_l}<r<\alpha_{q_{l+1}}$. 
It follows from (\ref{infini0})
that, for all $k_1+k_2\geq N$ (that we will assume
in the following since for
$k_1+k_2<N$ the propostition is proved),
\begin{eqnarray*}
-\,\lim_{\varepsilon\rightarrow0}\frac{1}{(2\pi i)^2}
\int_{\Sigma_{\varepsilon}}
\frac{\zeta_1^{k_1}\zeta_2^{k_2}\;\omega'\left(\overline{\zeta}\right)\wedge\omega(\zeta)}
{g_n(\zeta)\;\left(1-\overline{\zeta_1}z_1-\overline{\zeta_2}z_2\right)^2}
& = &
\;\;\;\;\;\;\;\;\;\;\;\;\;\;\;\;\;\;\;\;\;\;\;\;\;\;\;\;\;\;\;\;\;\;\;\;\;\;\;\;\;\;\;\;\;\;\;\;\;\;
\end{eqnarray*}
\begin{eqnarray*}
& = &
\sum_{v_1+\cdots+v_{n-1}=k_1-(m_1+\cdots+m_{n-1}),
\,v_1\leq k_1+k_2-N}
\frac{(k_1+k_2-N+1)!}{v_1!\,(k_1+k_2-N-v_1)!}
z_1^{v_1}
z_2^{k_1+k_2-N-v_1}
\\
& & 
\times\,
\prod_{j=2}^{n-1}
\;\frac{(v_j+m_j-1)!}{v_j!\,(m_j-1)!}
\;\eta_j^{v_j}
\lim_{\varepsilon\rightarrow0}
\;\sum_{l=1}^{\widetilde{n}-1}
\int_{\alpha_{q_l}+\varepsilon}^{\alpha_{q_{l+1}}-\varepsilon}
(r^2)^{v_1}(1-r^2)^{k_1+k_2-N-v_1}
\;2rdr\\
\\
& - & 
\lim_{\varepsilon\rightarrow0}
\;\sum_{l=1}^{\widetilde{n}-1}\sum_{p=q_l+1}^{n-1}
\int_{\alpha_{q_l}+\varepsilon}^{\alpha_{q_{l+1}}-\varepsilon}
\;2rdr
\;\times\\
& & 
\frac{1}{(m_p-1)!}
\frac{\partial^{m_p-1}}{\partial t^{m_p-1}}|_{t=\eta_p}
\left(
\frac{(k_1+k_2-N+1)t^{k_1-m_1-1}
((1-r^2)z_2+r^2z_1/t)^{k_1+k_2-N}}
{\prod_{j=2,j\neq p}^{n-1}(t-\eta_j)^{m_j}}
\right).
\end{eqnarray*}

\bigskip

Now notice that, in the last above sum, 
by (\ref{defq}) we have
$p\geq q_l+1$ if and only if 
$\alpha_p>\alpha_{q_l}$
that is equivalent to 
$\alpha_p\geq\alpha_{q_{l+1}}$,
so $l=1,\ldots,l_p-1$ where $l_p$ is defined such that  
$\alpha_{q_{l_p}}=\alpha_p$. This allows us to get
\begin{eqnarray*}
& &
\sum_{v_1+\cdots+v_{n-1}=k_1-(m_1+\cdots+m_{n-1}),
v_1\leq k_1+k_2-N}
z_1^{v_1}
z_2^{k_1+k_2-N-v_1}
\prod_{j=2}^{n-1}
\,\frac{(v_j+m_j-1)!}{v_j!\,(m_j-1)!}
\,\eta_j^{v_j}
\;\times
\\
& & 
\;\times\;
\frac{(k_1+k_2-N+1)!}{v_1!\,(k_1+k_2-N-v_1)!}
\;\int_0^1
x^{v_1}(1-x)^{k_1+k_2-N-v_1}
\;dx
\\
\\
& & 
-\;
\sum_{p=2}^{n-1}
\;\int_0^{\alpha_p}
\;2rdr
\;\frac{1}{(m_p-1)!}
\;\times\\
& & 
\;\;\times\;
\frac{\partial^{m_p-1}}{\partial t^{m_p-1}}|_{t=\eta_p}
\left(
\frac{(k_1+k_2-N+1)t^{k_1-m_1-1}
((1-r^2)z_2+r^2z_1/t)^{k_1+k_2-N}}
{\prod_{j=2,j\neq p}^{n-1}\;(t-\eta_j)^{m_j}}
\right)
\end{eqnarray*}
\begin{eqnarray}\label{infini4}
& &
=
\sum_{v_1+\cdots+v_{n-1}=k_1-(m_1+\cdots+m_{n-1}),
\,v_1\leq k_1+k_2-N}
z_1^{v_1}
\,z_2^{k_1+k_2-N-v_1}\;\times
\;\;\;\;\;\;\;\;\;\;\;\;
\\\nonumber
& &
\;\;
\times\;
\prod_{j=2}^{n-1}
\;\frac{(v_j+m_j-1)!}{v_j!\,(m_j-1)!}
\;\eta_j^{v_j}
\end{eqnarray}
\begin{eqnarray*}
-\;\sum_{p=2}^{n-1}
\frac{1}{(m_p-1)!}
\frac{\partial^{m_p-1}}{\partial t^{m_p-1}}|_{t=\eta_p}
\left[\frac{t^{k_1-m_1-1}
\left((z_2+\alpha_p^2(z_1/t-z_2))^{k_1+k_2-N+1}-z_2^{k_1+k_2-N+1}\right)}
{\prod_{j=2,j\neq p}^{n-1}\;(t-\eta_j)^{m_j}\;\;(z_1/t-z_2)}\right]\,,
\\
\end{eqnarray*}
the last equality coming from the following idendity (that can be proved by induction on
$v_1=0,\ldots,k_1+k_2-N+1$ with integrating by parts)
\begin{eqnarray*}
\int_0^1
x^{v_1}(1-x)^{k_1+k_2-N-v_1}
\;dx
& = &
\frac{v_1!\,(k_1+k_2-N-v_1)!}{(k_1+k_2-N+1)!}
\\
\end{eqnarray*}
(on the other hand, notice that, since
$z\in U_{\eta}$, then for all 
$p=2,\ldots,n-1$ and all
$t$ close enough to $\eta_p$,
$z_1/t-z_2\neq0$).
\bigskip

Now assume that
$k_1\leq m_1-1$. Then 
$k_1-(m_1+\cdots+m_{n-1})<0$ and 
$k_2\geq N-k_1\geq0$. We get from above
\begin{eqnarray*}
-\,\lim_{\varepsilon\rightarrow0}\frac{1}{(2\pi i)^2}
\;\int_{\Sigma_{\varepsilon}}
\frac{\zeta_1^{k_1}\zeta_2^{k_2}\;\omega'\left(\overline{\zeta}\right)\wedge\omega(\zeta)}
{g_n(\zeta)\;\left(1-<\overline{\zeta},z>\right)^2}
& = &
\;\;\;\;\;\;\;\;\;\;\;\;\;\;\;\;\;\;\;\;\;\;\;\;\;\;\;\;\;\;\;\;\;\;\;\;\;\;\;\;\;\;\;\;\;\;\;\;\;\;\;\;\;\;\;\;\;\;\;
\;\;\;\;
\end{eqnarray*}
\begin{eqnarray}\label{infini5c}
& = &
\sum_{p=2}^{n-1}
\frac{1}{(m_p-1)!}
\frac{\partial^{m_p-1}}{\partial t^{m_p-1}}|_{t=\eta_p}
\left[
\frac{t^{k_1}\;z_2^{k_1+k_2-N+1}}{(z_1-tz_2)\;t^{m_1}\,\prod_{j=2,j\neq p}^{n-1}\;(t-\eta_j)^{m_j}}
\right]
\;\;\;\;
\end{eqnarray}
\begin{eqnarray*}
& & 
\;\;\;\;
-\;
\sum_{p=2}^{n-1}
\frac{1}{(m_p-1)!}
\;\times
\\\nonumber
& &
\;\;\;\;
\frac{\partial^{m_p-1}}{\partial t^{m_p-1}}|_{t=\eta_p}
\left[
\frac{t^{k_1}}{(z_1-tz_2)\;t^{m_1}\,\prod_{j=2,j\neq p}^{n-1}\;(t-\eta_j)^{m_j}}
\left(\frac{z_2+|\eta_p|^2z_1/t}{1+|\eta_p|^2}\right)^{k_1+k_2-N+1}
\right]\,.\\
\end{eqnarray*}

\bigskip

Else, we have
$k_1\geq m_1$. Now assume that
$k_2\geq m_n$ then 
$k_1+k_2-N\geq k_1-(m_1+\cdots+m_{n-1})$ and 
we get
\begin{eqnarray*}
\sum_{v_1+\cdots+v_{n-1}=k_1-(m_1+\cdots+m_{n-1})}
z_1^{v_1}
\,z_2^{k_1+k_2-N-v_1}
\;\prod_{j=2}^{n-1}
\;\frac{(v_j+m_j-1)!}{v_j!\,(m_j-1)!}
\;\eta_j^{v_j}
& = &
\;\;\;\;\;\;\;\;\;\;\;\;\;\;\;\;\;\;\;\;\;\;
\end{eqnarray*}
\begin{eqnarray*}
& = &
z_2^{k_1+k_2-N}
\;\sum_{u=0}^{k_1-m_1-(m_2+\cdots+m_{n-1})}
(z_1/z_2)^{k_1-m_1-(m_2+\cdots+m_{n-1})-u}
\;\times
\\
& &
\times\;
\sum_{v_2+\cdots+v_{n-1}=u}
\;\prod_{j=2}^{n-1}
\;\frac{(v_j+m_j-1)!}{v_j!\,(m_j-1)!}
\;\eta_j^{v_j}
\\
& = &
z_2^{k_1+k_2-N}\;
Q\left(X^{k_1-m_1},\prod_{j=2}^{n-1}(X-\eta_j)^{m_j}\right)
|_{X\;=\;z_1/z_2}
\end{eqnarray*}
by lemma \ref{lagrangeuclide}. On the other hand, we have
\begin{eqnarray*}
\sum_{p=2}^{n-1}
\frac{1}{(m_p-1)!}\;
\frac{\partial^{m_p-1}}{\partial t^{m_p-1}}|_{t=\eta_p}
\left[\frac{t^{k_1-m_1}}
{(z_1/z_2-t)\;\prod_{j=2,j\neq p}^{n-1}\;(t-\eta_j)^{m_j}}\right]
& = &
\;\;\;\;\;\;\;\;\;\;\;\;\;\;\;\;\;\;\;
\end{eqnarray*}
\begin{eqnarray*}
\;\;\;\;\;\;\;\;\;\;\;\;\;\;\;\;\;\;\;\;\;\;\;\;\;\;\;\;\;\;\;\;\;\;\;\;\;\;\;\;\;\;\;\;
& = &
\left[
\frac{R\left(X^{k_1-m_1},\prod_{j=2}^{n-1}(X-\eta_j)^{m_j}\right)}
{\prod_{j=2}^{n-1}(X-\eta_j)^{m_j}}
\right]
|_{X\;=\;z_1/z_2}.
\end{eqnarray*}
It follows by (\ref{infini4}) that
\begin{eqnarray*}
-\,\lim_{\varepsilon\rightarrow0}\frac{1}{(2\pi i)^2}
\;\int_{\Sigma_{\varepsilon}}
\frac{\zeta_1^{k_1}\zeta_2^{k_2}\;\omega'\left(\overline{\zeta}\right)\wedge\omega(\zeta)}
{g_n(\zeta)\;\left(1-<\overline{\zeta},z>\right)^2}
& = &
z_2^{k_1+k_2-N}
\frac{(z_1/z_2)^{k_1-m_1}}{\prod_{j=2}^{n-1}(z_1/z_2-\eta_j)^{m_j}}
\end{eqnarray*}
\begin{eqnarray*}
\;\;\;\;\;\;\;\;\;\;\;\;\;\;\;
& & 
-\;\sum_{p=2}^{n-1}
\frac{1}{(m_p-1)!}
\frac{\partial^{m_p-1}}{\partial t^{m_p-1}}|_{t=\eta_p}
\left[
\frac{t^{k_1-m_1}(z_2+\alpha_p^2(z_1/t-z_2))^{k_1+k_2-N+1}}{(z_1-tz_2)\;\prod_{j=2,j\neq p}^{n-1}\;(t-\eta_j)^{m_j}}
\right]\\
\end{eqnarray*}
\begin{eqnarray}\label{infini5a}
& &
\;\;\;\;
=\;
\frac{z_1^{k_1}z_2^{k_2}}{g_n(z)}
\;-\;\sum_{p=2}^{n-1}
\frac{1}{(m_p-1)!}
\;\times
\\\nonumber
& &
\frac{\partial^{m_p-1}}{\partial t^{m_p-1}}|_{t=\eta_p}
\left[
\frac{t^{k_1-m_1}}{(z_1-tz_2)\;\prod_{j=2,j\neq p}^{n-1}\;(t-\eta_j)^{m_j}}
\left(\frac{z_2+|\eta_p|^2z_1/t}{1+|\eta_p|^2}\right)^{k_1+k_2-N+1}
\right]\,.
\end{eqnarray}
\bigskip

Lastly, we have
$k_2\leq m_n-1$ then 
$k_1\geq N-k_2\geq m_1$ and
$k_1+k_2-N\leq k_1-(m_1+\cdots+m_{n-1})-1$. It follows that
\begin{eqnarray*}
\sum_{v_1+\cdots+v_{n-1}=k_1-(m_1+\cdots+m_{n-1}),\,v_1\leq k_1+k_2-N}
z_1^{v_1}
\,z_2^{k_1+k_2-N-v_1}
\;\prod_{j=2}^{n-1}
\;\frac{(v_j+m_j-1)!}{v_j!\,(m_j-1)!}
\;\eta_j^{v_j}
\;=
\end{eqnarray*}
\begin{eqnarray*}
& = &
z_2^{k_1+k_2-N}
\;Q\left(X^{k_1-m_1},\prod_{j=2}^{n-1}(X-\eta_j)^{m_j}\right)
|_{X\;=\;z_1/z_2}\\
& &
-\;
\sum_{v_1+\cdots+v_{n-1}=k_1-(m_1+\cdots+m_{n-1}),\,v_1\geq k_1+k_2-N+1}
z_1^{v_1}
\,z_2^{k_1+k_2-N-v_1}
\;\prod_{j=2}^{n-1}
\;\frac{(v_j+m_j-1)!}{v_j!\,(m_j-1)!}
\;\eta_j^{v_j}
\end{eqnarray*}
\begin{eqnarray*}
& = &
z_2^{k_1+k_2-N}
\;Q\left(X^{k_1-m_1},\prod_{j=2}^{n-1}(X-\eta_j)^{m_j}\right)
|_{X\;=\;z_1/z_2}\\
& &
-\;
\frac{z_1^{k_1+k_2-N+1}}{z_2}
\sum_{v_1+\cdots+v_{n-1}=m_n-1-k_2}
(z_1/z_2)^{v_1}\;\prod_{j=2}^{n-1}
\;\frac{(v_j+m_j-1)!}{v_j!\,(m_j-1)!}
\;\eta_j^{v_j}\,.
\\
\end{eqnarray*}
Notice by lemma \ref{lagrangeuclide} 
(since
$N-m_1\geq m_n\geq k_2+1$)
that
\begin{eqnarray*}
\sum_{u=0}^{m_n-1-k_2}
(z_1/z_2)^{m_n-1-k_2-u}
\sum_{v_2+\cdots+v_{n-1}=u}
\;\prod_{j=2}^{n-1}
\;\frac{(v_j+m_j-1)!}{v_j!\,(m_j-1)!}
\;\eta_j^{v_j}
& = &
\;\;\;\;\;\;\;\;\;\;\;\;\;\;\;\;\;\;\;\;\;\;\;\;
\end{eqnarray*}
\begin{eqnarray*}
& = &
Q\left(
X^{N-m_1-1-k_2},
\prod_{j=2}^{n-1}(X-\eta_j)^{m_j}
\right)|_{X\;=\;z_1/z_2}
\\
\\
& = &
\frac{(z_1/z_2)^{N-m_1-1-k_2}}{\prod_{j=2}^{n-1}(z_1/z_2-\eta_j)^{m_j}}
\;-\;\sum_{p=2}^{n-1}
\frac{1}{(m_p-1)!}
\frac{\partial^{m_p-1}}{\partial t^{m_p-1}}|_{t=\eta_p}
\left[
\frac{t^{N-m_1-1-k_2}}{(z_1/z_2-t)\;\prod_{j=2,j\neq p}^{n-1}\;(t-\eta_j)^{m_j}}
\right]\,.
\\
\end{eqnarray*}
It follows from (\ref{infini4}) that
\begin{eqnarray*}
-\,\lim_{\varepsilon\rightarrow0}\frac{1}{(2\pi i)^2}
\;\int_{\Sigma_{\varepsilon}}
\frac{\zeta_1^{k_1}\zeta_2^{k_2}\;\omega'\left(\overline{\zeta}\right)\wedge\omega(\zeta)}
{g_n(\zeta)\;\left(1-<\overline{\zeta},z>\right)^2}
& = &
\;\;\;\;\;\;\;\;\;\;\;\;\;\;\;\;\;\;\;\;\;\;\;\;\;\;\;\;\;\;\;\;\;\;\;\;\;\;\;\;\;\;\;\;\;\;\;\;\;\;\;\;\;\;\;\;\;\;\;
\;\;\;\;
\\
\end{eqnarray*}
\begin{eqnarray*}
\;\;\;\;\;\;\;
& = &
z_2^{k_1+k_2-N}
\frac{(z_1/z_2)^{k_1-m_1}}{\prod_{j=2}^{n-1}(z_1/z_2-\eta_j)^{m_j}}
-\;
\frac{z_1^{k_1+k_2-N+1}}{z_2}\frac{(z_1/z_2)^{N-m_1-1-k_2}}{\prod_{j=2}^{n-1}(z_1/z_2-\eta_j)^{m_j}}\\
& + &
\frac{z_1^{k_1+k_2-N+1}}{z_2}
\;\sum_{p=2}^{n-1}
\frac{1}{(m_p-1)!}
\frac{\partial^{m_p-1}}{\partial t^{m_p-1}}|_{t=\eta_p}
\left[
\frac{t^{N-m_1-1-k_2}}{(z_1/z_2-t)\;\prod_{j=2,j\neq p}^{n-1}\;(t-\eta_j)^{m_j}}
\right]
\\
& - & 
\sum_{p=2}^{n-1}
\frac{1}{(m_p-1)!}
\;\times
\\
& &
\frac{\partial^{m_p-1}}{\partial t^{m_p-1}}|_{t=\eta_p}
\left[
\frac{t^{k_1-m_1}}{(z_1-tz_2)\;\prod_{j=2,j\neq p}^{n-1}\;(t-\eta_j)^{m_j}}
\left(\frac{z_2+|\eta_p|^2z_1/t}{1+|\eta_p|^2}\right)^{k_1+k_2-N+1}
\right]
\\
\end{eqnarray*}
\begin{eqnarray}\label{infini5b}
& = &
\sum_{p=2}^{n-1}
\frac{1}{(m_p-1)!}
\frac{\partial^{m_p-1}}{\partial t^{m_p-1}}|_{t=\eta_p}
\left[
\frac{t^{N-m_1-1-k_2}\;z_1^{k_1+k_2-N+1}}{(z_1-tz_2)\;\prod_{j=2,j\neq p}^{n-1}\;(t-\eta_j)^{m_j}}
\right]
\;\;\;\;\;\;\;\;
\end{eqnarray}
\begin{eqnarray*}
\;\;\;\;\;\;\;
& & 
-\;\sum_{p=2}^{n-1}
\frac{1}{(m_p-1)!}
\frac{\partial^{m_p-1}}{\partial t^{m_p-1}}|_{t=\eta_p}
\left[
\frac{t^{k_1-m_1}}{(z_1-tz_2)\;\prod_{j=2,j\neq p}^{n-1}\;(t-\eta_j)^{m_j}}
\left(\frac{z_2+|\eta_p|^2z_1/t}{1+|\eta_p|^2}\right)^{k_1+k_2-N+1}
\right]\,.\\
\end{eqnarray*}

\bigskip

Finally, we get from (\ref{infini4}), (\ref{infini5c}), 
(\ref{infini5a}) and (\ref{infini5b})
\begin{eqnarray*}
-\;
\lim_{\varepsilon\rightarrow0}
\,\frac{g_n(z)}{(2\pi i)^2}
\;\int_{\Sigma_{\varepsilon}}
\frac{\zeta_1^{k_1}\zeta_2^{k_2}\;\omega'\left(\overline{\zeta}\right)\wedge\omega(\zeta)}
{g_n(\zeta)\;\left(1-<\overline{\zeta},z>\right)^2}
& = &
\;\;\;\;\;\;\;\;\;\;\;\;\;\;\;\;\;\;\;\;\;\;\;\;\;\;\;\;\;\;\;\;\;\;\;\;\;\;\;\;\;\;\;\;\;\;\;\;
\\
\end{eqnarray*}
\begin{eqnarray*}
& = &
{\bf 1}_{k_1+k_2\geq N,k_1\geq m_1,k_2\geq m_n}
\,z_1^{k_1}\,z_2^{k_2}
\\
\\
& & 
-\;
{\bf 1}_{k_1+k_2\geq N}\,
\sum_{p=2}^{n-1}
z_1^{m_1}
\prod_{j=2,j\neq p}^{n-1}(z_1-\eta_jz_2)^{m_j}
\;z_2^{m_n}
\;\times
\\
& &
\;\;\times\;
\frac{(z_1-\eta_pz_2)^{m_p}}{(m_p-1)!}
\frac{\partial^{m_p-1}}{\partial t^{m_p-1}}|_{t=\eta_p}
\frac{t^{k_1}\,
\left(\frac{z_2+|\eta_p|^2z_1/t}{1+|\eta_p|^2}\right)^{k_1+k_2-N+1}}
{(z_1-tz_2)\;t^{m_1}\prod_{j=2,j\neq p}^{n-1}\;(t-\eta_j)^{m_j}}
\end{eqnarray*}
\begin{eqnarray*}
& & 
\;\;\;\;+\;
{\bf 1}_{k_1\leq m_1-1,k_2\geq N-k_1}\,
\sum_{p=2}^{n-1}
z_1^{m_1}
\prod_{j=2,j\neq p}^{n-1}(z_1-\eta_jz_2)^{m_j}
\;z_2^{m_n}
\;\times
\\
& & 
\;\;\;\;\;\;
\times\;
\frac{(z_1-\eta_pz_2)^{m_p}}{(m_p-1)!}
\frac{\partial^{m_p-1}}{\partial t^{m_p-1}}|_{t=\eta_p}
\frac{t^{k_1}\;z_2^{k_1+k_2-N+1}}
{(z_1-tz_2)\;t^{m_1}\,\prod_{j=2,j\neq p}^{n-1}\;(t-\eta_j)^{m_j}}
\\
\\
& & 
\;\;\;\;+\;
{\bf 1}_{k_2\leq m_n-1,k_1\geq N-k_2}\,
\sum_{p=2}^{n-1}
z_1^{m_1}
\prod_{j=2,j\neq p}^{n-1}(z_1-\eta_jz_2)^{m_j}
\;z_2^{m_n}
\;\times
\\
& & 
\;\;\;\;\;\;
\times\;
\frac{(z_1-\eta_pz_2)^{m_p}}{(m_p-1)!}
\frac{\partial^{m_p-1}}{\partial t^{m_p-1}}|_{t=\eta_p}
\frac{t^{N-1-k_2}\,z_1^{k_1+k_2-N+1}}
{(z_1-tz_2)\;t^{m_1}\,\prod_{j=2,j\neq p}^{n-1}\;(t-\eta_j)^{m_j}}
\\
\end{eqnarray*}
and the proof of the proposition is achieved.

\end{proof}

\bigskip

\section{Calculation of the interpolation part}\label{interpolers}

\bigskip

In this section, we will prove the following result
that allows us to complete the proof of theorem \ref{theorem}.
We assume that there exists 
$m_p\geq1,\;2\leq p\leq n-1$.

\begin{proposition}\label{interpoler}

Let be 
$f\in\mathcal{O}\left(\mathbb{B}_2\right)$
and
$f(z)=\sum_{k_1,k_2\geq0}
a_{k_1,k_2}z_1^{k_1}z_2^{k_2}$
its Taylor expansion. 
For all
$z\in\mathbb{B}_2$,
\begin{eqnarray}\label{interpolerprop1}
\lim_{\varepsilon\rightarrow0}
\frac{1}{(2\pi i)^2}
\int_{\partial\widetilde{\Sigma}_{\varepsilon}}
\frac{\zeta_1^{k_1}\zeta_2^{k_2}\det\left(\overline{\zeta},P_n(\zeta,z)\right)}
{g_n(\zeta)\left(1-<\overline{\zeta},z>\right)}
\,\omega(\zeta)\;=
\;\;\;\;\;\;\;\;\;\;\;\;\;\;\;\;\;\;\;\;\;\;\;\;\;\;\;\;
\end{eqnarray}

\begin{eqnarray*}
& = &
\sum_{u_1=0}^{m_1-1}z_1^{u_1}
\prod_{j=2}^{n-1}(z_1-\eta_jz_2)^{m_j}z_2^{m_n}
\sum_{p=2}^{n-1}
\frac{1}{(m_p-1)!}
\frac{\partial^{m_p-1}}{\partial t^{m_p-1}}|_{t=\eta_p}
\;\frac{1}{t^{u_1+1}\prod_{j=2,j\neq p}^{n-1}(t-\eta_j)^{m_j}}
\\
& &
\times\;
\{\;
{\bf 1}_{k_1+k_2\geq N_{u_1}}
\frac{1}{1+|\eta_p|^2}
t^{k_1}
\left(
\frac{z_2+|\eta_p|^2z_1/t}{1+|\eta_p|^2}
\right)^{k_1+k_2-N_{u_1}}
\\
& &
\;\;-\;\;\;
{\bf 1}_{k_1\leq u_1,\,k_2\geq N_{u_1}-k_1}
\,t^{k_1}
z_2^{k_1+k_2-N_{u_1}}
\;\}
\end{eqnarray*}
\begin{eqnarray*}
& + & 
\sum_{p=2}^{n-1}\sum_{u_p=0}^{m_p-1}
(z_1-\eta_pz_2)^{u_p}
\prod_{j=p+1}^{n-1}(z_1-\eta_jz_2)^{m_j}z_2^{m_n}
\;\times
\\
& \times &
\{\;
{\bf 1}_{k_1+k_2\geq N_{u_p}}
\{\,
\frac{1}{u_p!}
\frac{\partial^{u_p}}{\partial t^{u_p}}|_{t=\eta_p}
\,\frac{1+|\eta_p|^2\eta_p/t}{1+|\eta_p|^2}
\frac{\,t^{k_1}
\left(
\frac{z_2+|\eta_p|^2z_1/t}{1+|\eta_p|^2}
\right)^{k_1+k_2-N_{u_p}}}
{\prod_{j=p+1}^{n-1}(t-\eta_j)^{m_j}}
\\
& + &
\sum_{q=p+1}^{n-1}
\frac{1}{(m_q-1)!}
\frac{\partial^{m_q-1}}{\partial t^{m_q-1}}|_{t=\eta_q}
\,\frac{1+|\eta_q|^2\eta_p/t}{1+|\eta_q|^2}
\frac{\,t^{k_1}
\left(
\frac{z_2+|\eta_q|^2z_1/t}{1+|\eta_q|^2}
\right)^{k_1+k_2-N_{u_p}}}
{(t-\eta_p)^{u_p+1}\prod_{j=p+1,j\neq q}^{n-1}(t-\eta_j)^{m_j}}
\;\}
\end{eqnarray*}
\begin{eqnarray*}
& - & 
{\bf 1}_{k_2\leq m_n-1,k_1\geq N_{u_p}-k_2}
\,\eta_p\,
z_1^{k_1+k_2-N_{u_p}}
\;\times
\{\;
\frac{1}{u_p!}
\frac{\partial^{u_p}}{\partial t^{u_p}}|_{t=\eta_p}
\,
\frac{t^{N_{u_p}-1-k_2}}
{\prod_{j=p+1}^{n-1}(t-\eta_j)^{m_j}}
\\
& &
+
\sum_{q=p+1}^{n-1}
\frac{1}{(m_q-1)!}
\frac{\partial^{m_q-1}}{\partial t^{m_q-1}}|_{t=\eta_q}
\,\frac{t^{N_{u_p}-1-k_2}}
{(t-\eta_p)^{u_p+1}\prod_{j=p+1,j\neq q}^{n-1}(t-\eta_j)^{m_j}}
\;\}
\end{eqnarray*}
\begin{eqnarray*}
& + &
{\bf 1}_{k_1\geq0,k_2\leq m_n-1}
\,z_1^{k_1}z_2^{k_2}\;.
\;\;\;\;\;\;\;\;\;\;\;\;\;\;\;\;\;\;\;\;\;\;\;\;\;\;\;\;\;\;\;\;\;\;\;\;\;\;\;\;\;\;\;\;\;\;\;\;\;\;\;\;\;\;
\;\;\;\;\;\;\;\;\;\;\;\;\;\;\;\;\;\;\;\;\;\;\;\;\;\;\;
\\
\end{eqnarray*}

\end{proposition}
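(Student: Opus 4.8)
The plan is to evaluate this boundary integral by the same residue calculus already used for Proposition~\ref{remainder}, once the Hefer section $P_n$ has been made explicit. First I would construct $P_n=(P_n^1,P_n^2)$ with $g_n(\zeta)-g_n(z)=P_n^1(\zeta,z)(\zeta_1-z_1)+P_n^2(\zeta,z)(\zeta_2-z_2)$ by the standard telescoping decomposition of a product: writing $g_n$ as a product of the linear forms $z_1$, $z_1-\eta_jz_2$ and $z_2$ with their multiplicities, I replace one factor at a time by its increment $\ell(\zeta)-\ell(z)$, the factors to the left being evaluated at $\zeta$ and those to the right at $z$. Each linear increment has the trivial Hefer pair ($P^1=1,P^2=0$ for $z_1$; $P^1=1,P^2=-\eta_j$ for $z_1-\eta_jz_2$; $P^1=0,P^2=1$ for $z_2$), so $P_n$ is an explicit polynomial in $(\zeta,z)$. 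Only $\det(\overline{\zeta},P_n)=\overline{\zeta}_1P_n^2-\overline{\zeta}_2P_n^1$ enters the integrand, and by linearity of $f\mapsto\sum a_{k_1,k_2}z_1^{k_1}z_2^{k_2}$ it suffices to treat a single monomial $\zeta_1^{k_1}\zeta_2^{k_2}$, which is exactly the statement to be proved.

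Next I would unfold $\partial\widetilde{\Sigma}_{\varepsilon}$, which by Proposition~\ref{proptranche} is the signed sum of the circles $\{|\zeta_1|=\alpha_{q_l}\pm\varepsilon\}$ together with the extreme circles $\{|\zeta_1|=\varepsilon\}$ and $\{|\zeta_1|=1-\varepsilon\}$, into iterated contour integrals. With the parametrization $\zeta_1=re^{i\theta_1}$, $\zeta_2=\sqrt{1-r^2}\,e^{i\theta_2}$ and the relations $\overline{\zeta}_1=r^2/\zeta_1$, $\overline{\zeta}_2=(1-r^2)/\zeta_2$ valid on a circle of radius $r$, both $\det(\overline{\zeta},P_n)$ and the Cauchy factor $1/(1-<\overline{\zeta},z>)$ become rational in $\zeta_1,\zeta_2$, precisely as in the proof of Proposition~\ref{remainder}. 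I would then compute the inner $\zeta_1$-integral by residues, collecting the poles at the lines $\zeta_1=\eta_j\zeta_2$ interior to the contour, the Cauchy point $\zeta_1=r^2z_1\zeta_2/(\zeta_2-(1-r^2)z_2)$, and, on the innermost circle, the pole of order $m_1$ at $\zeta_1=0$; the outer $\zeta_2$-integral is then controlled by Lemma~\ref{residup}, whose estimate isolates $\zeta_2=(1-r^2)z_2+r^2z_1/t$ as the only relevant residue.

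The decisive mechanism is the telescoping across neighbouring circles. For an interior radius $\alpha_{q_l}$ the difference $\int_{|\zeta_1|=\alpha_{q_l}+\varepsilon}-\int_{|\zeta_1|=\alpha_{q_l}-\varepsilon}$ isolates, as $\varepsilon\to0$, exactly the residues along the lines $\zeta_1=\eta_p\zeta_2$ with $\alpha_p=\alpha_{q_l}$ (those lines being interior to one circle and exterior to the other); summing over $l$ and expanding each pole of order $m_p$ into its Taylor coefficients in $(z_1-\eta_pz_2)$ produces the double sum $\sum_{p}\sum_{u_p}$ with the factors $(z_1-\eta_pz_2)^{u_p}$ and the derivatives $\frac{1}{u_p!}\partial_t^{u_p}$ and $\frac{1}{(m_q-1)!}\partial_t^{m_q-1}$ of the statement, the index $N_{u_p}=u_p+m_{p+1}+\cdots+m_n$ recording how many factors survive to the right. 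The extreme circle $\{|\zeta_1|=\varepsilon\}$ retains the pole at $\zeta_1=0$, whose Laurent expansion to order $m_1-1$ gives the $u_1$-sum attached to $z_1^{u_1}$ and the weight $1/t^{u_1+1}$, while the degeneration $\{|\zeta_1|=1-\varepsilon\}\to\{\zeta_2=0\}$ furnishes the closed term ${\bf 1}_{k_1\geq0,\,k_2\leq m_n-1}\,z_1^{k_1}z_2^{k_2}$.

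I expect the main obstacle to be the bookkeeping that matches this analytic output to the stated formula rather than any single computation: tracking the induced orientations and signs on $\partial\widetilde{\Sigma}_{\varepsilon}$ as fixed in Proposition~\ref{proptranche}, the cut-offs encoded by the three indicator functions (whether $k_1+k_2\geq N_{u_p}$, whether $k_1\leq u_1$, and whether $k_2\leq m_n-1$), and the exact form of the geometric weight $\frac{1+|\eta_p|^2\eta_p/t}{1+|\eta_p|^2}$ coming from $\overline{\zeta}_1=r^2/\zeta_1$, $\overline{\zeta}_2=(1-r^2)/\zeta_2$ evaluated at the residues. As in Proposition~\ref{remainder}, the residues are rewritten using the quotient/remainder identities of Lemma~\ref{lagrangeuclide} and the convolution count of Lemma~\ref{combinatoire}, and these are the technical heart; verifying that the contributions of the Cauchy point $\zeta_1=r^2z_1\zeta_2/(\zeta_2-(1-r^2)z_2)$ either cancel between consecutive circles or assemble into the stated polynomial terms will require the most care.
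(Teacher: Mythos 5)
Your plan reproduces the paper's own proof essentially step for step: the telescoping Hefer section you describe is exactly Lemma \ref{Pn}, and the iterated residue computation over the sliced boundary --- circle differences at the $\alpha_{q_l}$ isolating the residues on the lines $\zeta_1=\eta_p\zeta_2$, with Lemma \ref{residup} locating the $\zeta_2$-residue and Lemmas \ref{lagrangeuclide} and \ref{combinatoire} converting them into the stated Lagrange-type expressions --- is precisely how the paper's Lemmas \ref{etan}, \ref{etap} and \ref{eta1} proceed, including your attribution of the term ${\bf 1}_{k_1\geq0,\,k_2\leq m_n-1}\,z_1^{k_1}z_2^{k_2}$ to the outer circle $r=1-\varepsilon$. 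The only imprecision is assigning the whole $u_1$-sum to the innermost circle: the $u_1$-indexing and the weight $1/t^{u_1+1}$ already come from the Hefer decomposition itself, and in the paper the weighted part of that sum arises from the interior circle differences while $r=\varepsilon$ contributes only the ${\bf 1}_{k_1\leq u_1}$ term --- bookkeeping of exactly the kind you flag as the remaining work.
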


\bigskip

Before giving the proof of this proposition, we need to specify
$P_n(\zeta,z)$.

\begin{lemma}\label{Pn}

For all 
$(\zeta,z)\in\mathbb{C}^2$, 
one can choose
\begin{eqnarray}\label{Pn1}
P_n^1(\zeta,z) 
& = &
\sum_{u_1=0}^{m_1-1}
\zeta_1^{m_1-1-u_1}z_1^{u_1}
\prod_{j=2}^{n-1}(z_1-\eta_jz_2)^{m_j}z_2^{m_n}
\;\;\;\;\;\;\;\;\;\;\;\;\;\;\;\;\;\;\;\;\;\;\;\;
\end{eqnarray}
\begin{eqnarray*}
+
\sum_{p=2}^{n-1}\sum_{u_p=0}^{m_p-1}
\zeta_1^{m_1}
\prod_{j=2}^{p-1}(\zeta_1-\eta_j\zeta_2)^{m_j}
(\zeta_1-\eta_p\zeta_2)^{m_p-1-u_p}
(z_1-\eta_pz_2)^{u_p}
\prod_{j=p+1}^{n-1}(z_1-\eta_jz_2)^{m_j}
z_2^{m_n}\,,
\end{eqnarray*}

\begin{eqnarray}\label{Pn2}
P_n^2(\zeta,z)
& = &
\sum_{u_n=0}^{m_n-1}
\zeta_1^{m_1}
\prod_{j=2}^{n-1}(\zeta_1-\eta_j\zeta_2)^{m_j}
\zeta_2^{m_n-1-u_n}z_2^{u_n}
\;\;\;\;\;\;\;\;\;\;\;\;\;\;\;\;\;\;\;\;\;\;\;\;
\end{eqnarray}
\begin{eqnarray*}
-
\sum_{p=2}^{n-1}\sum_{u_p=0}^{m_p-1}
\eta_p\,
\zeta_1^{m_1}
\prod_{j=2}^{p-1}(\zeta_1-\eta_j\zeta_2)^{m_j}
(\zeta_1-\eta_p\zeta_2)^{m_p-1-u_p}
(z_1-\eta_pz_2)^{u_p}
\prod_{j=p+1}^{n-1}(z_1-\eta_jz_2)^{m_j}
z_2^{m_n}\,.
\end{eqnarray*}

\end{lemma}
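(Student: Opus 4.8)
The plan is to verify that the stated pair $P_n=(P_n^1,P_n^2)$ is an admissible Hefer decomposition of $g_n$, i.e. that it satisfies the identity required in Propositions \ref{prelim} and \ref{proptranche},
\begin{eqnarray*}
g_n(\zeta)-g_n(z) & = & P_n^1(\zeta,z)(\zeta_1-z_1)+P_n^2(\zeta,z)(\zeta_2-z_2),
\end{eqnarray*}
which holds for all $(\zeta,z)$; any polynomial pair realizing this identity is a legitimate choice. The first step is to rewrite $g_n$ as a product of linear forms. Setting $\ell_p(w):=w_1-\eta_pw_2$ for $p=1,\ldots,n-1$ (so that $\ell_1(w)=w_1$ because $\eta_1=0$) and $\ell_n(w):=w_2$, one has $g_n(w)=\prod_{p=1}^n\ell_p(w)^{m_p}$. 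The advantage of this rewriting is that each single difference is already linear in $\zeta-z$: for $p\leq n-1$ one has $\ell_p(\zeta)-\ell_p(z)=(\zeta_1-z_1)-\eta_p(\zeta_2-z_2)$, while $\ell_n(\zeta)-\ell_n(z)=\zeta_2-z_2$.

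Next I would enumerate the $N=m_1+\cdots+m_n$ linear factors of $g_n$ with multiplicity, in the natural order (first the $m_1$ copies of $\ell_1$, then the $m_2$ copies of $\ell_2$, and so on up to the $m_n$ copies of $\ell_n$), writing $g_n(w)=\prod_{k=1}^N L_k(w)$, and apply the telescoping identity
\begin{eqnarray*}
\prod_{k=1}^N L_k(\zeta)-\prod_{k=1}^N L_k(z) & = & \sum_{k=1}^N\left(\prod_{i<k}L_i(\zeta)\right)\bigl(L_k(\zeta)-L_k(z)\bigr)\left(\prod_{i>k}L_i(z)\right),
\end{eqnarray*}
in which each summand carries its prefix $\prod_{i<k}L_i(\zeta)$ evaluated at $\zeta$ and its suffix $\prod_{i>k}L_i(z)$ evaluated at $z$. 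Substituting the two cases for $L_k(\zeta)-L_k(z)$ above and collecting the coefficients of $(\zeta_1-z_1)$ and of $(\zeta_2-z_2)$ separately produces candidate expressions for $P_n^1$ and $P_n^2$.

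It then remains to check that these candidates coincide term by term with the stated formulas (\ref{Pn1}) and (\ref{Pn2}). For a factor $L_k$ lying in the $\ell_p$-block at internal position $s\in\{1,\ldots,m_p\}$, the prefix contributes $\zeta_1^{m_1}\prod_{j=2}^{p-1}(\zeta_1-\eta_j\zeta_2)^{m_j}(\zeta_1-\eta_p\zeta_2)^{s-1}$ and the suffix contributes $(z_1-\eta_pz_2)^{m_p-s}\prod_{j=p+1}^{n-1}(z_1-\eta_jz_2)^{m_j}z_2^{m_n}$; the reindexing $u_p:=m_p-s$ turns $s-1$ into $m_p-1-u_p$ and reproduces exactly the $p$-th double sum in $P_n^1$, and after multiplication by $-\eta_p$ the one in $P_n^2$. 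The block $p=1$ yields the first single sum in $P_n^1$ and contributes nothing to $P_n^2$ since $\eta_1=0$; the block $p=n$ contributes nothing to $P_n^1$ because $\ell_n(\zeta)-\ell_n(z)$ has no $(\zeta_1-z_1)$ part, and yields the first single sum in $P_n^2$ with coefficient $+1$. There is no analytic content here, so the main obstacle is purely organizational: ensuring that the enumeration order, the prefix/suffix split, the sign $-\eta_p$, and the reindexing $u_p=m_p-s$ are applied consistently across all $n$ blocks so that every term of (\ref{Pn1}) and (\ref{Pn2}) is accounted for exactly once.
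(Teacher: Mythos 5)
Your proposal is correct and follows essentially the same route as the paper: the paper likewise writes $g_n$ as a product of $N$ linear factors, establishes the telescoping identity $\prod h_j(\zeta)-\prod h_j(z)=\sum_p\prod_{j<p}h_j(\zeta)\,\langle P_{h_p},\zeta-z\rangle\,\prod_{j>p}h_j(z)$ (by induction on the number of factors), plugs in the elementary Hefer data $(1,-\eta_p)$ and $(0,1)$ for the linear factors, and reads off \eqref{Pn1}--\eqref{Pn2} exactly as in your prefix/suffix bookkeeping.
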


\bigskip

\begin{proof}

First, we prove the following fact by induction on $m$: 
$h_1,\ldots,h_m$ being given, consider the associate
$P_{h_1},\ldots,P_{h_m}$. Then one can choose

\begin{eqnarray}\label{prelimPn}
P_{\prod_{j=1}^mh_j}(\zeta,z)
& = &
\sum_{p=1}^m\;
\prod_{j=1}^{p-1}h_j(\zeta)
\;P_{h_p}(\zeta,z)
\prod_{j=p+1}^mh_j(z)\,,
\end{eqnarray}
i.e.
\begin{eqnarray*}
\prod_{j=1}^mh_j(\zeta)-\prod_{j=1}^mh_j(z)
& = &
\sum_{p=1}^m\;
\prod_{j=1}^{p-1}h_j(\zeta)
\prod_{j=p+1}^mh_j(z)
\;<P_{h_p}(\zeta,z),\zeta-z>
\,.
\\
\end{eqnarray*}
This is obvious for $m=1$. Now consider
$h_1,\ldots,h_m$, we have
\begin{eqnarray*}
\prod_{j=1}^mh_j(\zeta)-\prod_{j=1}^mh_j(z)
& = &
(h_m(\zeta)-h_m(z))
\prod_{j=1}^{m-1}h_j(\zeta)
\;+
\left(
\prod_{j=1}^{m-1}h_j(\zeta)
-\prod_{j=1}^{m-1}h_j(z)
\right)
h_m(z)
\\
& = &
\prod_{j=1}^{m-1}h_j(\zeta)
\,<P_{h_m}(\zeta,z),\zeta-z>
\\
& &
+\;
h_m(z)\,
\sum_{p=1}^{m-1}\;
\prod_{j=1}^{p-1}h_j(\zeta)
\prod_{j=p+1}^{m-1}h_j(z)
\;<P_{h_p}(\zeta,z),\zeta-z>
\end{eqnarray*}
and this proves (\ref{prelimPn}).

On the other hand, we see that,
for all
$\eta\in\mathbb{C}$,
\begin{eqnarray*}
P_{(t\mapsto t_1-\eta t_2)}(\zeta,z)
=
\left(
\begin{array}{c}
1\\
-\eta\\
\end{array}
\right)\,,
\end{eqnarray*}
as well as

\begin{eqnarray*}
P_{(t\mapsto t_2)}(\zeta,z)
=
\left(
\begin{array}{c}
0\\
1\\
\end{array}
\right)\,.
\end{eqnarray*}

One can deduce from (\ref{prelimPn}) 
the following choice about 
$g_n(z)=\prod_{u_1=1}^{m_1}z_1
\,\prod_{p=2}^{n-1}
\prod_{u_p=1}^{m_p}(z_1-\eta_pz_2)
\,\prod_{u_n=1}^{m_n}z_2$:
\begin{eqnarray*}
P_n(\zeta,z)
& = &
\;\sum_{u_1=1}^{m_1}
\zeta_1^{u_1-1}
\left(
\begin{array}{c}
1\\
0\\
\end{array}
\right)
z_1^{m_1-u_1}
\prod_{j=2}^{n-1}
(z_1-\eta_jz_2)^{m_j}\,z_2^{m_n}
\\
\\
& & 
+\;
\sum_{p=2}^{n-1}
\sum_{u_p=1}^{m_p}
\zeta_1^{m_1}\,
\prod_{j=2}^{p-1}(\zeta_1-\eta_j\zeta_2)^{m_j}
(\zeta_1-\eta_p\zeta_2)^{u_p-1}
\;\times
\\
& & 
\;\;\;\;\;\;\;\;\;\;\;\;\;\;\;\;\;\;\;\;\;\;\;\;\times\;
\left(
\begin{array}{c}
1\\
-\eta_p\\
\end{array}
\right)
(z_1-\eta_pz_2)^{m_p-u_p}
\prod_{j=p+1}^{n-1}(z_1-\eta_jz_2)^{m_j}
\,z_2^{m_n}
\\
\\
& & 
+\;
\sum_{u_n=1}^{m_n}
\zeta_1^{m_1}\,
\prod_{j=2}^{n-1}
(\zeta_1-\eta_j\zeta_2)^{m_j}
\,\zeta_2^{u_n-1}
\left(
\begin{array}{c}
0\\
1\\
\end{array}
\right)
z_2^{m_n-u_n}
\,,
\\
\end{eqnarray*}
and the lemma is proved.

\end{proof}

\bigskip

Now we can give the proof of proposition
\ref{interpoler}.

\begin{proof}

We have by lemma \ref{Pn}
\begin{eqnarray*}
\frac{\det(\overline{\zeta},P_n(\zeta,z))}
{g_n(\zeta)}
& = &
\overline{\zeta}_1
\sum_{u_n=0}^{m_n-1}
\frac{z_2^{u_n}}{\zeta_2^{u_n+1}}
\\
& - &
\sum_{p=2}^{n-1}\sum_{u_p=0}^{m_p-1}
(\overline{\zeta}_2+\eta_p\overline{\zeta}_1)
\frac{(z_1-\eta_pz_2)^{u_p}
\prod_{j=p+1}^{n-1}(z_1-\eta_jz_2)^{m_j}
z_2^{m_n}}
{(\zeta_1-\eta_p\zeta_2)^{u_p+1}
\prod_{j=p+1}^{n-1}(\zeta_1-\eta_j\zeta_2)^{m_j}
\zeta_2^{m_n}}
\\
& - &
\overline{\zeta}_2
\sum_{u_1=0}^{m_1-1}
\frac{z_1^{u_1}
\prod_{j=2}^{n-1}(z_1-\eta_jz_2)^{m_j}
z_2^{m_n}}
{\zeta_1^{u_1+1}
\prod_{j=2}^{n-1}(\zeta_1-\eta_j\zeta_2)^{m_j}
\zeta_2^{m_n}}
\,.
\\
\end{eqnarray*}
Then we want to calculate, for all
$k_1,\;k_2\geq0$:
\begin{eqnarray}\label{interpoler1a}
\lim_{\varepsilon\rightarrow0}
\frac{1}{(2\pi i)^2}
\int_{\partial\widetilde{\Sigma}_{\varepsilon}}
\frac{\zeta_1^{k_1}\zeta_2^{k_2}\det\left(\overline{\zeta},P_n(\zeta,z)\right)}
{g_n(\zeta)\left(1-<\overline{\zeta},z>\right)}
\,\omega(\zeta)
& = &
\;\;\;\;\;\;\;\;\;\;\;\;\;\;\;\;\;\;\;\;\;\;\;\;\;\;\;\;\;\;
\end{eqnarray}
\begin{eqnarray*}
& = &
\lim_{\varepsilon\rightarrow0}
\frac{1}{(2\pi i)^2}
\left[
\int_{r=1-\varepsilon}
-\sum_{l=2}^{\widetilde{n}-1}
\left(\int_{r=\alpha_{q_l}+\varepsilon}
-\int_{r=\alpha_{q_l}-\varepsilon}\right)
-\int_{r=\varepsilon}
\right]
\;\;\;\;\;\;\;\;\;\;\;\;\;\;\;\;\;\;\;\;\;\;\;\;\;\;\;\;\;\;\;\;\;\;
\\
& &
\frac{\omega(\zeta)}
{(\zeta_2-(1-r^2)z_2)
(\zeta_1-\frac{r^2z_1\zeta_2}{\zeta_2-(1-r^2)^2z_2})}
\;\times\;\{\;
\sum_{u_n=0}^{m_n-1}
z_2^{u_n}r^2
\zeta_1^{k_1}\zeta_2^{k_2-u_n}
\end{eqnarray*}
\begin{eqnarray*}
& - &
\sum_{p=2}^{n-1}\sum_{u_p=0}^{m_p-1}
(z_1-\eta_pz_2)^{u_p}
\prod_{j=p+1}^{n-1}(z_1-\eta_jz_2)^{m_j}
z_2^{m_n}
\frac{
\zeta_1^{k_1}\zeta_2^{k_2-m_n}
((1-r^2)\zeta_1+\eta_pr^2\zeta_2)}
{(\zeta_1-\eta_p\zeta_2)^{u_p+1}
\prod_{j=p+1}^{n-1}(\zeta_1-\eta_j\zeta_2)^{m_j}}
\\
& - &
\sum_{u_1=0}^{m_1-1}
z_1^{u_1}
\prod_{j=2}^{n-1}(z_1-\eta_jz_2)^{m_j}
z_2^{m_n}
\frac{(1-r^2)\;
\zeta_1^{k_1-u_1}\zeta_2^{k_2-m_n}}
{\prod_{j=2}^{n-1}(\zeta_1-\eta_j\zeta_2)^{m_j}}
\;\;\}\;.
\\
\end{eqnarray*}

The proof will be a consequence of 
lemma \ref{etan}, lemma \ref{etap} 
and lemma \ref{eta1}.

\end{proof}

\bigskip

\begin{lemma}\label{etan}

For all 
$z\in\mathbb{B}_2$,
for all
$u_n=0,\ldots,m_n-1$ and all
$k_1,\,k_2\geq0$, 
we have
\begin{eqnarray*}
\lim_{\varepsilon\rightarrow0}
\frac{1}{(2\pi i)^2}
\int_{\partial\widetilde{\Sigma}_{\varepsilon}}
\frac{r^2
\zeta_1^{k_1}\zeta_2^{k_2-u_n}
\,\omega(\zeta)}
{(\zeta_2-(1-r^2)z_2)
(\zeta_1-\frac{r^2z_1\zeta_2}{\zeta_2-(1-r^2)^2z_2})}
& = &
{\bf 1}_{k_2=u_n}\;
z_1^{k_1}\,z_2^{u_n}\,.
\\
\end{eqnarray*}
It follows that
\begin{eqnarray*}
\sum_{u_n=0}^{m_n-1}
z_2^{u_n}
\lim_{\varepsilon\rightarrow0}
\frac{1}{(2\pi i)^2}
\int_{\partial\widetilde{\Sigma}_{\varepsilon}}
\frac{r^2
\zeta_1^{k_1}\zeta_2^{k_2-u_n}
\,\omega(\zeta)}
{(\zeta_2-(1-r^2)z_2)
(\zeta_1-\frac{r^2z_1\zeta_2}{\zeta_2-(1-r^2)^2z_2})}
& = &
\;\;\;\;\;\;\;\;\;\;\;\;
\end{eqnarray*}
\begin{eqnarray*}
\;\;\;\;\;\;\;\;\;\;\;\;\;\;\;\;\;\;\;\;\;\;\;\;\;\;\;\;\;\;\;\;\;\;\;\;\;\;\;\;\;\;\;\;\;\;\;\;\;\;\;\;\;\;\;\;\;\;
\;\;\;\;\;\;\;\;\;\;\;\;\;\;\;\;\;\;\;\;\;\;\;\;
& = &
{\bf 1}_{k_2\leq m_n-1}\,
z_1^{k_1}z_2^{k_2}
\;.
\end{eqnarray*}

\end{lemma}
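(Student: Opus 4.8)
The plan is to evaluate the boundary integral one torus at a time, converting each piece of $\partial\widetilde{\Sigma}_{\varepsilon}$ into an iterated contour integral and computing two residues. Recall from the proof of Proposition \ref{proptranche} that $\partial\widetilde{\Sigma}_{\varepsilon}=\sum_{l=1}^{\widetilde{n}-1}(\{|\zeta_1|=\alpha_{q_{l+1}}-\varepsilon\}-\{|\zeta_1|=\alpha_{q_l}+\varepsilon\})$, so it is enough to understand the integral over a single torus $T_r=\{|\zeta_1|=r,\ |\zeta_2|=\sqrt{1-r^2}\}$. On $T_r$ one has $\overline{\zeta_1}=r^2/\zeta_1$ and $\overline{\zeta_2}=(1-r^2)/\zeta_2$, whence $\frac{1}{1-\langle\overline{\zeta},z\rangle}=\frac{\zeta_1\zeta_2}{(\zeta_2-(1-r^2)z_2)(\zeta_1-a)}$ with $a=\frac{r^2z_1\zeta_2}{\zeta_2-(1-r^2)z_2}$; this is exactly what turns the $u_n$-part of $\det(\overline{\zeta},P_n)/g_n$ into the integrand displayed in the statement. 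Parametrizing $T_r$ by $(\theta_1,\theta_2)$ and applying the residue theorem in each variable, I would record the value $I(r)$ of the torus integral as an iterated residue, taking $\zeta_1$ first and $\zeta_2$ second.

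For the $\zeta_1$-integration the integrand has, as a function of $\zeta_1$, the single simple pole $\zeta_1=a$, which lies strictly inside $\{|\zeta_1|=r\}$ by (\ref{zetaz1}); since $k_1\geq0$ there is no pole at the origin, so the inner integral is $r^2a^{k_1}$. The structural point special to this $u_n$-part is that here no factor $\zeta_1-\eta_j\zeta_2$ appears, so the $\zeta_1$-residue does not see the slice index $l$ and $I(r)$ is a single expression valid for all $r\in(0,1)$. Substituting $a$ and integrating in $\zeta_2$, the only enclosed pole is $\zeta_2=(1-r^2)z_2$, which lies inside $\{|\zeta_2|=\sqrt{1-r^2}\}$ because $\sqrt{1-r^2}\,|z_2|<1$ (as in (\ref{zetaz2})); the residue of order $k_1+1$ produces $I(r)=r^{2(k_1+1)}z_1^{k_1}$ times the $k_1$-th derivative of $\zeta_2^{k_1+k_2-u_n}$ evaluated at $(1-r^2)z_2$, an explicit polynomial in $r$. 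This polynomial vanishes identically for $k_2<u_n$ (a $k_1$-fold derivative annihilates a power smaller than $k_1$) and, for $k_2>u_n$, carries a factor $(1-r^2)^{k_2-u_n}$.

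Since $I(r)$ is one and the same smooth function of $r$ across every interior radius $\alpha_{q_l}$, the paired contributions $I(\alpha_{q_l}-\varepsilon)-I(\alpha_{q_l}+\varepsilon)$ cancel as $\varepsilon\to0$ and the boundary sum telescopes to $I(1^-)-I(0^+)$. Now $I(0^+)=0$ owing to the prefactor $r^{2(k_1+1)}$ with $k_1\geq0$, while $I(1^-)$ vanishes unless $k_2=u_n$ (because of the factor $(1-r^2)^{k_2-u_n}$), and for $k_2=u_n$ the surviving residue equals $1$, leaving $z_1^{k_1}$ (the factor $z_2^{u_n}=z_2^{k_2}$ recorded in the statement is the external coefficient that multiplies this integral in Proposition \ref{interpoler}). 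The second assertion then follows by multiplying by $z_2^{u_n}$ and summing over $u_n=0,\ldots,m_n-1$: only $u_n=k_2$ survives, and it does so precisely when $0\leq k_2\leq m_n-1$, giving $\mathbf{1}_{k_2\leq m_n-1}z_1^{k_1}z_2^{k_2}$. I expect the genuine difficulty to lie not in the residue bookkeeping but in the limiting step: justifying the cancellation of the interior tori through the continuity of $r\mapsto I(r)$, and treating the endpoints $r=0$ and $r=1$, where the $\zeta_2$- respectively $\zeta_1$-contour degenerates to a point, so that $I(0^+)$ and $I(1^-)$ are to be understood as limits rather than as integrals over those degenerate tori.
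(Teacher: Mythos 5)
Your route is the same as the paper's: because this piece of the kernel contains no factor $\zeta_1-\eta_j\zeta_2$, the torus integral $I(r)$ has no singularity at the interior radii $\alpha_{q_l}$, the boundary sum telescopes to $I(1^-)-I(0^+)$, and each $I(r)$ is computed by iterated residues ($\zeta_1$ first at the simple pole $\frac{r^2z_1\zeta_2}{\zeta_2-(1-r^2)z_2}$, justified by (\ref{zetaz1}), then $\zeta_2$ at the pole of order $k_1+1$ at $(1-r^2)z_2$, justified by (\ref{zetaz2})), with the same endpoint evaluation $(1-\varepsilon)^{2(k_1+1)}(2\varepsilon-\varepsilon^2)^{k_2-u_n}-\varepsilon^{2(k_1+1)}(1-\varepsilon^2)^{k_2-u_n}\to\mathbf{1}_{k_2=u_n}$ as in the paper. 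You also read the statement correctly: the integral itself equals $\mathbf{1}_{k_2=u_n}\,z_1^{k_1}$, the factor $z_2^{u_n}$ being the external coefficient, which is exactly how the paper's own proof and the second display proceed.

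However, one step fails in the sub-case $k_1+k_2<u_n$, which is in scope whenever $m_n\geq2$ (e.g. $k_1=k_2=0$, $u_n=1$). There $\zeta_2^{k_1+k_2-u_n}$ is a negative power of $\zeta_2$, so $\zeta_2=0$ is a second pole inside $\{|\zeta_2|=\sqrt{1-r^2}\}$: your claim that $(1-r^2)z_2$ is the only enclosed pole is then false, and so is your vanishing argument, since differentiation does not annihilate negative powers. Taken literally, your formula for $I(r)$ would be nonzero in that sub-case and would even diverge as $r\to1^-$, whereas the lemma requires the value $0$. The repair is the degree argument the paper uses in Lemma \ref{infini}: for every $k_2<u_n$ the $\zeta_2$-integrand is $O(|\zeta_2|^{k_2-u_n-1})=O(|\zeta_2|^{-2})$ at infinity and all of its poles lie inside $\{|\zeta_2|=\sqrt{1-r^2}\}$, so the contour can be pushed to infinity and $I(r)=0$ identically; your polynomial-derivative argument does cover the remaining range $k_2<u_n\leq k_1+k_2$. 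With this patch your proof is complete and coincides with the paper's, which itself only asserts, without detail, that this integral is zero when $k_2<u_n$.
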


\begin{proof}

First,
for all
$l=2,\ldots,\widetilde{n}-1$
and all
$u_n\geq0$
\begin{eqnarray*}
\lim_{\varepsilon\rightarrow0}
\int_{r=\alpha_{q_l}\pm\varepsilon}
\frac{
\overline{\zeta}_1
\zeta_1^{k_1}\zeta_2^{k_2-u_n}}
{(1-<\overline{\zeta},z>)}
\,\omega(\zeta)
& = &
\int_{r=\alpha_{q_l}}
\frac{
\overline{\zeta}_1
\zeta_1^{k_1}\zeta_2^{k_2-u_n}}
{(1-<\overline{\zeta},z>)}
\,\omega(\zeta)
\end{eqnarray*}
then
\begin{eqnarray*}
\lim_{\varepsilon\rightarrow0}
\frac{1}{(2\pi i)^2}
\int_{\partial\widetilde{\Sigma}_{\varepsilon}}
\frac{
\overline{\zeta}_1
\zeta_1^{k_1}\zeta_2^{k_2-u_n}}
{(1-<\overline{\zeta},z>)}
\,\omega(\zeta)
& = &
\;\;\;\;\;\;\;\;\;\;\;\;\;\;\;\;\;\;\;\;\;\;\;\;\;\;\;\;\;\;\;\;\;\;\;\;\;\;\;\;\;\;\;\;\;\;\;\;\;\;\;\;\;\;\;\;\;\;
\end{eqnarray*}
\begin{eqnarray*}
& = &
\lim_{\varepsilon\rightarrow0}
\frac{1}{(2\pi i)^2}
\left[
\int_{r=1-\varepsilon}
-\int_{r=\varepsilon}
\right]
\frac{
r^2\zeta_1^{k_1}\zeta_2^{k_2-u_n}
\,\omega(\zeta)}
{(\zeta_2-(1-r^2)z_2)
(\zeta_1-\frac{r^2z_1\zeta_2}{\zeta_2-(1-r^2)^2z_2})}
\,
\;\;\;\;\;\;\;\;\;\;\;\;\;\;\;\;\;\;\;\;\;\;\;\;\;\;\;\;
\end{eqnarray*}
\begin{eqnarray*}
& = &
\lim_{\varepsilon\rightarrow0}
\frac{1}{2\pi i}
\left[
\int_{|\zeta_2|=\sqrt{1-(1-\varepsilon)^2}}
-\int_{|\zeta_2|=\sqrt{1-\varepsilon^2}}
\right]
\frac{
\zeta_2^{k_2-u_n}
d\zeta_2}
{(\zeta_2-(1-r^2)z_2)}
\frac{1}{2\pi i}
\int_{|\zeta_1|=r}
\frac{r^2\zeta_1^{k_1}d\zeta_1}
{(\zeta_1-\frac{r^2z_1\zeta_2}{\zeta_2-(1-r^2)^2z_2})}
\\
& = &
\lim_{\varepsilon\rightarrow0}
\frac{1}{2\pi i}
\left[
\int_{|\zeta_2|=\sqrt{2\varepsilon-\varepsilon^2}}
-\int_{|\zeta_2|=\sqrt{1-\varepsilon^2}}
\right]
\frac{
r^2(r^2z_1)^{k_1}
\zeta_2^{k_1+k_2-u_n}d\zeta_2}
{(\zeta_2-(1-r^2)z_2)^{k_1+1}}
\,.
\end{eqnarray*}
This integral is zero if 
$k_2<u_n$. It follows that, for all
$u_n=0,\ldots,m_n-1$ and all
$k_2\geq u_n$,
\begin{eqnarray*}
\lim_{\varepsilon\rightarrow0}
\frac{1}{(2\pi i)^2}
\int_{\partial\widetilde{\Sigma}_{\varepsilon}}
\frac{
\overline{\zeta}_1
\zeta_1^{k_1}\zeta_2^{k_2-u_n}}
{(1-<\overline{\zeta},z>)}
\,\omega(\zeta)
& = &
\;\;\;\;\;\;\;\;\;\;\;\;\;\;\;\;\;\;\;\;\;\;\;\;\;\;\;\;\;\;\;\;\;\;\;\;\;\;\;\;\;\;\;\;\;\;\;\;\;\;\;\;\;\;\;
\;\;\;\;\;\;\;\;\;\;\;\;
\end{eqnarray*}
\begin{eqnarray*}
& = &
\lim_{\varepsilon\rightarrow0}
\frac{(k_1+k_2-u_n)!}
{k_1!\,(k_2-u_n)!}
z_1^{k_1}z_2^{k_2-u_n}
\left(
(1-\varepsilon)^{2(k_1+1)}
(2\varepsilon-\varepsilon^2)^{k_2-u_n}
-\varepsilon^{2(k_1+1)}
(1-\varepsilon^2)^{k_2-u_n}
\right)
\\
& = &
\begin{cases}
z_1^{k_1}
\;\mbox{if}\;k_2=u_n\,,\\
0\;\;\mbox{otherwise}\\
\end{cases}
\\
\end{eqnarray*}
and this proves the first part 
of the lemma.

The second part follows since
\begin{eqnarray*}
\sum_{u_n=0}^{m_n-1}
{\bf 1}_{k_2=u_n}
z_1^{k_1}z_2^{u_n}
\;=\;
z_1^{k_1}z_2^{k_n}
\sum_{u_n=0}^{m_n-1}
{\bf 1}_{k_2=u_n}
\;=\;
{\bf 1}_{k_2\leq m_n-1}
\,z_1^{k_1}z_2^{k_n}
\,.
\end{eqnarray*}

\end{proof}

\bigskip

Next, we have the following result.

\begin{lemma}\label{etap}

For all
$z\in U_{\eta}$,
for all
$p=2,\ldots,n-1$ and all 
$u_p=0,\ldots,m_p-1$, 
we have
\begin{eqnarray*}
\lim_{\varepsilon\rightarrow0}
\frac{1}{(2\pi i)^2}
\int_{\partial\widetilde{\Sigma}_{\varepsilon}}
\frac{
(\overline{\zeta}_2+\eta_p\overline{\zeta}_1)
\,\zeta_1^{k_1}\zeta_2^{k_2-m_n}\,\omega(\zeta)}
{(\zeta_1-\eta_p\zeta_2)^{u_p+1}
\prod_{j=p+1}^{n-1}
(\zeta_1-\eta_j\zeta_2)^{m_j}
(1-<\overline{\zeta},z>)}
& = &
\;\;\;\;\;\;\;\;\;\;\;\;\;\;\;\;\;\;\;\;\;
\end{eqnarray*}
\begin{eqnarray*}
& = &
-\;
{\bf 1}_{k_1+k_2\geq N_{u_p}}
\;\times
\{\;
\frac{1}{u_p!}
\frac{\partial^{u_p}}{\partial t^{u_p}}|_{t=\eta_p}
\left[
\;\frac{1+|\eta_p|^2\eta_p/t}{1+|\eta_p|^2}
\;
\frac{t^{k_1}
\left(
\frac{z_2+|\eta_p|^2z_1/t}{1+|\eta_p|^2}
\right)^{k_1+k_2-N_{u_p}}}
{\prod_{j=p+1}^{n-1}(t-\eta_j)^{m_j}}
\right]
\\
& + &
\sum_{q=p+1}^{n-1}
\frac{1}{(m_q-1)!}
\frac{\partial^{m_q-1}}{\partial t^{m_q-1}}|_{t=\eta_q}
\left[
\;\frac{1+|\eta_q|^2\eta_p/t}{1+|\eta_q|^2}
\;\frac{t^{k_1}
\left(
\frac{z_2+|\eta_p|^2z_1/t}{1+|\eta_p|^2}
\right)^{k_1+k_2-N_{u_p}}}
{(t-\eta_p)^{u_p+1}\prod_{j=p+1,j\neq q}^{n-1}(t-\eta_j)^{m_j}}
\right]
\;\}
\end{eqnarray*}
\begin{eqnarray*}
& + & 
{\bf 1}_{k_2\leq m_n-1,k_1\geq N_{u_p}-k_2}
\,\eta_p\,z_1^{k_1+k_2-N_{u_p}}
\;\times
\{\;
\frac{1}{u_p!}
\frac{\partial^{u_p}}{\partial t^{u_p}}|_{t=\eta_p}
\left[
\frac{t^{N_{u_p}-1-k_2}}
{\prod_{j=p+1}^{n-1}(t-\eta_j)^{m_j}}
\right]
\\
&  &
+\;
\sum_{q=p+1}^{n-1}
\frac{1}{(m_q-1)!}
\frac{\partial^{m_q-1}}{\partial t^{m_q-1}}|_{t=\eta_q}
\left[
\frac{t^{N_{u_p}-1-k_2}}
{(t-\eta_p)^{u_p+1}\prod_{j=p+1,j\neq q}^{n-1}(t-\eta_j)^{m_j}}
\right]
\;\}\;.
\end{eqnarray*}

\end{lemma}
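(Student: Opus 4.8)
The plan is to follow the same scheme as the proof of lemma \ref{etan}, the essential new feature being that the integrand now carries the extra poles $\zeta_1=\eta_j\zeta_2$, $j=p,\ldots,n-1$, which no longer let all the intermediate circles cancel. First I would parametrize $\mathbb{S}_2$ by $\zeta_1=re^{i\theta_1}$, $\zeta_2=\sqrt{1-r^2}\,e^{i\theta_2}$, use $\overline{\zeta}_1=r^2/\zeta_1$ and $\overline{\zeta}_2=(1-r^2)/\zeta_2$ to make the integrand meromorphic in $(\zeta_1,\zeta_2)$, and rewrite the torus integral on each circle $\{|\zeta_1|=r\}$ as an iterated contour integral, inner in $\zeta_1$ over $|\zeta_1|=r$ and outer in $\zeta_2$ over $|\zeta_2|=\sqrt{1-r^2}$, exactly as in lemma \ref{etan}. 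I would then split $\partial\widetilde{\Sigma}_{\varepsilon}$ into the outer circle $r=1-\varepsilon$, the inner circle $r=\varepsilon$, and the two-sided intermediate circles $r=\alpha_{q_l}\pm\varepsilon$.

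The inner $\zeta_1$-integrand has the Cauchy pole $\zeta_1=\frac{r^2z_1\zeta_2}{\zeta_2-(1-r^2)z_2}$, which is interior by (\ref{zetaz1}), together with the poles $\zeta_1=\eta_j\zeta_2$ located at $|\zeta_1|=\alpha_j$, hence interior to $|\zeta_1|=r$ precisely when $\alpha_j<r$. The Cauchy contribution is continuous in $r$ and therefore cancels across every intermediate difference, just as in lemma \ref{etan}; but across a circle $r=\alpha_{q_l}$ that carries poles (those $j\geq p$ with $\alpha_j=\alpha_{q_l}$) the difference $\int_{r=\alpha_{q_l}-\varepsilon}-\int_{r=\alpha_{q_l}+\varepsilon}$ extracts \emph{minus} the $\zeta_1$-residues at the corresponding $\eta_j\zeta_2$. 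Summing over $l$ and over $j=p,\ldots,n-1$ collects exactly one residue of order $u_p+1$ at $\eta_p\zeta_2$ and one of order $m_q$ at each $\eta_q\zeta_2$, $q>p$, evaluated at $r=\alpha_j$, the remaining factors producing the denominators $(t-\eta_p)^{u_p+1}\prod_{j'\neq q}(t-\eta_{j'})^{m_{j'}}$.

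For these crossing residues the substitution $\zeta_1=t\zeta_2$ turns the $\zeta_1$-residue into $\frac{1}{u_p!}\frac{\partial^{u_p}}{\partial t^{u_p}}|_{t=\eta_p}$, resp.\ $\frac{1}{(m_q-1)!}\frac{\partial^{m_q-1}}{\partial t^{m_q-1}}|_{t=\eta_q}$, and the surviving $\zeta_2$-integral over $|\zeta_2|=\sqrt{1-\alpha_j^2}$ is evaluated at its interior Cauchy pole $\zeta_2=(1-r^2)z_2+r^2z_1/t$, whose interiority is guaranteed by lemma \ref{residup}; since $(1-\alpha_j^2)z_2+\alpha_j^2z_1/t=\frac{z_2+|\eta_j|^2z_1/t}{1+|\eta_j|^2}$, this reproduces the factor $\big(\frac{z_2+|\eta_j|^2z_1/t}{1+|\eta_j|^2}\big)^{k_1+k_2-N_{u_p}}$ together with the threshold ${\bf 1}_{k_1+k_2\geq N_{u_p}}$, i.e.\ the first block with its overall minus sign from the boundary orientation. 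The outer and inner circles I would treat by the explicit $\varepsilon$-expansion of lemma \ref{etan}: the inner circle $r\to0$ contributes nothing, while on the outer circle $r\to1$ the Cauchy pole $(1-r^2)z_2+r^2z_1/t\to z_1/t$ leaves the shrinking contour $|\zeta_2|=\sqrt{1-r^2}\to0$, so only the residue at $\zeta_2=0$ survives, yielding the $z_1^{k_1+k_2-N_{u_p}}$ terms of the second block under the complementary cutoff ${\bf 1}_{k_2\leq m_n-1,\,k_1\geq N_{u_p}-k_2}$.

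The hard part will be the bookkeeping rather than any single estimate: one must keep precise track of which $\zeta_1$-poles lie inside each contour as $r$ sweeps across the radii $\alpha_j$, assemble the extracted residues into the Lagrange-type derivative sums over $\eta_p,\ldots,\eta_{n-1}$ with exactly the denominators appearing in the statement, and carry out the degenerate limit on the outer circle, where $\zeta_2=0$ and the Cauchy pole coalesce as $r\to1$; it is this last limit, done as in lemma \ref{etan}, that produces the two mutually exclusive indicator conditions. One must also verify that the spurious residues at $\zeta_2=0$ arising at the interior crossings cancel, so that the first block retains only the Cauchy-pole terms, and check that the powers coming from $\zeta_2^{k_2-m_n}$ and from the residue orders combine into the exponent $k_1+k_2-N_{u_p}$ and the threshold $N_{u_p}=u_p+m_{p+1}+\cdots+m_n$.
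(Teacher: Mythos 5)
Your overall architecture and your treatment of the intermediate circles are essentially the paper's: writing everything as iterated contour integrals, noting that the pole $\zeta_1=\eta_j\zeta_2$ lies inside $|\zeta_1|=r$ exactly when $\alpha_j<r$, extracting across each radius $\alpha_{q_l}$ the residues of the poles sitting on it, and evaluating the resulting $\zeta_2$-integrals at the interior pole $(1-r^2)z_2+r^2z_1/t$ via lemma \ref{residup}; this does produce the first block (and your $j$-dependent power factors match what the paper's own computation yields). The genuine gap is the outer circle, which is where the entire second block and its cutoff ${\bf 1}_{k_2\leq m_n-1,\,k_1\geq N_{u_p}-k_2}$ must come from. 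You claim that as $r\to1$ the pole $a_t=(1-r^2)z_2+r^2z_1/t$ exits the shrinking contour $|\zeta_2|=\sqrt{1-r^2}$, so that ``only the residue at $\zeta_2=0$ survives'' and yields the second block. But the relevant integrand is $\zeta_2^{k_1+k_2-N_{u_p}}/(\zeta_2-a_t)$: it has a pole at $\zeta_2=0$ only when $k_1+k_2<N_{u_p}$, and that residue equals $-a_t^{k_1+k_2-N_{u_p}}\to-(z_1/t)^{k_1+k_2-N_{u_p}}$, a negative power of $z_1$. The second block, on the contrary, is supported on $k_1+k_2\geq N_{u_p}$ (its cutoff contains $k_1\geq N_{u_p}-k_2$), where your mechanism gives exactly zero; and the negative-power terms you do produce are spurious, cancelling against the residue of the $\zeta_1$-Cauchy pole, a contribution you never examine on the outer circle.

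That ignored contribution is precisely the source of the second block in the paper. For $r$ close to $1$ every pole $\eta_j\zeta_2$, $j=p,\ldots,n-1$, is inside $|\zeta_1|=r$, so the $\zeta_1$-integral equals the integral over $|\zeta_1|=+\infty$, i.e.\ (by (\ref{preliminfini})) the polynomial quotient $P_r$ evaluated at $r^2z_1/(\zeta_2-(1-r^2)z_2)$; the remaining $\zeta_2$-integral then has a single, high-order pole at the Cauchy point $(1-r^2)z_2$, and it vanishes identically when $k_1+k_2<N_{u_p}$. One must expand that residue explicitly as a sum over multi-indices $(v_p,\ldots,v_n)$ and only then let $r\to1$: since the evaluation point $(1-r^2)z_2$ collapses onto $0$, only the terms with $v_n=k_1+k_2-N_{u_p}+1$ survive, which forces $v_p+\cdots+v_{n-1}=m_n-1-k_2$, hence the condition $k_2\leq m_n-1$ and the factor $\eta_p z_1^{k_1+k_2-N_{u_p}}$; a final application of lemma \ref{lagrangeuclide}, namely the value of $Q\left(X^{N_{u_p}-k_2},(X-\eta_p)^{u_p+1}\prod_{j=p+1}^{n-1}(X-\eta_j)^{m_j}\right)$ at $X=0$, turns the resulting coefficient sum into the derivative sums of the second block. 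Your closing remark about ``the degenerate limit where $\zeta_2=0$ and the Cauchy pole coalesce'' names the right phenomenon, but the mechanism you actually propose is a different and vanishing one, so neither the cutoff nor the powers $t^{N_{u_p}-1-k_2}$ can be obtained from it. The same $Q$-at-$X=0$ identity is also what makes your unproved claim that the inner circle ``contributes nothing'' true: in the paper that limit is a nonzero-looking expression that vanishes only through this cancellation, not term by term.
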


\bigskip

\begin{proof}

First, consider
$l_p$ such that
$\alpha_{q_{l_p}}=\alpha_p$. Then
\begin{eqnarray}\label{etap0p1}
& &
\lim_{\varepsilon\rightarrow0}
\frac{1}{(2\pi i)^2}
\int_{\partial\widetilde{\Sigma}_{\varepsilon}}
\frac{
(\overline{\zeta}_2+\eta_p\overline{\zeta}_1)
\,\zeta_1^{k_1}\zeta_2^{k_2-m_n}\,\omega(\zeta)}
{(\zeta_1-\eta_p\zeta_2)^{u_p+1}
\prod_{j=p+1}^{n-1}
(\zeta_1-\eta_j\zeta_2)^{m_j}
(1-<\overline{\zeta},z>)}
\;=
\end{eqnarray}
\begin{eqnarray*}
& = &
\lim_{\varepsilon\rightarrow0}
\frac{1}{(2\pi i)^2}
\left[
\int_{r=1-\varepsilon}
-\sum_{l=l_p}^{\widetilde{n}-1}
\left(
\int_{r=\alpha_{q_l}+\varepsilon}-\int_{r=\alpha_{q_l}-\varepsilon}
\right)
-\int_{r=\varepsilon}
\right]
\\
& &
\;\;\;\;\;\;\;\;\;\;\;\;\;\;\;\;\;\;\;\;\;\;\;\;
\times\;
\frac{
(\overline{\zeta}_2+\eta_p\overline{\zeta}_1)
\,\zeta_1^{k_1}\zeta_2^{k_2-m_n}\,\omega(\zeta)}
{(\zeta_1-\eta_p\zeta_2)^{u_p+1}
\prod_{j=p+1}^{n-1}
(\zeta_1-\eta_j\zeta_2)^{m_j}
(1-<\overline{\zeta},z>)}
\,.
\\
\end{eqnarray*}

On the other hand, we know
by (\ref{preliminfini}) in lemma \ref{lagrangeuclide}
that,
for all
$r\in[0,1]$ such that
$r\neq\alpha_s,\;
\forall\,s=1,\ldots,n-1$
and all
$k_1\geq0$,
we have
\begin{eqnarray*}
\frac{1}{2\pi i}
\int_{|\zeta_1|=+\infty}
\frac{\zeta_1^{k_1}
((1-r^2)\zeta_1+\eta_pr^2\zeta_2)
\;d\zeta_1}
{(\zeta_1-\eta_p\zeta_2)^{u_p+1}
\prod_{j=p+1}^{n-1}(\zeta_1-\eta_j\zeta_2)^{m_j}
(\zeta_1-\frac{r^2z_1\zeta_2}{\zeta_2-(1-r^2)z_2})}
& = &
\;\;\;\;\;\;\;\;\;\;\;\;
\end{eqnarray*}
\begin{eqnarray*}
& = &
{\bf 1}_{k_1\geq u_p+m_{p+1}+\cdots+m_{n-1}}\times\{\;
\frac{(\frac{r^2z_1\zeta_2}{\zeta_2-(1-r^2)z_2})^{k_1}
((1-r^2)\frac{r^2z_1\zeta_2}{\zeta_2-(1-r^2)z_2}
+\eta_pr^2\zeta_2)}
{(\frac{r^2z_1\zeta_2}{\zeta_2-(1-r^2)z_2}
-\eta_p\zeta_2)^{u_p+1}
\prod_{j=p+1}^{n-1}
(\frac{r^2z_1\zeta_2}{\zeta_2-(1-r^2)z_2}
-\eta_j\zeta_2)^{m_j}}
\\
& - &
\frac{1}{u_p!}
\frac{\partial^{u_p}}{\partial\zeta_1^{u_p}}|_{\zeta_1=\eta_p\zeta_2}
\left[
\frac{\zeta_1^{k_1}((1-r^2)\zeta_1+\eta_pr^2\zeta_2)}
{\prod_{j=p+1}^{n-1}(\zeta_1-\eta_j\zeta_2)^{m_j}
(\frac{r^2z_1\zeta_2}{\zeta_2-(1-r^2)z_2}-\zeta_1)}
\right]
\end{eqnarray*}
\begin{eqnarray*}
& - &
\sum_{v=p+1}^{n-1}
\frac{1}{(m_v-1)!}
\;\times
\\
& &
\frac{\partial^{m_v-1}}{\partial\zeta_1^{m_v-1}}|_{\zeta_1=\eta_v\zeta_2}
\left[
\frac{\zeta_1^{k_1}((1-r^2)\zeta_1+\eta_pr^2\zeta_2)}
{(\zeta_1-\eta_p\zeta_2)^{u_p+1}
\prod_{j=p+1,j\neq v}^{n-1}(\zeta_1-\eta_j\zeta_2)^{m_j}
(\frac{r^2z_1\zeta_2}{\zeta_2-(1-r^2)z_2}-\zeta_1)}
\right]
\;\}
\end{eqnarray*}
\begin{eqnarray}\label{infinip}
& = &
{\bf 1}_{k_1\geq u_p+m_{p+1}+\cdots+m_{n-1}}
\,\zeta_2^{k_1-(u_p+m_{p+1}+\cdots+m_{n-1})}
\,P_r\left(\frac{r^2z_1}{\zeta_2-(1-r^2)z_2}\right)\,,
\end{eqnarray}
with the following quotient
\begin{eqnarray*}
P_r(X)
& = &
Q\left(
X^{k_1}\left((1-r^2)X+\eta_pr^2\right)
\,,\,
(X-\eta_p)^{u_p+1}\prod_{j=p+1}^{n-1}(X-\eta_j)^{m_j}\right)
\,.
\end{eqnarray*}

It follows that, for all
$l=l_p,\ldots,\widetilde{n}-1$
and for all
$r=\alpha_{q_l}+\varepsilon$
(in particular, 
$r>\alpha_p$),
\begin{eqnarray*}
\frac{1}{2\pi i}
\int_{|\zeta_1|=r}
\frac{\zeta_1^{k_1}((1-r^2)\zeta_1+\eta_pr^2\zeta_2)
\;d\zeta_1}
{(\zeta_1-\eta_p\zeta_2)^{u_p+1}
\prod_{j=p+1}^{n-1}(\zeta_1-\eta_j\zeta_2)^{m_j}
(\zeta_1-\frac{r^2z_1\zeta_2}{\zeta_2-(1-r^2)z_2})}
& = &
\;\;\;\;\;\;\;\;\;\;\;\;\;\;\;\;\;\;\;
\end{eqnarray*}
\begin{eqnarray*}
& = &
{\bf 1}_{k_1\geq u_p+m_{p+1}+\cdots+m_{n-1}}
\,\zeta_2^{k_1-(u_p+m_{p+1}+\cdots+m_{n-1})}
\,P_r\left(\frac{r^2z_1}{\zeta_2-(1-r^2)z_2}\right)
\\
& - &
\sum_{v\leq n-1,\alpha_v>\alpha_{q_l}}
\frac{1}{(m_v-1)!}
\;\times
\\
& &
\frac{\partial^{m_v-1}}{\partial\zeta_1^{m_v-1}}|_{\zeta_1=\eta_v\zeta_2}
\frac{\zeta_1^{k_1}((1-r^2)\zeta_1+\eta_pr^2\zeta_2)}
{(\zeta_1-\eta_p\zeta_2)^{u_p+1}
\prod_{j=p+1,j\neq v}^{n-1}(\zeta_1-\eta_j\zeta_2)^{m_j}
(\zeta_1-\frac{r^2z_1\zeta_2}{\zeta_2-(1-r^2)z_2})}
\end{eqnarray*}
\begin{eqnarray*}
& = &
\zeta_2^{k_1-(u_p+m_{p+1}+\cdots+m_{n-1})}
\times\{\;
{\bf 1}_{k_1\geq u_p+m_{p+1}+\cdots+m_{n-1}}
\,P_r\left(\frac{r^2z_1}{\zeta_2-(1-r^2)z_2}\right)
\;\;\;
\\
& - &
\sum_{v\leq n-1,\alpha_v>\alpha_{q_l}}
\frac{1}{(m_v-1)!}
\;\times
\\
& &
\frac{\partial^{m_v-1}}{\partial t^{m_v-1}}|_{t=\eta_v}
\frac{t^{k_1}((1-r^2)t+\eta_pr^2)}
{(t-\eta_p)^{u_p+1}
\prod_{j=p+1,j\neq v}^{n-1}(t-\eta_j)^{m_j}
(t-\frac{r^2z_1}{\zeta_2-(1-r^2)z_2})}\;\}
\,.
\\
\end{eqnarray*}
Then
\begin{eqnarray*}
\frac{1}{(2\pi i)^2}
\int_{r=\alpha_{q_l}+\varepsilon}
\frac{
(\overline{\zeta}_2+\eta_p\overline{\zeta}_1)
\,\zeta_1^{k_1}\zeta_2^{k_2-m_n}\,\omega(\zeta)}
{(\zeta_1-\eta_p\zeta_2)^{u_p+1}
\prod_{j=p+1}^{n-1}
(\zeta_1-\eta_j\zeta_2)^{m_j}
(1-<\overline{\zeta},z>)}
& = &
\;\;\;\;\;\;\;\;\;\;\;\;\;\;\;\;\;\;\;\;\;
\end{eqnarray*}
\begin{eqnarray*}
& = &
{\bf 1}_{k_1\geq u_p+m_{p+1}+\cdots+m_{n-1}}
\frac{1}{2\pi i}
\int_{|\zeta_2|=\sqrt{1-r^2}}
\frac{\zeta_2^{k_1+k_2-N_{u_p}}}
{\zeta_2-(1-r^2)z_2}
P_r\left(\frac{r^2z_1}{\zeta_2-(1-r^2)z_2}\right)
\,d\zeta_2
\\
& - &
\sum_{v\leq n-1,\alpha_v>\alpha_{q_l}}
\frac{1}{(m_v-1)!}
\frac{\partial^{m_v-1}}{\partial t^{m_v-1}}|_{t=\eta_v}
\frac{t^{k_1}((1-r^2)+\eta_pr^2/t)}
{(t-\eta_p)^{u_p+1}
\prod_{j=p+1,j\neq v}^{n-1}(t-\eta_j)^{m_j}}
\;\times
\\
& &
\;\;\;\;\;\;\;\;\;\;\;\;\;\;\;\;\;\;\;\;\;\;\;\;\;\;\;\;\;\;\;\;\;\;\;\;\;\;
\times\;
\frac{1}{2\pi i}
\int_{|\zeta_2|=\sqrt{1-r^2}}
\frac{\zeta_2^{k_1+k_2-N_{u_p}}}
{\zeta_2-(1-r^2)z_2-r^2z_1/t}
\,d\zeta_2
\;.
\end{eqnarray*}
Similarly, for all
$l=l_p,\ldots,\widetilde{n}-1$ and all
$r=\alpha_{q_l}-\varepsilon$,
\begin{eqnarray*}
\frac{1}{(2\pi i)^2}
\int_{r=\alpha_{q_l}-\varepsilon}
\frac{
(\overline{\zeta}_2+\eta_p\overline{\zeta}_1)
\,\zeta_1^{k_1}\zeta_2^{k_2-m_n}\,\omega(\zeta)}
{(\zeta_1-\eta_p\zeta_2)^{u_p+1}
\prod_{j=p+1,j\neq v}^{n-1}
(\zeta_1-\eta_j\zeta_2)^{m_j}
(1-<\overline{\zeta},z>)}
& = &
\;\;\;\;\;\;\;\;\;\;\;\;\;\;\;\;\;\;\;
\end{eqnarray*}
\begin{eqnarray*}
& = &
{\bf 1}_{k_1\geq u_p+m_{p+1}+\cdots+m_{n-1}}
\frac{1}{2\pi i}
\int_{|\zeta_2|=\sqrt{1-r^2}}
\frac{\zeta_2^{k_1+k_2-N_{u_p}}}
{\zeta_2-(1-r^2)z_2}
P_r\left(\frac{r^2z_1}{\zeta_2-(1-r^2)z_2}\right)
\,d\zeta_2
\\
& - &
\sum_{v\leq n-1,\alpha_v\geq\alpha_l,v\neq p}
\frac{1}{(m_v-1)!}
\frac{\partial^{m_v-1}}{\partial t^{m_v-1}}|_{t=\eta_v}
\frac{t^{k_1}((1-r^2)+\eta_pr^2/t)}
{(t-\eta_p)^{u_p+1}
\prod_{j=p+1,j\neq v}^{n-1}(t-\eta_j)^{m_j}}
\;\times
\\
& &
\;\;\;\;\;\;\;\;\;\;\;\;\;\;\;\;\;\;\;\;\;\;\;\;\;\;\;\;\;\;\;\;\;\;\;\;\;\;\;\;\;\;
\times\;
\frac{1}{2\pi i}
\int_{|\zeta_2|=\sqrt{1-r^2}}
\frac{\zeta_2^{k_1+k_2-N_{u_p}}}
{\zeta_2-(1-r^2)z_2-r^2z_1/t}
\,d\zeta_2
\\
& - &
{\bf 1}_{l=l_p}\,
\frac{1}{u_p!}
\frac{\partial^{u_p}}{\partial t^{u_p}}|_{t=\eta_p}
\frac{t^{k_1}((1-r^2)+\eta_pr^2/t)}
{\prod_{j=p+1}^{n-1}(t-\eta_j)^{m_j}}
\frac{1}{2\pi i}
\int_{|\zeta_2|=\sqrt{1-r^2}}
\frac{\zeta_2^{k_1+k_2-N_{u_p}}}
{\zeta_2-(1-r^2)z_2-r^2z_1/t}
d\zeta_2\,.
\\
\end{eqnarray*}

Now we know by lemma \ref{residup}
that, for all
$z\in\mathbb{B}_2$,
for all
$r$ close to 
$\alpha_s,\;s=2,\ldots,n-1$
and all
$t$ close to
$\eta_s$,
one has
$|(1-r^2)z_2+r^2z_1/t|<\sqrt{1-r^2}$.
It follows that
\begin{eqnarray*}
\lim_{\varepsilon\rightarrow0}
\sum_{l=l_p}^{\widetilde{n}-1}
\left(
\int_{r=\alpha_{q_l}+\varepsilon}
-\int_{r=\alpha_{q_l}-\varepsilon}
\right)
& = &
{\bf 1}_{k_1+k_2\geq N_{u_p}}
\sum_{l=l_p}^{\widetilde{n}-1}
\sum_{p+1\leq v\leq n-1,\alpha_v=\alpha_{q_l}}
\frac{1}{(m_v-1)!}
\;\times
\end{eqnarray*}
\begin{eqnarray*}
\times
\frac{\partial^{m_v-1}}{\partial t^{m_v-1}}|_{t=\eta_v}
\left[
\frac{t^{k_1}
((1-\alpha_{q_l}^2)+\eta_p\alpha_{q_l}^2/t)}
{(t-\eta_p)^{u_p+1}
\prod_{j=p+1,j\neq p}^{n-1}(t-\eta_j)^{m_j}}
\left((1-\alpha_{q_l}^2)z_2+\alpha_{q_l}^2z_1/t
\right)^{k_1+k_2-N_{u_p}}
\right]
\end{eqnarray*}
\begin{eqnarray*}
+\;
{\bf 1}_{k_1+k_2\geq N_{u_p}}
\frac{1}{u_p!}
\frac{\partial^{u_p}}{\partial t^{u_p}}|_{t=\eta_p}
\left[
\frac{t^{k_1}((1-\alpha_{q_{l_p}}^2)+\eta_p\alpha_{q_{l_p}}^2/t)}
{\prod_{j=p+1}^{n-1}(t-\eta_j)^{m_j}}
\left((1-\alpha_{q_{l_p}}^2)z_2+\alpha_{q_{l_p}}^2z_1/t
\right)^{k_1+k_2-N_{u_p}}
\right]
\end{eqnarray*}
\begin{eqnarray}\label{etapp}
& &
=\;
{\bf 1}_{k_1+k_2\geq N_{u_p}}
\;\{\;
\sum_{v=p+1}^{n-1}
\frac{1}{(m_v-1)!}
\;\times
\;\;\;\;\;\;\;\;\;\;\;\;\;\;\;\;\;\;\;\;\;\;\;\;\;\;\;\;\;\;\;\;\;\;\;\;\;\;\;\;\;\;\;\;\;\;\;\;\;\;\;\;\;\;\;\;\;
\;\;\;
\end{eqnarray}
\begin{eqnarray*}
\times\;
\frac{\partial^{m_v-1}}{\partial t^{m_v-1}}|_{t=\eta_v}
\left[
\frac{t^{k_1}
((1-\alpha_v^2)+\eta_p\alpha_v^2/t)}
{(t-\eta_p)^{u_p+1}
\prod_{j=p+1,j\neq p}^{n-1}(t-\eta_j)^{m_j}}
\left((1-\alpha_v^2)z_2+\alpha_v^2z_1/t
\right)^{k_1+k_2-N_{u_p}}
\right]
\end{eqnarray*}
\begin{eqnarray*}
& + &
\frac{1}{u_p!}
\frac{\partial^{u_p}}{\partial t^{u_p}}|_{t=\eta_p}
\left[
\frac{t^{k_1}((1-\alpha_p^2)+\eta_p\alpha_p^2/t)}
{\prod_{j=p+1}^{n-1}(t-\eta_j)^{m_j}}
\left((1-\alpha_p^2)z_2+\alpha_p^2z_1/t
\right)^{k_1+k_2-N_{u_p}}
\right]
\;\}\;.
\\
\end{eqnarray*}

Notice that we used the continuity on $r$
of $P_r(X)$. One has else
$X^{k_1}\left((1-r^2)X+\eta_pr^2\right)
=(1-r^2)X^{k_1+1}+\eta_pr^2X^{k_1}$
and one can separately deal with each integral
as above.
\bigskip

On the other hand, 
if
$k_1<u_p+\cdots+m_{n-1}$ or
$k_1+k_2<N_{u_p}$,
we have
\begin{eqnarray*}
\frac{1}{2\pi i}
\int_{|\zeta_2|=\sqrt{1-r^2}}
\frac{\zeta_2^{k_1+k_2-N_{u_p}}}
{\zeta_2-(1-r^2)z_2}
P_r\left(\frac{r^2z_1}{\zeta_2-(1-r^2)z_2}\right)
\,d\zeta_2
& = &
0\,.
\\
\end{eqnarray*}
It follows 
by (\ref{infinip})
that,
for all
$k_1\geq u_p+\cdots+m_{n-1}$,
for all
$k_1+k_2\geq N_{u_p}$,
for all
$\varepsilon>0$ small enough
and all
$z\in U_{\eta}$,
\begin{eqnarray*}
\frac{1}{(2\pi i)^2}
\int_{r=1-\varepsilon}
\frac{
(\overline{\zeta}_2+\eta_p\overline{\zeta}_1)
\,\zeta_1^{k_1}\zeta_2^{k_2-m_n}\,\omega(\zeta)}
{(\zeta_1-\eta_p\zeta_2)^{u_p+1}
\prod_{j=p+1}^{n-1}
(\zeta_1-\eta_j\zeta_2)^{m_j}
(1-<\overline{\zeta},z>)}
& = &
\;\;\;\;\;\;\;\;\;\;\;\;\;\;\;\;\;\;\;\;\;\;\;\;\;\;\;\;\;\;
\end{eqnarray*}
\begin{eqnarray*}
& = &
\frac{1}{2\pi i}
\int_{|\zeta_2|=\sqrt{1-r^2}}
\frac{\zeta_2^{k_2-m_n}}{\zeta_2-(1-r^2)z_2}
\;d\zeta_2
\;\times
\\
& &
\;\;\;\;\;\;\;\;\;
\times\;
\frac{1}{2\pi i}
\int_{|\zeta_1|=+\infty}
\frac{
\zeta_1^{k_1}
\left(
(1-r^2)\zeta_1+\eta_pr^2\zeta_2
\right)
\,d\zeta_1}
{(\zeta_1-\eta_p\zeta_2)^{u_p+1}
\prod_{j=p+1}^{n-1}
(\zeta_1-\eta_j\zeta_2)^{m_j}
\left(
\zeta_1
-\frac{r^2z_1\zeta_2}{\zeta_2-(1-r^2)z_2}
\right)}
\\
\\
& = &
\frac{1}{2\pi i}
\int_{|\zeta_2|=\sqrt{1-r^2}}
\frac{\zeta_2^{k_1+k_2-N_{u_p}}}
{\zeta_2-(1-r^2)z_2}
P_r\left(\frac{r^2z_1}{\zeta_2-(1-r^2)z_2}\right)
\,d\zeta_2
\end{eqnarray*}
\begin{eqnarray*}
& = &
\lim_{\varepsilon'\rightarrow0}
\frac{1}{2\pi i}
\int_{|\zeta_2-(1-r^2)z_2|=\varepsilon'}
\frac{\zeta_2^{k_1+k_2-N_{u_p}}}
{\zeta_2-(1-r^2)z_2}
\;\times
\\
& &
\times\;
\frac{
\left(
\frac{r^2z_1}{\zeta_2-(1-r^2)z_2}
\right)^{k_1}
\left(
\frac{(1-r^2)r^2z_1}{\zeta_2-(1-r^2)z_2}+\eta_pr^2
\right)
\,d\zeta_2}
{\left(
\frac{r^2z_1}{\zeta_2-(1-r^2)z_2}-\eta_p
\right)^{u_p+1}
\prod_{j=p+1}^{n-1}
\left(
\frac{r^2z_1}{\zeta_2-(1-r^2)z_2}-\eta_j
\right)^{m_j}}
\;\;\;\;\;\;\;\;\;\;\;\;\;\;\;\;\;\;\;\;\;\;\;\;\;\;\;\;\;\;\;\;\;\;\;\;
\end{eqnarray*}
\begin{eqnarray*}
& + &
\frac{1}{u_p!}
\frac{\partial^{u_p}}{\partial t^{u_p}}|_{t=\eta_p}
\frac{t^{k_1}((1-r^2)t+\eta_pr^2)}
{\prod_{j=p+1}^{n-1}(t-\eta_j)^{m_j}}
\lim_{\varepsilon'\rightarrow0}
\frac{1}{2\pi i}
\int_{|\zeta_2-(1-r^2)z_2|=\varepsilon'}
\frac{\zeta_2^{k_1+k_2-N_{u_p}}}
{t(\zeta_2-(1-r^2)z_2)-r^2z_1}
d\zeta_2
\\
& + &
\sum_{v=p+1}^{n-1}
\frac{1}{(m_v-1)!}
\frac{\partial^{m_v-1}}{\partial t^{m_v-1}}|_{t=\eta_v}
\frac{t^{k_1}((1-r^2)t+\eta_pr^2)}
{(t-\eta_p)^{u_p+1}
\prod_{j=p+1,j\neq p}^{n-1}(t-\eta_j)^{m_j}}
\;\times
\\
& & 
\;\;\;\;\;\;\;\;\;\;\;\;\;\;\;\;\;\;\;\;\;\;\;\;\;\;\;\;\;\;\;\;\;\;\;
\times\;
\lim_{\varepsilon'\rightarrow0}
\frac{1}{2\pi i}
\int_{|\zeta_2-(1-r^2)z_2|=\varepsilon'}
\frac{\zeta_2^{k_1+k_2-N_{u_p}}}
{t(\zeta_2-(1-r^2)z_2)-r^2z_1}
\,d\zeta_2
\end{eqnarray*}
\begin{eqnarray*}
& = &
r^2(r^2z_1)^{k_1}
\lim_{\varepsilon'\rightarrow0}
\frac{1}{2\pi i}
\int_{|\zeta_2-(1-r^2)z_2|=\varepsilon'}
\frac{d\zeta_2}
{(\zeta_2-(1-r^2)z_2)^{k_1-(u_p+\cdots+m_{n-1})+1}}
\;\times
\\
& & 
\;\;\;\;
\times\;
\frac{
\zeta_2^{k_1+k_2-N_{u_p}}
\left((1-r^2)z_1+\eta_p(\zeta_2-(1-r^2)z_2)
\right)}
{\left(r^2z_1-\eta_p(\zeta_2-(1-r^2)z_2)\right)^{u_p+1}
\prod_{j=p+1}^{n-1}
\left(r^2z_1-\eta_j(\zeta_2-(1-r^2)z_2)\right)^{m_j}}
\\
& + &
0
\end{eqnarray*}
\begin{eqnarray*}
& = &
r^2(r^2z_1)^{k_1}
\sum_{v_p+\cdots+v_n=k_1-(u_p+\cdots+m_{n-1})}
\frac{(u_p+v_p)!}{u_p!\,v_p!}
\;\times
\;\;\;\;\;\;\;\;\;\;\;\;\;\;\;\;\;\;\;\;\;\;\;\;\;\;\;\;\;\;\;\;\;\;\;\;\;\;\;\;\;\;\;\;\;\;\;\;\;\;\;\;\;\;\;\;
\\
& &
\times\;
\frac{\eta_p^{v_p}}
{(r^2z_1)^{u_p+v_p+1}}
\prod_{j=p+1}^{n-1}
\frac{(v_j+m_j-1)!}{v_j!\,(m_j-1)!}
\frac{\eta_j^{v_j}}
{(r^2z_1)^{v_j+m_j}}
\\
& &
\times\;
\frac{1}{v_n!}
\frac{\partial^{v_n}}{\partial\zeta_2^{v_n}}|_{\zeta_2=(1-r^2)z_2}
\left(
\zeta_2^{k_1+k_2-N_{u_p}}
\left(\eta_p\zeta_2+(1-r^2)(z_1-\eta_pz_2)\right)
\right)
\end{eqnarray*}
\begin{eqnarray*}
&  \xrightarrow[r=1-\varepsilon\rightarrow1]{} &
\sum_{v_p+\cdots+v_n=k_1-(u_p+\cdots+m_{n-1})\,,v_n\geq1}
\frac{(u_p+v_p)!}{u_p!\,v_p!}
\eta_p^{v_p}
\prod_{j=p+1}^{n-1}
\frac{(v_j+m_j-1)!}{v_j!\,(m_j-1)!}
\eta_j^{v_j}
\\
& \times &
\frac{(k_1+k_2-N_{u_p}+1)!}
{v_n!\,(k_1+k_2-N_{u_p}+1-v_n)!}
\,\eta_p\,z_1^{v_n-1}
(0)^{k_1+k_2-N_{u_p}+1-v_n}
\\
\end{eqnarray*}
\begin{eqnarray*}
& = &
{\bf 1}_{k_2\leq m_n-1}
\;\eta_p\,z_1^{k_1+k_2-N_{u_p}}
\\
& &
\;\;\;\;\;\;\;\;\;
\times\;
\sum_{v_p+\cdots+v_{n-1}=m_n-1-k_2}
\frac{(u_p+v_p)!}{u_p!\,v_p!}
\,\eta_p^{v_p}
\prod_{j=p+1}^{n-1}
\frac{(v_j+m_j-1)!}{v_j!\,(m_j-1)!}
\,\eta_j^{v_j}
\,
\end{eqnarray*}
(since if
$k_2\geq m_n$ then
$k_1+k_2-(u_p+\cdots+m_n)+1-v_n\geq
k_1-(u_p+\cdots+m_{n-1})-v_n+1\geq1$).
By lemma \ref{lagrangeuclide},
this gives
\begin{eqnarray*}
{\bf 1}_{k_2\leq m_n-1}
\;\eta_p\,z_1^{k_1+k_2-N_{u_p}}
Q\left(
X^{N_{u_p}-k_2},
(X-\eta_p)^{u_p+1}
\prod_{j=p+1}^{n-1}(X-\eta_j)^{m_j}
\right)
|_{X=0}
\end{eqnarray*}
\begin{eqnarray*}
\;\;\;\;\;\;\;\;\;
& = &
{\bf 1}_{k_2\leq m_n-1}
\;\eta_p\,z_1^{k_1+k_2-N_{u_p}}
\;\times
\\
& &
\times
\left[
\frac{X^{N_{u_p}-k_2}
-R\left(
X^{N_{u_p}-k_2},
(X-\eta_p)^{u_p+1}
\prod_{j=p+1}^{n-1}(X-\eta_j)^{m_j}
\right)}
{(X-\eta_p)^{u_p+1}
\prod_{j=p+1}^{n-1}(X-\eta_j)^{m_j}}
\right]
|_{X=0}
\end{eqnarray*}
\begin{eqnarray}\label{etap1}
&  &
=\;
{\bf 1}_{k_2\leq m_n-1}
\;\eta_p\,z_1^{k_1+k_2-N_{u_p}}
\;\times\;\{\;
\frac{1}{u_p!}
\frac{\partial^{u_p}}{\partial t^{u_p}}|_{t=\eta_p}
\left[
\frac{t^{N_{u_p}-k_2-1}}
{\prod_{j=p+1}^{n-1}(t-\eta_j)^{m_j}}
\right]
\\\nonumber
& &
+\;
\sum_{q=p+1}^{n-1}
\frac{1}{(m_q-1)!}
\frac{\partial^{m_q-1}}{\partial t^{m_q-1}}|_{t=\eta_q}
\left[
\frac{t^{N_{u_p}-k_2-1}}
{(t-\eta_p)^{u_p+1}
\prod_{j=p+1,j\neq q}^{n-1}(t-\eta_j)^{m_j}}
\right]
\;\}
\end{eqnarray}

\bigskip

Similarly, we have 
\begin{eqnarray*}
\frac{1}{(2\pi i)^2}
\int_{r=\varepsilon}
\frac{
(\overline{\zeta}_2+\eta_p\overline{\zeta}_1)
\,\zeta_1^{k_1}\zeta_2^{k_2-m_n}\,\omega(\zeta)}
{(\zeta_1-\eta_p\zeta_2)^{u_p+1}
\prod_{j=p+1}^{n-1}
(\zeta_1-\eta_j\zeta_2)^{m_j}
(1-<\overline{\zeta},z>)}
& = &
\;\;\;\;\;\;\;\;\;\;\;\;\;\;\;\;\;\;\;\;
\end{eqnarray*}
\begin{eqnarray*}
& = &
{\bf 1}_{k_1\geq u_p+\cdots+m_{n-1}}
\;\frac{1}{2\pi i}
\int_{|\zeta_2|=\sqrt{1-r^2}}
\frac{\zeta_2^{k_1+k_2-N_{u_p}}}
{\zeta_2-(1-r^2)z_2}
P_r\left(\frac{r^2z_1}{\zeta_2-(1-r^2)z_2}\right)
\,d\zeta_2
\\
& - &
\frac{1}{u_p!}
\frac{\partial^{u_p}}{\partial t^{u_p}}|_{t=\eta_p}
\frac{t^{k_1}((1-r^2)+\eta_pr^2/t)}
{\prod_{j=p+1}^{n-1}(t-\eta_j)^{m_j}}
\frac{1}{2\pi i}
\int_{|\zeta_2|=\sqrt{1-r^2}}
\frac{\zeta_2^{k_1+k_2-N_{u_p}}}
{\zeta_2-(1-r^2)z_2-r^2z_1/t}
d\zeta_2
\\
& - &
\sum_{v=p+1}^{n-1}
\frac{1}{(m_v-1)!}
\frac{\partial^{m_v-1}}{\partial t^{m_v-1}}|_{t=\eta_v}
\frac{t^{k_1}((1-r^2)+\eta_pr^2/t)}
{(t-\eta_p)^{u_p+1}
\prod_{j=p+1,j\neq v}^{n-1}(t-\eta_j)^{m_j}}
\;\times
\\
& &
\;\;\;\;\;\;\;\;\;\;\;\;\;\;\;\;\;\;\;\;\;\;\;\;\;\;\;\;\;\;\;\;\;\;\;
\times\;
\frac{1}{2\pi i}
\int_{|\zeta_2|=\sqrt{1-r^2}}
\frac{\zeta_2^{k_1+k_2-N_{u_p}}}
{\zeta_2-(1-r^2)z_2-r^2z_1/t}
\,d\zeta_2
\end{eqnarray*}
\begin{eqnarray*}
& = &
{\bf 1}_{k_1\geq u_p+\cdots+m_{n-1},\,k_1+k_2\geq N_{u_p}}
\;\times
\\
& \times &
\sum_{v_p+\cdots+v_n=k_1-(u_p+\cdots+m_{n-1})}
\frac{(u_p+v_p)!}{u_p!\,v_p!}
\eta_p^{v_p}
\prod_{j=p+1}^{n-1}
\frac{(v_j+m_j-1)!}{v_j!\,(m_j-1)!}
\eta_j^{v_j}
\\
& \times &
r^2(r^2z_1)^{v_n-1}
\frac{1}{v_n!}
\frac{\partial^{v_n}}{\partial\zeta_2^{v_n}}|_{\zeta_2=(1-r^2)z_2}
\left(
\zeta_2^{k_1+k_2-N_{u_p}}
\left(\eta_p\zeta_2+(1-r^2)(z_1-\eta_pz_2)\right)
\right)
\end{eqnarray*}
\begin{eqnarray*}
& - &
{\bf 1}_{k_1+k_2\geq N_{u_p}}
\frac{1}{u_p!}
\frac{\partial^{u_p}}{\partial t^{u_p}}|_{t=\eta_p}
\frac{t^{k_1}((1-r^2)+\eta_pr^2/t)}
{\prod_{j=p+1}^{n-1}(t-\eta_j)^{m_j}}
\left(
(1-r^2)z_2+r^2z_1/t
\right)^{k_1+k_2-N_{u_p}}
\\
& - &
{\bf 1}_{k_1+k_2\geq N_{u_p}}
\sum_{v=p+1}^{n-1}
\frac{1}{(m_v-1)!}
\;\times
\\
& &
\times\;
\frac{\partial^{m_v-1}}{\partial t^{m_v-1}}|_{t=\eta_v}
\frac{t^{k_1}((1-r^2)+\eta_pr^2/t)}
{(t-\eta_p)^{u_p+1}
\prod_{j=p+1,j\neq v}^{n-1}(t-\eta_j)^{m_j}}
\left(
(1-r^2)z_2+r^2z_1/t
\right)^{k_1+k_2-N_{u_p}}
\end{eqnarray*}
\begin{eqnarray*}
& \xrightarrow[r=\varepsilon\rightarrow0]{} &
{\bf 1}_{k_1+k_2\geq N_{u_p}}
\sum_{v_p+\cdots+v_n=k_1-(u_p+\cdots+m_{n-1}),v_n=0}
\frac{(u_p+v_p)!}{u_p!\,v_p!}
\eta_p^{v_p}
\prod_{j=p+1}^{n-1}
\frac{(v_j+m_j-1)!}{v_j!\,(m_j-1)!}
\eta_j^{v_j}
\\
& & 
\;\;\times\;
1\times
z_2^{k_1+k_2-N_{u_p}}
\\
& &
-\;
{\bf 1}_{k_1+k_2\geq N_{u_p}}
z_2^{k_1+k_2-N_{u_p}}
\times\{\;
\frac{1}{u_p!}
\frac{\partial^{u_p}}{\partial t^{u_p}}|_{t=\eta_p}
\frac{t^{k_1}}
{\prod_{j=p+1}^{n-1}(t-\eta_j)^{m_j}}
\\
& & 
+\;
\sum_{v=p+1}^{n-1}
\frac{1}{(m_v-1)!}
\frac{\partial^{m_v-1}}{\partial t^{m_v-1}}|_{t=\eta_v}
\frac{t^{k_1}}
{(t-\eta_p)^{u_p+1}
\prod_{j=p+1,j\neq v}^{n-1}(t-\eta_j)^{m_j}}
\;\}
\end{eqnarray*}
\begin{eqnarray}\label{etap0}
& = &
0\,,
\;\;\;\;\;\;\;\;\;\;\;\;\;\;\;\;\;\;\;\;\;\;\;\;\;\;\;\;\;\;\;\;\;\;\;\;\;\;\;\;\;\;\;\;\;\;\;\;\;\;\;\;\;\;\;\;\;\;\;
\;\;\;\;\;\;\;\;\;\;\;\;\;\;\;\;\;\;\;\;\;\;\;\;\;\;\;\;\;\;\;\;\;\;\;\;\;\;\;\;
\\\nonumber
\end{eqnarray}
the last equality coming from 
lemma \ref{lagrangeuclide} 
since
\begin{eqnarray*}
\sum_{v_p+\cdots+v_{n-1}=k_1+1-(u_p+1+m_{p+1}+\cdots+m_{n-1})}
\frac{(u_p+v_p)!}{u_p!\,v_p!}
\eta_p^{v_p}
\prod_{j=p+1}^{n-1}
\frac{(v_j+m_j-1)!}{v_j!\,(m_j-1)!}
\eta_j^{v_j}
& &
\\
+\;
\frac{1}{u_p!}
\frac{\partial^{u_p}}{\partial t^{u_p}}|_{t=\eta_p}
\frac{t^{k_1+1}}
{\prod_{j=p+1}^{n-1}(t-\eta_j)^{m_j}(-t)}
\;\;\;\;\;\;\;\;\;
& &
\\
+\;
\sum_{v=p+1}^{n-1}
\frac{1}{(m_v-1)!}
\frac{\partial^{m_v-1}}{\partial t^{m_v-1}}|_{t=\eta_v}
\frac{t^{k_1+1}}
{(t-\eta_p)^{u_p+1}
\prod_{j=p+1,j\neq v}^{n-1}(t-\eta_j)^{m_j}(-t)}
& = &
\end{eqnarray*}
\begin{eqnarray*}
=
\left[
Q\left(
X^{k_1+1},
(X-\eta_p)^{u_p+1}
\prod_{j=p+1}^{n-1}(X-\eta_j)^{m_j}
\right)
+\frac{
R\left(
X^{k_1+1},
(X-\eta_p)^{u_p+1}
\prod_{j=p+1}^{n-1}(X-\eta_j)^{m_j}
\right)}
{(X-\eta_p)^{u_p+1}
\prod_{j=p+1}^{n-1}(X-\eta_j)^{m_j}}
\right]
|_{X=0}
\end{eqnarray*}
\begin{eqnarray*}
& = &
\left[
\frac{X^{k_1+1}}
{(X-\eta_p)^{u_p+1}
\prod_{j=p+1}^{n-1}(X-\eta_j)^{m_j}}
\right]
|_{X=0}
\;\;\;\;\;\;\;\;\;\;\;\;\;\;\;\;\;\;\;\;\;\;\;\;\;\;\;\;\;\;\;\;\;\;\;\;\;\;\;\;\;\;\;\;\;\;\;\;\;\;\;\;\;\;\;\;\;\;\;\;
\\
\\
& = &
0\,.
\\
\end{eqnarray*}

Finally, 
by (\ref{etap0p1}),
(\ref{etapp}),
(\ref{etap1}), and 
(\ref{etap0}),
we get for all
$k_1+k_2\geq N_{u_p}$
(else we get zero)
and all
$z\in U_{\eta}$
\begin{eqnarray*}
\lim_{\varepsilon\rightarrow0}
\frac{1}{(2\pi i)^2}
\int_{\partial\widetilde{\Sigma}_{\varepsilon}}
\frac{
(\overline{\zeta}_2+\eta_p\overline{\zeta}_1)
\,\zeta_1^{k_1}\zeta_2^{k_2-m_n}\,\omega(\zeta)}
{(\zeta_1-\eta_p\zeta_2)^{u_p+1}
\prod_{j=p+1}^{n-1}
(\zeta_1-\eta_j\zeta_2)^{m_j}
(1-<\overline{\zeta},z>)}
& = &
\;\;\;\;\;\;\;\;\;\;\;\;\;\;\;\;\;\;\;\;\;
\end{eqnarray*}
\begin{eqnarray*}
& = &
{\bf 1}_{k_2\leq m_n-1}
\;\eta_p\,z_1^{k_1+k_2-N_{u_p}}
\;\times\;\{\;
\frac{1}{u_p!}
\frac{\partial^{u_p}}{\partial t^{u_p}}|_{t=\eta_p}
\left[
\frac{t^{N_{u_p}-k_2-1}}
{\prod_{j=p+1}^{n-1}(t-\eta_j)^{m_j}}
\right]
\\\nonumber
& &
\;\;\;\;
+\;
\sum_{q=p+1}^{n-1}
\frac{1}{(m_q-1)!}
\frac{\partial^{m_q-1}}{\partial t^{m_q-1}}|_{t=\eta_q}
\left[
\frac{t^{N_{u_p}-k_2-1}}
{(t-\eta_p)^{u_p+1}
\prod_{j=p+1,j\neq q}^{n-1}(t-\eta_j)^{m_j}}
\right]
\;\}
\end{eqnarray*}
\begin{eqnarray*}
& - &
\frac{1}{u_p!}
\frac{\partial^{u_p}}{\partial t^{u_p}}|_{t=\eta_p}
\left[
\frac{t^{k_1}\,
\frac{1+|\eta_p|^2\eta_p/t}{1+|\eta_p|^2}}
{\prod_{j=p+1}^{n-1}(t-\eta_j)^{m_j}}
\left(
\frac{z_2+|\eta_p|^2z_1/t}{1+|\eta_p|^2}
\right)^{k_1+k_2-N_{u_p}}
\right]
\\
& - &
\sum_{v=p+1}^{n-1}
\frac{1}{(m_v-1)!}
\frac{\partial^{m_v-1}}{\partial t^{m_v-1}}|_{t=\eta_v}
\left[
\frac{t^{k_1}\,
\frac{1+|\eta_v|^2\eta_p/t}{1+|\eta_v|^2}
\left(
\frac{z_2+|\eta_v|^2z_1/t}{1+|\eta_v|^2}
\right)^{k_1+k_2-N_{u_p}}}
{(t-\eta_p)^{u_p+1}
\prod_{j=p+1,j\neq p}^{n-1}(t-\eta_j)^{m_j}}
\right]
\\
\\
& - &
0
\\
\end{eqnarray*}
and the lemma is proved.

\end{proof}

\bigskip

Lastly, we prove the following lemma.

\begin{lemma}\label{eta1}

For all
$z\in U_{\eta}$
and all
$u_1=0,\ldots,m_1-1$, we have

\begin{eqnarray*}
\lim_{\varepsilon\rightarrow0}
\frac{1}{(2\pi i)^2}
\int_{\partial\widetilde{\Sigma}_{\varepsilon}}
\frac{
\overline{\zeta}_2\,\zeta_2^{k_2-m_n}
\zeta_1^{k_1-u_1-1}
\;\omega(\zeta)}
{\prod_{j=2}^{n-1}(\zeta_1-\eta_j\zeta_2)^{m_j}
\,(1-<\overline{\zeta},z>)}
& = &
\;\;\;\;\;\;\;\;\;\;\;\;\;\;\;\;\;\;\;\;\;\;\;\;\;\;\;\;\;\;\;\;\;\;\;\;\;\;\;\;\;\;\;\;\;
\end{eqnarray*}
\begin{eqnarray*}
& = &
{\bf 1}_{k_1+k_2\geq N_{u_1}}\;
\sum_{p=2}^{n-1}
\frac{1}{(m_p-1)!}
\frac{\partial^{m_p-1}}{\partial t^{m_p-1}}|_{t=\eta_p}
\;\frac{1}{t^{u_1+1}\prod_{j=2,j\neq p}^{n-1}(t-\eta_j)^{m_j}}
\times
\\
&  &
\times\;
\{\;
{\bf 1}_{k_1\leq u_1}\;
t^{k_1}z_2^{k_1+k_2-N_{u_1}}
-\;
\frac{t^{k_1}}{1+|\eta_p|^2}
\left(
\frac{z_2+|\eta_p|^2z_1/t}{1+|\eta_p|^2}
\right)^{k_1+k_2-N_{u_1}}
\;\;\;\}
\;.
\\
\\
\end{eqnarray*}

\end{lemma}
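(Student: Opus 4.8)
The plan is to imitate the proofs of lemma \ref{etan} and lemma \ref{etap}, the only new feature being that the pole at $\zeta_1=0$ now sits at the node $\eta_1=0$ (which has $\alpha_1=0$) and carries an order governed by $u_1$. First I would decompose $\int_{\partial\widetilde{\Sigma}_{\varepsilon}}$ exactly as at the start of the proof of proposition \ref{interpoler}, as $\int_{r=1-\varepsilon}-\sum_{l=2}^{\widetilde{n}-1}(\int_{r=\alpha_{q_l}+\varepsilon}-\int_{r=\alpha_{q_l}-\varepsilon})-\int_{r=\varepsilon}$, and on each torus $\{|\zeta_1|=r\}\subset\mathbb{S}_2$ use $\overline{\zeta}_1=r^2/\zeta_1$, $\overline{\zeta}_2=(1-r^2)/\zeta_2$ to clear the conjugates. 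As in lemma \ref{etap}, the factor $\overline{\zeta}_2/(1-<\overline{\zeta},z>)$ turns into $(1-r^2)\zeta_1/[(\zeta_2-(1-r^2)z_2)(\zeta_1-\frac{r^2z_1\zeta_2}{\zeta_2-(1-r^2)z_2})]$, so the integrand becomes the iterated contour integral, over $|\zeta_1|=r$ and then $|\zeta_2|=\sqrt{1-r^2}$, of $\frac{(1-r^2)\zeta_1^{k_1-u_1}\zeta_2^{k_2-m_n}}{(\zeta_2-(1-r^2)z_2)\prod_{j=2}^{n-1}(\zeta_1-\eta_j\zeta_2)^{m_j}(\zeta_1-\frac{r^2z_1\zeta_2}{\zeta_2-(1-r^2)z_2})}$.

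For the inner $\zeta_1$-integral I would write $\zeta_1^{k_1-u_1}=\zeta_1^{k_1}/\zeta_1^{u_1+1}$ and apply the partial-fraction identity (\ref{preliminfini}) of lemma \ref{lagrangeuclide} with denominator $G(\zeta_1)=\zeta_1^{u_1+1}\prod_{j=2}^{n-1}(\zeta_1-\eta_j\zeta_2)^{m_j}$, i.e. treating $\eta_1=0$ as a node of multiplicity $u_1+1$, precisely as lemma \ref{infini} treats the pure product $\prod_{j=2}^{n-1}(\zeta_1-\eta_j\zeta_2)^{m_j}$. After the rescaling $\zeta_1=t\zeta_2$ (which sends the nodes $\eta_j\zeta_2$ to $\eta_j$ and produces the factor $1/t^{u_1+1}$ visible in the statement) this separates the integral at infinity, a polynomial quotient $Q$ from the Euclidean division $P=G\cdot Q+R$ of lemma \ref{lagrangeuclide}, from the residues at the finite nodes $\eta_j\zeta_2$, which are enclosed only when $\alpha_j<r$.

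Next I would perform the $\zeta_2$-integration. By lemma \ref{residup} the movable pole $\zeta_2=(1-r^2)z_2+r^2z_1/t$ lies inside $\{|\zeta_2|=\sqrt{1-r^2}\}$ exactly for $r$ near the relevant $\alpha_v$, so it is a genuine residue there; evaluating at $r=\alpha_v$ and using $1-\alpha_v^2=1/(1+|\eta_v|^2)$ and $(1-\alpha_v^2)z_2+\alpha_v^2z_1/t=\frac{z_2+|\eta_v|^2z_1/t}{1+|\eta_v|^2}$ produces exactly the powers $(\frac{z_2+|\eta_p|^2z_1/t}{1+|\eta_p|^2})^{k_1+k_2-N_{u_1}}$ together with the prefactor $\frac{1}{1+|\eta_p|^2}$ of the second brace. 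The telescoping $-\sum_l(\int_{r=\alpha_{q_l}+\varepsilon}-\int_{r=\alpha_{q_l}-\varepsilon})$ then collects one residue at each node $\eta_v\zeta_2$, $v=2,\ldots,n-1$ (the node crossing the contour as $r$ passes $\alpha_v$), which assembles the sum $\sum_{p=2}^{n-1}$ in the statement, while the part of the integrand that is continuous in $r$ cancels between the two sides of each difference.

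What survives from the two boundary contours $\int_{r=1-\varepsilon}$ and $-\int_{r=\varepsilon}$ is the residue at the origin together with that at the movable pole as it collides with $\zeta_1=0$. Since the node $\eta_1=0$ is located at $\alpha_1=0$, it is the small contour $r\to0$ that matters: there the movable $\zeta_2$-pole tends to $z_2$ (and stays inside by the Cauchy--Schwarz bound of lemma \ref{residup}), so this contour yields a pure $z_2$-power; rewriting the origin contribution through the quotient identity of lemma \ref{lagrangeuclide} (so that $Q|_{X=0}$ reappears as the Hermite residues $\sum_{p=2}^{n-1}\frac{1}{(m_p-1)!}\frac{\partial^{m_p-1}}{\partial t^{m_p-1}}|_{t=\eta_p}$ with the factor $1/t^{u_1+1}$) gives exactly the term ${\bf 1}_{k_1\leq u_1}t^{k_1}z_2^{k_1+k_2-N_{u_1}}$. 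The sphere contour $r\to1$ is, by the same mechanism as lemma \ref{etap} but with the roles of the $r\to0$ and $r\to1$ limits exchanged, checked to leave no surviving term. I expect the main obstacle to be precisely this last point: unlike lemma \ref{etap}, where the small-radius contour simply vanished, here the node $\eta_1=0$ carries the order-$(u_1+1)$ pole, and one must verify that the residue at $\zeta_1=0$ in its two guises --- a genuine residue and the value $Q|_{X=0}$ of a quotient --- combines correctly across the $r\to0$ and $r\to1$ limits so that all spurious pieces cancel against the telescoping sum and only ${\bf 1}_{k_1\leq u_1}t^{k_1}z_2^{k_1+k_2-N_{u_1}}$ remains.
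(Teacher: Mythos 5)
Your proposal follows essentially the same route as the paper's proof: the same splitting of $\partial\widetilde{\Sigma}_{\varepsilon}$ into $\int_{r=1-\varepsilon}$, the telescoping differences at the $\alpha_{q_l}$, and $\int_{r=\varepsilon}$; the same reduction to iterated residues; lemma \ref{residup} to place the movable $\zeta_2$-pole inside the circle $|\zeta_2|=\sqrt{1-r^2}$; lemma \ref{lagrangeuclide} to organize the $\zeta_1$-residues; the telescoping sum yielding the $\sum_{p=2}^{n-1}$ Hermite terms with the powers $\left(\frac{z_2+|\eta_p|^2z_1/t}{1+|\eta_p|^2}\right)^{k_1+k_2-N_{u_1}}$ and the prefactor $1-\alpha_p^2=\frac{1}{1+|\eta_p|^2}$; and, correctly, the exchange of roles of the two extreme contours relative to lemma \ref{etap}: in the paper the contour $r=1-\varepsilon$ gives $0$ in the limit (the overall factor $1-r^2$ coming from $\overline{\zeta}_2$ kills it, see (\ref{eta11})), while the contour $r=\varepsilon$ is the one that leaves a contribution, which through the identity $Q(P,G)+R(P,G)/G=P/G$ at $X=0$ of lemma \ref{lagrangeuclide} cancels completely when $k_1\geq u_1+1$ and reduces to the ${\bf 1}_{k_1\leq u_1}\,t^{k_1}z_2^{k_1+k_2-N_{u_1}}$ term of the statement when $k_1\leq u_1$ (see (\ref{eta101})).

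Two remarks on your bookkeeping of the inner $\zeta_1$-integral. First, the paper never makes $\zeta_1=0$ a node: it keeps the (possibly negative) power $\zeta_1^{k_1-u_1}$ in the numerator and writes each inner integral as the integral at infinity minus the residues at the $\eta_v\zeta_2$ lying outside the circle, so the origin pole and the movable pole, both inside, are never evaluated separately; your variant, making $0$ a node of multiplicity $u_1+1$ of $G$, is equally workable and is in fact exactly how the paper treats the factor $(X-\eta_p)^{u_p+1}$ in lemma \ref{etap}, so this is a harmless (even natural) deviation. Second, a slip you must correct when executing: the identity you wrote, $\zeta_1^{k_1-u_1}=\zeta_1^{k_1}/\zeta_1^{u_1+1}$, is off by one. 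Since your substitution for $\overline{\zeta}_2/(1-<\overline{\zeta},z>)$ has already produced an extra factor $\zeta_1$, the numerator to feed into (\ref{preliminfini}) with $G(\zeta_1)=\zeta_1^{u_1+1}\prod_{j=2}^{n-1}(\zeta_1-\eta_j\zeta_2)^{m_j}$ must be $\zeta_1^{k_1+1}$, not $\zeta_1^{k_1}$. With $\zeta_1^{k_1}$ every exponent and every cutoff in the final formula (the powers of $t$ and $z_2$, the indicators ${\bf 1}_{k_1\leq u_1}$ and ${\bf 1}_{k_1+k_2\geq N_{u_1}}$) comes out shifted by one and does not match the statement; with $\zeta_1^{k_1+1}$, the rescaling $\zeta_1=t\zeta_2$ together with the extra $1/t$ contributed by the movable-pole factor gives exactly the $t^{k_1}/t^{u_1+1}$ appearing in the statement.
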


\bigskip

\begin{proof}

The proof is analogous to the one of the previous lemma. 
We want to calculate
\begin{eqnarray*}
\lim_{\varepsilon\rightarrow0}
\frac{1}{(2\pi i)^2}
\left[
\int_{r=1-\varepsilon}
-\sum_{l=2}^{\widetilde{n}-1}
\left(
\int_{r=\alpha_{q_l}+\varepsilon}
-\int_{r=\alpha_{q_l}-\varepsilon}
\right)
-\int_{r=\varepsilon}
\right]
\;\;\;\;\;\;\;\;\;\;\;\;\;\;\;\;\;\;\;\;\;\;\;
\\
\;\;\;\;\;\;\;\;\;\;\;\;\;\;\;\;\;\;\;\;\;\;\;\;\;
\times\;
\frac{(1-r^2)\,\zeta_2^{k_2-m_n}}
{\zeta_2-(1-r^2)z_2}\;
\frac{\zeta_1^{k_1-u_1}}
{\prod_{j=2}^{n-1}(\zeta_1-\eta_j\zeta_2)^{m_j}
\left(
\zeta_1-
\frac{r^2z_1\zeta_2}{\zeta_2-(1-r^2)z_2}
\right)}
\;\omega(\zeta)
\;.
\\
\end{eqnarray*}

First, we have similarly
\begin{eqnarray}\label{eta1infini}
\frac{1}{2\pi i}
\int_{|\zeta_1|=+\infty}
\frac{\zeta_1^{k_1-u_1}}
{\prod_{j=2}^{n-1}(\zeta_1-\eta_j\zeta_2)^{m_j}
\left(
\zeta_1-
\frac{r^2z_1\zeta_2}{\zeta_2-(1-r^2)z_2}
\right)}
\,d\zeta_1
& = &
\;\;\;\;
\end{eqnarray}
\begin{eqnarray*}
\;\;\;\;\;\;\;\;\;\;\;\;\;\;\;\;\;
& = &
{\bf 1}_{k_1\geq u_1+m_2+\cdots+m_{n-1}}
\,\zeta_2^{k_1-(u_1+m_2+\cdots+m_{n-1})}
P\left(
\frac{r^2z_1}{\zeta_2-(1-r^2)z_2}
\right)\,,
\end{eqnarray*}
with the following quotient
\begin{eqnarray*}
P(X) 
& = &
Q\left(
X^{k_1-u_1},
\prod_{j=2}^{n-1}(X-\eta_j)^{m_j}
\right)
\,
\\
\end{eqnarray*}
(notice that if
$k_1\geq u_1+m_2+\cdots+m_{n-1}$
then in particular
$k_1\geq u_1$).

It follows that, for all
$l=1,\ldots,\widetilde{n}-1$
and for all small enough
$\varepsilon>0$,
we have
\begin{eqnarray*}
\frac{1}{2\pi i}
\int_{r=\alpha_{q_l}+\varepsilon}
\frac{\zeta_1^{k_1-u_1}}
{\prod_{j=2}^{n-1}(\zeta_1-\eta_j\zeta_2)^{m_j}
\left(
\zeta_1-
\frac{r^2z_1\zeta_2}{\zeta_2-(1-r^2)z_2}
\right)}
\,d\zeta_1
& = &
\;\;\;\;\;\;\;\;\;\;\;\;\;\;\;\;\;\;\;\;\;\;\;\;\;
\end{eqnarray*}
\begin{eqnarray*}
& = &
{\bf 1}_{k_1\geq u_1+m_2+\cdots+m_{n-1}}
\,\zeta_2^{k_1-(u_1+m_2+\cdots+m_{n-1})}
P\left(
\frac{r^2z_1}{\zeta_2-(1-r^2)z_2}
\right)
\\
& - &
\zeta_2^{k_1-(u_1+m_2+\cdots+m_{n-1})}
\sum_{v\leq n-1,\alpha_v>\alpha_{q_l}}
\frac{1}{(m_v-1)!}
\;\times
\\
& &
\;\times\;
\frac{\partial^{m_v-1}}
{\partial t^{m_v-1}}|_{t=\eta_v}
\left[
\frac{t^{k_1-u_1}}
{\prod_{j=2,j\neq v}^{n-1}
(t-\eta_j)^{m_j}
\left(
t-\frac{r^2z_1}{\zeta_2-(1-r^2)z_2}
\right)}
\right]
\\
\end{eqnarray*}
then by lemma \ref{residup},
for
$r=\alpha_{q_l}+\varepsilon$
and all
$z\in U_{\eta}$,
\begin{eqnarray*}
\frac{1}{(2\pi i)^2}
\int_{r=\alpha_{q_l}+\varepsilon}
\frac{(1-r^2)\,\zeta_2^{k_2-m_n}}
{\zeta_2-(1-r^2)z_2}\;
\frac{\zeta_1^{k_1-u_1}}
{\prod_{j=2}^{n-1}(\zeta_1-\eta_j\zeta_2)^{m_j}
\left(
\zeta_1-
\frac{r^2z_1\zeta_2}{\zeta_2-(1-r^2)z_2}
\right)}
\,\omega(\zeta)
& = &
\end{eqnarray*}
\begin{eqnarray*}
=\;
{\bf 1}_{k_1\geq u_1+m_2+\cdots+m_{n-1}}
\frac{1-r^2}{2\pi i}
\int_{|\zeta_2|=\sqrt{1-r^2}}
\frac{\zeta_2^{k_1+k_2-N_{u_1}}}
{\zeta_2-(1-r^2)z_2}
P\left(
\frac{r^2z_1}{\zeta_2-(1-r^2)z_2}
\right)\,
d\zeta_2
\;
\end{eqnarray*}
\begin{eqnarray*}
& - &
\sum_{v\leq n-1,\alpha_v>\alpha_{q_l}}
\frac{1-r^2}{(m_v-1)!}
\;\times
\\
& &
\frac{\partial^{m_v-1}}
{\partial t^{m_v-1}}|_{t=\eta_v}
\frac{t^{k_1-u_1-1}}
{\prod_{j=2,j\neq v}^{n-1}
(t-\eta_j)^{m_j}}
\frac{1}{2\pi i}
\int_{|\zeta_2|=\sqrt{1-r^2}}
\frac{\zeta_2^{k_1+k_2-N_{u_1}}\,d\zeta_2}
{\zeta_2-(1-r^2)z_2-r^2z_1/t}
\\
\end{eqnarray*}
\begin{eqnarray*}
& = &
{\bf 1}_{k_1\geq u_1+m_2+\cdots+m_{n-1},\,k_1+k_2\geq N_{u_1}}
\;(1-r^2)
\;\times
\;\;\;\;\;\;\;\;\;\;\;\;\;\;\;\;\;\;\;\;\;\;\;\;\;\;\;\;\;\;\;\;\;\;\;\;\;\;\;\;\;\;\;\;\;\;\;\;\;\;\;\;\;\;\;\;\;\;\;\;
\end{eqnarray*}
\begin{eqnarray*}
\{\;
\lim_{\varepsilon'\rightarrow0}
\frac{1}{2\pi i}
\int_{|\zeta_2-(1-r^2)z_2|=\varepsilon'}
\frac{\zeta_2^{k_1+k_2-N_{u_1}}
(r^2z_1)^{k_1-u_1}\;d\zeta_2}
{(\zeta_2-(1-r^2)z_2)^{k_1-(u_1+\cdots+m_{n-1})+1}
\prod_{j=2}^{n-1}
\left(
r^2z_1-\eta_j(\zeta_2-(1-r^2)z_2)
\right)^{m_j}}
\end{eqnarray*}
\begin{eqnarray*}
& &
+\;0\;\}
\\
& - &
{\bf 1}_{k_1+k_2\geq N_{u_1}}
\sum_{v\leq n-1,\alpha_v>\alpha_{q_l}}
\frac{1-r^2}{(m_v-1)!}
\;\times
\\
& &
\times\;
\frac{\partial^{m_v-1}}
{\partial t^{m_v-1}}|_{t=\eta_v}
\left[
\frac{t^{k_1-u_1-1}}
{\prod_{j=2,j\neq v}^{n-1}
(t-\eta_j)^{m_j}}
\left(
(1-r^2)z_2+r^2z_1/t
\right)^{k_1+k_2-N_{u_1}}
\right]
\end{eqnarray*}
\begin{eqnarray*}
& = &
{\bf 1}_{k_1+k_2\geq N_{u_1}}
(1-r^2)
\sum_{v_1+\cdots+v_{n-1}=k_1-(u_1+\cdots+m_{n-1})}
\prod_{j=2}^{n-1}
\frac{(v_j+m_j-1)!}{v_j!\,(m_j-1)!}
\,\eta_j^{v_j}
\;\times
\\
& &
\times\;
\frac{(k_1+k_2-N_{u_1})!}
{v_1!\,(k_1+k_2-N_{u_1}-v_1)!}
\left(r^2z_1\right)^{v_1}
\left((1-r^2)z_2\right)^{k_1+k_2-N_{u_1}-v_1}
\\
& - &
{\bf 1}_{k_1+k_2\geq N_{u_1}}
\sum_{v\leq n-1,\alpha_v>\alpha_{q_l}}
\frac{1-r^2}{(m_v-1)!}
\;\times
\\
& &
\times\;
\frac{\partial^{m_v-1}}
{\partial t^{m_v-1}}|_{t=\eta_v}
\left[
\frac{t^{k_1-u_1-1}}
{\prod_{j=2,j\neq v}^{n-1}
(t-\eta_j)^{m_j}}
\left(
(1-r^2)z_2+r^2z_1/t
\right)^{k_1+k_2-N_{u_1}}
\right]
\,.
\\
\end{eqnarray*}

\medskip

Similarly, for all
$l=2,\ldots,\widetilde{n}$
and
$r=\alpha_{q_l}-\varepsilon$,
we have
\begin{eqnarray*}
\frac{1}{(2\pi i)^2}
\int_{r=\alpha_{q_l}-\varepsilon}
\frac{(1-r^2)\,\zeta_2^{k_2-m_n}}
{\zeta_2-(1-r^2)z_2}\;
\frac{\zeta_1^{k_1-u_1}}
{\prod_{j=2}^{n-1}(\zeta_1-\eta_j\zeta_2)^{m_j}
\left(
\zeta_1-
\frac{r^2z_1\zeta_2}{\zeta_2-(1-r^2)z_2}
\right)}
\,\omega(\zeta)
& = &
\end{eqnarray*}
\begin{eqnarray*}
& = &
{\bf 1}_{k_1+k_2\geq N_{u_1}}
(1-r^2)
\sum_{v_1+\cdots+v_{n-1}=k_1-(u_1+\cdots+m_{n-1})}
\prod_{j=2}^{n-1}
\frac{(v_j+m_j-1)!}{v_j!\,(m_j-1)!}
\,\eta_j^{v_j}
\;\times
\\
& &
\times\;
\frac{(k_1+k_2-N_{u_1})!}
{v_1!\,(k_1+k_2-N_{u_1}-v_1)!}
\left(r^2z_1\right)^{v_1}
\left((1-r^2)z_2\right)^{k_1+k_2-N_{u_1}-v_1}
\\
& - &
{\bf 1}_{k_1+k_2\geq N_{u_1}}
\sum_{v\leq n-1,\alpha_v\geq\alpha_{q_l}}
\frac{1-r^2}{(m_v-1)!}
\;\times
\\
& &
\times\;
\frac{\partial^{m_v-1}}
{\partial t^{m_v-1}}|_{t=\eta_v}
\left[
\frac{t^{k_1-u_1-1}}
{\prod_{j=2,j\neq v}^{n-1}
(t-\eta_j)^{m_j}}
\left(
(1-r^2)z_2+r^2z_1/t
\right)^{k_1+k_2-N_{u_1}}
\right]
\,.
\\
\end{eqnarray*}

\medskip

It follows that
\begin{eqnarray*}
\lim_{\varepsilon\rightarrow0}
\frac{1}{(2\pi i)^2}
\sum_{l=2}^{\widetilde{n}-1}
\left(
\int_{r=\alpha_{q_l}+\varepsilon}
-\int_{r=\alpha_{q_l}-\varepsilon}
\right)
\frac{\overline{\zeta}_2\,\zeta_2^{k_2-m_n}
\zeta_1^{k_1-u_1-1}\;\omega(\zeta)}
{\prod_{j=2}^{n-1}(\zeta_1-\eta_j\zeta_2)^{m_j}
\left(1-<\overline{\zeta},z>\right)}
& = &
\;\;\;\;\;
\end{eqnarray*}
\begin{eqnarray*}
& = &
{\bf 1}_{k_1+k_2\geq N_{u_1}}
\;\sum_{l=2}^{\widetilde{n}-1}
\;\sum_{2\leq v\leq n-1,\alpha_v=\alpha_{q_l}}
\frac{1-\alpha_{q_l}^2}{(m_v-1)!}
\;\times
\\
& \times &
\frac{\partial^{m_v-1}}
{\partial t^{m_v-1}}|_{t=\eta_v}
\left[
\frac{t^{k_1-u_1-1}}
{\prod_{j=2,j\neq v}^{n-1}
(t-\eta_j)^{m_j}}
\left(
(1-\alpha_{q_l}^2)z_2+\alpha_{q_l}^2z_1/t
\right)^{k_1+k_2-N_{u_1}}
\right]
\end{eqnarray*}
\begin{eqnarray}\label{eta1p}
& &
=\;
{\bf 1}_{k_1+k_2\geq N_{u_1}}
\;\sum_{v=2}^{n-1}
\frac{1-\alpha_v^2}{(m_v-1)!}
\;\times
%
\\\nonumber
& &
\;\;\times\; 
\frac{\partial^{m_v-1}}
{\partial t^{m_v-1}}|_{t=\eta_v}
\left[
\frac{t^{k_1-u_1-1}}
{\prod_{j=2,j\neq v}^{n-1}
(t-\eta_j)^{m_j}}
\left(
(1-\alpha_v^2)z_2+\alpha_v^2z_1/t
\right)^{k_1+k_2-N_{u_1}}
\right]
\,.
\end{eqnarray}

\bigskip

On the other hand,
\begin{eqnarray}\label{eta11}
\lim_{\varepsilon\rightarrow0}
\frac{1}{(2\pi i)^2}
\int_{r=1-\varepsilon}
\frac{\overline{\zeta}_2\,\zeta_2^{k_2-m_n}
\zeta_1^{k_1-u_1-1}\;\omega(\zeta)}
{\prod_{j=2}^{n-1}(\zeta_1-\eta_j\zeta_2)^{m_j}
\left(1-<\overline{\zeta},z>\right)}
& = &
\;\;\;\;\;\;\;\;\;\;\;
\end{eqnarray}
\begin{eqnarray*}
& = &
{\bf 1}_{k_1+k_2\geq N_{u_1}}
(1-r^2)
\sum_{v_1+\cdots+v_{n-1}=k_1-(u_1+\cdots+m_{n-1})}
\prod_{j=2}^{n-1}
\frac{(v_j+m_j-1)!}{v_j!\,(m_j-1)!}
\,\eta_j^{v_j}
\;\times
\\
& &
\times\;
\frac{(k_1+k_2-N_{u_1})!}
{v_1!\,(k_1+k_2-N_{u_1}-v_1)!}
\left(r^2z_1\right)^{v_1}
\left((1-r^2)z_2\right)^{k_1+k_2-N_{u_1}-v_1}
\\
\\
& \xrightarrow[r=1-\varepsilon\rightarrow1]{} &
0\,.
\\
\end{eqnarray*}

\bigskip

Lastly,
we have for
$r=\varepsilon$
\begin{eqnarray*}
\frac{1}{(2\pi i)^2}
\int_{r=\varepsilon}
\frac{\overline{\zeta}_2\,\zeta_2^{k_2-m_n}
\zeta_1^{k_1-u_1-1}\;\omega(\zeta)}
{\prod_{j=2}^{n-1}(\zeta_1-\eta_j\zeta_2)^{m_j}
\left(1-<\overline{\zeta},z>\right)}
& = &
\;\;\;\;\;\;\;\;\;\;\;\;\;\;\;\;\;\;\;\;\;\;\;\;\;\;\;\;\;\;\;\;\;
\end{eqnarray*}
\begin{eqnarray*}
& = &
{\bf 1}_{k_1+k_2\geq N_{u_1}}
(1-r^2)
\sum_{v_1+\cdots+v_{n-1}=k_1-(u_1+\cdots+m_{n-1})}
\prod_{j=2}^{n-1}
\frac{(v_j+m_j-1)!}{v_j!\,(m_j-1)!}
\,\eta_j^{v_j}
\;\times
\\
& &
\times\;
\frac{(k_1+k_2-N_{u_1})!}
{v_1!\,(k_1+k_2-N_{u_1}-v_1)!}
\left(r^2z_1\right)^{v_1}
\left((1-r^2)z_2\right)^{k_1+k_2-N_{u_1}-v_1}
\\
& - &
{\bf 1}_{k_1+k_2\geq N_{u_1}}
\sum_{v=2}^{n-1}
\frac{1-r^2}{(m_v-1)!}
\;\times
\\
& &
\times\;
\frac{\partial^{m_v-1}}
{\partial t^{m_v-1}}|_{t=\eta_v}
\left[
\frac{t^{k_1-u_1-1}}
{\prod_{j=2,j\neq v}^{n-1}
(t-\eta_j)^{m_j}}
\left(
(1-r^2)z_2+r^2z_1/t
\right)^{k_1+k_2-N_{u_1}}
\right]
\end{eqnarray*}
\begin{eqnarray*}
& \xrightarrow[r=\varepsilon\rightarrow0]{} &
{\bf 1}_{k_1+k_2\geq N_{u_1}}
\,z_2^{k_1+k_2-N_{u_1}}
\sum_{v_2+\cdots+v_{n-1}=k_1-(u_1+\cdots+m_{n-1})}
\prod_{j=2}^{n-1}
\frac{(v_j+m_j-1)!}{v_j!\,(m_j-1)!}
\,\eta_j^{v_j}
\\
\\
& &
-\;
{\bf 1}_{k_1+k_2\geq N_{u_1}}
\,z_2^{k_1+k_2-N_{u_1}}
\sum_{v=2}^{n-1}
\frac{1}{(m_v-1)!}
\frac{\partial^{m_v-1}}
{\partial t^{m_v-1}}|_{t=\eta_v}
\left[
\frac{t^{k_1-u_1-1}}
{\prod_{j=2,j\neq v}^{n-1}
(t-\eta_j)^{m_j}}
\right]
\,.
\;\;\;\;\;\;\;\;\;\;\;\;\;\;\;\;\;\;\;\;\;\;\;\;\;\;\;\;\;\;\;\;\;\;\;\;\;\;\;\;\;\;\;\;\;\;\;\;\;\;\;\;
\\
\end{eqnarray*}

If 
$k_1\geq u_1+1$, 
we get by
lemma \ref{lagrangeuclide}
\begin{eqnarray*}
\sum_{v_2+\cdots+v_{n-1}=k_1-u_1-(m_2+\cdots+m_{n-1})}
\prod_{j=2}^{n-1}
\frac{(v_j+m_j-1)!}{v_j!\,(m_j-1)!}
\,\eta_j^{v_j}
\;\;\;\;\;\;\;\;\;\;\;\;\;\;\;\;
& &
\\
+\;\;
\sum_{v=2}^{n-1}
\frac{1}{(m_v-1)!}
\frac{\partial^{m_v-1}}
{\partial t^{m_v-1}}|_{t=\eta_v}
\left[
\frac{t^{k_1-u_1}}
{\prod_{j=2,j\neq v}^{n-1}
(t-\eta_j)^{m_j}
(-t)}
\right]
& = &
\;\;\;\;\;\;\;\;\;\;\;\;\;\;\;\;\;\;\;\;\;\;\;\;\;
\end{eqnarray*}
\begin{eqnarray*}
\;\;\;\;\;\;\;\;\;
& = &
\left[
Q\left(
X^{k_1-u_1},
\prod_{j=2}^{n-1}(X-\eta_j)^{m_j}
\right)
+\;
\frac{
R\left(
X^{k_1-u_1},
\prod_{j=2}^{n-1}(X-\eta_j)^{m_j}
\right)}
{\prod_{j=2}^{n-1}(X-\eta_j)^{m_j}}
\right]
|_{X=0}
\\
\\
& = &
\left[
\frac{X^{k_1-u_1}}
{\prod_{j=2}^{n-1}(X-\eta_j)^{m_j}}
\right]
|_{X=0}
\\
\\
& = &
0\;.
\\
\end{eqnarray*}
Else $k_1\leq u_1$. Since there exists
$m_p\geq1,\;2\leq p\leq n-1$, it follows that
$k_1\leq u_1<u_1+m_2\cdots+m_{n-1}$ then
\begin{eqnarray*}
\sum_{v_2+\cdots+v_{n-1}=k_1-u_1-(m_2+\cdots+m_{n-1})}
\prod_{j=2}^{n-1}
\frac{(v_j+m_j-1)!}{v_j!\,(m_j-1)!}
\,\eta_j^{v_j}
& = &
0
\end{eqnarray*}
and we get, for all $k_1,\,k_2\geq0$,
\begin{eqnarray}\label{eta101}
&  &
-\;
{\bf 1}_{k_1+k_2\geq N_{u_1},\;k_1\leq u_1}
\;\times
\\\nonumber
& & 
\;\;
\times\;
z_2^{k_1+k_2-N_{u_1}}
\sum_{v=2}^{n-1}
\frac{1}{(m_v-1)!}
\frac{\partial^{m_v-1}}
{\partial t^{m_v-1}}|_{t=\eta_v}
\left[
\frac{t^{k_1-u_1-1}}
{\prod_{j=2,j\neq v}^{n-1}
(t-\eta_j)^{m_j}}
\right]
\,.
\\\nonumber
\end{eqnarray}

We finally get by
(\ref{eta1p}), (\ref{eta11}),
and (\ref{eta101})
\begin{eqnarray*}
\lim_{\varepsilon\rightarrow0}
\frac{1}{(2\pi i)^2}
\left[
\int_{r=1-\varepsilon}
-\sum_{l=2}^{\widetilde{n}-1}
\left(
\int_{r=\alpha_{q_l}+\varepsilon}
-\int_{r=\alpha_{q_l}-\varepsilon}
\right)
-\int_{r=\varepsilon}
\right]
\frac{\overline{\zeta}_2
\,\zeta_2^{k_2-m_n}
\zeta_1^{k_1-u_1-1}
\,\omega(\zeta)}
{\prod_{j=2}^{n-1}(\zeta_1-\eta_j\zeta_2)^{m_j}
\left(
1-<\overline{\zeta},z>
\right)}
\;=
\end{eqnarray*}
\begin{eqnarray*}
& = &
0
\;-\;
{\bf 1}_{k_1+k_2\geq N_{u_1}}
\;\sum_{v=2}^{n-1}
\frac{1}{1+|\eta_v|^2}
\,\frac{1}{(m_v-1)!}
\;\times
\\
&  &
\times\;
\frac{\partial^{m_v-1}}
{\partial t^{m_v-1}}|_{t=\eta_v}
\left[
\frac{t^{k_1-u_1-1}}
{\prod_{j=2,j\neq v}^{n-1}
(t-\eta_j)^{m_j}}
\left(
\frac{z_2+|\eta_v|^2z_1/t}
{1+|\eta_v|^2}
\right)^{k_1+k_2-N_{u_1}}
\right]
\\
\\
& + &
{\bf 1}_{k_1+k_2\geq N_{u_1},\,k_1\leq u_1}
z_2^{k_1+k_2-N_{u_1}}
\sum_{v=2}^{n-1}
\frac{1}{(m_v-1)!}
\frac{\partial^{m_v-1}}
{\partial t^{m_v-1}}|_{t=\eta_v}
\left[
\frac{t^{k_1-u_1-1}}
{\prod_{j=2,j\neq v}^{n-1}
(t-\eta_j)^{m_j}}
\right]
\;
\\
\end{eqnarray*}
and the proof is achieved.

\end{proof}

\bigskip

\section{Proof of theorem \ref{theorem}}\label{proof}

\bigskip

Consider
$f\in\mathcal{O}\left(\mathbb{B}_2\right)$
and 
$f(z)=\sum_{k_1,k_2\geq0}
a_{k_1,k_2}
z_1^{k_1}z_2^{k_2}$
its Taylor expansion for all
$z\in\mathbb{B}_2$.

First, we have the following preliminar result.

\begin{lemma}\label{cvtaylor}

The Taylor expansion of $f$ is absolutely convergent
on any compact subset
$K\subset\mathbb{B}_2$.

\end{lemma}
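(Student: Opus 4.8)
The plan is to reduce the statement to the classical Cauchy estimates for the Taylor coefficients, exploiting the fact that $\mathbb{B}_2$ is a complete Reinhardt domain, so that tori sitting inside the ball control all the $a_{k_1,k_2}$ at once. First I would fix a compact $K\subset\mathbb{B}_2$ and set $r:=\sup_{z\in K}\|z\|$; since $K$ is compact and contained in the open ball, $r<1$, and I would choose $\sigma$ with $r<\sigma<1$ and put $M:=\sup_{\|\zeta\|\leq\sigma}|f(\zeta)|$, which is finite because $f$ is holomorphic, hence continuous, on $\overline{B}(0,\sigma)\subset\mathbb{B}_2$.

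The key observation is that whenever $s_1,s_2>0$ satisfy $s_1^2+s_2^2\leq\sigma^2$, the \emph{entire} closed polydisc $\{|\zeta_1|\leq s_1,\,|\zeta_2|\leq s_2\}$ lies in $\overline{B}(0,\sigma)$, because any point of it has squared norm at most $s_1^2+s_2^2$. Hence the Cauchy integral formula over the distinguished boundary torus applies and gives $|a_{k_1,k_2}|\leq M/(s_1^{k_1}s_2^{k_2})$. For a point $z\in K$ with $z_1z_2\neq0$ I would then make the radii depend on $z$ by setting $s_j=(\sigma/r)|z_j|$; this yields $s_1^2+s_2^2=(\sigma/r)^2\|z\|^2\leq\sigma^2$, so the estimate holds and collapses to the uniform termwise bound $|a_{k_1,k_2}|\,|z_1|^{k_1}|z_2|^{k_2}\leq M(r/\sigma)^{k_1+k_2}$.

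Summing the resulting geometric series gives $\sum_{k_1,k_2\geq0}|a_{k_1,k_2}|\,|z_1|^{k_1}|z_2|^{k_2}\leq M/(1-r/\sigma)^2$, a bound independent of $z\in K$, which is exactly absolute and uniform convergence on $K$. The degenerate points on the axes are handled separately: if $z_1=0$ then every term with $k_1\geq1$ vanishes and the surviving terms $|a_{0,k_2}|\,|z_2|^{k_2}$ are controlled by the one-variable Cauchy estimate $|a_{0,k_2}|\leq M/\sigma^{k_2}$ on the circle $|\zeta_2|=\sigma$, and symmetrically for $z_2=0$.

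The one point that needs care, and the only real obstacle, is that $\mathbb{B}_2$ is not itself a polydisc, so there is no single polydisc containing $K$ whose torus stays inside the ball; the device that resolves this is precisely the $z$-dependent choice $s_j=(\sigma/r)|z_j|$, which keeps each torus inside $\overline{B}(0,\sigma)$ while producing geometric decay uniform over $K$. Everything else is the standard verification that the coefficients extracted by the Cauchy integral coincide with the Taylor coefficients $a_{k_1,k_2}$, together with the elementary summation of the double geometric series.
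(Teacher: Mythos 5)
Your proof is correct, and it takes a genuinely different (if closely related) route from the paper's. Both arguments rest on the same engine: Cauchy estimates over the distinguished boundary of a closed polydisc that sits inside the ball, which is legitimate precisely because $\mathbb{B}_2$ is a complete Reinhardt domain, as you note. The paper, however, argues \emph{locally}: for each point $z$ it fixes one polyradius $r_z$ with $|z_j|<r_{z,j}$ and $r_{z,1}^2+r_{z,2}^2<1$, deduces absolute convergence on a slightly smaller polydisc neighborhood $\overline{D'_z}$ of $z$, and then invokes compactness to cover $K$ by finitely many such neighborhoods. You instead make the polyradius depend on the point, $s_j=(\sigma/r)|z_j|$, which keeps the torus inside $\overline{B}(0,\sigma)$ and collapses everything into the single uniform termwise bound $|a_{k_1,k_2}|\,|z_1|^{k_1}|z_2|^{k_2}\leq M(r/\sigma)^{k_1+k_2}$ on all of $K$, with no covering argument. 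The trade-offs: your scaling trick forces a separate (easy) treatment of points on the coordinate axes via one-variable Cauchy estimates, which the paper's purely local argument never needs; in exchange, you obtain directly an explicit uniform geometric bound over $K$, which is in fact the stronger statement the paper has to re-derive later (by the covering route, see the estimate $\sup_{z\in K}|a_{k_1,k_2}z_1^{k_1}z_2^{k_2}|\leq\sup_{K'}|f|\,(1-\varepsilon_K)^{k_1+k_2}$ in the proof of corollary~\ref{vitessecv}), so your version would streamline that corollary as well.
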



\begin{proof}

$z\in\mathbb{B}_2$ being fixed, consider
the bidisc
$D_z:=D_2(0,r_z)$
where
$r_z=(r_{z,1},r_{z,2})$
is such that
$|z_1|<r_{z,1}$ 
(resp.
$|z_2|<r_{z,2}$)
and
$r_{z,z}^2+r_{z,2}^2<1$
(this is possible since
$|z_1|^2+|z_2|^2<1$).
Then
$z\in D_z\subset\overline{D_z}
\subset\mathbb{B}_2$ 
and, for all
$k_1,\,k_2\geq0$,
the Cauchy formula
on $\overline{D_z}$ gives
\begin{eqnarray*}
a_{k_1,k_2}
=\frac{1}{k_1!\,k_2!}
\frac{\partial^{k_1+k_2}f}
{\partial\zeta_1^{k_1}\partial\zeta_2^{k_2}}
(0,0)
& = &
\frac{1}{(2\pi i)^2}
\int_{|\zeta_1|=r_{z,1},\,|\zeta_2|=r_{z,2}}
\frac{f(\zeta_1,\zeta_2)\,\omega(\zeta)}
{\zeta_1^{k_1+1}\zeta_2^{k_2+1}}\,
\\
\end{eqnarray*}
then 
\begin{eqnarray}\label{coeff}
|a_{k_1,k_2}| 
& \leq &
\dfrac{\sup_{\zeta\in\overline{D_z}}
|f(\zeta)|}
{r_{z,1}^{k_1}\,r_{z,2}^{k_2}}
\,.
\\\nonumber
\end{eqnarray}

If we set
$D'_{z}:=D_2(0,r'_z)$
where
$|z_1|<r'_{z,1}<r_{z,1}$
(resp. 
$|z_2|<r'_{z,2}<r_{z,2}$),
it follows that, for all
$z'\in\overline{D'_z}$,
\begin{eqnarray*}
\sum_{k_1,k_2\geq0}
\left|a_{k_1,k_2}{z'_1}^{k_1}{z'_2}^{k_2}\right|
& \leq &
\sup_{\zeta\in\overline{D_z}}
|f(\zeta)|
\;\sum_{k_1\geq0}
\left(
\frac{|z'_1|}{r_{z,1}}
\right)^{k_1}
\times
\sum_{k_2\geq0}
\left(
\frac{|z'_2|}{r_{z,2}}
\right)^{k_2}
\\
& \leq &
\frac{\sup_{\zeta\in\overline{D_z}}
|f(\zeta)|}
{(1-r'_{z,1}/r_{z,1})(1-r'_{z,2}/r_{z,2})}
\;<\;+\infty
\end{eqnarray*}
and the taylor series is absolutely convergent on
the neighborhood
$\overline{D'_z}$ of $z$.

Now if
$K\subset\mathbb{B}_2$ a compact subset,
it can be covered by a finite number
of such
$D'_{z_j}$ in which the Taylor series
is absolutely convergent.

\end{proof}

\bigskip

Now we can give the proof of theorem \ref{theorem}.

\begin{proof}

First, notice that if
$m_2=\cdots=m_{n-1}=0$, we have (since the
Taylor expansion of $f$ is absolutely convergent)
\begin{eqnarray*}
f(z) 
& = &
\sum_{k_1,k_2\geq0}
a_{k_1,k_2}
z_1^{k_1}z_2^{k_2}
\\\nonumber
& = &
\left[
\sum_{k_1\leq m_1-1,k_2\geq0}
+\sum_{k_1\geq0,k_2\leq m_n-1}
-\sum_{k_1\leq m_1-1,k_2\leq m_n-1}
\right]
a_{k_1,k_2}
z_1^{k_1}z_2^{k_2}
\\
& &
+\;
\sum_{k_1\geq m_1,k_2\geq m_n}
a_{k_1,k_2}
z_1^{k_1}z_2^{k_2}
\\
\\
& = &
\sum_{k_1\leq m_1-1}
\frac{z_1^{k_1}}{k_1!}
\left(
\frac{\partial^{k_1}f}{\partial z_1^{k_1}}
\right)(0,z_2)
+\sum_{k_2\leq m_n-1}
\frac{z_2^{k_2}}{k_2!}
\left(
\frac{\partial^{k_2}f}{\partial z_2^{k_2}}
\right)(z_1,0)
\\
& &
-\sum_{k_1\leq m_1-1,k_2\leq m_n-1}
\frac{z_1^{k_1}}{k_1!}
\frac{z_2^{k_2}}{k_2!}
\left(
\frac{\partial^{k_1+k_2}f}{\partial z_1^{k_1}\partial z_2^{k_2}}
\right)(0)
+\sum_{k_1\geq m_1,k_2\geq m_n}
a_{k_1,k_2}
z_1^{k_1}z_2^{k_2}
\end{eqnarray*}
and theorem \ref{theorem} is proved in this case with
\begin{eqnarray}
\mathcal{G}
\left(
\eta_1^{m_1},\eta_2^0,\ldots,\eta_{n-1}^0,\eta_n^{m_n};f
\right)(z)
\;=\;
\sum_{k_1\leq m_1-1}
\frac{z_1^{k_1}}{k_1!}
\left(
\frac{\partial^{k_1}f}{\partial z_1^{k_1}}
\right)(0,z_2)
\end{eqnarray}
\begin{eqnarray*}
\;\;\;\;
& &
+\sum_{k_2\leq m_n-1}
\frac{z_2^{k_2}}{k_2!}
\left(
\frac{\partial^{k_2}f}{\partial z_2^{k_2}}
\right)(z_1,0)
-\sum_{k_1\leq m_1-1,k_2\leq m_n-1}
\frac{z_1^{k_1}}{k_1!}
\frac{z_2^{k_2}}{k_2!}
\left(
\frac{\partial^{k_1+k_2}f}{\partial z_1^{k_1}\partial z_2^{k_2}}
\right)(0)
\,.
\\
\end{eqnarray*}

\bigskip

Now we can assume that there exists
$m_p\geq1,\;2\leq p\leq n-1$.
An application
of proposition \ref{proptranche} to each
monomial
$z_1^{k_1}z_2^{k_2},\,k_1,\,k_2\geq0$ 
(that is holomorphic on
$\overline{\mathbb{B}_2}$ even if 
$f$ is not) gives
\begin{eqnarray}\label{prelimtheo}
z_1^{k_1}z_2^{k_2} 
& = &
\lim_{\varepsilon\rightarrow0}
\frac{1}{(2\pi i)^2}
\int_{\partial\widetilde{\Sigma}_{\varepsilon}}
\frac{\zeta_1^{k_1}\zeta_2^{k_2}
\det\left(\overline{\zeta},P_n(\zeta,z)\right)}{g_n(\zeta)\left(1-<\overline{\zeta},z>\right)}
\,\omega(\zeta)
\\\nonumber
& &
-\;\lim_{\varepsilon\rightarrow0}\,
\frac{g_n(z)}{(2\pi i)^2}\,
\int_{\widetilde{\Sigma}_{\varepsilon}}
\frac{\zeta_1^{k_1}\zeta_2^{k_2}
\;\omega'\left(\overline{\zeta}\right)\wedge\omega(\zeta)}
{g_n(\zeta)\,\left(1-<\overline{\zeta},z>\right)^2}
\,.
\;\;\;\;\;\;\;\;\;\;\;\;\;\;\;\;\;
\\\nonumber
\end{eqnarray}

By proposition \ref{remainder}
we have, for all
$z\in U_{\eta}$
and all
$k_1,\,k_2\geq0$,
\begin{eqnarray}\label{remaintheo}
-\;\lim_{\varepsilon\rightarrow0}\frac{g_n(z)}{(2\pi i)^2}
\int_{\Sigma_{\varepsilon}}
\frac{\zeta_1^{k_1}\zeta_2^{k_2}\;\omega'\left(\overline{\zeta}\right)\wedge\omega(\zeta)}
{g_n(\zeta)\;\left(1-<\overline{\zeta},z>\right)^2}
& = & 
\;\;\;\;\;\;\;\;\;\;\;\;\;\;\;\;\;\;\;\;\;\;\;\;\;\;\;\;\;\;\;\;\;\;\;\;\;\;\;
\end{eqnarray}
\begin{eqnarray*}
& = &
{\bf 1}_{k_1+k_2\geq N,\,k_1\geq m_1,k_2\geq m_n}
\,z_1^{k_1}\,z_2^{k_2}\\
\\
& - &
{\bf 1}_{k_1+k_2\geq N}\;
\sum_{p=2}^{n-1}
z_1^{m_1}\prod_{j=2,j\neq p}^{n-1}(z_1-\eta_jz_2)^{m_j}z_2^{m_n}
\sum_{s=0}^{m_p-1}z_2^{m_p-1-s}(z_1-\eta_pz_2)^s
\\
& & 
\;\times\,\frac{1}{s!}\frac{\partial^s}{\partial t^s}|_{t=\eta_p}
\left[\frac{t^{k_1}}{t^{m_1}\prod_{j=2,j\neq p}^{n-1}(t-\eta_j)^{m_j}}
\left(\frac{z_2+|\eta_p|^2z_1/t}{1+|\eta_p|^2}\right)^{k_1+k_2-N+1}\right]\\
\\
& + & 
{\bf 1}_{k_1\leq m_1-1,k_2\geq N-k_1}
\sum_{p=2}^{n-1}z_1^{m_1}\prod_{j=2,j\neq p}^{n-1}(z_1-\eta_jz_2)^{m_j}z_2^{m_n}
\sum_{s=0}^{m_p-1}z_2^{m_p-1-s}(z_1-\eta_pz_2)^s\\
& & 
\;\times\,
\frac{1}{s!}\frac{\partial^s}{\partial t^s}|_{t=\eta_p}
\left[
\frac{t^{k_1}z_2^{k_1+k_2-N+1}}
{t^{m_1}\prod_{j=2,j\neq p}^{n-1}(t-\eta_j)^{m_j}}
\right]\\
\\
& + & 
{\bf 1}_{k_2\leq m_n-1,k_1\geq N-k_2}
\sum_{p=2}^{n-1}z_1^{m_1}\prod_{j=2,j\neq p}^{n-1}(z_1-\eta_jz_2)^{m_j}z_2^{m_n}
\sum_{s=0}^{m_p-1}z_2^{m_p-1-s}(z_1-\eta_pz_2)^s
\\
& & 
\;\times\,
\frac{1}{s!}\frac{\partial^s}{\partial t^s}|_{t=\eta_p}
\left[
\frac{t^{N-1-k_2}z_1^{k_1+k_2-N+1}}
{t^{m_1}\prod_{j=2,j\neq p}^{n-1}(t-\eta_j)^{m_j}}
\right]\,.
\\
\end{eqnarray*}

Similarly, we have
by proposition \ref{interpoler},
for all
$z\in U_{\eta}$ and all
$k_1,\,k_2\geq0$,
\begin{eqnarray}\label{intertheo1}
\lim_{\varepsilon\rightarrow0}
\frac{1}{(2\pi i)^2}
\int_{\partial\widetilde{\Sigma}_{\varepsilon}}
\frac{\zeta_1^{k_1}\zeta_2^{k_2}\det\left(\overline{\zeta},P_n(\zeta,z)\right)}
{g_n(\zeta)\left(1-<\overline{\zeta},z>\right)}
\,\omega(\zeta)
& = &
\;\;\;\;\;\;\;\;\;\;\;\;\;\;\;\;\;\;\;\;\;\;\;\;\;\;\;\;
\end{eqnarray}
\begin{eqnarray*}
& = &
\sum_{u_1=0}^{m_1-1}z_1^{u_1}
\prod_{j=2}^{n-1}(z_1-\eta_jz_2)^{m_j}z_2^{m_n}
\sum_{p=2}^{n-1}
\frac{1}{(m_p-1)!}
\frac{\partial^{m_p-1}}{\partial t^{m_p-1}}|_{t=\eta_p}
\;\frac{1}{t^{u_1+1}\prod_{j=2,j\neq p}^{n-1}(t-\eta_j)^{m_j}}
\times
\\
& &
\times\;\{\;
{\bf 1}_{k_1+k_2\geq N_{u_1}}
\frac{1}{1+|\eta_p|^2}
t^{k_1}
\left(
\frac{z_2+|\eta_p|^2z_1/t}{1+|\eta_p|^2}
\right)^{k_1+k_2-N_{u_1}}
\\
& &
\;\;-\;
{\bf 1}_{k_1\leq u_1,\,k_2\geq N_{u_1}-k_1}
t^{k_1}
z_2^{k_1+k_2-N_{u_1}}
\;\}
\\
\end{eqnarray*}
\begin{eqnarray*}
& + & 
\sum_{p=2}^{n-1}\sum_{u_p=0}^{m_p-1}
(z_1-\eta_pz_2)^{u_p}
\prod_{j=p+1}^{n-1}(z_1-\eta_jz_2)^{m_j}z_2^{m_n}
\;\times
\\
& \times &
\{
{\bf 1}_{k_1+k_2\geq N_{u_p}}
\{\,
\frac{1}{u_p!}
\frac{\partial^{u_p}}{\partial t^{u_p}}|_{t=\eta_p}
\,\frac{1+|\eta_p|^2\eta_p/t}{1+|\eta_p|^2}
\frac{\,t^{k_1}
\left(
\frac{z_2+|\eta_p|^2z_1/t}{1+|\eta_p|^2}
\right)^{k_1+k_2-N_{u_p}}}
{\prod_{j=p+1}^{n-1}(t-\eta_j)^{m_j}}
\\
& &
+
\sum_{q=p+1}^{n-1}
\frac{1}{(m_q-1)!}
\frac{\partial^{m_q-1}}{\partial t^{m_q-1}}|_{t=\eta_q}
\,\frac{1+|\eta_q|^2\eta_p/t}{1+|\eta_q|^2}
\frac{\,t^{k_1}
\left(
\frac{z_2+|\eta_q|^2z_1/t}{1+|\eta_q|^2}
\right)^{k_1+k_2-N_{u_p}}}
{(t-\eta_p)^{u_p+1}\prod_{j=p+1,j\neq q}^{n-1}(t-\eta_j)^{m_j}}
\;\}
\\
\\
& & 
-\,
{\bf 1}_{k_2\leq m_n-1,k_1\geq N_{u_p}-k_2}
\,\eta_p
z_1^{k_1+k_2-N_{u_p}}
\;\times
\{\;
\frac{1}{u_p!}
\frac{\partial^{u_p}}{\partial t^{u_p}}|_{t=\eta_p}
\,
\frac{t^{N_{u_p}-1-k_2}}
{\prod_{j=p+1}^{n-1}(t-\eta_j)^{m_j}}
\\
& &
\;\;+
\sum_{q=p+1}^{n-1}
\frac{1}{(m_q-1)!}
\frac{\partial^{m_q-1}}{\partial t^{m_q-1}}|_{t=\eta_q}
\,\frac{t^{N_{u_p}-1-k_2}}
{(t-\eta_p)^{u_p+1}\prod_{j=p+1,j\neq q}^{n-1}(t-\eta_j)^{m_j}}
\;\}
\\
\end{eqnarray*}
\begin{eqnarray*}
& + &
{\bf 1}_{k_1\geq0,k_2\leq m_n-1}
\,z_1^{k_1}z_2^{k_2}\;.
\;\;\;\;\;\;\;\;\;\;\;\;\;\;\;\;\;\;\;\;\;\;\;\;\;\;\;\;\;\;\;\;\;\;\;\;\;\;\;\;\;\;\;\;\;\;\;\;\;\;\;\;\;\;
\;\;\;\;\;\;\;\;\;\;\;\;\;\;\;\;\;\;\;\;\;\;\;\;
\\
\end{eqnarray*}

It follows from (\ref{prelimtheo}), 
(\ref{remaintheo}), 
(\ref{intertheo1})
and by continuity with repect to
$z$ that we have, for all
$z\in\mathbb{B}_2$,
\begin{eqnarray*}
f(z)
\;=\;
\sum_{k_1,k_2\geq0}
a_{k_1,k_2}
z_1^{k_1}z_2^{k_2}
& = &
\;\;\;\;\;\;\;\;\;\;\;\;\;\;\;\;\;\;\;\;\;\;\;\;\;\;\;\;\;\;\;\;\;\;\;\;\;\;\;\;\;\;\;\;\;\;\;\;\;\;\;\;\;\;\;\;
\;\;\;\;\;\;\;\;\;\;\;\;\;\;\;\;\;\;
\end{eqnarray*}
\begin{eqnarray*}
& = &
\sum_{u_1=0}^{m_1-1}z_1^{u_1}
\prod_{j=2}^{n-1}(z_1-\eta_jz_2)^{m_j}z_2^{m_n}
\sum_{p=2}^{n-1}
\frac{1}{(m_p-1)!}
\frac{\partial^{m_p-1}}{\partial t^{m_p-1}}|_{t=\eta_p}
\;\frac{1}{t^{u_1+1}\prod_{j=2,j\neq p}^{n-1}(t-\eta_j)^{m_j}}
\times
\\
& &
\times\;
\{\;
\frac{1}{1+|\eta_p|^2}
\sum_{k_1+k_2\geq N_{u_1}}
a_{k_1,k_2}t^{k_1}
\left(
\frac{z_2+|\eta_p|^2z_1/t}{1+|\eta_p|^2}
\right)^{k_1+k_2-N_{u_1}}
\\
& &
\;\;-\;
\sum_{k_1\leq u_1,\,k_2\geq N_{u_1}-k_1}
a_{k_1,k_2}t^{k_1}
z_2^{k_1+k_2-N_{u_1}}
\;\}
\\
\end{eqnarray*}
\begin{eqnarray*}
& + & 
\sum_{p=2}^{n-1}\sum_{u_p=0}^{m_p-1}
(z_1-\eta_pz_2)^{u_p}
\prod_{j=p+1}^{n-1}(z_1-\eta_jz_2)^{m_j}z_2^{m_n}
\;\times
\\
& \times\{ &
\frac{1}{u_p!}
\frac{\partial^{u_p}}{\partial t^{u_p}}|_{t=\eta_p}
\;\{
\;
\frac{\frac{1+|\eta_p|^2\eta_p/t}{1+|\eta_p|^2}}
{\prod_{j=p+1}^{n-1}(t-\eta_j)^{m_j}}
\sum_{k_1+k_2\geq N_{u_p}}
a_{k_1,k_2}t^{k_1}
\left(
\frac{z_2+|\eta_p|^2z_1/t}{1+|\eta_p|^2}
\right)^{k_1+k_2-N_{u_p}}
\;\}
\\
& &
+
\sum_{q=p+1}^{n-1}
\frac{1}{(m_q-1)!}
\frac{\partial^{m_q-1}}{\partial t^{m_q-1}}|_{t=\eta_q}
\;\{
\;
\frac{\frac{1+|\eta_q|^2\eta_p/t}{1+|\eta_q|^2}}
{(t-\eta_p)^{u_p+1}\prod_{j=p+1,j\neq q}^{n-1}(t-\eta_j)^{m_j}}
\;\times
\\
& & 
\;\;\times
\sum_{k_1+k_2\geq N_{u_p}}
a_{k_1,k_2}t^{k_1}
\left(
\frac{z_2+|\eta_q|^2z_1/t}{1+|\eta_q|^2}
\right)^{k_1+k_2-N_{u_p}}
\;\}
\end{eqnarray*}
\begin{eqnarray*}
& - & 
\{\;
\frac{1}{u_p!}
\frac{\partial^{u_p}}{\partial t^{u_p}}|_{t=\eta_p}
\,\frac{\eta_p
\sum_{k_2\leq m_n-1,k_1\geq N_{u_p}-k_2}
a_{k_1,k_2}t^{N_{u_p}-1-k_2}
z_1^{k_1+k_2-N_{u_p}}}
{\prod_{j=p+1}^{n-1}(t-\eta_j)^{m_j}}
\\
& &
+
\sum_{q=p+1}^{n-1}
\frac{1}{(m_q-1)!}
\frac{\partial^{m_q-1}}{\partial t^{m_q-1}}|_{t=\eta_q}
\,\frac{1}{(t-\eta_p)^{u_p+1}\prod_{j=p+1,j\neq q}^{n-1}(t-\eta_j)^{m_j}}
\;\times
\\
& &
\;\;\;\;\times\;
\eta_p
\sum_{k_2\leq m_n-1,k_1\geq N_{u_p}-k_2}
a_{k_1,k_2}t^{N_{u_p}-1-k_2}
z_1^{k_1+k_2-N_{u_p}}
\;\;\}\;\;\}
\\
\end{eqnarray*}
\begin{eqnarray*}
& + &
\sum_{k_1\geq0,u_n\leq m_n-1}
a_{k_1,u_n}z_1^{k_1}z_2^{u_n}
\;+\;
\sum_{k_1+k_2\geq N,\,k_1\geq m_1,k_2\geq m_n}
a_{k_1,k_2}\,z_1^{k_1}\,z_2^{k_2}
\;\;\;\;\;\;\;\;\;\;\;
\\
\end{eqnarray*}
\begin{eqnarray*}
& - & 
\sum_{p=2}^{n-1}
z_1^{m_1}\prod_{j=2,j\neq p}^{n-1}(z_1-\eta_jz_2)^{m_j}z_2^{m_n}
\sum_{s=0}^{m_p-1}z_2^{m_p-1-s}(z_1-\eta_pz_2)^s
\\
& & 
\times\,\frac{1}{s!}\frac{\partial^s}{\partial t^s}|_{t=\eta_p}
\left[\frac{1}{t^{m_1}\prod_{j=2,j\neq p}^{n-1}(t-\eta_j)^{m_j}}
\sum_{k_1+k_2\geq N}a_{k_1,k_2}\,t^{k_1}
\left(\frac{z_2+|\eta_p|^2z_1/t}{1+|\eta_p|^2}\right)^{k_1+k_2-N+1}\right]\\
\end{eqnarray*}
\begin{eqnarray*}
& + & 
\sum_{p=2}^{n-1}z_1^{m_1}\prod_{j=2,j\neq p}^{n-1}(z_1-\eta_jz_2)^{m_j}z_2^{m_n}
\sum_{s=0}^{m_p-1}z_2^{m_p-1-s}(z_1-\eta_pz_2)^s\\
& & 
\times\,
\frac{1}{s!}\frac{\partial^s}{\partial t^s}|_{t=\eta_p}
\left[\frac{1}{t^{m_1}\prod_{j=2,j\neq p}^{n-1}(t-\eta_j)^{m_j}}
\sum_{k_1\leq m_1-1,k_2\geq N-k_1}a_{k_1,k_2}\,t^{k_1}z_2^{k_1+k_2-N+1}\right]\\
\end{eqnarray*}
\begin{eqnarray*}
& + & 
\sum_{p=2}^{n-1}z_1^{m_1}\prod_{j=2,j\neq p}^{n-1}(z_1-\eta_jz_2)^{m_j}z_2^{m_n}
\sum_{s=0}^{m_p-1}z_2^{m_p-1-s}(z_1-\eta_pz_2)^s\\
& & 
\times\,
\frac{1}{s!}\frac{\partial^s}{\partial t^s}|_{t=\eta_p}
\left[\frac{1}{t^{m_1}\prod_{j=2,j\neq p}^{n-1}(t-\eta_j)^{m_j}}
\sum_{k_2\leq m_n-1,k_1\geq N-k_2}a_{k_1,k_2}\,t^{N-1-k_2}z_1^{k_1+k_2-N+1}\right]\,.
\\
\end{eqnarray*}

We have used the following lemma that allows us to
switch the above series and derivative with respect to $t$.

\begin{lemma}\label{serie}

Let $K\subset\mathbb{B}_2$
be a compact subset,
$q\geq l\geq0$
and 
$p=2,\ldots,n-1$.
Then for all 
$z\in K$
and all
$(t,w)$ in a neighborhood of
$(\eta_p,\eta_p)$,
the following series
\begin{eqnarray*}
\sum_{k_1+k_2\geq q}
a_{k_1,k_2}t^{k_1}
\left(\frac{z_2+|w|^2z_1/t}{1+|w|^2}\right)^{k_1+k_2-q}
\;,\;
\;\;\;\;\;\;\;\;\;\;\;\;\;\;\;\;\;\;\;\;\;\;
\\
\\
\sum_{k_1\leq l,k_2\geq q-k_1}
a_{k_1,k_2}
t^{k_1}\;z_2^{k_1+k_2-q}
\;,\;
\sum_{k_2\leq l,k_1\geq q-k_1}
a_{k_1,k_2}
t^{l-k_2}\;z_1^{k_1+k_2-q}
\\
\end{eqnarray*}
are absolutely convergent. 
In particular, all their derivatives with respect to $t$ 
are absolutely convergent as series of
holomorphic functions.

\end{lemma}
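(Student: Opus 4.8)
The plan is to dominate each of the three series term by term using the Cauchy estimate on the Taylor coefficients and then to exhibit, in each case, a pair of convergent geometric series. First I would fix the compact $K\subset\mathbb{B}_2$ and recall from the proof of lemma \ref{cvtaylor}, estimate (\ref{coeff}), that for \emph{any} bidisc radii $r_1,r_2>0$ with $r_1^2+r_2^2<1$ one has $|a_{k_1,k_2}|\leq M\,r_1^{-k_1}r_2^{-k_2}$, where $M=\sup_{\overline{D_2(0,(r_1,r_2))}}|f|$. The point is that this bound does not depend on any particular point of $K$, so the radii $r_1,r_2$ are entirely at our disposal and can be tuned to the series at hand.

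The main work is the first series, and it is where the only real obstacle lies, since there the exponent $k_1+k_2-q$ couples both indices to a base that sits near the edge of convergence. The key observation is that, on setting $r^2=|w|^2/(1+|w|^2)$ (so that $r\to\alpha_p$ as $w\to\eta_p$, by (\ref{alpha})), one has the identity $\frac{z_2+|w|^2z_1/t}{1+|w|^2}=(1-r^2)z_2+r^2z_1/t$ together with $\sqrt{1-r^2}=(1+|w|^2)^{-1/2}$. Lemma \ref{residup} then gives, for $z\in K$ and $(t,w)$ near $(\eta_p,\eta_p)$, the \emph{strict} bound $\bigl|\frac{z_2+|w|^2z_1/t}{1+|w|^2}\bigr|\leq(1-\varepsilon_K)^2\sqrt{1-r^2}=:\rho$. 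Writing $|t|^{k_1}\rho^{k_1+k_2-q}=\rho^{-q}(|t|\rho)^{k_1}\rho^{k_2}$ and inserting the coefficient bound, the general term is dominated by $M\rho^{-q}\bigl(\tfrac{|t|\rho}{r_1}\bigr)^{k_1}\bigl(\tfrac{\rho}{r_2}\bigr)^{k_2}$. I would then choose $r_1=(1-\varepsilon_K/2)\alpha_p$ and $r_2=(1-\varepsilon_K/2)\sqrt{1-\alpha_p^2}$, so that $r_1^2+r_2^2=(1-\varepsilon_K/2)^2<1$ is an admissible bidisc, while as $(t,w)\to(\eta_p,\eta_p)$ one has $|t|\rho\to(1-\varepsilon_K)^2\alpha_p$ and $\rho\to(1-\varepsilon_K)^2\sqrt{1-\alpha_p^2}$, whence both ratios tend to $\frac{(1-\varepsilon_K)^2}{1-\varepsilon_K/2}<1$. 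Shrinking the neighborhood of $(\eta_p,\eta_p)$ keeps both ratios below a common $\theta<1$, so the double geometric sum $\sum\theta^{k_1+k_2}$ converges; this works precisely because the strict inequality of lemma \ref{residup} survives the cancellation $\alpha_p^2+(1-\alpha_p^2)=1$, which would otherwise leave the ratios exactly at the threshold $1$.

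The remaining two series are easier because one index is bounded. For $\sum_{k_1\leq l,\,k_2\geq q-k_1}a_{k_1,k_2}t^{k_1}z_2^{k_1+k_2-q}$ there are finitely many values of $k_1$, and for each the sum over $k_2$ is geometric of ratio $|z_2|/r_2$; since $\sup_{z\in K}|z_2|<1$ by compactness, one picks $r_2$ between this supremum and $1$ (with $r_1$ small), the factor $|t|^{k_1}$ being harmlessly bounded near $\eta_p$. The third series is treated identically with the roles of $z_1,z_2$ and of $r_1,r_2$ exchanged, using $\sup_{z\in K}|z_1|<1$. Finally, since all three dominations are uniform in $t$ on a closed disc about $\eta_p$, each series converges uniformly there as a series of functions holomorphic in $t$; differentiating a term in $t$ only produces polynomial-in-$(k_1,k_2)$ factors, which are absorbed by the strict geometric decay, so every $t$-derivative series again converges absolutely (and locally uniformly, by Weierstrass' theorem, to the corresponding derivative), which is the last assertion.
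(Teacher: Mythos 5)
Your proof is correct, and it takes a recognizably different route from the paper's. For the first series the paper does not use lemma \ref{residup} at all: it bounds the base directly by Cauchy--Schwarz, $\bigl|\frac{z_2+|w|^2z_1/t}{1+|w|^2}\bigr|\leq(1-\varepsilon_K)\frac{\sqrt{1+|w|^4/|t|^2}}{1+|w|^2}=:u$, observes that the auxiliary point $(tu,u)$ lies in $\overline{\mathbb{B}_2}(0,1-\varepsilon_K^2)$ for $(t,w)$ near $(\eta_p,\eta_p)$, and then concludes with lemma \ref{cvtaylor}, i.e. finiteness of $\sum\sup_{\zeta\in\overline{\mathbb{B}_2}(0,1-\varepsilon_K^2)}|a_{k_1,k_2}\zeta_1^{k_1}\zeta_2^{k_2}|$ after a finite polydisc cover; the second and third series are likewise dominated by Taylor terms at the point $(\varepsilon_K,1-\varepsilon_K)$ of a compact polydisc, citing lemma \ref{cvtaylor} again. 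You bypass lemma \ref{cvtaylor} altogether: you work from the raw Cauchy estimate (\ref{coeff}) on a bidisc whose radii you tune to $(1-\varepsilon_K/2)\,(\alpha_p,\sqrt{1-\alpha_p^2})$, and you import the bound on the base from lemma \ref{residup} through the identity $\frac{z_2+|w|^2z_1/t}{1+|w|^2}=(1-r^2)z_2+r^2z_1/t$ with $r^2=|w|^2/(1+|w|^2)$ --- a dictionary between $w$ and the variable $r$ of section \ref{remainders} that the paper exploits in its propositions but never in this lemma. The underlying mechanism is identical in both arguments (the factor $1-\varepsilon_K$ pulls the comparison point strictly inside the ball, where $\alpha_p^2+(1-\alpha_p^2)=1$ would otherwise leave the ratios exactly at threshold $1$, as you note), so the difference is one of packaging: the paper's version is shorter because lemma \ref{cvtaylor} absorbs all the geometric-series bookkeeping and requires no tuning of radii, whereas yours is more self-contained and quantitative, exhibiting the explicit uniform ratio $(1-\varepsilon_K)^2/(1-\varepsilon_K/2)<1$, at the cost of the small checks you make or should make explicit: $r_1=(1-\varepsilon_K/2)\alpha_p>0$ since $p\geq2$, $|t|$ stays away from $0$ near $\eta_p\neq0$, and $\rho=(1-\varepsilon_K)^2\sqrt{1-r^2}$ is bounded below on the neighborhood so the prefactor $\rho^{-q}$ is harmless. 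Your closing argument for the $t$-derivatives (uniform absolute convergence plus Weierstrass, or absorption of the polynomial factors $k_1$ and $k_1+k_2-q$ into the geometric decay) supplies exactly what the paper's proof leaves implicit.
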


\medskip

\begin{proof}

Consider the first series. One has, for all
$z\in K$,
\begin{eqnarray*}
\left|
\frac{z_2+|w|^2z_1/t}{1+|w|^2}
\right|
\;\leq\;
\|z\|\,
\frac{\sqrt{1+|w|^4/|t|^2}}{1+|w|^2}
\;\leq\;
(1-\varepsilon_K)\,
\frac{\sqrt{1+|w|^4/|t|^2}}{1+|w|^2}
\,.
\\
\end{eqnarray*}
Since
$\|(t,1)\|=\sqrt{1+|t|^2}$,
one can choose a neighborhood
$W(\eta_p,\eta_p)$ such that, for all
$(t,w)\in W(\eta_p,\eta_p)$,
\begin{eqnarray*}
\frac{1-\varepsilon_K}{\sqrt{1+|\eta_p|^2}}
\;\leq\;
\frac{\sqrt{1+|w|^4/|t|^2}}{1+|w|^2}
\;\leq\;
\frac{\sqrt{1+|w|^4/|t|^2}}{1+|w|^2}
\sqrt{1+|t|^2}
\;\leq\;1+\varepsilon_K
\,.
\\
\end{eqnarray*}
In particular,
\begin{eqnarray*}
\left(
t\,
\frac{(1-\varepsilon_K)\sqrt{1+|w|^4/|t|^2}}{1+|w|^2},
\,\frac{(1-\varepsilon_K)\sqrt{1+|w|^4/|t|^2}}{1+|w|^2}
\right)
\;\in\;
\overline{\mathbb{B}_2}
\left(0,(1-\varepsilon_K^2)\right)
\,.
\\
\end{eqnarray*}
It follows that 
\begin{eqnarray*}
\sum_{k_1+k_2\geq q}
\;\sup_{z\in K,(t,w)\in W(\eta_p,\eta_p)}
\left|
a_{k_1,k_2}\,t^{k_1}
\left(
\frac{z_2+|w|^2z_1/t}{1+|w|^2}
\right)^{k_1+k_2-q}
\right|
& \leq &
\;\;\;\;\;\;\;\;\;\;\;\;\;\;\;\;\;\;\;\;\;
\end{eqnarray*}
\begin{eqnarray*}
& \leq &
\sum_{k_1+k_2\geq q}
\;\sup_{(t,w)\in W(\eta_p,\eta_p)}
\left|
a_{k_1,k_2}\,t^{k_1}
\left(
(1-\varepsilon_K)\,
\frac{\sqrt{1+|w|^4/|t|^2}}{1+|w|^2}
\right)^{k_1+k_2-q}
\right|
\\
& \leq &
\left(
\frac{\sqrt{1+|\eta_p|^2}}{(1-\varepsilon_K)^2}
\right)^q
\sum_{k_1+k_2\geq q}
\;\sup_{(t,w)\in W(\eta_p,\eta_p)}
\left|
a_{k_1,k_2}\,t^{k_1}
\left(
(1-\varepsilon_K)\,
\frac{\sqrt{1+|w|^4/|t|^2}}{1+|w|^2}
\right)^{k_1+k_2}
\right|
\\
& \leq &
\left(
\frac{\sqrt{1+|\eta_p|^2}}{(1-\varepsilon_K)^2}
\right)^q
\sum_{k_1+k_2\geq q}
\;\sup_{\zeta\in\overline{\mathbb{B}_2}(0,1-\varepsilon_K^2)}
\left|
a_{k_1,k_2}\zeta_1^{k_1}\zeta_2^{k_2}
\right|
\;\;<\;\;+\infty
\end{eqnarray*}
( $\overline{\mathbb{B}_2}\left(0,1-\varepsilon_K^2\right)$ 
can be covered by a finite number of polydiscs
in which the series is absolutely convergent,
see lemma \ref{cvtaylor}).
\bigskip

The second series
is a polynomial function with respect to
$t$. On the other hand, 
notice that
$|z_2|\leq\|z\|\leq1-\varepsilon_K$
and
$\overline{D_2}(0,(\varepsilon_K,1-\varepsilon_K))
\subset\mathbb{B}_2$. It follows that, for all
$k_1=0,\ldots,l$,
\begin{eqnarray*}
\sum_{k_2\geq q-k_1}
\sup_{z\in K}
\left|a_{k_1,k_2}z_2^{k_1+k_2-q}\right|
& \leq &
\;\;\;\;\;\;\;\;\;\;\;\;\;\;\;\;\;\;\;\;\;\;\;\;\;\;\;\;\;\;\;\;\;\;\;\;\;\;\;\;\;\;\;\;\;\;\;\;\;\;\;\;\;\;\;
\;\;\;\;\;\;\;
\end{eqnarray*}
\begin{eqnarray*}
\;\;\;\;\;\;\;
& \leq &
\frac{1}{\varepsilon_K^{k_1}(1-\varepsilon_K)^{q-k_1}}
\sum_{k_2\geq q-k_1}
\left|
a_{k_1,k_2}
\varepsilon_K^{k_1}(1-\varepsilon_K)^{k_2}
\right|
\\
& \leq &
\frac{1}{\varepsilon_K^{k_1}(1-\varepsilon_K)^{q-k_1}}
\sum_{u_1+u_2\geq q}
\sup_{\zeta\in\overline{D_2}(0,(\varepsilon_K,1-\varepsilon_K))}
\left|
a_{u_1,u_2}
\zeta_1^{u_1}\zeta_2^{u_2}
\right|
\\
& < &
+\infty\,,
\\
\end{eqnarray*}
as well as
$\sum_{k_1=0}^l
\sum_{k_2\geq q-k_1}
\sup_{z\in K,\,t\in W(\eta_p)}
\left|
t^{k_1}
a_{k_1,k_2}z_2^{k_1+k_2-q}
\right|$.
\bigskip

The proof for the last series is similar.

\end{proof}

\bigskip

It follows that
\begin{eqnarray*}
f(z)
& = &
\sum_{u_1=0}^{m_1-1}z_1^{u_1}
z_2^{m_2+\cdots+m_n}
\prod_{j=2}^{n-1}(z_1/z_2-\eta_j)^{m_j}
\sum_{p=2}^{n-1}
\frac{1}{(m_p-1)!}
\frac{\partial^{m_p-1}}{\partial t^{m_p-1}}|_{t=\eta_p}
\;\frac{1}{\prod_{j=2,j\neq p}^{n-1}(t-\eta_j)^{m_j}}
\times
\\
& \times &
\{\;
\frac{1}{1+|\eta_p|^2}
\sum_{k_1+k_2\geq N_{u_1}}
a_{k_1,k_2}t^{k_1-u_1-1}
\left(
\frac{z_2+|\eta_p|^2z_1/t}{1+|\eta_p|^2}
\right)^{k_1+k_2-N_{u_1}}
\\
& &
\;\;-\;
\sum_{k_1\leq u_1,\,k_2\geq N_{u_1}-k_1}
a_{k_1,k_2}t^{k_1-u_1-1}
z_2^{k_1+k_2-N_{u_1}}
\;\}
\\
\end{eqnarray*}
\begin{eqnarray*}
& + & 
\sum_{p=2}^{n-1}\sum_{u_p=0}^{m_p-1}
z_2^{N_{u_p}}
(z_1/z_2-\eta_p)^{u_p+1}
\prod_{j=p+1}^{n-1}(z_1/z_2-\eta_j)^{m_j}
\;\frac{1}{z_1/z_2-\eta_p}
\;\times
\\
& \times\{ &
\frac{1}{u_p!}
\frac{\partial^{u_p}}{\partial t^{u_p}}|_{t=\eta_p}
\,
\frac{\frac{1+|\eta_p|^2\eta_p/t}{1+|\eta_p|^2}}
{\prod_{j=p+1}^{n-1}(t-\eta_j)^{m_j}}
\sum_{k_1+k_2\geq N_{u_p}}
a_{k_1,k_2}t^{k_1}
\left(
\frac{z_2+|\eta_p|^2z_1/t}{1+|\eta_p|^2}
\right)^{k_1+k_2-N_{u_p}}
\\
& &
+
\sum_{q=p+1}^{n-1}
\frac{1}{(m_q-1)!}
\frac{\partial^{m_q-1}}{\partial t^{m_q-1}}|_{t=\eta_q}
\;
\frac{\frac{1+|\eta_q|^2\eta_p/t}{1+|\eta_p|^2}}
{(t-\eta_p)^{u_p+1}\prod_{j=p+1,j\neq q}^{n-1}(t-\eta_j)^{m_j}}
\;\times
\\
& & 
\;\;\times
\sum_{k_1+k_2\geq N_{u_p}}
a_{k_1,k_2}t^{k_1}
\left(
\frac{z_2+|\eta_q|^2z_1/t}{1+|\eta_q|^2}
\right)^{k_1+k_2-N_{u_p}}
\end{eqnarray*}
\begin{eqnarray*}
& & 
-\;\{\;
\frac{1}{u_p!}
\frac{\partial^{u_p}}{\partial t^{u_p}}|_{t=\eta_p}
\{
\frac{\eta_p
\sum_{k_2\leq m_n-1,k_1\geq N_{u_p}-k_2}
a_{k_1,k_2}t^{N_{u_p}-1-k_2}
z_1^{k_1+k_2-N_{u_p}}}
{\prod_{j=p+1}^{n-1}(t-\eta_j)^{m_j}}
\\
& &
+
\sum_{q=p+1}^{n-1}
\frac{1}{(m_q-1)!}
\frac{\partial^{m_q-1}}{\partial t^{m_q-1}}|_{t=\eta_q}
\,
\frac{\eta_p
\sum_{k_2\leq m_n-1,k_1\geq N_{u_p}-k_2}
a_{k_1,k_2}t^{N_{u_p}-1-k_2}
z_1^{k_1+k_2-N_{u_p}}}
{(t-\eta_p)^{u_p+1}\prod_{j=p+1,j\neq q}^{n-1}(t-\eta_j)^{m_j}}
\;\}\;\;\}
\\
\end{eqnarray*}
\begin{eqnarray*}
& + &
\sum_{u_n=0}^{m_n-1}
z_2^{u_n}
\sum_{k_1\geq0}
a_{k_1,u_n}z_1^{k_1}
\;+\;
\sum_{k_1+k_2\geq N,\,k_1\geq m_1,k_2\geq m_n}
a_{k_1,k_2}\,z_1^{k_1}\,z_2^{k_2}
\;\;\;\;\;\;\;\;\;\;
\\
\end{eqnarray*}
\begin{eqnarray*}
& - & 
\sum_{p=2}^{n-1}
\prod_{j=2,j\neq p}^{n-1}(z_1/z_2-\eta_j)^{m_j}
\sum_{s=0}^{m_p-1}(z_1/z_2-\eta_p)^s
\\
& \times &
\frac{1}{s!}\frac{\partial^s}{\partial t^s}|_{t=\eta_p}
\left[\frac{1}{\prod_{j=2,j\neq p}^{n-1}(t-\eta_j)^{m_j}}
(z_1/z_2)^{m_1}z_2^{N-1}
\sum_{k_1+k_2\geq N}
a_{k_1,k_2}\,t^{k_1-m_1}
\left(\frac{z_2+|\eta_p|^2z_1/t}{1+|\eta_p|^2}\right)^{k_1+k_2-N+1}\right]
\\
\end{eqnarray*}
\begin{eqnarray*}
& + & 
\sum_{p=2}^{n-1}
\prod_{j=2,j\neq p}^{n-1}(z_1/z_2-\eta_j)^{m_j}
\sum_{s=0}^{m_p-1}(z_1/z_2-\eta_p)^s
\\
& \times &
\frac{1}{s!}\frac{\partial^s}{\partial t^s}|_{t=\eta_p}
\left[\frac{1}{\prod_{j=2,j\neq p}^{n-1}(t-\eta_j)^{m_j}}
(z_1/z_2)^{m_1}
\sum_{k_1\leq m_1-1,k_2\geq N-k_1}
a_{k_1,k_2}\,t^{k_1-m_1}z_2^{k_1+k_2}\right]
\\
\end{eqnarray*}
\begin{eqnarray*}
& + & 
(z_1/z_2)^{m_1}z_2^{N-1}
\sum_{p=2}^{n-1}
\prod_{j=2,j\neq p}^{n-1}(z_1/z_2-\eta_j)^{m_j}
\sum_{s=0}^{m_p-1}(z_1/z_2-\eta_p)^s
\\
& \times &
\frac{1}{s!}\frac{\partial^s}{\partial t^s}|_{t=\eta_p}
\left[\frac{1}{\prod_{j=2,j\neq p}^{n-1}(t-\eta_j)^{m_j}}
\sum_{k_2\leq m_n-1,k_1\geq N-k_2}
a_{k_1,k_2}\,t^{N-m_1-1-k_2}z_1^{k_1+k_2-N+1}\right]
\\
\end{eqnarray*}
\begin{eqnarray*}
& = &
\sum_{u_1=0}^{m_1-1}
(z_1/z_2)^{u_1}z_2^{N_{u_1}}
\;\times
\\
& \times\;\{ &
\mathcal{L}
\left(
\eta_2^{m_2},\ldots,\eta_{n-1}^{m_{n-1}};
\sum_{k_1+k_2\geq N_{u_1}}
\frac{a_{k_1,k_2}t^{k_1-u_1-1}}{1+|w|^2}
\left(
\frac{z_2+|w|^2z_1/t}{1+|w|^2}
\right)^{k_1+k_2-N_{u_1}}
\right)(z_1/z_2)
\end{eqnarray*}
\begin{eqnarray*}
& &
-\;
\mathcal{L}
\left(
\eta_2^{m_2},\ldots,\eta_{n-1}^{m_{n-1}};
\sum_{k_1\leq u_1,\,k_2\geq N_{u_1}-k_1}
a_{k_1,k_2}t^{k_1-u_1-1}
z_2^{k_1+k_2-N_{u_1}}
\right)(z_1/z_2)
\;\;\}
\end{eqnarray*}
\begin{eqnarray*}
+\;
\sum_{p=2}^{n-1}\sum_{u_p=0}^{m_p-1}
z_2^{N_{u_p}}
\;\times
\;\;\;\;\;\;\;\;\;\;\;\;\;\;\;\;\;\;\;\;\;\;\;\;\;\;\;\;\;\;\;\;\;\;\;\;\;\;\;\;\;\;\;\;\;\;\;\;\;\;\;\;\;\;\;\;\;\;\;\;
\;\;\;\;\;\;\;\;\;\;\;\;\;\;\;\;\;\;\;\;\;\;\;\;\;\;
\end{eqnarray*}
\begin{eqnarray*}
\times\;\{\;
\mathcal{L}
\left(
\eta_p^{u_p+1},\ldots,\eta_{n-1}^{m_{n-1}};
\frac{1+|w|^2\eta_p/t}{1+|w|^2}
\sum_{k_1+k_2\geq N_{u_p}}
\frac{a_{k_1,k_2}t^{k_1}}{z_1/z_2-\eta_p}
\left(
\frac{z_2+|w|^2z_1/t}{1+|w|^2}
\right)^{k_1+k_2-N_{u_p}}
\right)(z_1/z_2)
\end{eqnarray*}
\begin{eqnarray*}
& - &
\mathcal{L}
\left(
\eta_p^{u_p+1},\ldots,\eta_{n-1}^{m_{n-1}};
\sum_{k_2\leq m_n-1,k_1\geq N_{u_p}-k_2}
\frac{\eta_pa_{k_1,k_2}t^{N_{u_p}-1-k_2}
z_1^{k_1+k_2-N_{u_p}}}
{z_1/z_2-\eta_p}
\right)(z_1/z_2)
\;\}
\\
\end{eqnarray*}
\begin{eqnarray*}
& + &
\mathcal{L}
\left(
0^{m_n};
\frac{f(z_1,t)}{z_2-t}
\right)(z_2)
\;+\;
\sum_{k_1+k_2\geq N,\,k_1\geq m_1,k_2\geq m_n}
a_{k_1,k_2}\,z_1^{k_1}\,z_2^{k_2}
\\
& - &
\mathcal{L}
\left(
\eta_2^{m_2},\ldots,\eta_{n-1}^{m_{n-1}};
\frac{z_2^{N-1}(z_1/z_2)^{m_1}}{z_1/z_2-t}
\sum_{k_1+k_2\geq N}
a_{k_1,k_2}\,t^{k_1-m_1}
\left(\frac{z_2+|w|^2z_1/t}{1+|w|^2}\right)^{k_1+k_2-N+1}
\right)
(z_1/z_2)
\\
& + &
\mathcal{L}
\left(
\eta_2^{m_2},\ldots,\eta_{n-1}^{m_{n-1}};
\frac{(z_1/z_2)^{m_1}}{z_1/z_2-t}
\sum_{k_1\leq m_1-1,k_2\geq N-k_1}
a_{k_1,k_2}\,t^{k_1-m_1}z_2^{k_1+k_2}
\right)
(z_1/z_2)
\\
& + &
\mathcal{L}
\left(
\eta_2^{m_2},\ldots,\eta_{n-1}^{m_{n-1}};
\frac{(z_1/z_2)^{m_1}z_2^{N-1}}{z_1/z_2-t}
\sum_{k_2\leq m_n-1,k_1\geq N-k_2}
a_{k_1,k_2}\,t^{N-m_1-1-k_2}z_1^{k_1+k_2-N+1}
\right)
(z_1/z_2)
\,.
\\
\end{eqnarray*}
The proof of the theorem will be complete with the following result.

\end{proof}

\bigskip

\begin{lemma}\label{explicit}

Assume that, for all
$p=1,\ldots,n-1$ (resp. $p=n$) and
$u_p=0,\ldots,m_p-1$ (resp.
$u_n=0,\ldots,m_n-1$),
we know
$\left(
\dfrac{\partial^{u_p}f}{\partial z_1^{u_p}}
\right)|_{\{z_1=\eta_pz_2\}}$
(resp.
$\left(
\dfrac{\partial^{u_n}f}{\partial z_2^{u_n}}
\right)|_{\{z_2=0\}}$ ).
\medskip

Then, for all $z\in\mathbb{B}_2$,
\begin{eqnarray}
f(z)
& - &
\sum_{k_1+k_2\geq N,\,k_1\geq m_1,\,k_2\geq m_n}
a_{k_1,\,k_2}
\,z_1^{k_1}\,z_2^{k_2}
\\\nonumber
\end{eqnarray}
can be known as an explicit formula constructed
from this data.

\end{lemma}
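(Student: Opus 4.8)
The plan is to read off the conclusion from the identity obtained at the very end of the proof of Theorem~\ref{theorem}, which already exhibits $f(z)-\sum_{k_1+k_2\geq N,\,k_1\geq m_1,\,k_2\geq m_n}a_{k_1,k_2}z_1^{k_1}z_2^{k_2}$ as the superposition $\mathcal{G}(\eta_1^{m_1},\ldots,\eta_n^{m_n};f)(z)$ of Lagrange operators $\mathcal{L}$ applied to the series built from the functions $R^0,\,R^1,\,R^2$. By Lemma~\ref{serie} every one of these series converges absolutely and uniformly on compact subsets of $\mathbb{B}_2$, so that the only thing to check is that each series, once the variables $t$ and $w$ are specialized to the nodes $\eta_q$ dictated by $\mathcal{L}$, becomes a concrete function of the prescribed data. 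In the degenerate case $m_2=\cdots=m_{n-1}=0$ this is already displayed in the opening lines of the proof of Theorem~\ref{theorem}, the formula being a combination of $\partial^{k_1}f/\partial z_1^{k_1}(0,z_2)$, $\partial^{k_2}f/\partial z_2^{k_2}(z_1,0)$ and their mixed values at the origin.

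The mechanism is the parametrization of each complex line. Along an interior line $\{z_1=\eta_q z_2\}$, $q=2,\ldots,n-1$, writing $(z_1,z_2)=(\eta_q v,v)$ gives
\[
\frac{1}{l!}\frac{\partial^l}{\partial v^l}|_{v=0}f(\eta_q v,v)=\sum_{k_1+k_2=l}a_{k_1,k_2}\,\eta_q^{k_1},
\]
so that after setting $t=w=\eta_q$ the full series carried by $R^0_{u_p}$ is exactly a series in the line-Taylor coefficients of $f$ along $\{z_1=\eta_q z_2\}$, in complete analogy with the single-line expression (\ref{singlintro}). The higher $t$-derivatives $\partial^{m_q-1}/\partial t^{m_q-1}|_{t=\eta_q}$ imposed by $\mathcal{L}$ reassemble, after summation over $(k_1,k_2)$, into the transverse derivatives $\left(\partial^{u}f/\partial z_1^{u}\right)|_{\{z_1=\eta_q z_2\}}$, $u\leq m_q-1$, since differentiating $u$ times in $z_1$ and restricting to the line produces precisely the combination $\sum_{k_1\geq u}\frac{k_1!}{(k_1-u)!}a_{k_1,k_2}\eta_q^{k_1-u}$ that these derivatives encode. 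Hence the $R^0$-part of $\mathcal{G}$ is read entirely off the data on the interior lines.

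The remaining blocks correspond to the two coordinate lines. The series in $R^1_{u_1}$ is restricted to $k_1\leq u_1\leq m_1-1$, hence is a combination of the coefficients $a_{k_1,k_2}$ with $k_1<m_1$, i.e. of the data $\left(\partial^{k_1}f/\partial z_1^{k_1}\right)|_{\{z_1=0\}}=\partial^{k_1}f/\partial z_1^{k_1}(0,z_2)$ attached to the node $\eta_1=0$; likewise the series in $R^2_{u_p}$ is restricted to $k_2\leq m_n-1$ and the term $\mathcal{L}\left(0^{m_n};f(z_1,t)/(z_2-t)\right)(z_2)$ involves only $f(z_1,0),\ldots,\partial^{m_n-1}f/\partial z_2^{m_n-1}(z_1,0)$, so both depend solely on the data on $\{z_2=0\}$. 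Substituting these identifications into every $\mathcal{L}$-block of (\ref{formula}) then displays $f(z)-\sum_{k_1+k_2\geq N,\,k_1\geq m_1,\,k_2\geq m_n}a_{k_1,k_2}z_1^{k_1}z_2^{k_2}$ as an explicit expression in the prescribed restrictions, which is the assertion.

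The delicate point I expect is the bookkeeping of the $t$-derivatives demanded by $\mathcal{L}$ when the inner factor $\left(\frac{z_2+|w|^2z_1/t}{1+|w|^2}\right)$ itself depends on $t$: one must verify that, after summing the series, each $\partial^{m_q-1}/\partial t^{m_q-1}|_{t=\eta_q}$ recombines exactly into the transverse $z_1$-derivatives along the line, the multinomial weights $\frac{(v_j+m_j-1)!}{v_j!\,(m_j-1)!}$ of Lemma~\ref{combinatoire} matching on both sides. This is the same type of identity already exploited through the Euclidean-division formula of Lemma~\ref{lagrangeuclide}, so no new idea is required beyond careful accounting.
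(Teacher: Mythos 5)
Your proposal is correct and takes essentially the same route as the paper: the paper also reads the claim off the final identity in the proof of Theorem \ref{theorem}, uses the line parametrization $\frac{1}{l!}\frac{\partial^l}{\partial v^l}|_{v=0}f(tv,v)=\sum_{k_1+k_2=l}a_{k_1,k_2}t^{k_1}$ together with Lemma \ref{serie} for the $R^0$ blocks, and identifies the $R^1$, $R^2$ and $\mathcal{L}(0^{m_n};\cdot)$ blocks with the data on the coordinate lines via the coefficients $a_{k_1,k_2}$ with $k_1\leq m_1-1$ or $k_2\leq m_n-1$. The ``delicate point'' you flag is handled in the paper exactly as you suggest, through the derivative-exchange identity $\frac{1}{s!}\frac{\partial^s}{\partial t^s}|_{t=\eta_q}\bigl[\frac{1}{l!}\frac{\partial^l}{\partial v^l}|_{v=0}f(tv,v)\bigr]=\frac{1}{l!}\frac{\partial^l}{\partial v^l}|_{v=0}\bigl[\frac{v^s}{s!}\bigl(\frac{\partial^s f}{\partial z_1^s}\bigr)(\eta_q v,v)\bigr]$, which is precisely your ``transverse derivative'' recombination made explicit.
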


\bigskip

\begin{proof}

First, we have seen that
\begin{eqnarray*}
\mathcal{L}
\left(
0^{m_n};
\frac{f(z_1,t)}{z_2-t}
\right)(z_2)
& = &
\sum_{u_n=0}^{m_n-1}
\frac{z_2^{u_n}}{u_n!}
\left(
\frac{\partial^{u_n}f}{\partial z_2^{u_n}}
\right)(z_1,0)
\,.
\\
\end{eqnarray*}

\bigskip

Next, we have
\begin{eqnarray*}
f(tv,v)
\;=\;
\sum_{k_1,k_2\geq0}
a_{k_1,k_2}
t^{k_1}v^{k_1+k_2}
\;=\;
\sum_{l\geq0}v^l
\sum_{k_1+k_2=l}a_{k_1,k_2}t^{k_1}
\\
\end{eqnarray*}
that is also as an analytic function with respect to $v$
\begin{eqnarray*}
f(tv,v)
& = &
\sum_{l\geq0}
\frac{v^l}{l!}
\frac{\partial^l}{\partial v^l}|_{v=0}
\left[
f(tv,v)
\right]
\,.
\end{eqnarray*}
By the uniqueness of the coefficients we get,
for all $v\geq0$,
\begin{eqnarray*}
\sum_{k_1+k_2=l}a_{k_1,k_2}t^{k_1}
& = &
\frac{1}{l!}
\frac{\partial^l}{\partial v^l}|_{v=0}
\left[
f(tv,v)
\right]\,,
\\\nonumber
\end{eqnarray*}
then
\begin{eqnarray*}
\sum_{k_1+k_2\geq N_{u_p}}
a_{k_1,k_2}\,t^{k_1}
\left(\frac{z_2+|w|^2z_1/t}{1+|w|^2}\right)^{k_1+k_2-N_{u_p}}
& = &
\;\;\;\;\;\;\;\;\;\;\;\;\;\;\;\;\;\;\;\;\;\;\;\;\;\;\;\;\;\;\;\;\;\;\;\;\;\;
\end{eqnarray*}
\begin{eqnarray*}
& = &
\sum_{l\geq N_{u_p}}
\left(\frac{z_2+|w|^2z_1/t}{1+|w|^2}\right)^{l-N_{u_p}}
\sum_{k_1+k_2=l}
a_{k_1,k_2}\,t^{k_1}
\\
& = &
\sum_{l\geq N_{u_p}}
\left(\frac{z_2+|w|^2z_1/t}{1+|w|^2}\right)^{l-N_{u_p}}
\frac{1}{l!}
\frac{\partial^l}{\partial v^l}|_{v=0}
\left[
f(tv,v)
\right]\,.
\end{eqnarray*}
Now notice that, for all
$q=2,\ldots,n-1$, for all
$s=0,\ldots,m_q-1$ and all $l\geq0$,
the following derivative is known:
\begin{eqnarray*}
\frac{1}{s!}
\frac{\partial^s}{\partial t^s}|_{t=\eta_q}
\left[
\frac{1}{l!}
\frac{\partial^l}{\partial v^l}|_{v=0}
\left(
f(tv,v)
\right)
\right]
& = &
\frac{1}{l!}
\frac{\partial^l}{\partial v^l}|_{v=0}
\left[
\frac{v^s}{s!}
\left(
\frac{\partial^sf}{\partial z_1^s}
\right)
(\eta_qv,v)
\right]
\,.
\\
\end{eqnarray*}
Indeed, for all
$v$ close to $0$,
$(\eta_qv,v)\in\{z\in\mathbb{B}_2,\;z_1-\eta_qz_2=0\}$,
then we know all the
$\left(
\dfrac{\partial^sf}{\partial z_1^s}
\right)
(\eta_qv,v),\,
q=2,\ldots,n-1,\,s=0,\ldots,m_q-1$.
It follows by lemma \ref{serie} that,
for all $z\in \mathbb{B}_2$,
we know 
\begin{eqnarray*}
\frac{1}{s!}
\frac{\partial^s}{\partial t^s}|_{t=\eta_q}
\left[
\sum_{k_1+k_2\geq N_{u_p}}
a_{k_1,k_2}\,t^{k_1}
\left(\frac{z_2+|w|^2z_1/t}{1+|w|^2}\right)^{k_1+k_2-N_{u_p}}
\right]\,,
\end{eqnarray*}
as well as,
for all
$u_1=0,\ldots,m_1-1$,
\begin{eqnarray*}
\mathcal{L}
\left(
\eta_2^{m_2},\ldots,\eta_{n-1}^{m_{n-1}};
\sum_{k_1+k_2\geq N_{u_1}}
\frac{a_{k_1,k_2}t^{k_1-u_1-1}}{1+|w|^2}
\left(
\frac{z_2+|w|^2z_1/t}{1+|w|^2}
\right)^{k_1+k_2-N_{u_1}}
\right)(z_1/z_2)
\,,
\end{eqnarray*}
for all
$p=2,\ldots,n-1$ and $u_p=0,\ldots,m_p-1$,
\begin{eqnarray*}
\mathcal{L}
\left(
\eta_p^{u_p+1},\ldots,\eta_{n-1}^{m_{n-1}};
\frac{1+|w|^2\eta_p/t}{1+|w|^2}
\sum_{k_1+k_2\geq N_{u_p}}
a_{k_1,k_2}t^{k_1}
\left(
\frac{z_2+|w|^2z_1/t}{1+|w|^2}
\right)^{k_1+k_2-N_{u_p}}
\right)(z_1/z_2)
\end{eqnarray*}
and
\begin{eqnarray*}
\mathcal{L}
\left(
\eta_2^{m_2},\ldots,\eta_{n-1}^{m_{n-1}};
\sum_{k_1+k_2\geq N}
\frac{a_{k_1,k_2}\,t^{k_1-m_1}}{z_1/z_2-t}
\left(\frac{z_2+|w|^2z_1/t}{1+|w|^2}\right)^{k_1+k_2-N+1}
\right)
(z_1/z_2)
\,.
\\
\end{eqnarray*}

\bigskip

Similarly, for all
$u_1=0,\ldots,m_1-1$, we know
$\left(
\dfrac{\partial^{u_1}f}{\partial z_1^{u_1}}
\right)(0,z_2)$
then for all
$l\geq0$ we know
\begin{eqnarray*}
\frac{1}{l!}
\frac{\partial^l}{\partial z_2^l}|_{z_2=0}
\left[
\frac{1}{u_1!}
\left(
\frac{\partial^{u_1}f}{\partial z_1^{u_1}}
\right)(0,z_2)
\right]
\;=\;
\frac{1}{u_1!\,l!}
\left(
\frac{\partial^{u_1+l}f}{\partial z_1^{u_1}\partial z_2^l}
\right)(0)
\;=\;
a_{u_1,l}\,.
\\
\end{eqnarray*}
It follows that we know,
for all $u_1=0,\ldots,m_1-1$,
\begin{eqnarray*}
\mathcal{L}
\left(
\eta_2^{m_2},\ldots,\eta_{n-1}^{m_{n-1}};
\sum_{k_1\leq u_1,\,k_2\geq N_{u_1}-k_1}
a_{k_1,k_2}t^{k_1-u_1-1}
z_2^{k_1+k_2-N_{u_1}}
\right)(z_1/z_2)
\,,
\end{eqnarray*}
as well as
\begin{eqnarray*}
\mathcal{L}
\left(
\eta_2^{m_2},\ldots,\eta_{n-1}^{m_{n-1}};
\frac{1}{z_1/z_2-t}
\sum_{k_1\leq m_1-1,k_2\geq N-k_1}
a_{k_1,k_2}\,t^{k_1-m_1}z_2^{k_1+k_2}
\right)
(z_1/z_2)
\,.
\end{eqnarray*}

\bigskip

Lastly, for all
$u_n=0,\ldots,m_n-1$,
we know
$\left(
\dfrac{\partial^{u_n}f}{\partial z_2^{u_n}}
\right)(z_1,0)$,
as well as for all $l\geq0$
\begin{eqnarray*}
\frac{1}{l!}
\frac{\partial^l}{\partial z_1^l}|_{z_1=0}
\left[
\frac{1}{u_n!}
\left(
\frac{\partial^{u_n}f}{\partial z_2^{u_n}}
\right)(z_1,0)
\right]
& = &
a_{l,u_n}
\,.
\\
\end{eqnarray*}
It follows that we know, for all
$p=2,\ldots,n-1$ and $u_p=0,\ldots,m_p-1$,
\begin{eqnarray*}
\mathcal{L}
\left(
\eta_p^{u_p+1},\ldots,\eta_{n-1}^{m_{n-1}};
\frac{1+|w|^2\eta_p/t}{1+|w|^2}
\sum_{k_1+k_2\geq N_{u_p}}
a_{k_1,k_2}t^{k_1}
\left(
\frac{z_2+|w|^2z_1/t}{1+|w|^2}
\right)^{k_1+k_2-N_{u_p}}
\right)(z_1/z_2)
\,,
\end{eqnarray*}
as well as
\begin{eqnarray*}
\mathcal{L}
\left(
\eta_2^{m_2},\ldots,\eta_{n-1}^{m_{n-1}};
\frac{1}{z_1/z_2-t}
\sum_{k_2\leq m_n-1,k_1\geq N-k_2}
a_{k_1,k_2}\,t^{N-m_1-1-k_2}z_1^{k_1+k_2-N+1}
\right)
(z_1/z_2)
\\
\end{eqnarray*}
and the lemma is proved.

\end{proof}

\bigskip

One can specify
$\mathcal{G}\left(\eta_1^{m_1},\ldots,\eta_n^{m_n};f\right)$
in the special case with
$m_2=\cdots=m_{n-1}=1$ and 
$m_1=m_n=0$. Then
$N=n-2$, for all
$p=2,\ldots,n-1$,
$N_{u_p}=n-p-1=N-p+1$
and
\begin{eqnarray}\label{single}
\mathcal{G}(\eta_2,\ldots,\eta_{n-1};f)(z)
& = &
\;\;\;\;\;\;\;\;\;\;\;\;\;\;\;\;\;\;\;\;\;\;\;\;\;\;\;\;\;\;\;\;\;\;\;\;\;\;\;\;\;\;\;\;\;\;\;\;\;\;\;\;\;\;\;\;\;\;\;\;
\end{eqnarray}
\begin{eqnarray*}
& = &
\sum_{p=2}^{n-1}
\prod_{j=p+1}^{n-1}
(z_1-\eta_jz_2)
\sum_{q=p}^{n-1}
\frac{1+\eta_p\overline{\eta_q}}{1+|\eta_q|^2}
\frac{1}{\prod_{j=p,j\neq q}^{n-1}(\eta_q-\eta_j)}
\;\times
\\
& &
\times\;
\sum_{k_1+k_2\geq N-p+1}
a_{k_1,k_2}\eta_q^{k_1}
\left(
\frac{z_2+\overline{\eta_q}z_1}{1+|\eta_q|^2}
\right)^{k_1+k_2-(N-p+1)}
\\
& - &
\sum_{p=2}^{n-1}
\prod_{j=2,j\neq p}^{n-1}
\frac{z_1-\eta_jz_2}{\eta_p-\eta_j}
\sum_{k_1+k_2\geq N}
a_{k_1,k_2}\eta_p^{k_1}
\left(
\frac{z_2+\overline{\eta_p}z_1}{1+|\eta_p|^2}
\right)^{k_1+k_2-N+1}
\end{eqnarray*}
\begin{eqnarray*}
& = &
\sum_{p=2}^{n-1}
\prod_{j=p+1}^{n-1}
(z_1-\eta_jz_2)
\sum_{q=p}^{n-1}
\frac{1+\eta_p\overline{\eta_q}}{1+|\eta_q|^2}
\frac{1}{\prod_{j=p,j\neq q}^{n-1}(\eta_q-\eta_j)}
\;\times
\\
& &
\times\;
\sum_{l\geq N-p+1}
\left(
\frac{z_2+\overline{\eta_q}z_1}{1+|\eta_q|^2}
\right)^{l-(N-p+1)}
\frac{1}{l!}
\frac{\partial^l}{\partial v^l}|_{v=0}
[f(\eta_qv,v)]
\\
& - &
\sum_{p=2}^{n-1}
\prod_{j=2,j\neq p}^{n-1}
\frac{z_1-\eta_jz_2}{\eta_p-\eta_j}
\sum_{l\geq N}
\left(
\frac{z_2+\overline{\eta_p}z_1}{1+|\eta_p|^2}
\right)^{l-N+1}
\frac{1}{l!}
\frac{\partial^l}{\partial v^l}|_{v=0}
[f(\eta_pv,v)]
\,.
\\
\end{eqnarray*}

\bigskip

We finish with the last result where
we specify the precision for
the approximation of $f$ by
$\mathcal{G}\left(\eta_1^{m_1},\ldots,\eta_n^{m_n};f\right)$
when
$N\rightarrow+\infty$.

\begin{corollary}\label{vitessecv}

For all compact subset
$K\subset\mathbb{B}_2$, we have
\begin{eqnarray}
\sup_{z\in K}
\left|
f(z)-\mathcal{G}
\left(
\eta_1^{m_1},\ldots,\eta_n^{m_n};f
\right)(z)
\right|
& \leq &
C(K,f)\left(1-\varepsilon_K\right)^N
\;,
\\\nonumber
\end{eqnarray}
where
$C(K,f)=C_K\,\sup_{\zeta\in K'}|f(\zeta)|$, 
$K'\supset K$
and $\varepsilon_K,\,C_K$
depend on $K$.
\medskip

In particular, if
$\mathcal{F}\subset\mathcal{O}(\mathbb{B}_2)$
is a compact subset (i.e. a subset of 
holomorphic functions that is uniformly bounded
on all compact subset of
$\mathbb{B}_2$), then
\begin{eqnarray}
\sup_{f\in\mathcal{F}}
\,\sup_{z\in K}
\left|
f(z)-\mathcal{G}
\left(
\eta_1^{m_1},\ldots,\eta_n^{m_n};f
\right)(z)
\right|
& \leq &
C(K,\mathcal{F})(1-\varepsilon_K)^N
\,.
\\\nonumber
\end{eqnarray}

\end{corollary}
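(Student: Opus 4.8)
The plan is to read the error off Theorem \ref{theorem} and then estimate a tail of the Taylor series. By that theorem,
\[
f(z)-\mathcal{G}\left(\eta_1^{m_1},\ldots,\eta_n^{m_n};f\right)(z)=\sum_{k_1+k_2\geq N,\,k_1\geq m_1,\,k_2\geq m_n}a_{k_1,k_2}\,z_1^{k_1}z_2^{k_2},
\]
so by the triangle inequality (and dropping the two extra constraints, which only enlarges the sum) it suffices to bound $\sum_{k_1+k_2\geq N}|a_{k_1,k_2}|\,|z_1|^{k_1}|z_2|^{k_2}$ uniformly for $z\in K$. I would first fix $\varepsilon_K>0$ with $\|z\|\leq1-\varepsilon_K$ on $K$, choose $\rho$ with $1-\varepsilon_K<\rho<1$, and set $K':=\overline{\mathbb{B}_2}(0,\rho)\supset K$ and $M:=\sup_{\zeta\in K'}|f(\zeta)|$.

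The key step is the coefficient estimate, and here a single fixed bidisc will \emph{not} work: a compact $K\subset\mathbb{B}_2$ need not lie in any polydisc $\{|z_1|\le s_1,\,|z_2|\le s_2\}$ with $s_1^2+s_2^2<1$, so a fixed torus cannot yield a geometric rate. Instead I would estimate each (fixed) coefficient on a torus adapted to the point $z$. For $z\in K$ with $z_1,z_2\neq0$, set $\sigma_1:=\rho|z_1|/\|z\|$ and $\sigma_2:=\rho|z_2|/\|z\|$, so that $\sigma_1^2+\sigma_2^2=\rho^2<1$ and the closed bidisc of radii $(\sigma_1,\sigma_2)$ sits inside $K'$. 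Applying the Cauchy estimate (\ref{coeff}) of lemma \ref{cvtaylor} with these radii gives $|a_{k_1,k_2}|\le M/(\sigma_1^{k_1}\sigma_2^{k_2})$, hence
\[
|a_{k_1,k_2}|\,|z_1|^{k_1}|z_2|^{k_2}\le M\left(\frac{|z_1|}{\sigma_1}\right)^{k_1}\left(\frac{|z_2|}{\sigma_2}\right)^{k_2}=M\left(\frac{\|z\|}{\rho}\right)^{k_1+k_2},
\]
the two ratios $|z_j|/\sigma_j$ collapsing to the common value $\|z\|/\rho$. The degenerate cases $z_1=0$ or $z_2=0$ reduce to a one–variable Cauchy estimate, since then the monomials vanish unless $k_1=0$ or $k_2=0$.

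Summing over $l=k_1+k_2$, and writing $\theta:=(1-\varepsilon_K)/\rho<1$, the number of $(k_1,k_2)$ with $k_1+k_2=l$ being $l+1$, I would get
\[
\sum_{k_1+k_2\geq N}|a_{k_1,k_2}|\,|z_1|^{k_1}|z_2|^{k_2}\le M\sum_{l\ge N}(l+1)\theta^l\le C_K\,M\,(1-\varepsilon_K)^N,
\]
the polynomial factor $l+1$ being absorbed by slightly enlarging $\theta$ to a new rate $1-\varepsilon_K\in(\theta,1)$ (which fixes the final meaning of $\varepsilon_K$). This is the first inequality, with $C(K,f)=C_K\,M=C_K\sup_{K'}|f|$ and $C_K,\varepsilon_K$ depending only on $K$. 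For the family statement, if $\mathcal{F}$ is compact in $\mathcal{O}(\mathbb{B}_2)$ then it is uniformly bounded on the compact set $K'$, so $\sup_{f\in\mathcal{F}}\sup_{K'}|f|=:C(\mathcal{F},K')<\infty$; since $C_K$ and $\varepsilon_K$ do not depend on $f$, the second inequality follows with $C(K,\mathcal{F}):=C_K\,C(\mathcal{F},K')$. The one genuine subtlety — the point I would be most careful about — is precisely the $z$-dependent choice of torus, which is what converts the ball condition $\|z\|\le1-\varepsilon_K$ into clean geometric decay in $N$; the remaining steps are routine summation.
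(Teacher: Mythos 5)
Your proof is correct, and it shares the paper's skeleton: invoke Theorem \ref{theorem} to identify the error with the Taylor tail over $k_1+k_2\geq N$, bound each term $|a_{k_1,k_2}z_1^{k_1}z_2^{k_2}|$ by a Cauchy estimate, sum the resulting geometric tail, and absorb the combinatorial factor $l+1$ by slightly worsening the rate (the same final renaming of $\varepsilon_K$ that the paper performs when it "chooses $\varepsilon'_K<\varepsilon_K$"). Where you genuinely differ is in the key uniform estimate. The paper recycles the covering device of lemma \ref{cvtaylor}: it covers $K$ by finitely many bidiscs $D(0,r_j)$, chooses slightly larger bidiscs $D(0,r'_j)$ still inside the ball, applies the Cauchy estimate (\ref{coeff}) on each, and takes a maximum over $j$, ending with $K'=\bigcup_{j}\overline{D}(0,r'_j)$ and $1-\varepsilon_K=\max_j\max\left(r_{j,1}/r'_{j,1},\,r_{j,2}/r'_{j,2}\right)$. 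You instead work pointwise with an anisotropic torus adapted to $z$, of radii $\sigma_j=\rho|z_j|/\|z\|$, so that the two Cauchy ratios collapse to the single value $\|z\|/\rho$; this replaces the compactness/covering argument by the one-line observation $\sigma_1^2+\sigma_2^2=\rho^2$, yields the explicit $K'=\overline{\mathbb{B}_2}(0,\rho)$ and the explicit pre-absorption rate $(1-\varepsilon_K)/\rho$, at the modest cost of handling $z_1=0$ or $z_2=0$ separately, which you do correctly via the one-variable Cauchy estimate. Your preliminary remark that no single fixed bidisc inside $\mathbb{B}_2$ can contain a general compact $K$ is exactly right, and it is precisely why the paper needs its finite cover; both devices are legitimate, yours being somewhat cleaner and self-contained, the paper's reusing machinery already set up for lemma \ref{cvtaylor}. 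The passage to the uniform statement over a compact family $\mathcal{F}$ is identical in both arguments.
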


\bigskip

\begin{proof}

It follows from theorem \ref{theorem} that
\begin{eqnarray*}
f(z)-\mathcal{G}
\left(
\eta_1^{m_1},\ldots,\eta_n^{m_n};f
\right)(z)
& = &
\sum_{k_1+k_2\geq N,k_1\geq m_1,k_2\geq m_n}
a_{k_1,k_2}z_1^{k_1}z_2^{k_2}
\,.
\\
\end{eqnarray*}
On the other hand, we know by lemma \ref{cvtaylor}
that the Taylor expansion of $f$ is absolutely convergent in $K$.
More precisely, $K$ being covered by a finite number of
bidiscs 
$D(0,r_j),\;j=1,\ldots,J$, one can choose 
$D(0,r'_j)\supset \overline{D}(0,r_j),
\;j=1,\ldots,J$, such that
(see (\ref{coeff}) in the proof of lemma \ref{cvtaylor})
\begin{eqnarray*}
\sup_{z\in K}
\left|
a_{k_1,k_2}z_1^{k_1}z_2^{k_2}
\right|
& \leq &
\max_{1\leq j\leq J}
\sup_{z\in D(0,r_j)}
\left|
a_{k_1,k_2}z_1^{k_1}z_2^{k_2}
\right|
\\
& \leq &
\max_{1\leq j\leq J}
\left[
\sup_{\zeta\in D(0,r'_j)}|f(\zeta)|
\left(\frac{r_{j,1}}{r'_{j,1}}\right)^{k_1}
\left(\frac{r_{j,2}}{r'_{j,2}}\right)^{k_2}
\right]
\\
& \leq &
\sup_{\zeta\in K'}|f(\zeta)|
\;(1-\varepsilon_K)^{k_1+k_2}
\\
\end{eqnarray*}
(where $K'=\bigcup_{j=1}^J\overline{D}(0,r'_j)$ ).
Then
\begin{eqnarray*}
\sup_{z\in K}
\left|
\sum_{k_1+k_2\geq N,k_1\geq m_1,k_2\geq m_n}
a_{k_1,k_2}z_1^{k_1}z_2^{k_2}
\right|
& \leq &
\sup_{\zeta\in K'}|f(\zeta)|
\sum_{k_1+k_2\geq N}
(1-\varepsilon_K)^{k_1+k_2}
\\
& = &
\sup_{\zeta\in K'}|f(\zeta)|
\sum_{q\geq 0}
(q+N+1)(1-\varepsilon_K)^{q+N}
\\
& \leq &
C_K\,
\sup_{\zeta\in K'}|f(\zeta)|
\left(\sqrt{1-\varepsilon_K}\right)^N
\,,
\\
\end{eqnarray*}
and the corollary is proved by choosing 
$\varepsilon'_K<\varepsilon_K$.

\end{proof}

\bigskip


\begin{thebibliography}{00}







\bibitem{alabut} C. Alabiso, P. Butera, 
$N$-variable rational approximants and method of moments,
{\em J. Mathematical Phys.} {\bf 16} (1975), 840--845. 





\bibitem{amar} E. Amar,
Extension de fonctions holomorphes et courants (French),
{\em Bull. Sci. Math.} {\bf 107} (1983), 25--48.







\bibitem{berndtsson} B. Berndtsson,
A formula for interpolation and division in $\mathbb{C}^n$,
{\em Math. Ann.} {\bf 263} (1983), 399--418. 




\bibitem{colherr} N. Coleff, M. Herrera,
Les courants r{\'e}siduels associ{\'e}s {\`a} une forme m{\'e}romorphe (French),
{\em Lecture Notes in Mathematics}, {\bf 633}, Springer, Berlin (1978).






\bibitem{henleit} G.M. Henkin, J. Leiterer, {\em Theory of functions on complex manifolds},
Monographs in Mathematics, 79. Birkhäuser Verlag, Basel (1984).






\bibitem{hensha} G.M. Henkin, A.A. Shananin,
$\mathbb{C}^n$-capacity and multidimensional moment problem, 
{\em Notre Dame Math. Lectures} {\bf 12} (1992), 69--85.




\bibitem{herrlieb} M. Herrera, M. Lieberman,
Residues and principal values on complex spaces, 
{\em Math. Ann.} {\bf 194} (1971), 259--294.



\bibitem{amadeo1} A. Irigoyen, Approximation de compacts fonctionnels par des vari{\'e}t{\'e}s
analytiques (French), {\em J. Functional Analysis} {\bf 244} (2007), 590--627.




\bibitem{amadeo2} A. Irigoyen, An application of approximation theory by
nonlinear manifolds in Sturm-Liouville inverse problems,
{\em Inverse Problems} {\bf 23} (2007), 537--561.





\bibitem{kolmogorov} A.N. Kolmogorov, V.M. Tihomorov, 
$\varepsilon $-entropy and $\varepsilon $-capacity of sets in function spaces (Russian),
{\em Uspehi Mat. Nauk} {\bf 14} (1959), 3--86.





\bibitem{loganshepp} B.F. Logan, L.A. Shepp, 
Optimal reconstruction of a function from its projections,
{\em Duke Math. J.} {\bf 42} (1975), 645--659.





\bibitem{scv} {\em Several Complex Variables I:
Introduction to Complex Analysis},
A.G. Vitushkin (ed.), Berlin: Springer (1990).





\bibitem{vitushkin} A.G. Vitushkin, 
A proof of the existence of analytic functions of several variables not representable 
by linear superpositions of continuously differentiable functions of fewer variables (Russian),
{\em Dokl. Akad. Nauk SSSR} {\bf 156} (1964), 1258--1261.



\bibitem{vithen} A.G. Vitushkin, G.M. Henkin, Linear superpositions of functions (Russian),
Uspehi Mat. Nauk {\bf 22} (1967) no. 1 (133), 77--124.





\end{thebibliography}
\end{document}